\newtheorem{teo}{Theorem}
\newtheorem{defi}{Definition}
\newtheorem{lemma}{Lemma}
\newtheorem{cor}{Corollary}
\newtheorem{prop}{Proposition}
\newtheorem{exa} {Example}
\newtheorem{rem} {Remark}
\DeclareMathOperator{\im}{im}
\DeclareMathOperator{\ind}{ind}
\title{\huge Poincar\'e duality, Hilbert complexes and   geometric applications}
\author{Francesco Bei}
\date{}
\begin{document}

\maketitle
 \pagestyle{myheadings} \markboth {\centerline{\small \tt Francesco Bei}}{\centerline{\small \tt Poincar\'e duality, Hilbert complexes and   geometric applications}}

\bigskip

\begin{abstract}
Let $(M,g)$ be an open, oriented and incomplete riemannian manifold. The aim of this paper is to study  the  following two sequences of $L^2-$cohomology groups:
\begin{enumerate}
\item $H^i_{2,m\rightarrow M}(M,g)$  defined as the image $(H^i_{2,min}(M,g)\rightarrow H^i_{2,max}(M,g))$ 
\item $\overline{H}^i_{2,m\rightarrow M}(M,g)$ defined as the image $(\overline{H}^i_{2,min}(M,g)\rightarrow \overline{H}^i_{2,max}(M,g))$. 
\end{enumerate}We show, under suitable hypothesis, that the first sequence is the cohomology of a  Hilbert complex which contains the minimal one and is contained in the maximal one.  In particular this leads us to prove a Hodge theorem for these groups. We also show that when the second sequence is finite dimensional then  Poincar\'e duality holds  and that, with the same assumptions,  when $dim(M)=4n$ then   we can employ $\overline{H}^{2n}_{2,m\rightarrow M} (M,g)$ in order to define an $L^2-$signature on $M$. We prove several applications to the intersection cohomology of compact smoothly stratified pseudomanifolds and  we get some results about the Friedrichs extension  $\Delta_{i}^\mathcal{F}$ of $\Delta_{i}$.
\end{abstract}

\textbf{Keywords}: Poincar\'e duality, Hodge theorem,  $L^{2}-$cohomology, Stratified pseudomanifold, intersection cohomology.

\section*{Introduction}
The study of singular spaces from a geometric differential point of view leads naturally to consider (and  study) open differentiable manifolds with incomplete riemannian metric. A great variety of papers have been devoted, for example, to the relationship between the $L^2$ Hodge and de Rham cohomology  associated with incomplete metrics on $M$, the smooth part of a compact stratified pseudomanifold $X$,  and the intersection cohomology of $X$ with respect to suitable perversities. We mention here, without any claim of completeness, the classic paper of Cheeger \cite{C}, the papers of Nagase \cite{MN}, \cite{MAN}, the paper of Hunsicker and  Hunsicker and Mazzeo \cite{H} and \cite{HM} and the paper of Saper \cite{LS}. \\Nevertheless, as it is well known, when $M$ is an open manifold and $g$ is an incomplete riemannian metric on $M$ then the de Rham differential $d_i:L^2\Omega^{i}(M,g)\rightarrow L^2\Omega^{i+1}(M,g)$ could have many closed extensions when we look at it as an unbounded operator defined over the smooth forms with compact support. This implies that there exists several ways to turn the complex $(\Omega^i_{c}(M),d_{i})$ into a Hilbert complex and perhaps the most natural ones are  $(L^2\Omega^i(M,g),d_{max,i})$ and $(L^2\Omega^i(M,g),d_{min,i})$ where $d_{max,i}:L^{2}\Omega^{i}(M,g)\rightarrow L^{2}\Omega^{i+1}(M,g)$ is defined in the distributional sense and $d_{min,i}:L^{2}\Omega^{i}(M,g)\rightarrow L^{2}\Omega^{i+1}(M,g)$ is defined as the closure, under the graph norm, of $d_{i}:\Omega_{c}^i(M)\rightarrow \Omega^{i+1}_{c}(M)$. So a natural and fundamental  question is:
\begin{itemize}
\item if these two Hilbert complexes have finite dimensional $L^{2}-$cohomology groups or finite dimensional reduced $L^2-$cohomology groups, does Poincar\'e duality hold for them?
\end{itemize}
 As it is well known the answer is usually negative in both cases.\\ Anyway, from the pair of Hilbert complexes $(L^{2}\Omega^i(M,g),d_{max/min,i})$ we can get other sequences of $L^2-$cohomology groups defined in the following way:
\begin{equation}
\label{gfhg}
H^i_{2,m\rightarrow M}(M,g),\ \text{defined as the image}\ H^i_{2,min}(M,g)\longrightarrow H^i_{2,max}(M,g)
\end{equation}
\begin{equation}
\label{fghgf}
\overline{H}^i_{2,m\rightarrow M}(M,g),\  \text{defined as the image}\ \overline{H}^i_{2,min}(M,g)\longrightarrow \overline{H}^i_{2,max}(M,g).
\end{equation}
where  in \eqref{gfhg}, as well as in  \eqref{fghgf},  the map is the map induced in cohomology (reduced cohomology) by  the natural inclusion of complexes $(L^{2}\Omega^i(M,g),d_{min,i})\subseteq (L^{2}\Omega^i(M,g),d_{max,i})$.

At this point we can summarize the goal of this paper in the following way:
\begin{itemize}
\item Investigate the properties of the groups, $H^i_{2,m\rightarrow M}(M,g)$, $\overline{H}^i_{2,m\rightarrow M}(M,g)$ $i=0,...,dimM$. More precisely we show that, under certain assumptions, the first sequence is the cohomology of a Hilbert complex which contains the minimal one and is contained in the maximal one. In particular this leads us to prove a Hodge theorem for the groups $H^i_{2,m\rightarrow M}(M,g)$. For the sequence defined in \eqref{fghgf} we show  that, when it is finite dimensional, then  Poincar\'e duality holds for it. In particular, combining these two properties, we will show that when $(L^2\Omega^i(M,g),d_{max/min,i})$ is a Fredholm complex then $H^i_{2,m\rightarrow M}(M,g)$  is the cohomology of a Fredholm complex for which Poincar\'e duality holds; these should be regarded as the main results of this paper.  Then we show that, when $dim(M)=4n$,  we can use $\overline{H}^{2n}_{2,m\rightarrow M}(M,g)$ in order  to define an $L^2-$signature on $M$ and to get the existence of a topological signature on $M$. Moreover we show several applications to stratified pseudomanifolds and  we get a topological obstruction to the existence of a riemannian metric (complete or incomplete) with finite $L^2-$cohomology (reduced and unreduced). Finally we show some applications to $\Delta_{i}^\mathcal{F}$, the Friedrichs extension of $\Delta_{i}$; in particular we prove that when $(L^2\Omega^i(M,g),d_{max/min,i})$ are Fredholm complexes, then $\Delta_{i}^\mathcal{F}$ is a Fredholm operator for each $i$. This last result applies, for example, when $M$ is the regular part of a compact and smoothly stratified pseudomanifold with a Thom-Mather stratification.
\end{itemize}
\medskip

\noindent
The paper is structured in the following way:\\in the first section we introduce the notion of Hilbert complexes; we generalize to this abstract framework the properties of the pair $(L^2\Omega^i(M,g),d_{min,i})\subset (L^2\Omega^i(M,g),d_{max,i})$. In particular in Definition \ref{edera} we introduce the notion of complementary Hilbert complexes, that is a pair of Hilbert complexes $(H_{i},D_{i})\subseteq (H_{i},L_{i})$ such that for each $i$ there exists an isometry $\phi_{i}:H_{i}\rightarrow H_{n-i}$ which satisfies $\phi_{i}(\mathcal{D}(D_i))=\mathcal{D}(L^*_{n-i-1})$ and $L_{n-i-1}^*\circ\phi_{i}=C_{i}(\phi_{i+1}\circ D_{i})$ on $\mathcal{D}(D_{i})$,
 where $ L^{*}_{n-i-1}:H_{n-i}\rightarrow H_{n-i-1}$ is the adjoint of $L_{n-i-1}:H_{n-i-1}\rightarrow H_{n-i}$ and $C_i\neq0$ is a constant which depends only on $i$.  Then we prove these two theorems:
\begin{teo}
Let $(H_{j},D_{j})\subseteq (H_{j},L_{j})$ be a pair of complementary Hilbert complexes. Let $i_{r,j}^{*}$ be the map induced by the inclusion of complexes between the reduced cohomology groups. 
Suppose that for each $j$ 
\begin{equation}
\im(\overline{H}^{j}(H_{*},D_{*})\stackrel{i^{*}_{r,j}}{\longrightarrow}\overline{H}^{j}(H_{*},L_{*}))
\end{equation}
is finite dimensional. Then 
\begin{equation}
\im(\overline{H}^{j}(H_{*},D_{*})\stackrel{i^{*}_{r,j}}{\longrightarrow}\overline{H}^{j}(H_{*},L_{*})),\  j=0,...,n
\label{kiollk}
\end{equation}
is a finite sequence of finite dimensional  vector spaces with Poincar\'e duality.
\label{dualityq}
\end{teo}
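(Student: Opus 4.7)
The plan is to manufacture an explicit non-degenerate bilinear pairing $P\colon I^{j}\times I^{n-j}\to\mathbb{C}$ from the isometry $\phi$, and then upgrade non-degeneracy to full Poincar\'e duality via the finite-dimensionality hypothesis. Two forms of the intertwining are essential: the defining relation
\[
L^{*}_{n-i-1}\phi_{i}=C_{i}\,\phi_{i+1}D_{i}\quad\text{on }\mathcal{D}(D_{i}),
\]
and its Hilbert-adjoint consequence
\[
\phi_{i}D_{i}^{*}=C_{i}^{-1}\,L_{n-i-1}\phi_{i+1}\quad\text{on }\mathcal{D}(D_{i}^{*}),
\]
obtained by taking adjoints in the defining identity, using that $\phi_{i}$ is unitary and $L_{n-i-1}$ is closed. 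The abstract Hodge theorem for Hilbert complexes then identifies $\overline{H}^{j}(H_{*},L_{*})$ with $\mathcal{H}^{j}_{L}:=\ker L_{j}\cap\ker L^{*}_{j-1}$, and realizes $I^{j}$ as the subspace $\pi^{j}_{L}(\mathcal{H}^{j}_{D})\subseteq\mathcal{H}^{j}_{L}$, where $\pi^{j}_{L}$ is the orthogonal projection.

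Given classes in $I^{j}$ and $I^{n-j}$ I pick representatives $\alpha\in\ker D_{j}$, $\beta\in\ker D_{n-j}$ and set $P([\alpha],[\beta]):=\langle\alpha,\phi_{n-j}\beta\rangle_{H_{j}}$. Well-definedness in the first slot is immediate: the defining relation with $i=n-j$ gives $L^{*}_{j-1}\phi_{n-j}\beta=C_{n-j}\phi_{n-j+1}D_{n-j}\beta=0$, so $\phi_{n-j}\beta\in\ker L^{*}_{j-1}=(\overline{\im L_{j-1}})^{\perp}$, killing any shift of $\alpha$ by $\overline{\im L_{j-1}}$. For the second slot, split a variation $b\in\ker D_{n-j}\cap\overline{\im L_{n-j-1}}$ via the $D$-Hodge decomposition as $b=b_{1}+b_{2}$, with $b_{1}\in\overline{\im D_{n-j-1}}$ and $b_{2}\in\mathcal{H}^{n-j}_{D}$: the defining relation with $i=n-j-1$ places $\phi_{n-j}b_{1}$ in $\overline{\im L^{*}_{j}}\perp\ker L_{j}\ni\alpha$, while the residual harmonic piece $b_{2}\in\mathcal{H}^{n-j}_{D}\cap\overline{\im L_{n-j-1}}$ drops out because, after replacing $\alpha$ by a $D$-harmonic lift $h_{D}\in\mathcal{H}^{j}_{D}$ of $[\alpha]$, one has $\phi_{j}h_{D}\in\mathcal{H}^{n-j}_{L}\perp\overline{\im L_{n-j-1}}\ni b_{2}$, and the compatibility $\phi_{n-j}^{*}=\phi_{j}$ packaged in the complementary structure turns this orthogonality into the desired vanishing.

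For non-degeneracy, assume $P([\alpha],[\beta])=0$ for every $[\beta]\in I^{n-j}$. Rewriting the pairing as $\langle\phi_{n-j}^{-1}\alpha,\beta\rangle_{H_{n-j}}$ and letting $\beta$ range over $\ker D_{n-j}$ gives $\phi_{n-j}^{-1}\alpha\in(\ker D_{n-j})^{\perp}=\overline{\im D^{*}_{n-j}}$. Writing $\phi_{n-j}^{-1}\alpha=\lim_{k}D^{*}_{n-j}\xi_{k}$, applying $\phi_{n-j}$, and invoking the adjoint intertwining with $i=n-j$ produces
\[
\alpha=C_{n-j}^{-1}\lim_{k}L_{j-1}(\phi_{n-j+1}\xi_{k})\in\overline{\im L_{j-1}},
\]
so $[\alpha]=0$ in $\overline{H}^{j}(H_{*},L_{*})$ and hence in $I^{j}$. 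The symmetric argument with $j$ and $n-j$ swapped gives non-degeneracy on the other side. Combined with the hypothesis that both $I^{j}$ and $I^{n-j}$ are finite-dimensional, the two injections $I^{j}\hookrightarrow (I^{n-j})^{*}$ and $I^{n-j}\hookrightarrow (I^{j})^{*}$ force $\dim I^{j}=\dim I^{n-j}$ and make $P$ a perfect pairing, which is precisely the Poincar\'e duality asserted.

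The main obstacle is the well-definedness of $P$ in the second slot, concretely the vanishing of the harmonic contribution $b_{2}\in\mathcal{H}^{n-j}_{D}\cap\overline{\im L_{n-j-1}}$. This is the one step that genuinely uses the self-dual character of the $\phi_{i}$'s bundled into the notion of complementary Hilbert complexes (via the relation $\phi_{j}\circ\phi_{n-j}$ being a scalar, equivalently $\phi_{n-j}^{*}=\phi_{j}$); the remaining arguments are Hodge-theoretic bookkeeping and a direct application of the intertwining and its adjoint.
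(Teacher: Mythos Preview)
Your approach---constructing an explicit bilinear pairing $P([\alpha],[\beta])=\langle\alpha,\phi_{n-j}\beta\rangle$ and proving it non-degenerate---differs from the paper's, which argues via the orthogonal projections $\pi_{1,j}:\mathcal{H}^j_D\to\mathcal{H}^j_L$ and $\pi_{4,j}=\pi_{1,j}^*$ and shows that $\phi_j$ carries $\ker\pi_{1,j}$ onto $\ker\pi_{4,n-j}$. The two routes are closely related and rest on the same ingredients.

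There is, however, a genuine gap in your well-definedness argument for the second slot. You invoke ``the compatibility $\phi_{n-j}^{*}=\phi_{j}$ packaged in the complementary structure,'' but this identity is \emph{not} part of the paper's definition of complementary Hilbert complexes: the definition only requires that each $\phi_i$ be an isometry satisfying $L^*_{n-i-1}\phi_i=C_i\,\phi_{i+1}D_i$, and it imposes no relation whatsoever between $\phi_j$ and $\phi_{n-j}$. Both $\phi_j$ and $\phi_{n-j}^{-1}$ are isometries $H_j\to H_{n-j}$, but the first intertwines $D$ with $L^*$ while the second (by taking adjoints) intertwines $L$ with $D^*$; nothing in the hypotheses forces them to coincide even up to scalar. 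Concretely, to kill the residual harmonic piece $b_2\in\mathcal{H}^{n-j}_D\cap\overline{\im L_{n-j-1}}$ you need $\langle h_D,\phi_{n-j}b_2\rangle=0$, and you deduce this from $\langle\phi_j h_D,b_2\rangle=0$ only via $\phi_{n-j}^*=\phi_j$. For what it is worth, the paper's own proof has the parallel gap: it asserts that $\phi_j$ maps $\overline{\im L_{j-1}}$ isomorphically onto $\overline{\im D^*_{n-j}}$, which is precisely what would follow from $\phi_j=\phi_{n-j}^{-1}$ but does not follow from the stated definition alone. In the geometric applications the Hodge star does satisfy $*_{n-j}^{-1}=\pm *_{j}$, so both arguments go through there; at the abstract level this extra compatibility should either be added to the hypotheses or circumvented by a different argument.
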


The second theorem describes an abstract framework in which the groups $\im(H^i(H_*,D_*) \rightarrow H^i(H_*,L_*))$ are effectively the cohomology groups of a Hilbert complex which is intermediate between $(H_{j},D_{j})$ and $(H_{j},L_{j})$:

\begin{teo}
\label{cheaffa}
Let  $(H_{j},D_{j})\subseteq (H_{j},L_{j})$  be  a pair of  Hilbert complexes. Suppose that for each $j$ $ran(D_{j})$ is closed in $H_{j+1}$. Then there exists a third Hilbert complex $(H_{j},P_{j})$ such that 
\begin{enumerate}
\item  $(H_{j},D_{j})\subseteq (H_{j},P_{j})\subseteq (H_{j},L_{j})$ 
\item $H^i(H_{*},P_*)=\im(H^i(H_*,D_*) \rightarrow H^i(H_*,L_*))$. 
\end{enumerate}
Moreover if $(H_{j},D_{j})\subseteq (H_{j},L_{j})$ are complementary and  $(H_{j},D_{j})$ or equivalently $(H_{j},L_{j})$ is Fredholm then $(H_{j},P_{j})$ is a  Fredholm complex with Poincar\'e duality.
\end{teo}
\vspace{1 cm}

In the second section we specialize the situation to the pair of complementary Hilbert complexes that are natural in riemannian geometry; our main results are the following two theorems which are a consequence of the two previous results:

\begin{teo}
\label{marione}
Let $(M,g)$ be an open, oriented and incomplete riemannian manifold of dimension $m$. Then the complexes $$(L^{2}\Omega^{i}(M,g),d_{max,i})\ and\  (L^{2}\Omega^{i}(M,g),d_{min,i})$$ are a pair of complementary Hilbert complexes. \\In particular if $\im(\overline{H}^{i}_{2,min}(M,g)\stackrel{i^{*}_{r,i}}{\longrightarrow}\overline{H}^{i}_{2,max}(M,g))$ is finite dimensional for each $i$ then $$\im(\overline{H}^{i}_{2,min}(M,g)\stackrel{i^{*}_{r,i}}{\longrightarrow}\overline{H}^{i}_{2,max}(M,g))$$ is a finite sequence of finite dimensional vector spaces with Poincar\'e duality.
\end{teo}
 
Another application of Theorem \ref{cheaffa} gives the following :
\begin{teo}
\label{polcass}
Let $(M,g)$ be an open, oriented and incomplete  riemannian manifold of dimension $n$. Suppose that for each $i$ $ran(d_{min,i})$ is closed in $L^2\Omega^{i+1}(M,g)$. Then there exists a Hilbert complex $(L^2\Omega^i(M,g)),d_{\mathfrak{m},i})$ such that for each $i=0,...n$ $$\mathcal{D}(d_{min,i})\subset \mathcal{D}(d_{\mathfrak{m},i})\subset \mathcal{D}(d_{max,i}),$$$d_{max,i}$ is an extension of $d_{\mathfrak{m},i}$ which is an extension of $d_{min,i}$ and   $$H^i_{2,\mathfrak{m}}(M,g)=\im(H^{i}_{2,min}(M,g)\stackrel{i^{*}_{i}}{\longrightarrow}H^{i}_{2,max}(M,g))$$ where $H^i_{2,\mathfrak{m}}(M,g)$ is the cohomology of the Hilbert complex $(L^2\Omega^i(M,g),d_{\mathfrak{m},i})$. Finally, if\\ $(L^2\Omega^i(M,g),d_{max,i})$ or equivalently $(L^2\Omega^i(M,g),d_{min,i})$ is Fredholm, then $(L^2\Omega^i(M,g),d_{\mathfrak{m},i})$ is a Fredholm complex with Poincar\'e duality.
\end{teo}

From the previous theorem we get as corollary that under certain conditions  it is possible to construct a self-adjoint extension of $\Delta_{i}:\Omega^i_{c}(M)\rightarrow \Omega_{c}^i(M)$, the Laplacian acting on the space of smooth compactly supported $i-$forms, such that it is a Fredholm operator with nullspace isomorphic to $\im(H^{i}_{2,min}(M,g)\stackrel{i^{*}_{i}}{\longrightarrow}H^{i}_{2,max}(M,g))$. In other words it is possible to state (and prove)  a \textbf{Hodge theorem} for the cohomology groups  $\im(H^{i}_{2,min}(M,g)\stackrel{i^{*}_{i}}{\longrightarrow}H^{i}_{2,max}(M,g))$:

\begin{cor}
\label{giover}
In the same assumptions of Theorem \ref{polcass}; Let $\Delta_{i}:\Omega^i_{c}(M)\rightarrow \Omega_{c}^i(M)$ be the Laplacian acting on the space of smooth compactly supported forms. Then there exists a self-adjoint extension  $\Delta_{\mathfrak{m},i}:L^2\Omega^i(M,g)\rightarrow L^2\Omega^i(M,g)$ with closed range such that $$Ker(\Delta_{\mathfrak{m},i})\cong \im(H^{i}_{2,min}(M,g)\stackrel{i^{*}_{i}}{\longrightarrow}H^{i}_{2,max}(M,g)).$$ Moreover, if $(L^2\Omega^i(M,g),d_{max,i})$ or equivalently  $(L^2\Omega^i(M,g),d_{min,i})$ is Fredholm, then  $\Delta_{\mathfrak{m},i}$ is a Fredholm operator on its domain endowed with the graph norm.
\end{cor}

In the rest of the section we prove several applications of these results; in particular we show that there might exist   topological obstructions to the existence of a riemannian metric with finite $L^2-$cohomology  groups, see Corollary \ref{gianni}.

We end the second sections by showing that, when $(M,g)$ is an open, oriented and incomplete riemannian manifold of dimension $4n$ such that $\im(\overline{H}^{2n}_{2,min}(M,g)\rightarrow \overline{H}^{2n}_{2,max}(M,g))$ is finite dimensional, then it is possible to define an \textbf{$L^2-$signature} on $M$ and that this implies also the existence of a \textbf{topological signature} on $M$; see Definition \ref{qwerq} and Prop. \ref{topomicio}.\vspace{1 cm}

The third section is devoted to the applications of the previous results to compact smoothly  stratified pseudomanifold with a Thom-Mather stratification; after recalling the $L^2-$Hodge-de Rham theorem proved in \cite{FB}, we get some consequences for the intersection cohomology groups associated with some general perversity in the sense of Friedman; see Proposition \ref{derede}, Corollaries \ref{frengoesto}, \ref{gennaron}, \ref{mummio} and \ref{falco}.
In particular we have the following \textbf{Hodge and index theorems}:
\begin{teo}
\label{tizioss}
Let $X$ be a compact smoothly stratified pseudomanifold of dimension $n$ with a Thom-Mather stratification.  Let $g$ be a quasi-edge metric with weights on $reg(X)$, see Def. \ref{zedge}.  Then    we have the following results:
\begin{equation}
\label{miopf}
Ker(\Delta_{\mathfrak{m},i})\cong \im(I^{q_{g}}H^i(X,\mathcal{R}_0)\rightarrow I^{p_{g}}H^i(X,\mathcal{R}_0))
\end{equation}
\begin{equation}
\label{ssssa}
\ind((d_{\mathfrak{m}}+d_{\mathfrak{m}}^*)_{ev})=I^{p_{g}\rightarrow q_{g}}\chi(X,\mathcal{R}_0)
\end{equation}
where $I^{p_{g}\rightarrow q_{g}}\chi(X,\mathcal{R}_0)=\sum_{i}(-1)^idim(\im(I^{q_{g}}H^i(X,\mathcal{R}_0)\rightarrow I^{p_{g}}H^i(X,\mathcal{R}_0)))$ and  $(d_{\mathfrak{m}}+d_{\mathfrak{m}}^*)_{ev}$ is the  extension of $$d+\delta:\bigoplus_i\Omega_c^{2i}(M)\rightarrow \bigoplus_i\Omega_c^{2i+1}(M)\  defined\ by\  (d_{\mathfrak{m}}+d_{\mathfrak{m}}^*)_{ev}|_{L^2\Omega^{2i}(M,g)}:=d_{\mathfrak{m},2i}+d_{\mathfrak{m},2i-1}^*$$ which is a Fredholm operator on its domain endowed with the graph norm.
\end{teo}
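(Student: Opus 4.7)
The plan is to reduce the statement to the abstract machinery of Theorem~\ref{polcass} and Corollary~\ref{giover} via the $L^2$ Hodge--de Rham theorem of \cite{FB}, which identifies the maximal and minimal $L^2$ cohomology of a quasi edge metric with weights with intersection cohomology groups for two complementary perversities $p_{g}$ and $q_{g}$. Because $X$ is compact and $g$ is quasi edge with weights, these identifications come together with the statement that $(L^2\Omega^i(M,g),d_{\max,i})$ and $(L^2\Omega^i(M,g),d_{\min,i})$ are Fredholm complexes; in particular $\mathrm{ran}(d_{\min,i})$ is closed in $L^2\Omega^{i+1}(M,g)$ for each $i$, so the hypotheses of Theorem~\ref{polcass} and Corollary~\ref{giover} are met.

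Granted this, I would first invoke Theorem~\ref{polcass} to produce the intermediate Hilbert complex $(L^2\Omega^i(M,g),d_{\mathfrak{m},i})$ with the property
$$H^i_{2,\mathfrak{m}}(M,g)=\im\bigl(H^i_{2,\min}(M,g)\to H^i_{2,\max}(M,g)\bigr).$$
Transporting this equality through the two natural identifications from \cite{FB} and checking, as is standard, that the inclusion of complexes $d_{\min}\hookrightarrow d_{\max}$ corresponds under the Hodge--de Rham isomorphisms to the canonical comparison map between intersection cohomology groups for $q_{g}$ and $p_{g}$, one obtains
$$H^i_{2,\mathfrak{m}}(M,g)\cong \im\bigl(I^{q_{g}}H^i(X,\mathcal{R}_0)\to I^{p_{g}}H^i(X,\mathcal{R}_0)\bigr).$$
Corollary~\ref{giover} then gives the self-adjoint Fredholm extension $\Delta_{\mathfrak{m},i}$ with $Ker(\Delta_{\mathfrak{m},i})\cong H^i_{2,\mathfrak{m}}(M,g)$, which combined with the previous identification proves \eqref{miopf}.

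For \eqref{ssssa} I would use the standard rolling-up argument available for any Fredholm Hilbert complex: the even/odd assembled operator $(d_{\mathfrak{m}}+d_{\mathfrak{m}}^{*})_{ev}$ is Fredholm on its graph-norm domain (a consequence of Theorem~\ref{polcass}, which ensures that $(L^2\Omega^i(M,g),d_{\mathfrak{m},i})$ itself is Fredholm), and its index equals the Euler characteristic of the cohomology of that complex. Substituting the identification established above yields
$$\ind\bigl((d_{\mathfrak{m}}+d_{\mathfrak{m}}^{*})_{ev}\bigr)=\sum_{i}(-1)^{i}\dim H^i_{2,\mathfrak{m}}(M,g)=I^{p_{g}\to q_{g}}\chi(X,\mathcal{R}_0),$$
as required.

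The principal obstacle I expect is the verification, for a quasi edge metric with weights on the regular part of a compact smoothly stratified pseudomanifold with Thom--Mather stratification, that both the maximal and minimal complexes are genuinely Fredholm and that the two cohomology groups are canonically identified with the intersection cohomology groups of perversities $p_{g}$ and $q_{g}$ so that the inclusion-induced map corresponds to the canonical comparison map; all of this rests on the $L^2$ Hodge and de Rham theorem of \cite{FB} and on the careful bookkeeping between the analytic weights of the metric and the topological perversities. Once these identifications are in hand, the two displayed formulas follow quickly from the general Hilbert-complex machinery developed in the previous sections.
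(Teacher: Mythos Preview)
Your proposal is correct and follows essentially the same route as the paper: invoke the $L^2$ Hodge--de Rham theorem (Theorem~\ref{ris}) to obtain the Fredholm property and the identifications with intersection cohomology, then apply Corollary~\ref{giove} for \eqref{miopf} and the rolling-up/index argument (Proposition~\ref{rtlrtl}) for \eqref{ssssa}. The paper's own proof is simply a two-line citation of these same ingredients, so your outline matches it in substance.
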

 Moreover we remark that in this framework  the $L^2$ signature introduced in the previous section in a more general context has a topological meaning  because it coincides with the \textbf{perverse signature} introduced by Friedman and Hunsicker in \cite{FH}, that is $$\sigma_{2}(reg(X),g)=\sigma_{q_g\rightarrow p_g}(X).$$

Finally in the last section we show some applications to $\Delta_{i}^\mathcal{F}$, the Friedrichs extension of $\Delta_{i}$. Our main result is:
\begin{teo} Let $(M,g)$ be an open, oriented and incomplete riemannian manifold such that $(L^2\Omega^i(M,g),d_{max,i})$, or equivalently $(L^2\Omega^i(M,g),d_{min,i})$, is a Fredholm complex. Then for each $i$, $\Delta_{i}^{\mathcal{F}}$, the Friedrichs extension of $\Delta_i:\Omega_{c}^{i}(M)\rightarrow \Omega^i_{c}(M)$, is a Fredholm operator on its domain endowed with the graph norm. 
\end{teo}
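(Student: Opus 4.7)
The plan is to bridge the Fredholm property of the underlying Hilbert complex and that of $\Delta_i^{\mathcal{F}}$ through the Kato--Friedrichs quadratic-form description of $\Delta_i^{\mathcal{F}}$, combined with the Hodge decomposition for Hilbert complexes.

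First I would identify $\Delta_i^{\mathcal{F}}$ as the self-adjoint extension of $\Delta_i|_{\Omega^i_c(M)}$ associated, via the Kato--Friedrichs construction, with the closure $\overline{Q}$ of the non-negative quadratic form $Q(\omega):=\|d\omega\|^2+\|\delta\omega\|^2$ on $\Omega^i_c(M)$. Its form domain sits inside $\mathcal{D}(d_{min,i})\cap\mathcal{D}(\delta_{min,i})$, where $\delta_{min,i}=(d_{max,i-1})^*$ is the closure of the formal codifferential on compactly supported forms. In particular $\omega\in\ker(\Delta_i^{\mathcal{F}})$ iff $\overline{Q}(\omega)=0$, which forces $d_{min,i}\omega=0$ and $\delta_{min,i}\omega=0$.

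Second I would apply Theorem~\ref{marione}: the pair $(L^2\Omega^*,d_{min})\subseteq(L^2\Omega^*,d_{max})$ is complementary, and the Fredholmness of one of the two complexes is equivalent to that of the other. The standard Hodge-type decomposition theorem for Fredholm Hilbert complexes (Br\"uning--Lesch) applied to each yields finite-dimensional harmonic spaces and the $L^2$-orthogonal decompositions
\[
L^2\Omega^i=\ker(\Delta_{min,i})\oplus\mathrm{ran}(d_{min,i-1})\oplus\mathrm{ran}(d_{min,i}^*)=\ker(\Delta_{max,i})\oplus\mathrm{ran}(d_{max,i-1})\oplus\mathrm{ran}(d_{max,i}^*),
\]
with all ranges closed. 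For the Fredholm property of $\Delta_i^{\mathcal{F}}$, the restriction of $\Delta_i^{\mathcal{F}}$ to $\ker(\Delta_i^{\mathcal{F}})^{\perp}\cap\mathcal{D}(\Delta_i^{\mathcal{F}})$ is an injective self-adjoint operator which, by closedness of the ranges of $d_{min,i}$ and $d_{min,i-1}^*$, is bounded below; the bounded-inverse theorem then furnishes a continuous inverse on its range, so $\Delta_i^{\mathcal{F}}$ is Fredholm on its domain with the graph norm. The inclusion $\ker(\Delta_i^{\mathcal{F}})\subseteq\ker(\Delta_{min,i})$ is immediate from the first step, while the reverse inclusion I would extract by coupling the complementarity isometry of Theorem~\ref{marione} with the correspondence of harmonic representatives of the min and max complexes under Hodge $\ast$-duality. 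The range identity $\mathrm{ran}(\Delta_i^{\mathcal{F}})=\mathrm{ran}(\Delta_{max,i})$ then follows because both ranges coincide with the orthogonal complement of this common finite-dimensional kernel.

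The main obstacle will be precisely the reverse kernel inclusion $\ker(\Delta_{min,i})\subseteq\ker(\Delta_i^{\mathcal{F}})$: a priori $\Delta_i^{\mathcal{F}}$ and $\Delta_{min,i}$ are genuinely distinct self-adjoint extensions (their form domains differing exactly in the gap between $\delta_{min,i}$ and $\delta_{max,i}$), and collapsing this gap after intersection with $\ker(d_{min,i})$ uses the Fredholmness hypothesis in an essential way through the finite-dimensionality and rigidity of the harmonic representatives supplied by Br\"uning--Lesch.
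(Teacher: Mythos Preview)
Your overall strategy (Friedrichs extension via the quadratic form, then Hodge decomposition) is reasonable, but there are two genuine problems and the approach diverges from the paper's.

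\textbf{The ``main obstacle'' is not an obstacle.} The inclusion $\ker(\Delta_{min,i})\subseteq\ker(\Delta_i^{\mathcal F})$ is automatic: $\Delta_i^{\mathcal F}$ is a \emph{closed} extension of $\Delta_i|_{\Omega^i_c}$, hence it extends the graph closure $\Delta_{min,i}$, so any $\omega$ with $\Delta_{min,i}\omega=0$ lies in $\mathcal D(\Delta_i^{\mathcal F})$ and is annihilated there. Your proposed route through the complementarity isometry and Hodge $*$-duality is unnecessary (and, as stated, does not actually produce elements of the Friedrichs domain).

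\textbf{The Fredholm step has a real gap.} You assert that closedness of $\operatorname{ran}(d_{min,i})$ and $\operatorname{ran}(d_{min,i-1}^*)$ makes $\Delta_i^{\mathcal F}$ bounded below on $\ker(\Delta_i^{\mathcal F})^\perp$. But the quadratic form is $\|d_{min,i}\omega\|^2+\|\delta_{min,i-1}\omega\|^2$, involving $\delta_{min}$, \emph{not} $\delta_{max}=d_{min}^*$. The Hodge decomposition of the min complex you invoke is $L^2\Omega^i=\mathcal H^i_{rel}\oplus\operatorname{ran}(d_{min,i-1})\oplus\operatorname{ran}(\delta_{max,i})$, with harmonic piece $\mathcal H^i_{rel}=\ker d_{min,i}\cap\ker\delta_{max,i-1}$, whereas $\ker(\Delta_i^{\mathcal F})=\mathcal H^i_{min}=\ker d_{min,i}\cap\ker\delta_{min,i-1}$. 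These are different spaces, and it is not clear how the closed-range estimates for $d_{min}$ and $\delta_{max}$ yield $\|d_{min}\omega\|^2+\|\delta_{min}\omega\|^2\geq c\|\omega\|^2$ on $(\mathcal H^i_{min})^\perp$ intersected with the form domain. Likewise, your range identity uses that $\operatorname{ran}(\Delta_{max,i})$ is closed, which you have not established.

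\textbf{How the paper proceeds.} The paper bypasses all of this by working with the \emph{total} Dirac-type operator. Using the Br\"uning--Lesch identity $(d^t d)^{\mathcal F}=d^t_{max}d_{min}$ one gets $\Delta^{\mathcal F}=(d+\delta)_{max}\circ(d+\delta)_{min}$ on $\bigoplus_i L^2\Omega^i$, and a direct-sum lemma gives $\Delta^{\mathcal F}=\bigoplus_i\Delta_i^{\mathcal F}$. The kernel is then $\ker((d+\delta)_{min})=\ker d_{min}\cap\ker\delta_{min}=\bigoplus_i\mathcal H^i_{min}$, which simultaneously handles both inclusions. For Fredholmness the paper shows $\operatorname{ran}\bigl((d+\delta)_{max}\circ(d+\delta)_{min}\bigr)=\operatorname{ran}\bigl((d+\delta)_{max}\bigr)$ (using that $(d+\delta)_{min}$ has closed range), and the latter has finite codimension because $\operatorname{ran}(d_{max}+\delta_{min})\subseteq\operatorname{ran}((d+\delta)_{max})$ and $d_{max}+\delta_{min}$ is Fredholm. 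Splitting by degree then gives the Fredholmness of each $\Delta_i^{\mathcal F}$ and the range identity $\operatorname{ran}(\Delta_i^{\mathcal F})=\operatorname{ran}(\Delta_{max,i})$.
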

As a particular case of the previous theorem we have the following corollary:
\begin{cor}
\label{ilbossrussa}
Let $X$ be a compact smoothly and oriented stratified pseudomanifold of dimension $n$ with a Thom-Mather stratification. Let $g$ be a quasi-edge metric with weights on $reg(X)$. Then on $L^{2}\Omega^i(reg(X),g)$, for each $i=0,...,n$, $\Delta_{i}^\mathcal{F}$ is a Fredholm operator on its domain endowed with the graph norm.
\end{cor}
\vspace{1 cm}

\textbf{Acknowledgments.} This paper is part of my PhD thesis developed at the Department of Mathematics of
Sapienza, University of Rome. I wish to thank my advisor, Paolo Piazza, for having suggested this subject, for
his help and for many helpful discussions.  I wish also to thank Pierre Albin for having invited me to spend the months of March and April 2012 at  the University of Illinois at Urbana-Champaign, for many interesting discussions and for many helpful hints. Finally I wish to thank the anonymous referee  for many helpful comments and suggestions.

\section {Hilbert Complexes}

We start the section recalling the notion of Hilbert complex and its main properties.  For a complete development of the subject we refer to \cite{BL}.

\begin{defi} A Hilbert complex is a complex, $(H_{*},D_{*})$ of the form:
\begin{equation}
0\rightarrow H_{0}\stackrel{D_{0}}{\rightarrow}H_{1}\stackrel{D_{1}}{\rightarrow}H_{2}\stackrel{D_{2}}{\rightarrow}...\stackrel{D_{n-1}}{\rightarrow}H_{n}\rightarrow 0,
\label{mm}
\end{equation}
where each $H_{i}$ is a separable Hilbert space and each map $D_{i}$ is a closed operator called the differential such that:
\begin{enumerate}
\item $\mathcal{D}(D_{i})$, the domain of $D_{i}$, is dense in $H_{i}$.
\item $ran(D_{i})\subset \mathcal{D}(D_{i+1})$.
\item $D_{i+1}\circ D_{i}=0$ for all $i$.
%\item The cohomology groups of the complex are $H^{i}(H_{*},D_{*}):=Ker(D_{i})/ran(D_{i-1})$. If the groups $H^{i}(H_{*},D_{*})$ are all finite dimensional the complex is said $Fredholm\ complex$.
\end{enumerate}
\end{defi}
The cohomology groups of the complex are $H^{i}(H_{*},D_{*}):=Ker(D_{i})/ran(D_{i-1})$. If the groups $H^{i}(H_{*},D_{*})$ are all finite dimensional we say that it is a  $Fredholm\ complex$. 

Given a Hilbert complex there is a dual Hilbert complex
\begin{equation}
0\leftarrow H_{0}\stackrel{D_{0}^{*}}{\leftarrow}H_{1}\stackrel{D_{1}^{*}}{\leftarrow}H_{2}\stackrel{D_{2}^{*}}{\leftarrow}...\stackrel{D_{n-1}^{*}}{\leftarrow}H_{n}\leftarrow 0,
\label{mmp}
\end{equation}
defined using $D_{i}^{*}:H_{i+1}\rightarrow H_{i}$, the Hilbert space adjoints of the differentials\\ $D_{i}:H_{i}\rightarrow H_{i+1}$. The cohomology groups of $(H_{j},(D_{j})^*)$, the dual Hilbert  complex, are $$H^{i}(H_{j},(D_{j})^*):=Ker(D_{n-i-1}^{*})/ran(D_{n-i}^*).$$\\For all $i$ there is also a laplacian $\Delta_{i}=D_{i}^{*}D_{i}+D_{i-1}D_{i-1}^{*}$ which is a self-adjoint operator on $H_{i}$ with domain 
\begin{equation}
\label{saed}
\mathcal{D}(\Delta_{i})=\{v\in \mathcal{D}(D_{i})\cap \mathcal{D}(D_{i-1}^{*}): D_{i}v\in \mathcal{D}(D_{i}^{*}), D_{i-1}^{*}v\in \mathcal{D}(D_{i-1})\}
\end{equation} and nullspace: 
\begin{equation}
\label{said}
\mathcal{H}^{i}(H_{*},D_{*}):=ker(\Delta_{i})=Ker(D_{i})\cap Ker(D_{i-1}^{*}).
\end{equation}

The following propositions are  standard results for these complexes. The first result is a weak Kodaira decomposition:

\begin{prop}
\label{beibei}
 [\cite{BL}, Lemma 2.1] Let $(H_{i},D_{i})$ be a Hilbert complex and $(H_{i},(D_{i})^{*})$ its dual complex, then: $$H_{i}=\mathcal{H}^{i}\oplus\overline{ran(D_{i-1})}\oplus\overline{ran(D_{i}^{*})}.$$
\label{kio}
\end{prop}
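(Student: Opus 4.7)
My plan rests on the observation that each differential $D_{i-1}$ and $D_i$ is, by assumption, a closed densely defined operator between Hilbert spaces, so each one individually yields an orthogonal decomposition of $H_i$ via its range and the kernel of its adjoint.

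First I would invoke the standard closed-operator decomposition twice. Applied to $D_{i-1}\colon H_{i-1}\to H_i$ with adjoint $D_{i-1}^{*}\colon H_i\to H_{i-1}$, it gives
\begin{equation*}
H_i \;=\; \overline{\mathrm{ran}(D_{i-1})}\;\oplus\; \mathrm{Ker}(D_{i-1}^{*}),
\end{equation*}
and applied to $D_i\colon H_i\to H_{i+1}$ with adjoint $D_i^{*}$, it gives
\begin{equation*}
H_i \;=\; \mathrm{Ker}(D_i)\;\oplus\; \overline{\mathrm{ran}(D_i^{*})}.
\end{equation*}
Both decompositions are orthogonal in $H_i$.

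Next I would combine these using $D_i\circ D_{i-1}=0$. This identity, together with the closedness of $\mathrm{Ker}(D_i)$, forces $\overline{\mathrm{ran}(D_{i-1})}\subseteq \mathrm{Ker}(D_i)$. Intersecting the first decomposition with $\mathrm{Ker}(D_i)$ yields
\begin{equation*}
\mathrm{Ker}(D_i)\;=\;\overline{\mathrm{ran}(D_{i-1})}\;\oplus\;\bigl(\mathrm{Ker}(D_i)\cap\mathrm{Ker}(D_{i-1}^{*})\bigr)
\;=\;\overline{\mathrm{ran}(D_{i-1})}\;\oplus\;\mathcal{H}^{i},
\end{equation*}
by the characterization of $\mathcal{H}^{i}$ in \eqref{said}. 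Substituting this into the second decomposition gives the three-piece splitting
\begin{equation*}
H_i \;=\; \mathcal{H}^{i}\;\oplus\;\overline{\mathrm{ran}(D_{i-1})}\;\oplus\;\overline{\mathrm{ran}(D_i^{*})}.
\end{equation*}

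Finally I would verify that all three summands are pairwise orthogonal. The pair $\mathcal{H}^{i}\perp \overline{\mathrm{ran}(D_{i-1})}$ follows from $\mathcal{H}^{i}\subseteq \mathrm{Ker}(D_{i-1}^{*})=\overline{\mathrm{ran}(D_{i-1})}^{\perp}$, while $\mathcal{H}^{i}\perp\overline{\mathrm{ran}(D_i^{*})}$ follows from $\mathcal{H}^{i}\subseteq\mathrm{Ker}(D_i)=\overline{\mathrm{ran}(D_i^{*})}^{\perp}$. The only slightly delicate orthogonality is between $\overline{\mathrm{ran}(D_{i-1})}$ and $\overline{\mathrm{ran}(D_i^{*})}$: here the Hilbert complex axiom $\mathrm{ran}(D_{i-1})\subseteq\mathcal{D}(D_i)$ is what lets me write, for $\beta\in\mathcal{D}(D_{i-1})$ and $\delta\in\mathcal{D}(D_i^{*})$, $\langle D_{i-1}\beta,D_i^{*}\delta\rangle=\langle D_iD_{i-1}\beta,\delta\rangle=0$, and this extends to the closures by continuity. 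This verification is really the only place the complex structure enters beyond the bare closed-operator theory, and so it is the main (modest) obstacle; everything else is a direct consequence of the orthogonal complement formulas for closed densely defined operators.
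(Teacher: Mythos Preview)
Your argument is correct and is essentially the standard proof (the paper itself does not give a proof but simply cites \cite{BL}, Lemma 2.1). One small remark: the orthogonality you call ``slightly delicate'' between $\overline{\mathrm{ran}(D_{i-1})}$ and $\overline{\mathrm{ran}(D_i^{*})}$ is in fact already contained in what you proved earlier, since $\overline{\mathrm{ran}(D_{i-1})}\subseteq \mathrm{Ker}(D_i)=\overline{\mathrm{ran}(D_i^{*})}^{\perp}$; the separate computation with $\langle D_{i-1}\beta, D_i^{*}\delta\rangle$ is therefore redundant.
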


The reduced cohomology groups of the complex are: $$\overline{H}^{i}(H_{*},D_{*}):=Ker(D_{i})/(\overline{ran(D_{i-1})}).$$
By the above proposition there is a pair of  weak de Rham isomorphism theorems:
 \begin{equation}
\label{pppp}
\left\{
\begin{array}{ll}
\mathcal{H}^{i}(H_{*},D_{*})\cong\overline{H}^{i}(H_{*},D_{*})\\
\mathcal{H}^{i}(H_{*},D_{*})\cong\overline{H}^{n-i}(H_{*},(D_{*})^{*})
\end{array}
\right.
\end{equation}
where in the second case we mean the cohomology of the dual Hilbert complex.\\
The complex $(H_{*},D_{*})$ is said $ weak\  Fredholm$  if $\mathcal{H}_{i}(H_{*},D_{*})$ is finite dimensional for each $i$. By the next propositions it follows immediately that each Fredholm complex is a weak Fredholm  complex.

\begin{prop}
\label{topoamotore}
[\cite{BL}, corollary 2.5] If the cohomology of a Hilbert complex $(H_{*}, D_{*})$ is finite dimensional then, for all $i$,  $ran(D_{i-1})$ is closed  and  $H^{i}(H_{*},D_{*})\cong \mathcal{H}^{i}(H_{*},D_{*}).$
\end{prop}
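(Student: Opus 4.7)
The plan is to first reduce both conclusions to showing that $\mathrm{ran}(D_{i-1})$ is closed in $H_{i}$, and then deduce closedness from the finite dimensionality hypothesis via an open mapping argument.

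I start with the Kodaira decomposition (Proposition \ref{beibei}): $H_{i}=\mathcal{H}^{i}\oplus\overline{\mathrm{ran}(D_{i-1})}\oplus\overline{\mathrm{ran}(D_{i}^{*})}$. Since $\mathrm{ran}(D_{i-1})\subseteq\ker(D_{i})$ and $\ker(D_{i})\perp\overline{\mathrm{ran}(D_{i}^{*})}$, we obtain $\ker(D_{i})=\mathcal{H}^{i}\oplus\overline{\mathrm{ran}(D_{i-1})}$. Quotienting by $\mathrm{ran}(D_{i-1})$ and using $\mathcal{H}^{i}\cap\overline{\mathrm{ran}(D_{i-1})}=\{0\}$, this yields an algebraic direct sum decomposition
\begin{equation*}
H^{i}(H_{*},D_{*})\;\cong\;\mathcal{H}^{i}\;\oplus\;\overline{\mathrm{ran}(D_{i-1})}/\mathrm{ran}(D_{i-1}).
\end{equation*}
So the hypothesis $\dim H^{i}(H_{*},D_{*})<\infty$ forces the defect space $N:=\overline{\mathrm{ran}(D_{i-1})}/\mathrm{ran}(D_{i-1})$ to be finite dimensional. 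Once I prove $N=\{0\}$, both conclusions are immediate: $\mathrm{ran}(D_{i-1})=\overline{\mathrm{ran}(D_{i-1})}$ is closed, and $H^{i}=\overline{H}^{i}\cong\mathcal{H}^{i}$ by the weak de Rham theorem \eqref{pppp}.

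The main obstacle is thus the implication: $N$ finite dimensional $\Rightarrow$ $\mathrm{ran}(D_{i-1})$ closed. To handle it, I view $T:=D_{i-1}$ as an injective bounded operator
\begin{equation*}
\widetilde{T}\colon \bigl(\mathcal{D}(D_{i-1})/\ker(D_{i-1}),\ \|\cdot\|_{\mathrm{gr}}\bigr)\longrightarrow \overline{\mathrm{ran}(D_{i-1})},
\end{equation*}
where the domain is a Hilbert space under the graph norm. Pick a finite dimensional subspace $F\subseteq\overline{\mathrm{ran}(D_{i-1})}$ mapping isomorphically onto $N$, so that $F\cap\mathrm{ran}(D_{i-1})=\{0\}$ and $F+\mathrm{ran}(D_{i-1})=\overline{\mathrm{ran}(D_{i-1})}$. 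The augmented map $S(x,f):=\widetilde{T}x+f$ is then a bounded surjection between Hilbert spaces, and by the open mapping theorem there is a constant $c>0$ such that every $y\in\overline{\mathrm{ran}(D_{i-1})}$ has a preimage $(x,f)$ with $\|x\|_{\mathrm{gr}}+\|f\|\le c\|y\|$. For $y\in\mathrm{ran}(D_{i-1})$, one can force $f=0$ (since $F\cap\mathrm{ran}(D_{i-1})=\{0\}$), giving the uniform bound $\|x\|_{\mathrm{gr}}\le c\|y\|$.

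With this bound in hand I close out as follows. Take any $y\in\overline{\mathrm{ran}(D_{i-1})}$ and a sequence $y_{n}=Tx_{n}\to y$; the uniform bound gives $\|x_{n}\|_{\mathrm{gr}}\le c\|y_{n}\|$, so $(x_{n})$ is bounded in the graph Hilbert space. Passing to a weakly convergent subsequence $x_{n_{k}}\rightharpoonup x$, continuity of $\widetilde{T}$ yields $Tx_{n_{k}}\rightharpoonup Tx$, hence $Tx=y$ and $y\in\mathrm{ran}(D_{i-1})$. Therefore $N=\{0\}$, $\mathrm{ran}(D_{i-1})$ is closed, and the isomorphism $H^{i}(H_{*},D_{*})\cong\overline{H}^{i}(H_{*},D_{*})\cong\mathcal{H}^{i}(H_{*},D_{*})$ follows from \eqref{pppp}.
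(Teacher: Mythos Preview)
Your argument is correct. The paper does not supply its own proof of this proposition; it simply quotes it as \cite[Corollary 2.5]{BL}. What you have written is essentially the standard functional-analytic justification of that corollary: from the weak Kodaira decomposition you isolate the defect space $N=\overline{\mathrm{ran}(D_{i-1})}/\mathrm{ran}(D_{i-1})$, observe it is finite dimensional, and then use an open mapping argument to show that the range of a closed operator with finite-codimensional image in its closure is automatically closed. One small remark: once you have established the bound $\|x\|_{\mathrm{gr}}\le c\|y\|$ on $\mathrm{ran}(D_{i-1})$ (which holds because your augmented map $S$ is a bounded \emph{bijection}, not merely a surjection, so $S^{-1}$ exists and is bounded), the weak-convergence step is unnecessary; the bound already shows that $\widetilde{T}^{-1}$ is bounded on $\mathrm{ran}(D_{i-1})$, so any Cauchy sequence $y_n=\widetilde{T}x_n$ has $(x_n)$ Cauchy in the graph norm, and strong convergence finishes the job directly.
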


\begin{prop}[\cite{BL}, corollary 2.6] A Hilbert complex  $(H_{j},D_{j}),\ j=0,...,n$ is a Fredholm complex (weak Fredholm) if and only if  its dual complex, $(H_{j},D_{j}^{*})$, is Fredholm (weak Fredholm). If it is Fredholm then
\begin{equation}
\mathcal{H}_{i}(H_{j},D_{j})\cong H_{i}(H_{j},D_{j})\cong H_{n-i}(H_{j},(D_{j})^{*})\cong \mathcal{H}_{n-i}(H_{j},(D_{j})^{*}).
\end{equation}
Analogously in the  the weak Fredholm case we have:
\begin{equation}
\mathcal{H}_{i}(H_{j},D_{j})\cong\overline{ H}_{i}(H_{j},D_{j})\cong\overline{ H}_{n-i}(H_{j},(D_{j})^{*})\cong \mathcal{H}_{n-i}(H_{j},(D_{j})^{*}).
\end{equation}
\label{fred}
\end{prop}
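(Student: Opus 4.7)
The plan is to build everything from the Kodaira decomposition (Proposition \ref{kio}) together with the following observation: if we re-index the dual complex by setting $\tilde H_i := H_{n-i}$ and $\tilde D_i := D_{n-i-1}^{*}:\tilde H_i\to\tilde H_{i+1}$, we obtain a genuine Hilbert complex (closedness is automatic since $D_{n-i-1}^{*}$ is closed as the adjoint of a densely defined operator), and its harmonic spaces satisfy
\begin{equation}
\mathcal{H}^{i}(\tilde H_{*},\tilde D_{*})=\ker(\tilde D_{i})\cap\ker(\tilde D_{i-1}^{*})=\ker(D_{n-i-1}^{*})\cap\ker(D_{n-i})=\mathcal{H}^{n-i}(H_{*},D_{*}),
\end{equation}
where we used $D_{n-i}^{**}=D_{n-i}$ (closed operators are reflexive under double adjoint). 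This identity is the crux of the proposition: harmonic forms on the complex in degree $n-i$ coincide with harmonic forms on the dual complex in degree $i$.

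For the weak Fredholm statement, I would then combine this identity with the weak de Rham isomorphism $\mathcal{H}^{i}\cong\overline{H}^{i}$ from \eqref{pppp}. Chaining them gives
\begin{equation}
\overline{H}^{i}(H_{*},D_{*})\cong\mathcal{H}^{i}(H_{*},D_{*})\cong\mathcal{H}^{n-i}(\tilde H_{*},\tilde D_{*})\cong\overline{H}^{n-i}(H_{*},(D_{*})^{*}),
\end{equation}
and finite dimensionality of one side is equivalent to finite dimensionality of the other, so $(H_{*},D_{*})$ is weak Fredholm iff its dual is, and the displayed chain of isomorphisms in the weak Fredholm case is immediate.

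The Fredholm case requires the additional input that all ranges $ran(D_{i})$ are closed iff all ranges $ran(D_{i}^{*})$ are closed. This is the standard closed range theorem applied to each closed densely defined $D_{i}$ individually: $ran(D_{i})$ is closed in $H_{i+1}$ iff $ran(D_{i}^{*})$ is closed in $H_{i}$. Once I know the ranges are closed, Proposition \ref{topoamotore} (or directly the Kodaira decomposition) collapses reduced and unreduced cohomology: $H^{i}\cong\overline{H}^{i}\cong\mathcal{H}^{i}$ in both the original and the dual complex, upgrading the weak Fredholm chain of isomorphisms to the Fredholm one. Finiteness is then transferred along the same isomorphisms.

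The only genuinely non-formal step is the equivalence ``$ran(D_{i})$ closed $\iff$ $ran(D_{i}^{*})$ closed'' applied simultaneously in every degree; everything else is bookkeeping around the re-indexed dual complex and a careful use of $D^{**}=D$ together with the weak Hodge-Kodaira decomposition. I expect the main risk of error to be in the index shifts between $D_{i-1}^{*}$, $\tilde D_{i}$, and the degree $n-i$ appearing in \eqref{said} and in the definition of $H^{i}(H_{*},(D_{*})^{*})$, so I would pin down the indexing conventions first and derive the identification $\mathcal{H}^{i}(\tilde H_{*},\tilde D_{*})=\mathcal{H}^{n-i}(H_{*},D_{*})$ explicitly before applying \eqref{pppp}.
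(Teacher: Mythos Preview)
Your argument is correct. Note, however, that the paper does not supply its own proof of this proposition: it is quoted verbatim as \cite{BL}, Corollary 2.6, and left without proof in the text. So there is no in-paper argument to compare against; your write-up is essentially the standard proof one finds in Br\"uning--Lesch, built on the weak Kodaira decomposition (Proposition~\ref{kio}), the identity $\mathcal{H}^{i}(H_*,D_*)=\mathcal{H}^{n-i}(H_*,(D_*)^*)$ coming from $D^{**}=D$, and the closed range theorem to pass from the weak Fredholm to the Fredholm case.

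One small remark on the re-indexed dual complex: you assert that $(\tilde H_*,\tilde D_*)$ is a Hilbert complex but only mention closedness. For completeness you should also note that each $\tilde D_i=D_{n-i-1}^{*}$ is densely defined (because $D_{n-i-1}$ is closed) and that $ran(\tilde D_i)\subset\mathcal{D}(\tilde D_{i+1})$ with $\tilde D_{i+1}\circ\tilde D_i=0$; the latter follows from $\overline{ran(D_{n-i-1}^{*})}=\ker(D_{n-i-1})^{\perp}\subset ran(D_{n-i-2})^{\perp}=\ker(D_{n-i-2}^{*})$, using $ran(D_{n-i-2})\subset\ker(D_{n-i-1})$. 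This is routine, but since you flagged the indexing as the likely source of error it is worth writing out once.
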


Now we recall another result which  shows that it is possible to compute the cohomology groups of a Hilbert complex using a core subcomplex $$\mathcal{D}^{\infty}(H_{i})\subset H_{i}.$$ For all $i$ we define  $\mathcal{D}^{\infty}(H_{i})$ as consisting  of all elements $\eta$ that are in the domain of $\Delta_{i}^{l}$ for all $l\geq 0.$

\begin{prop}[\cite{BL}, Theorem 2.12] The complex $(\mathcal{D}^{\infty}(H_{i}),D_{i})$ is a subcomplex quasi-isomorphic to the complex $(H_{i},D_{i})$.
\label{dfff}
\end{prop}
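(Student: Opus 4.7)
The plan is to establish two things: that $(\mathcal{D}^{\infty}(H_{*}),D_{*})$ really is a subcomplex of $(H_{*},D_{*})$, and that the inclusion is a quasi-isomorphism. For the subcomplex property one shows $D_{i}\mathcal{D}^{\infty}(H_{i})\subset\mathcal{D}^{\infty}(H_{i+1})$ via the formal commutation $\Delta_{i+1}D_{i}=D_{i}\Delta_{i}$ on its natural common domain; this is immediate from $D_{i+1}D_{i}=0$, $D_{i}D_{i-1}=0$, and the expression for $\Delta$ in \eqref{saed}, and by iteration yields $\Delta_{i+1}^{l}D_{i}\eta=D_{i}\Delta_{i}^{l}\eta$, so $D_{i}\eta\in\mathcal{D}^{\infty}(H_{i+1})$ whenever $\eta\in\mathcal{D}^{\infty}(H_{i})$.

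For the quasi-isomorphism the main tool is the heat semigroup $e^{-t\Delta_{i}}$ obtained from the spectral theorem applied to the self-adjoint operator $\Delta_{i}$. Three properties drive the argument. First, since $\lambda\mapsto\lambda^{l}e^{-t\lambda}$ is bounded on $[0,\infty)$ for every $l$, functional calculus gives $e^{-t\Delta_{i}}H_{i}\subset\mathcal{D}^{\infty}(H_{i})$. Second, the commutation $\Delta_{i+1}D_{i}=D_{i}\Delta_{i}$ lifts through the spectral calculus to $D_{i}e^{-t\Delta_{i}}=e^{-t\Delta_{i+1}}D_{i}$ on $\mathcal{D}(D_{i})$, and analogously $D_{i-1}^{*}e^{-t\Delta_{i}}=e^{-t\Delta_{i-1}}D_{i-1}^{*}$ on $\mathcal{D}(D_{i-1}^{*})$. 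Third, Duhamel's formula combined with $\Delta_{i}=D_{i-1}D_{i-1}^{*}+D_{i}^{*}D_{i}$ produces
$$\eta-e^{-t\Delta_{i}}\eta=D_{i}^{*}\int_{0}^{t}D_{i}e^{-s\Delta_{i}}\eta\,ds+D_{i-1}\int_{0}^{t}D_{i-1}^{*}e^{-s\Delta_{i}}\eta\,ds,$$
where the closed operators are pulled out of the integrals in the standard way.

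Surjectivity on cohomology is then immediate: if $D_{i}\eta=0$, the second property makes the first integral vanish, leaving $\eta-e^{-t\Delta_{i}}\eta=D_{i-1}\xi_{t}$ with $\xi_{t}:=\int_{0}^{t}D_{i-1}^{*}e^{-s\Delta_{i}}\eta\,ds$, so $[\eta]$ is represented in $H^{i}(H_{*},D_{*})$ by $e^{-t\Delta_{i}}\eta\in\mathcal{D}^{\infty}(H_{i})$. For injectivity, let $\eta\in\mathcal{D}^{\infty}(H_{i})$ be closed with $\eta=D_{i-1}\zeta$ for some $\zeta\in\mathcal{D}(D_{i-1})$. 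Then $e^{-t\Delta_{i}}\eta=D_{i-1}(e^{-t\Delta_{i-1}}\zeta)$ with $e^{-t\Delta_{i-1}}\zeta\in\mathcal{D}^{\infty}(H_{i-1})$; moreover, since $\eta\in\mathcal{D}^{\infty}(H_{i})\subset\mathcal{D}(D_{i-1}^{*})$ the adjoint commutation rewrites $\xi_{t}=\int_{0}^{t}e^{-s\Delta_{i-1}}D_{i-1}^{*}\eta\,ds\in\mathcal{D}^{\infty}(H_{i-1})$, so the Duhamel identity gives $\eta=D_{i-1}(e^{-t\Delta_{i-1}}\zeta+\xi_{t})$ with primitive inside the subcomplex, i.e.\ $[\eta]=0$ in $H^{i}(\mathcal{D}^{\infty}(H_{*}),D_{*})$.

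The main obstacle is not conceptual but a matter of bookkeeping with domains: the commutations $D_{i}e^{-t\Delta_{i}}=e^{-t\Delta_{i+1}}D_{i}$ and its adjoint analogue, the passage of closed operators through Duhamel integrals, and the fact that the integrands land in $\mathcal{D}^{\infty}$ must all be checked first on $\mathcal{D}^{\infty}$, where everything is classical thanks to the spectral theorem, and then extended by closedness of $D_{i}$ and $D_{i-1}^{*}$ together with the uniform bounds $\|(1+\Delta)^{l/2}e^{-t\Delta}\|<\infty$ for $t>0$.
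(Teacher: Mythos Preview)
Your argument is correct and follows the standard route: establish the subcomplex property via the commutation $\Delta_{i+1}D_i=D_i\Delta_i$, then use the heat semigroup $e^{-t\Delta_i}$ together with Duhamel's identity to produce smooth representatives (surjectivity) and smooth primitives (injectivity). The domain bookkeeping you flag at the end is real but routine, and your sketch handles it adequately; in particular, the integrability of $\|D_{i-1}^*e^{-s\Delta_i}\eta\|$ near $s=0$ follows from the spectral bound $\|\Delta_i^{1/2}e^{-s\Delta_i}\|\leq Cs^{-1/2}$, and the fact that $\xi_t\in\mathcal{D}^{\infty}(H_{i-1})$ when $\eta\in\mathcal{D}^{\infty}(H_i)$ uses the adjoint commutation $\Delta_{i-1}D_{i-1}^*=D_{i-1}^*\Delta_i$ exactly as you indicate.

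Note, however, that the paper itself does not supply a proof: the proposition is simply quoted from Br\"uning--Lesch \cite{BL}, Theorem~2.12. So there is no in-paper argument to compare against. Your heat-semigroup approach is in fact the one used in \cite{BL}, so you have reproduced the original proof rather than found an alternative.
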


Given a pair of Hilbert complexes $(H_{j},D_{j})$ and $(H_{j},D'_{j})$ we will write $(H_{j},D_{j})\subseteq  (H_{j},D'_{j}) $ if for each $j$ one of the two following properties is satisfied:
\begin{enumerate}
\item $D'_{j}:H_{i}\rightarrow H_{j+1}$ is equal to   $D_{j}:H_{j}\rightarrow H_{j+1}$
\item $D'_{j}:H_{j}\rightarrow H_{j+1}$ is a proper closed extension of   $D_{j}:H_{i}\rightarrow H_{j+1}$
\end{enumerate}

We will write $(H_{j},D_{j})\subset  (H_{j},D'_{j})$ when the second of the above properties is satisfied. For each $j$ let $i_{j}:\mathcal{D}(D_{j})\rightarrow \mathcal{D}(L_{j})$ denote the natural inclusion of the domain of $D_{j}$ into the domain of $L_{j}$. Obviously $i_{j}$ induces a maps between $H^{j}(H_{*},D_{*})$ and $H^{j}(H_{*},L_{*}) $ and between  $\overline{H}^{j}(H_{*},D_{*})$ and $\overline{H}^{j}(H_{*},L_{*}) $. We will label the first as
\begin{equation}
 i_{j}^{*}:H^{j}(H_{*},D_{*})\rightarrow H^{j}(H_{*},L_{*})
\label{ytty}
\end{equation}
 and the second as
\begin{equation}
 i_{r,j}^{*}:\overline{H}^{j}(H_{*},D_{*})\rightarrow \overline{H}^{j}(H_{*},L_{*})
\label{oppio}
\end{equation}
Consider again a pair of Hilbert complexes $(H_{i},D_{i})$ and $(H_{i},L_{i})$ with $i=0,...n$ .
\begin{defi} 
\label{edera}
The pair  $(H_{i},D_{i})$ and $(H_{i},L_{i})$ is said to be \textbf{ related} if the following property is satisfied
\begin{itemize}
\item for each $i$ there exists a linear, continuous and bijective map $\phi_{i}:H_{i}\rightarrow H_{n-i}$ such that $\phi_i(\mathcal{D}(D_i))=\mathcal{D}(L^*_{n-i-1})$ and  $L_{n-i-1}^*\circ\phi_{i}=C_{i}(\phi_{i+1}\circ D_{i})$ on $\mathcal{D}(D_{i})$
 where $ L^{*}_{n-i-1}:H_{n-i}\rightarrow H_{n-i-1}$ is the adjoint of $L_{n-i-1}:H_{n-i-1}\rightarrow H_{n-i}$  
 and $C_{i}\neq 0$ is a constant which depends only on $i$.
\end{itemize}
Furthermore we call the maps $\phi_{i}$  \textbf{duality maps}; (later it will be clear why we choose this name). 
\label{qqqq}
\begin{itemize} 
\item We  call the complexes \textbf{complementary} if each $\phi_{i}$ is an isometry   between $H_{i}$ and $H_{n-i}$. 
\end{itemize}
\end{defi}

We have the following propositions:
\begin{prop}
\label{errew}
Let $(H_{i},D_{i})$ and $(H_{i},L_{i})$ be related Hilbert complexes. Then:
\begin{enumerate}
\item Also  $(H_{i},L_{i})$ and  $(H_{i},D_{i})$ are related  Hilbert complexes. Moreover if $\{\phi_{i}\}$ are the duality maps which make $(H_{i},D_{i})$ and $(H_{i},L_{i})$ related  then  $\{\phi_{i}^{*}\}$, the family of respective adjoint maps,  are the duality maps which make $(H_{i},L_{i})$ and $(H_{i},D_{i})$  related.
\item The complexes $(H_{i},D_{i})$ and $(H_{i},L_{i}^{*})$ have isomorphic cohomology groups and isomorphic reduced cohomology groups. In the same way the complexes $(H_{i},L_{i})$ and $(H_{i},D_{i}^{*})$ have isomorphic cohomology groups and isomorphic reduced cohomology groups.
\item The following isomorphisms hold: $\mathcal{H}^{j}(H_{i},D_{i})\cong \mathcal{H}^{n-j}(H_{i},L_{i}) $, $\overline{H}^{j}(H_{i},D_{i})\cong \overline{H}^{n-j}(H_{i},L_{i}).$
\item If the  complexes $(H_{i},D_{i})$ and $(H_{i},L_{i}^{*})$ are complementary then each $\phi_{j}$ induces an isomorphism between $\mathcal{H}^{j}(H_{i},D_{i})$ and $\mathcal{H}^{n-j}(H_{i},L_{i})$.
\end{enumerate}
\end{prop}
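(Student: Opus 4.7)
The overall plan is to exploit the link identity $L^{*}_{n-i-1}\circ\phi_{i} = C_{i}(\phi_{i+1}\circ D_{i})$ in two complementary ways: by passing to Hilbert-space adjoints (for parts (1) and (4)), and by reading $\phi_{\bullet}$ as a cochain map into the re-indexed dual complex of $L$ (for parts (2) and (3)). Throughout I will use the fact that each $\phi_{i}$, being a continuous linear bijection between Hilbert spaces, is automatically a topological isomorphism (bounded inverse theorem), so compositions of $\phi_{i}$ with closed unbounded operators behave cleanly under taking adjoints.

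For (1), I would take adjoints of both sides of the link identity. Since each $\phi_{i}$ is bounded and everywhere defined, $(L^{*}_{n-i-1}\circ\phi_{i})^{*} = \phi_{i}^{*}\circ L_{n-i-1}$ and $(\phi_{i+1}\circ D_{i})^{*} = D_{i}^{*}\circ\phi_{i+1}^{*}$ with matching natural domains, yielding
\[
\phi_{i}^{*}\circ L_{n-i-1} \;=\; C_{i}\bigl(D_{i}^{*}\circ \phi_{i+1}^{*}\bigr)\quad\text{on } \mathcal{D}(L_{n-i-1}),
\]
together with the domain equality $\phi_{i+1}^{*}(\mathcal{D}(L_{n-i-1})) = \mathcal{D}(D_{i}^{*})$. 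After the substitution $i\mapsto n-i-1$ and reading the family $\psi_{j} := \phi_{n-j}^{*}\colon H_{j}\to H_{n-j}$ as the new link maps (with rescaled nonzero constants $C'_{j}=C_{n-j-1}^{-1}$), this is exactly the definition of related complexes for the pair $(H_{i},L_{i})$ and $(H_{i},D_{i})$, and each $\phi_{i}^{*}$ is itself a topological isomorphism.

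For (2), I would package the link identity as a cochain map: set $K_{j}:=H_{n-j}$ with differential $d^{K}_{j}:=L^{*}_{n-j-1}\colon K_{j}\to K_{j+1}$, a re-indexing of the dual of $(H_{*},L_{*})$ as a cochain complex whose cohomology is, by definition, $H^{j}(H_{*},L_{*}^{*})$. The link identity then reads $d^{K}_{j}\circ \phi_{j} = C_{j}(\phi_{j+1}\circ D_{j})$, which after a diagonal rescaling absorbing the nonzero $C_{j}$ becomes a genuine chain map whose components are topological isomorphisms; hence it induces isomorphisms on both ordinary and reduced cohomology. The corresponding statement for $(H_{i},L_{i})$ and $(H_{i},D_{i}^{*})$ follows by re-running the argument with the adjoint link maps supplied by (1). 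Part (3) is then a formal consequence of (2) combined with the weak de Rham isomorphisms (\ref{pppp}) applied to $(H_{*},L_{*})$: the chain $\overline{H}^{j}(H_{*},D_{*})\cong \overline{H}^{j}(H_{*},L_{*}^{*})\cong \mathcal{H}^{n-j}(H_{*},L_{*})\cong \overline{H}^{n-j}(H_{*},L_{*})$ delivers both the reduced-cohomology and harmonic-space identifications.

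For (4), specialize to the complementary case, in which $\phi_{j}^{*}=\phi_{j}^{-1}$; the adjoint identity from (1) (with $i=j-1$), after applying $\phi_{j}$ on the left, rearranges to
\[
L_{n-j}\circ\phi_{j} \;=\; C_{j-1}\bigl(\phi_{j-1}\circ D_{j-1}^{*}\bigr)\quad\text{on } \mathcal{D}(D_{j-1}^{*}),
\]
together with $\phi_{j}(\mathcal{D}(D_{j-1}^{*})) = \mathcal{D}(L_{n-j})$. For any $v\in\mathcal{H}^{j}(H_{*},D_{*}) = Ker(D_{j})\cap Ker(D_{j-1}^{*})$, the original link identity gives $\phi_{j}v\in Ker(L^{*}_{n-j-1})$, and the rearranged one gives $\phi_{j}v\in Ker(L_{n-j})$, so $\phi_{j}v\in \mathcal{H}^{n-j}(H_{*},L_{*})$; the reverse inclusion is symmetric via $\phi_{j}^{-1}$, giving the asserted isomorphism. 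The main technical obstacle I anticipate is the careful domain bookkeeping under adjoints of compositions of bounded and unbounded operators: the inclusion $(AB)^{*}\supseteq B^{*}A^{*}$ is automatic, while the equality of domains needed everywhere above is guaranteed precisely because each $\phi_{i}$ is bounded, everywhere defined, and a topological isomorphism.
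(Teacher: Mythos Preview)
Your proposal is correct and follows essentially the same approach as the paper: take adjoints of the link identity (using that each $\phi_i$ is a bounded bijection so that $(L_{n-i-1}^{*}\circ\phi_i)^{*}=\phi_i^{*}\circ L_{n-i-1}$ and $(\phi_{i+1}\circ D_i)^{*}=D_i^{*}\circ\phi_{i+1}^{*}$ with the correct domains) to get part (1), read the link maps as an isomorphism of complexes onto the re-indexed dual for (2), combine with \eqref{pppp} for (3), and use $\phi_j^{*}=\phi_j^{-1}$ in the isometric case to show $\phi_j$ carries $Ker(D_j)\cap Ker(D_{j-1}^{*})$ onto $Ker(L_{n-j})\cap Ker(L_{n-j-1}^{*})$ for (4). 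Your indexing bookkeeping (writing the new link maps as $\psi_j=\phi_{n-j}^{*}$ with constants $C_{n-j-1}^{-1}$) is in fact a bit more explicit than the paper's, which simply refers to the family $\{\phi_i^{*}\}$ without re-labelling, but this is purely cosmetic.
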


\begin{proof} By Definition \ref{qqqq} we know  that  $\phi_{i}^{*}:H_{n-i}\rightarrow H_{i}$, the adjoint of $\phi_{i}:H_{i}\rightarrow H_{n-i}$ , is a family of linear continuous and bijective maps. In this way if we look at  $ L^{*}_{n-i-1}\circ\phi_{i}$ as an unbounded linear map between $H_{i}$ and $H_{n-i-1}$ with domain $\mathcal{D}( L^{*}_{n-i-1}\circ\phi_{i})=\phi_{i}^{-1}(\mathcal{D}(L_{n-i-1}^*))$ $=\mathcal{D}(D_i)$ we have that   $( L^{*}_{n-i-1}\circ\phi_{i})^*=\phi_{i}^{*}\circ L_{n-i-1}$   that is the adjoint of $ L^{*}_{n-i-1}\circ\phi_{i}$ is $\phi_{i}^{*}\circ L_{n-i-1}$ with $\mathcal{D}(\phi_{i}^{*}\circ L_{n-i-1})=\mathcal{D}(L_{n-i-1})$.\\ In the same way we have $(\phi_{i+1}\circ D_{i})^{*}=(D_{i}^{*}\circ \phi_{i+1}^{*})$ where $\mathcal{D}( \phi_{i+1}\circ D_{i})=\mathcal{D}(D_{i})$ and $\mathcal{D}(D_{i}^*\circ \phi_{i+1}^*)=(\phi_{i+1}^{*})^{-1}(\mathcal{D}(D_{i}^*))$.  In this way 
we have that, for each $i$,  $\mathcal{D}(D_{i}^{*}\circ \phi_{i+1}^{*})=\mathcal{D}(\phi_i^*\circ L_{n-i-1})$, $C_i(D_{i}^{*}\circ \phi_{i+1}^*)=\phi_{i}^*\circ L_{n-i-1}$ on $\mathcal{D}(L_{n-i-1})$  and that $\phi_{i+1}^*(\mathcal{D}(L_{n-i-1}))=\mathcal{D}(D^*_i)$. So we can conclude that   the complexes $(H_{i},L_{i})$ and $(H_{i},D_{i})$ are  related with $\{\phi_{i}^{*}\}$ as duality maps.\\
 The second  property is an immediate consequence of definition \ref{qqqq} and the first point of the proposition . Now if we compose the isomorphisms of the second point with  the  isomorphisms of \eqref{pppp} we can get the isomorphisms of the third point. Finally if each $\phi_{i}$ is an isometry then $\phi_{i}^{*}=\phi_{i}^{-1}$. By Definition \ref{qqqq}  we know that $\phi_{i}$ induces an isomorphism between $Ker(D_{i})$ and $Ker(L_{n-i-1}^*)$. In the same way by the first point of the proposition we know that $\phi_{i}^{*}$ induces an isomorphism between $Ker(L_{n-i})$ and $Ker(D_{i-1}^*)$.  But now we know that $\phi_{i}^{*}=\phi_{i}^{-1}$ and so we can conclude that for each $i$ $\phi_{i}$ induces an isomorphism between $Ker(D_{i})\cap Ker(D_{i-1}^*)$ and $Ker(L_{n-i})\cap Ker(L_{n-i-1})^{*}$, that is an isomorphism  between $\mathcal{H}^{i}(H_{*},D_{*})$ and $\mathcal{H}^{n-i}(H_{*},L_{*})$.
\end{proof}

\begin{prop}
Let $(H_{i},D_{i}),\  i=0,...,n$ be a Hilbert complex and suppose that for each $i$ there exists $\phi_{i}:H_{i}\rightarrow H_{n-i}$ that is linear, continuous and bijective. Then there exists a Hilbert complex $(H_{i},L_{i})$ such that the complexes $(H_{i},D_{i})$ and $ (H_{i},L_{i})$  are related  with $\{\phi_{i}\}$ as duality maps. Moreover if each $\phi_{j}$ is an isometry then  the complexes $(H_{i},D_{i})$ and $ (H_{i},L_{i})$  are complementary  with $\{\phi_{i}\}$ as duality maps.
\end{prop}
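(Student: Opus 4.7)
The approach is to define the adjoints $L_j^*$ first (since the related condition explicitly dictates what they must be), and then obtain $L_j$ by taking the Hilbert space adjoint. Specifically, I would pick any nonzero constants $\{C_i\}$ (for instance $C_i = 1$) and, for each $j = 0,\dots, n-1$, set
\[
\mathcal{D}(L^*_j) := \phi_{n-j-1}\bigl(\mathcal{D}(D_{n-j-1})\bigr), \qquad L^*_j := C_{n-j-1}\,\phi_{n-j}\circ D_{n-j-1}\circ \phi_{n-j-1}^{-1}.
\]
This is the unique prescription forced by substituting $j = n-i-1$ into the identity $L^*_{n-i-1}\circ\phi_i = C_i(\phi_{i+1}\circ D_i)$ of Definition \ref{edera}. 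I would then define $L_j := (L^*_j)^*$ and check that $(H_j, L_j)$ is a Hilbert complex with the required compatibility.

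First I would verify that each $L^*_j$ is closed and densely defined. Since each $\phi_i$ is a continuous linear bijection of Hilbert spaces, the open mapping theorem makes it a homeomorphism, so $\phi_{n-j-1}$ maps the dense subspace $\mathcal{D}(D_{n-j-1})$ onto a dense subspace of $H_{j+1}$. Sandwiching the closed operator $D_{n-j-1}$ between the bounded operators $\phi_{n-j-1}^{-1}$ and $\phi_{n-j}$ preserves the closed graph property, so $L^*_j$ is closed. Next I would verify the identity $L^*_j\circ L^*_{j+1} = 0$ by direct computation: an element in the range of $L^*_{j+1}$ is, up to a constant, of the form $\phi_{n-j-1}\bigl(D_{n-j-2}(\phi_{n-j-2}^{-1}y)\bigr)$, which lies in $\mathcal{D}(L^*_j)$ because $\mathrm{ran}(D_{n-j-2})\subseteq \mathcal{D}(D_{n-j-1})$; applying $L^*_j$ then yields a constant multiple of $\phi_{n-j}\bigl(D_{n-j-1}\circ D_{n-j-2}(\phi_{n-j-2}^{-1}y)\bigr) = 0$.

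The main obstacle is passing from this ``dual complex'' identity on the $L^*_j$'s to a genuine Hilbert complex structure on the $L_j$'s, since one must manipulate adjoint domains rather than argue algebraically. The operator $L_j=(L^*_j)^*$ is automatically closed and densely defined, so what remains is $\mathrm{ran}(L_j)\subseteq \mathcal{D}(L_{j+1})$ together with $L_{j+1}\circ L_j=0$. I would argue as follows: for any $x\in \mathcal{D}(L_j)$ and any $z\in \mathcal{D}(L^*_{j+1})$ the previous step gives $L^*_{j+1}(z)\in \mathcal{D}(L^*_j)$, so
\[
\langle L_j x,\, L^*_{j+1} z\rangle = \langle x,\, L^*_j L^*_{j+1} z\rangle = 0.
\]
As a function of $z$ this is identically zero, hence trivially continuous, which forces $L_j x \in \mathcal{D}((L^*_{j+1})^*) = \mathcal{D}(L_{j+1})$; moreover $L_{j+1}(L_j x)$ is orthogonal to the dense subspace $\mathcal{D}(L^*_{j+1})$ and therefore vanishes. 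The relatedness of $(H_i, D_i)$ and $(H_i, L_i)$ with link maps $\{\phi_i\}$ is built into the definition of $L^*_j$, and in the isometric case the $\phi_i$ are in particular continuous bijections, so the construction applies and the resulting pair is complementary by Definition \ref{edera}.
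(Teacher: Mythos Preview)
Your proposal is correct and follows essentially the same construction as the paper: the paper defines $L_i$ as the adjoint of the closed, densely defined operator $\phi_{n-i}\circ D_{n-i-1}\circ \phi_{n-i-1}^{-1}:H_{i+1}\to H_i$, which is exactly your prescription with $C_i=1$. The only difference is that the paper asserts without justification that the resulting $(H_j,L_j)$ is a Hilbert complex, whereas you supply the explicit adjoint argument showing $\mathrm{ran}(L_j)\subseteq\mathcal{D}(L_{j+1})$ and $L_{j+1}\circ L_j=0$; this is a genuine (if routine) verification that the paper omits, so your version is more complete.
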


\begin{proof} Consider the following complexes $(H_{i},L_{i})$ where each $L_{i}$ is the adjoint of the closed and densely defined operator $(\phi_{n-i}\circ D_{n-i-1}\circ \phi_{n-i-1}^{-1}):H_{i+1}\rightarrow H_{i}.$ It clear that $(H_{i},L_{i})$ is a Hilbert complex and by its construction it follows immediately that $(H_{i},D_{i})$ and $(H_{i},L_{i})$ are a pair of related Hilbert complexes having the maps $\{\phi_{i}\}$ as duality maps. Finally it is clear that if each $\phi_{j}$ is an isometry then  the complexes $(H_{i},D_{i})$ and $ (H_{i},L_{i})$  are complementary  with $\{\phi_{i}\}$ as link maps.
\end{proof}

Now we give the following definition which we will use later.

\begin{defi} Let $V_{0}, V_{1},...,V_{n}$ be  a finite sequence of finite dimensional  vector spaces. We will say that it is a finite sequence of finite dimensional  vector spaces with Poincar\'e duality if for each $i$: $$V_{i}\cong V_{n-i}$$ that is $V_i$ and $V_{n-i}$ are isomorphic.
\label{paolo} 
\end{defi}

We are now in position to state the first of the two  main results of this section.

\begin{teo}
\label{duality}
Let $(H_{j},D_{j})\subseteq (H_{j},L_{j})$ be  a pair of complementary Hilbert complexes. Let $i_{r,j}^{*}$ be the map defined in \eqref{oppio}.
Suppose that for each $j$ 
\begin{equation}
\im(\overline{H}^{j}(H_{*},D_{*})\stackrel{i^{*}_{r,j}}{\longrightarrow}\overline{H}^{j}(H_{*},L_{*}))
\end{equation}
is finite dimensional. Then 
\begin{equation}
\im(\overline{H}^{j}(H_{*},D_{*})\stackrel{i^{*}_{r,j}}{\longrightarrow}\overline{H}^{j}(H_{*},L_{*})),\  j=0,...,n
\label{kiol}
\end{equation}
is a finite sequence of finite dimensional  vector spaces with Poincar\'e duality.
\end{teo}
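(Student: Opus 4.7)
The plan is to pass to harmonic representatives via the weak de Rham isomorphism, interpret $\mathcal{K}^j:=\im(i^*_{r,j})$ as the range of an orthogonal projection between two harmonic subspaces of $H_j$, compute the rank of its Hilbert-space adjoint, and then use the link-map isometries to transfer this rank from degree $j$ to degree $n-j$.

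First, from \eqref{pppp} I identify $\overline{H}^j(H_*,D_*)\cong\mathcal{H}^j(H_*,D_*)$ and likewise for $L$. Because $\mathcal{H}^j(H_*,D_*)\subseteq\ker D_j\subseteq\ker L_j$, the Kodaira decomposition of Proposition \ref{beibei} applied to $(H_*,L_*)$ shows that the $L$-harmonic representative of any $h\in\mathcal{H}^j(H_*,D_*)$ is $P_{\mathcal{H}^j(H_*,L_*)}(h)$, the orthogonal projection in $H_j$. Thus $i^*_{r,j}$ becomes the bounded operator $\alpha_j:\mathcal{H}^j(H_*,D_*)\to\mathcal{H}^j(H_*,L_*)$, $h\mapsto P_{\mathcal{H}^j(H_*,L_*)}(h)$, and $\mathcal{K}^j\cong\im(\alpha_j)$, finite-dimensional by hypothesis. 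Its Hilbert-space adjoint, computed by self-adjointness of orthogonal projections, is $\alpha_j^*:\mathcal{H}^j(H_*,L_*)\to\mathcal{H}^j(H_*,D_*)$, $u\mapsto P_{\mathcal{H}^j(H_*,D_*)}(u)$, and since finite-rank operators and their adjoints have the same rank, $\operatorname{rank}(\alpha_j^*)=\dim\mathcal{K}^j$.

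Using the complementary structure, Proposition \ref{errew}(4) applied at indices $j$ and $n-j$ yields isometric isomorphisms $\phi_j:\mathcal{H}^j(H_*,D_*)\to\mathcal{H}^{n-j}(H_*,L_*)$ and $\phi_{n-j}:\mathcal{H}^{n-j}(H_*,D_*)\to\mathcal{H}^j(H_*,L_*)$. Pre- and post-composing $\alpha_j^*$ with these isometries yields an operator
\begin{equation*}
\phi_j\circ\alpha_j^*\circ\phi_{n-j}:\mathcal{H}^{n-j}(H_*,D_*)\to\mathcal{H}^{n-j}(H_*,L_*)
\end{equation*}
of the same rank $\dim\mathcal{K}^j$ as $\alpha_j^*$. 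The intertwining identities of Definition \ref{edera} together with the Kodaira decomposition of Proposition \ref{beibei} allow one to identify this composite with the corresponding projection $\alpha_{n-j}:\mathcal{H}^{n-j}(H_*,D_*)\to\mathcal{H}^{n-j}(H_*,L_*)$, $h\mapsto P_{\mathcal{H}^{n-j}(H_*,L_*)}(h)$, whose rank is $\dim\mathcal{K}^{n-j}$. Therefore $\dim\mathcal{K}^j=\dim\mathcal{K}^{n-j}$ for every $j$, which is Poincar\'e duality in the sense of Definition \ref{paolo}.

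I expect the main obstacle to be this last identification: unwinding the definitions, the composite sends $h\in\mathcal{H}^{n-j}(H_*,D_*)$ to $P_{\mathcal{H}^{n-j}(H_*,L_*)}(\phi_j\circ\phi_{n-j}(h))$, so coincidence with $\alpha_{n-j}(h)$ reduces to the compatibility $P_{\mathcal{H}^{n-j}(H_*,L_*)}\circ(\phi_j\circ\phi_{n-j}-I)=0$ on $\mathcal{H}^{n-j}(H_*,D_*)$. This is transparent in the Riemannian setting of Theorem \ref{marione}, where $\phi_j\circ\phi_{n-j}=\pm I$ is a classical Hodge-star identity, and in the abstract case it should follow by combining the intertwinings $L^*_{n-i-1}\circ\phi_i=C_i\phi_{i+1}\circ D_i$ from Definition \ref{edera} at the indices $i=j$ and $i=n-j$ with the orthogonality recorded in Proposition \ref{beibei}.
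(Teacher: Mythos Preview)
Your strategy coincides with the paper's up to the last step: identifying $i^*_{r,j}$ with the orthogonal projection $\alpha_j=\pi_{1,j}:\mathcal{H}^j(H_*,D_*)\to\mathcal{H}^j(H_*,L_*)$, noting that its adjoint is the reverse projection $\alpha_j^*=\pi_{4,j}$, and invoking the equality of ranks for a finite-rank operator and its adjoint are exactly what the paper does (Propositions \ref{cullen} and \ref{qwwq}).

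The gap is in the final identification. You compose \emph{two} link maps and are forced to control $\phi_j\circ\phi_{n-j}$ on $\mathcal{H}^{n-j}(H_*,D_*)$. Definition \ref{edera} imposes no relation whatsoever between $\phi_j$ and $\phi_{n-j}$: each $\phi_i$ is only required to be an isometry intertwining $D_i$ with $L^*_{n-i-1}$, and nothing links $\phi_i$ to $\phi_{n-i}$. In the Riemannian case one has the accident $**=\pm I$, but in the abstract framework the condition $P_{\mathcal{H}^{n-j}(H_*,L_*)}\circ(\phi_j\circ\phi_{n-j}-I)=0$ cannot be extracted from the intertwining identities you cite, and your sketch of how it ``should follow'' does not close.

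The paper sidesteps this by using only the single isometry $\phi_j$. Since $\phi_j$ carries $\mathcal{H}^j(H_*,D_*)$ onto $\mathcal{H}^{n-j}(H_*,L_*)$ and simultaneously $\overline{ran(L_{j-1})}$ onto $\overline{ran(D^*_{n-j})}$, it maps $Ker(\alpha_j)=\mathcal{H}^j(H_*,D_*)\cap\overline{ran(L_{j-1})}$ isomorphically onto $Ker(\alpha_{n-j}^*)=\mathcal{H}^{n-j}(H_*,L_*)\cap\overline{ran(D^*_{n-j})}$. Passing to quotients gives
\[
\mathcal{K}^j\cong \mathcal{H}^j(H_*,D_*)/Ker(\alpha_j)\ \cong\ \mathcal{H}^{n-j}(H_*,L_*)/Ker(\alpha_{n-j}^*)\cong ran(\alpha_{n-j}^*),
\]
and then your own rank-of-adjoint argument finishes: $\operatorname{rank}(\alpha_{n-j}^*)=\operatorname{rank}(\alpha_{n-j})=\dim\mathcal{K}^{n-j}$. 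Replacing your conjugation step with this kernel comparison repairs the proof.
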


Now we prove some propositions which we will use in the proof of Theorem \ref{duality}.

\begin{prop}
\label{cullen}
Let $H,K$ be two Hilbert spaces and let $T:H\rightarrow K$ be a linear and continuous map. Let $T^{*}:K\rightarrow L$ be the adjoint of $T$. Suppose that $ran(T)$ is closed. Then $$T:Ker(T)^{\bot}\longrightarrow Ker(T^{*})^{\bot}$$ is continuous, bijective with bounded inverse.
\end{prop}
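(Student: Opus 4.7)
The plan is to proceed by elementary Hilbert space arguments, with the open mapping theorem as the closing step. First I would clarify that $T^\ast: K \to H$ (the statement appears to contain a typo), and recall the standard orthogonality relation $\overline{\mathrm{ran}(T)} = \mathrm{Ker}(T^\ast)^\perp$. Since we are assuming $\mathrm{ran}(T)$ is closed, this upgrades to the equality $\mathrm{ran}(T) = \mathrm{Ker}(T^\ast)^\perp$, which is exactly what makes the target space in the statement the natural one.

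Next I would verify well-definedness, injectivity, and surjectivity of the restriction $\tilde{T} := T\big|_{\mathrm{Ker}(T)^\perp} : \mathrm{Ker}(T)^\perp \to \mathrm{Ker}(T^\ast)^\perp$. Continuity is inherited from $T$. Well-definedness follows from the identification above, since $\tilde{T}x \in \mathrm{ran}(T) = \mathrm{Ker}(T^\ast)^\perp$. Injectivity is immediate: $\tilde{T}x = 0$ forces $x \in \mathrm{Ker}(T) \cap \mathrm{Ker}(T)^\perp = \{0\}$. For surjectivity, given $y \in \mathrm{Ker}(T^\ast)^\perp = \mathrm{ran}(T)$, choose any preimage $z \in H$ with $Tz = y$ and orthogonally decompose $z = z_0 + z_1$ with $z_0 \in \mathrm{Ker}(T)$ and $z_1 \in \mathrm{Ker}(T)^\perp$; then $\tilde{T}z_1 = Tz = y$.

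Finally, to produce the bounded inverse, I would invoke the open mapping theorem (equivalently, the bounded inverse theorem). Both $\mathrm{Ker}(T)^\perp$ and $\mathrm{Ker}(T^\ast)^\perp$ are closed subspaces of Hilbert spaces, hence complete, and $\tilde{T}$ is a continuous bijection between Banach spaces, so $\tilde{T}^{-1}$ is automatically continuous.

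I do not anticipate any real obstacle: the entire argument rests on the standard closed-range identity $\mathrm{ran}(T) = \mathrm{Ker}(T^\ast)^\perp$ (valid precisely because of the closed-range hypothesis) together with the bounded inverse theorem. The only point requiring mild care is the target of $T^\ast$ in the statement, which should be $H$ rather than $L$; once that is corrected everything follows from the above three-line verification.
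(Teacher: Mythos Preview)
Your proof is correct and essentially identical to the paper's: both use the closed-range identity $\mathrm{Ker}(T^*)^\perp = \overline{\mathrm{ran}(T)} = \mathrm{ran}(T)$ to establish bijectivity of the restriction, then appeal to a Banach-space inverse theorem (you invoke the open mapping theorem, the paper the closed graph theorem, which are equivalent). You also correctly flag the typo in the codomain of $T^*$.
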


\begin{proof}
We have $K=Ker(T^{*})\oplus Ker(T^{*})^{\bot}$ and $Ker(T^{*})^{\bot}=\overline{ran(T)}$. Therefore by the fact that $ran(T)$ is closed it follows that $T$ is a bijection between $Ker(T)^{\bot}$ and $Ker(T^{*})^{\bot}$. Now from the fact that $Ker(T)^{\bot}$ and $(Ker(T^{*}))^{\bot}$ are closed subspace of $H$ and $K$ respectively it follows we can look at them as Hilbert spaces with the  products  induced by the products of $H$ and $K$ respectively. In this way we can use the closed graph theorem to conclude  that $T|_{Ker(T)^{\bot}}$ has a continuous inverse. 
\end{proof}

\begin{prop}
\label{qwwq}
Let $H$ be a Hilbert space and let $M,N$ be  two closed subspaces of it.  Let $\pi_{M},\pi_{N}$  be the orthogonal projections on $M$ and $N$ respectively. Consider $M$ and $N$ as Hilbert spaces with the scalar product induced by the one of $H$. Then $$\pi_{M}|_{N}=(\pi_{N}|_{M})^{*}$$ that is if we look at $\pi_{M}|_{N}$ as a linear and continuous map from the Hilbert space $M$ to the Hilbert space $N$ then $\pi_{N}|_{M}$ is its adjoint.
\end{prop}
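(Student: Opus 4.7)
The plan is to verify directly the defining identity of the Hilbert space adjoint: given a bounded operator $T:M\to N$ between Hilbert spaces, its adjoint $T^*:N\to M$ is characterized by
\[
\langle Tx,y\rangle_{N}=\langle x,T^{*}y\rangle_{M}\qquad \text{for all } x\in M,\ y\in N.
\]
So I only need to check that $\langle \pi_{N}x,y\rangle_{N}=\langle x,\pi_{M}y\rangle_{M}$ for every $x\in M$ and $y\in N$.

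First I would note the two preliminary facts. Since $M$ and $N$ are closed subspaces of $H$ equipped with the induced inner product, one has $\langle u,v\rangle_{M}=\langle u,v\rangle_{H}$ for $u,v\in M$ and similarly for $N$; moreover, both $\pi_{N}|_{M}:M\to N$ and $\pi_{M}|_{N}:N\to M$ are continuous, being restrictions of the continuous orthogonal projections $\pi_{N},\pi_{M}$ on $H$. Hence both restricted maps are bounded operators between Hilbert spaces and it makes sense to speak of their adjoints.

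Next I would use the two standard properties of orthogonal projections on $H$: they are self-adjoint, and they act as the identity on their range. For $x\in M$ and $y\in N$, self-adjointness of $\pi_{N}$ on $H$ combined with $\pi_{N}y=y$ gives
\[
\langle \pi_{N}x,y\rangle_{N}=\langle \pi_{N}x,y\rangle_{H}=\langle x,\pi_{N}y\rangle_{H}=\langle x,y\rangle_{H}.
\]
Symmetrically, using self-adjointness of $\pi_{M}$ and $\pi_{M}x=x$,
\[
\langle x,\pi_{M}y\rangle_{M}=\langle x,\pi_{M}y\rangle_{H}=\langle \pi_{M}x,y\rangle_{H}=\langle x,y\rangle_{H}.
\]
Comparing the two equalities gives the required identity, and by the uniqueness of the adjoint one concludes $\pi_{M}|_{N}=(\pi_{N}|_{M})^{*}$.

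There is no real obstacle: the statement reduces to a one-line use of the self-adjointness and idempotence of orthogonal projections, once one has observed that the inner products on $M$ and $N$ are the restrictions of that of $H$.
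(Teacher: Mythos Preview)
Your proof is correct and follows essentially the same route as the paper: both reduce the claim to showing $\langle \pi_N u, v\rangle_N = \langle u, v\rangle_H = \langle u, \pi_M v\rangle_M$ for $u\in M$, $v\in N$. The paper phrases the first and last equalities via the orthogonal decompositions $u=\pi_N u+\pi_{N^\perp}u$ and $v=\pi_M v+\pi_{M^\perp}v$, while you use self-adjointness together with $\pi_N y=y$ and $\pi_M x=x$; these are the same computation in slightly different words.
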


\begin{proof} During the proof we use $<,>_{H}$ to indicate the scalar product of $H$ and $<,>_{M},<,>_{N}$ to indicate the scalar products induced by $<,>_{H}$ on $M$ and $N$ respectively. For each $u\in M,\ v\in N$ we have $<\pi_{N}(u),v>_{N}=<\pi_{N}(u)+\pi_{N^{\bot}}(u),v>_{H}=<u,v>_{H}=<u,\pi_{M}(v)+\pi_{M^{\bot}}(v)>_{H}=<u,\pi_{M}>_{M}$ and so we get the assertion.
\end{proof}
 
Now we are in position to prove Theorem \ref{duality} .
\begin{proof}
First of all, for the benefit of the reader, we explain the strategy of the proof. The main idea is to build a family of maps, that we will label with $\pi_{1,j}:\mathcal{H}^j(H^*,D_*)\rightarrow \mathcal{H}^j(H^*,L_*)$ $j=0,...,n,$ such that:
\begin{itemize}
\item $\pi_{1,j}(\mathcal{H}^{j}(H_{*},D_{*}))\cong  \im(\overline{H}^{j}(H_{*},D_{*})\stackrel{i^{*}_{r,j}}{\longrightarrow}\overline{H}^{j}(H_{*},L_{*}))$ for each $j$
\item $\pi_{1,j}\circ (\phi_j)^{-1}$ is an isomorphism between $\pi_{1,n-j}(\mathcal{H}^{n-j}(H^*,D_*))$ and $\pi_{1,j}(\mathcal{H}^{j}(H^*,D_*))$
\end{itemize}
Therefore, taking the composition of these  maps, we will get the desired isomorphism: $$ \im(\overline{H}^{j}(H_{*},D_{*})\stackrel{i^{*}_{r,j}}{\longrightarrow}\overline{H}^{j}(H_{*},L_{*}))\cong \im(\overline{H}^{n-j}(H_{*},D_{*})\stackrel{i^{*}_{r,n-j}}{\longrightarrow}\overline{H}^{n-j}(H_{*},L_{*})).$$

Now we start defining $\pi_{1,j}$. From Proposition \ref{kio} we know that $$H_{j}=\mathcal{H}^{j}(H_{*},D_{*})\bigoplus\overline{ran(D_{j-1})}\bigoplus\overline{ran(D_{j}^{*})}$$  and that $$H_{j}=\mathcal{H}^{j}(H_{*},L_{*})\bigoplus\overline{ran(L_{j-1})}\bigoplus\overline{ran(L_{j}^{*})}.$$ So for each $j$ we can define $\pi_{D_{j}}$ as the orthogonal projection of $H_{j}$ on $\mathcal{H}^{j}(H_{*},D_{*})$ and  $\pi_{L_{j}}$ as the orthogonal projection of $H_{j}$ on $\mathcal{H}^{j}(H_{*},L_{*})$. In the same way we can define $\pi_{\overline{ran(D_{j-1})}}$, $\pi_{\overline{ran(L_{j-1})}}$, $\pi_{\overline{ran(D_{j}^{*})}}$ and $\pi_{\overline{ran(L_{j}^{*})}}$. Finally we define$$ \pi_{1,j}:=(\pi_{L_{j}})|_{\mathcal{H}^{j}(H_{*},D_{*})},\  \pi_{2,j}:=(\pi_{\overline{ran(L_{j-1})}})|_{\mathcal{H}^{j}(H_{*},D_{*})},\  \pi_{3,j}:=(\pi_{\overline{ran(L_{j}^{*})}})|_{\mathcal{H}^{j}(H_{*},D_{*})}.$$ Analogously,  but now projecting from $\mathcal{H}^{j}(H_{*},L_{*})$ on the orthogonal components of the sum $H_{j}=\mathcal{H}^{j}(H_{*},D_{*})\bigoplus\overline{ran(D_{j-1})}\bigoplus\overline{ran(D_{j}^{*})}$,  we define $\pi_{4,j}:\mathcal{H}^j(H^*,L_*)\longrightarrow \mathcal{H}^j(H^*,D_*),\\ \pi_{5,j}:\mathcal{H}^j(H^*,L_*)\longrightarrow \overline{ran(D_{j-1})}$ and  $\pi_{6,j}:\mathcal{H}^j(H^*,L_*)\longrightarrow \overline{ran(D_{j}^*)}.$\\Our  claim now is to show that for each $j$
\begin{equation}
\pi_{1,j}(\mathcal{H}^{j}(H_{*},D_{*}))\cong  \im(\overline{H}^{j}(H_{*},D_{*})\stackrel{i^{*}_{r,j}}{\longrightarrow}\overline{H}^{j}(H_{*},L_{*}))
\label{allaz}
\end{equation}
Let $[h]\in \overline{H}^{j}(H_{*},D_{*})$ be a cohomology class. By \eqref{pppp} we know that there exists a unique representative of $[h]$ in $\mathcal{H}^{j}(H_{*},D_{*})$. We call it $\omega$. Every other representative of $[h]$ differs from $\omega$ by an element in $\overline{ran(D_{j-1})}$; therefore $i^{*}_{r,j}([h])=[i_{j}(\omega)]$. Now we can decompose $\omega$ as $\omega=\pi_{1,j}(\omega)+\pi_{2,j}(\omega)+\pi_{3,j}(\omega)$. Clearly $[i_j(\omega)]=[\pi_{1,j}(\omega)]+[\pi_{3,j}(\omega)]$.  So if we show that $\pi_{3,j}|_{\mathcal{H}^{j}(H_{*},D_{*})}\equiv 0$ we get the claim. Now let $\eta\in \mathcal{H}^{j}(H_{*},D_{*}) .$  Then $\pi_{3,j}(\eta)\in \overline{ran(L_{j}^{*}})\cap Ker(L_{j})$  because $\pi_{3,j}(\eta)=\eta-\pi_{1,j}(\eta)-\pi_{2,j}(\eta)$ and each term on the right hand side of the equality lies in $Ker(L_{j})$. But $(Ker(L_{j}))^{\bot}=\overline{ran(L_{j}^{*})}$ and therefore $\pi_{3,j}(\eta)=0$. So for each 
$\eta\in \mathcal{H}^{j}(H_{*},D_{*})$ we have $\pi_{3,j}(\eta)=0$. Therefore the  claim is proved.\\In this way the first  point in the sketch of the strategy described above is proved. Now, in order to complete the proof,  we have to prove the second one.
First of all we observe that now we know that $\pi_{1,j}$ has closed range because it is isomorphic to $\im(\overline{H}^{j}(H_{*},D_{*})\stackrel{i^{*}_{r,j}}{\longrightarrow}\overline{H}^{j}(H_{*},L_{*}))$ which is finite dimensional by the assumptions. Moreover we know  that $Ker(\pi_{1,j})=\overline{ran(L_{j-1})}\cap \mathcal{H}^{j}(H_{*},D_{*}).$ In the same way we can prove   that $Ker(\pi_{4,j})=\overline{ran(D_{j}^*)}\cap \mathcal{H}^{j}(H_{*},L_{*})$. Finally from the observations above and from Propositions \ref{cullen} and    \ref{qwwq} we get the following three properties for each $j$:
\begin{enumerate}
\item $(\pi_{1,j})^{*}=\pi_{4,j}$ and both induce an isomorphism between $ran(\pi_{4,j})$ and $ran(\pi_{1,j})$.
\item $ \mathcal{H}^{j}(H_{*},D_{*})=ran(\pi_{4,j})\oplus( \overline{ran(L_{j-1})}\cap \mathcal{H}^{j}(H_{*},D_{*}))=ran(\pi_{4,j})\oplus Ker(\pi_{1,j})$.
\item $ \mathcal{H}^{j}(H_{*},L_{*})=ran(\pi_{1,j})\oplus( \overline{ran(D_{j}^*)}\cap \mathcal{H}^{j}(H_{*},L_{*}))=ran(\pi_{1,j})\oplus Ker(\pi_{4,j})$.
\end{enumerate}
By the fourth point of Proposition \ref{errew}  we know that each $\phi_{j}$ induces an isomorphism between $\mathcal{H}^{j}(H_{*},D_{*})$ and $\mathcal{H}^{n-j}(H_{*},L_{*})$ . For the same reason $\phi_{j}$  induces an isomorphism between  $\overline{ran(L_{j-1})}$ and $\overline{ran(D_{n-j}^*)}$ and between $\overline{ran(D_{j-1})}$ and $\overline{ran(L_{n-j}^*)}$  . This implies that each $\phi_{j}$ induces an isomorphism between $\mathcal{H}^{j}(H_{*},D_{*})\cap \overline{ ran(L_{j-1})}$ and $\mathcal{H}^{n-j}(H_{*},L_{*})\cap\overline{ ran(D_{n-j}^*)}$ that is an isomorphism between $Ker(\pi_{1,j})$ and $Ker(\pi_{4,n-j})$. In this way we can conclude that each $\phi_{j}$ induces an isomorphism between $$\frac{\mathcal{H}^{j}(H_{*},D_{*})}{Ker(\pi_{1,j})}\  \text{ and}\  \frac{\mathcal{H}^{n-j}(H_{*},L_{*})}{Ker(\pi_{4,n-j})}.$$But, as recalled above,  $(\pi_{1,j})^{*}=\pi_{4,j}$, they have both closed range  and they both induce an isomorphism between $ran(\pi_{4,j})$ and $ran(\pi_{1,j})$. Therefore we get: $$\frac{\mathcal{H}^{j}(H_{*},D_{*})}{Ker(\pi_{1,j})}\cong ran(\pi_{4,j})\cong ran(\pi_{1,j})\cong \im(\overline{H}^{j}(H_{*},D_{*})\stackrel{i^{*}_{r,j}}{\longrightarrow}\overline{H}^{j}(H_{*},L_{*}))$$ and similarly $$\frac{\mathcal{H}^{n-j}(H_{*},L_{*})}{Ker(\pi_{4,n-j})}\cong ran( \pi_{1,n-j})\cong  \im(\overline{H}^{n-j}(H_{*},D_{*})\stackrel{i^{*}_{r,n-j}}{\longrightarrow}\overline{H}^{n-j}(H_{*},L_{*})) .$$ The composition of the above isomorphisms gives: $$ \im(\overline{H}^{j}(H_{*},D_{*})\stackrel{i^{*}_{r,j}}{\longrightarrow}\overline{H}^{j}(H_{*},L_{*}))\cong  \im(\overline{H}^{n-j}(H_{*},D_{*})\stackrel{i^{*}_{r,n-j}}{\longrightarrow}\overline{H}^{n-j}(H_{*},L_{*}))$$ and this completes the proof.
\end{proof}

\begin{rem}
\label{piop}
 By the above proof we get that given a pair of Hilbert complexes $(H_{*},D_{*})\subseteq (H_{*},L_{*})$, without any other assumption, the following isomorphism holds for each $j$ :
\begin{equation}
\label{planterwald}
ran(\pi_{1,j})\cong  \im(\overline{H}^{j}(H_{*},D_{*})\stackrel{i^{*}_{r,j}}{\longrightarrow}\overline{H}^{j}(H_{*},L_{*})). 
\end{equation}
Moreover when the sequences of vector spaces on the right hand side  of \eqref{planterwald}   is finite dimensional we have 
 $$ \mathcal{H}^{j}(H_{*},D_{*}) \cap(\mathcal{H}^{j}(H_{*},D_{*})\cap\overline{ran(L_{j-1})})^{\bot}\cong  (\mathcal{H}^{j}(H_{*},L_{*})\cap\overline{ran(D_{j}^*)})^{\bot}\cap\mathcal{H}^{j}(H_{*},L_{*}) $$that is $$ran(\pi_{1,j})\cong ran(\pi_{4,j}).$$
\end{rem}

The following  statements are immediate consequences of Theorem \ref{duality}.
\begin{cor}
Suppose that one of the two complexes of Theorem \ref{duality} is Fredholm; then also the other complex is Fredholm and 
\begin{equation}
\im(H^{j}(H_{*},D_{*}) \longrightarrow H^{j}(H_{*},L_{*})),\ j=0,...,n
\end{equation}
is a finite sequence of finite dimensional  vector spaces with Poincar\'e duality.  Moreover 
\begin{equation}
ran(\pi_{1,j})\cong \im(H^{j}(H_{*},D_{*}) \longrightarrow H^{j}(H_{*},L_{*})).
\end{equation}
and 
\begin{equation}
\mathcal{H}^{j}(H_{*},D_{*})\cap (\mathcal{H}^{j}(H_{*},D_{*})\cap ran(L_{j-1}))^{\bot}\cong  (\mathcal{H}^{j}(H_{*},L_{*})\cap ran(D_{j}^*))^{\bot}\cap\mathcal{H}^{j}(H_{*},L_{*}) .
\end{equation}
\label{klop}
\end{cor}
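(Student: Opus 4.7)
The plan is to derive Corollary \ref{klop} by reducing it to Theorem \ref{duality} and Remark \ref{piop} after establishing that the Fredholm hypothesis propagates to both complexes and forces reduced and unreduced cohomology to coincide.

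First I would show that the Fredholm property transfers from one complex to the other. If $(H_j,D_j)$ is Fredholm, then by Proposition \ref{fred} its dual complex $(H_j,D_j^*)$ is also Fredholm; by Proposition \ref{errew}(2), $(H_j,L_j)$ has cohomology isomorphic to that of $(H_j,D_j^*)$, hence finite dimensional, so $(H_j,L_j)$ is Fredholm. The reverse implication is symmetric. In particular both complexes and both of their duals are Fredholm.

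Next, I invoke Proposition \ref{topoamotore} applied to each of the four complexes $(H_j,D_j),(H_j,L_j),(H_j,D_j^*),(H_j,L_j^*)$ to conclude that all four families of ranges $ran(D_{j-1}), ran(L_{j-1}), ran(D_j^*), ran(L_j^*)$ are closed and that for each complex the cohomology coincides with the reduced cohomology and with the space of harmonic elements. In particular $\overline{H}^j(H_*,D_*)=H^j(H_*,D_*)$ and similarly for $L$, so the map induced by inclusion on unreduced cohomology has the same image as the map on reduced cohomology. Theorem \ref{duality} then yields directly that $\im(H^j(H_*,D_*)\to H^j(H_*,L_*))$ for $j=0,\dots,n$ is a finite sequence of finite dimensional vector spaces with Poincar\'e duality.

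Finally, the two displayed isomorphisms at the end of the corollary are immediate consequences of Remark \ref{piop}: the identification $ran(\pi_{1,j})\cong\im(\overline{H}^j(H_*,D_*)\to\overline{H}^j(H_*,L_*))$ from that remark combined with the equality of reduced and unreduced cohomology gives the first one; the second one is obtained by simply removing the closure bars from the analogous identification in Remark \ref{piop}, which is now legitimate because $ran(L_{j-1})$ and $ran(D_j^*)$ are already closed by Proposition \ref{topoamotore}. The main obstacle is essentially bookkeeping: one must carefully transfer the Fredholm property through the chain Proposition \ref{errew}(2) plus Proposition \ref{fred}, and verify closure of all four range spaces so that the overlines can be safely dropped; once this is done, the rest is a direct citation of the earlier theorem and remark.
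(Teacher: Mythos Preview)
Your argument is correct and matches the paper's intent: the corollary is stated there as an immediate consequence of Theorem \ref{duality} (and Remark \ref{piop}), and you have accurately supplied the missing bookkeeping, using Proposition \ref{errew}(2) together with Proposition \ref{fred} to transfer the Fredholm property between the two complexes, and Proposition \ref{topoamotore} to drop the closure bars.
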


\begin{prop}
Let $(H_{*},D_{*})\subseteq (H_{*},L_{*})$ be a pair of complementary Hilbert complexes. Furthermore suppose that there is a third Hilbert complex  $(H_{*},P_{*})$ with the following properties:
\begin{enumerate}
\item  $(H_{*},D_{*})\subseteq (H_{*},P_{*})\subseteq (H_{*},L_{*})$.
\item The reduced cohomology of  $(H_{*},P_{*})$  is finite dimensional.
\end{enumerate} 
Then $$\im(\overline{H}^{j}(H_{*},D_{*})\stackrel{i^{*}_{r,j}}{\longrightarrow}\overline{H}^{j}(H_{*},L_{*})),\  j=0,...,n
$$ is a finite  sequence of finite dimensional vector spaces with Poincar\'e duality.
\end{prop}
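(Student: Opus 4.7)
The plan is to reduce the statement to Theorem \ref{duality}, which already gives Poincaré duality as soon as the images $\im(\overline{H}^{j}(H_{*},D_{*})\stackrel{i^{*}_{r,j}}{\longrightarrow}\overline{H}^{j}(H_{*},L_{*}))$ are finite dimensional. So the only thing to establish is this finite dimensionality; the duality part is then free.

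First I would exploit the fact that the inclusion of complexes $(H_{*},D_{*})\subseteq (H_{*},L_{*})$ factors through $(H_{*},P_{*})$. At the level of domains this gives, for each $j$, a chain of natural inclusions $\mathcal{D}(D_{j})\hookrightarrow \mathcal{D}(P_{j})\hookrightarrow \mathcal{D}(L_{j})$, and all of these commute with the differentials (by the definition of $(H_{*},D_{*})\subseteq (H_{*},P_{*})\subseteq (H_{*},L_{*})$ given right before Definition \ref{edera}). Consequently the induced map $i^{*}_{r,j}:\overline{H}^{j}(H_{*},D_{*})\to \overline{H}^{j}(H_{*},L_{*})$ factors as
\begin{equation}
\overline{H}^{j}(H_{*},D_{*}) \longrightarrow \overline{H}^{j}(H_{*},P_{*}) \longrightarrow \overline{H}^{j}(H_{*},L_{*}),
\end{equation}
where both arrows are the canonical maps induced by the inclusions of the subcomplexes.

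Next I would observe that $\im(i^{*}_{r,j})$ is the image of the first arrow pushed forward by the second, hence it is a linear image of a subspace of $\overline{H}^{j}(H_{*},P_{*})$. Since by hypothesis $\overline{H}^{j}(H_{*},P_{*})$ is finite dimensional for every $j$, it follows at once that $\im(i^{*}_{r,j})\subseteq \overline{H}^{j}(H_{*},L_{*})$ is finite dimensional for every $j$. This is the step where the assumption on $(H_{*},P_{*})$ is used, and it is really the only content beyond what has already been proved.

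Finally, with finite dimensionality of every $\im(i^{*}_{r,j})$ established and with the pair $(H_{*},D_{*})\subseteq (H_{*},L_{*})$ assumed complementary, I can invoke Theorem \ref{duality} directly. That theorem produces the isomorphisms $\im(i^{*}_{r,j})\cong \im(i^{*}_{r,n-j})$ for $j=0,\dots,n$, which is exactly the Poincaré duality property in the sense of Definition \ref{paolo}. I do not expect any serious obstacle in this argument; the only mild subtlety is to verify cleanly that the reduced cohomology maps do factor through $\overline{H}^{j}(H_{*},P_{*})$, but this is immediate from the definition of the inclusion relation of Hilbert complexes and from functoriality of taking cohomology (and then its reduction) of a morphism of complexes that is the identity on each $H_{j}$.
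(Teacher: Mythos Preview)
Your proof is correct and follows essentially the same line as the paper: factor the inclusion $(H_{*},D_{*})\subseteq (H_{*},L_{*})$ through $(H_{*},P_{*})$, deduce that each $\im(i^{*}_{r,j})$ is a quotient of (a subspace of) the finite dimensional $\overline{H}^{j}(H_{*},P_{*})$, and then invoke Theorem \ref{duality}. The paper phrases this via the containment $\im(i^{*}_{r,3,j})\subseteq \im(i^{*}_{r,2,j})$, but the content is identical.
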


\begin{proof}
The assertion is an immediate consequence of the following, simple fact. Let $i_{1,j}$ be the natural inclusion of $(H_{*},D_{*})$ in $(H_{*},P_{*})$, let $i_{2,j}$ be the natural inclusion of $(H_{*},P_{*})$ in $(H_{*},L_{*})$ and finally let  $i_{3,j}$ be the natural inclusion of $(H_{*},D_{*})$ in $(H_{*},L_{*})$. Obviously we have $i_{3,j}=i_{2,j}\circ i_{1,j}$. This implies that also the respective maps induced between the reduced cohomology groups commute. So we have $i_{r,3,j}^*=i_{r,2,j}^*\circ i_{r,1,j}^*$ and therefore $$\im(\overline{H}^{j}(H_{*},D_{*})\stackrel{i^{*}_{r,3,j}}{\longrightarrow}\overline{H}^{j}(H_{*},L_{*}))\subseteq \im(\overline{H}^{j}(H_{*},P_{*})\stackrel{i^{*}_{r,2,j}}{\longrightarrow}\overline{H}^{j}(H_{*},L_{*})).$$ In this way, by the second hypothesis, we know that $$ \im(\overline{H}^{j}(H_{*},D_{*})\stackrel{i^{*}_{r,3,j}}{\longrightarrow}\overline{H}^{j}(H_{*},L_{*}))$$ is a finite sequence of finite dimensional vector spaces. Now we are in position to apply Theorem \ref{duality} and so the proposition follows.
\end{proof}

Finally we conclude this section with the following result:

\begin{teo}
\label{cheafa}
Let  $(H_{i},D_{i})\subseteq (H_{i},L_{i})$,  $i=0,...,n$, be a pair of  Hilbert complexes. Suppose that for each $i$ $ran(D_{i})$ is closed in $H_{i+1}$. Then there exists a third Hilbert complex $(H_{i},P_{i})$ such that: 
\begin{enumerate}
\item  $(H_{i},D_{i})\subseteq (H_{i},P_{i})\subseteq (H_{i},L_{i})$. 
\item $H^i(H_{*},P_*)=\im(H^i(H_*,D_*) \rightarrow H^i(H_*,L_*))$. 
\end{enumerate}
Moreover if $(H_{i},D_{i})\subseteq (H_{i},L_{i})$ are complementary and  $(H_{i},D_{i})$, or equivalently $(H_{i},L_{i})$, is Fredholm then $(H_{i},P_{i})$ is a  Fredholm complex with Poincar\'e duality.
\end{teo}

\begin{proof}
It is immediate that $$\im(H^i(H_*,D_*) \rightarrow H^i(H_*,L_*))=\frac{Ker(D_i)}{ran(L_{i-1})\cap \mathcal{D}(D_i)}.$$ Therefore for each $i=0,...,n$ we have to construct a closed extension of $D_i$, that we call $P_i$, such that:  
\begin{equation}
\label{sudsudan}
Ker(P_{i})= Ker(D_i)\ \text{and}\ ran(P_{i-1})=ran(L_{i-1})\cap \mathcal{D}(D_i).
\end{equation}
In order to get this closed extensions $P_i$, we have to build a suitable subspace of $\mathcal{D}(L_i)$, let us say $B_i$, such that:
\begin{itemize}
\item  $\mathcal{D}(D_i)\subset B_i\subset \mathcal{D}(L_i)$.
\item $P_i:H_i\rightarrow H_{i+1}$ with domain given by $B_i$ and defined as the restriction of $L_i$ to $B_i$ is a closed operator.
\item \eqref{sudsudan} holds.
\end{itemize}
 To do this, from now on we will consider the following Hilbert space $(\mathcal{D}(L_i),<\ , >_{\mathcal{G}})$, which is by definition  the domain of $L_i$ endowed with the graph scalar product. Therefore all the direct sums that will appear  and all the assertions of topological type are referred to this Hilbert space $(\mathcal{D}(L_i),<\ , >_{\mathcal{G}})$. We can decompose $(\mathcal{D}(L_i),<\ , >_{\mathcal{G}})$ in the following way:
\begin{equation}
\label{wsws}
(\mathcal{D}(L_i),<\ , >_{\mathcal{G}})=Ker(L_i)\oplus V_i
\end{equation}
where $V_i=\{\alpha\in \mathcal{D}(L_i)\cap \overline{ran(L_{i}^*)}\}$. They are both closed  in $(\mathcal{D}(L_i),<\ , >_{\mathcal{G}})$ because $V_i$ is  the orthogonal complement of $Ker(L_i)$ in $(\mathcal{D}(L_i),<\ ,>_{\mathcal{G}})$ and $Ker(L_i)$ is closed because $L_i:(\mathcal{D}(L_i),<\ , >_{\mathcal{G}})\rightarrow H_{i+1}$ is continuous.\\Consider now $(\mathcal{D}(D_i),<\ , >_{\mathcal{G}})$. By the fact that $D_i$ is a closed operator we get that $\mathcal{D}(D_i)$ is a closed subspace of $(\mathcal{D}(L_i),<\ , >_{\mathcal{G}})$. Moreover  we can decompose $\mathcal{D}(D_i)$ as: 
\begin{equation}
\label{wswsa}
(\mathcal{D}(D_i),<\ ,>_{\mathcal{G}})=Ker(D_i)\oplus A_i. 
\end{equation}
Analogously to the previous case  $A_i=\{\alpha\in \mathcal{D}(D_i)\cap \overline{ran(D_{i}^*)}\}$. Furthermore they are both closed  in $(\mathcal{D}(D_i),<\ ,>_{\mathcal{G}})$ because, in a similar way to \eqref{sudsudan},  $A_i$ is  the orthogonal complement of $Ker(D_i)$ in $(\mathcal{D}(D_i),<\ ,>_{\mathcal{G}})$ and $Ker(D_i)$ is closed because $D_i:(\mathcal{D}(D_i),<\ , >_{\mathcal{G}})\rightarrow H_{i+1}$ is continuous. Now let $C_i=\{\alpha \in \mathcal{D}(L_i): L_i(\alpha)\in \mathcal{D}(D_{i+1})\}$. $C_i$ is closed in $(\mathcal{D}(D_i),<\ ,>_{\mathcal{G}})$ because it is the preimage of a closed subspace under a continuous map, that is $C_i=L_i^{-1}(Ker(L_{i+1}))$. Finally let $$W_i:=C_i\cap V_i.$$ Then it is clear that: 
\begin{equation}
\label{wswsas}
C_i=Ker(L_i)\oplus W_i. 
\end{equation}
Obviously if $Ker(D_i)=Ker(L_i)$ then it enough to define $P_i:=L_i|_{C_i}$. So we can suppose that $Ker(D_i)$ is properly contained in $Ke r(L_i)$. Let $\pi_1$ be the orthogonal projection of $A_i$ onto $Ker(L_i)$ and analogously let $\pi_2$ be the orthogonal projection of $A_i$ onto $V_i$. We have the following properties:
\begin{enumerate}
\item $\pi_2$ is injective 
\item $ran(\pi_2)\subseteq W_i$
\item $ran(\pi_2)$ is closed.
\end{enumerate}
The first property follows from the fact that  $Ker(\pi_2)=A_i\cap Ker(L_i)$. But $L_i$ is an extension of $D_i$; therefore if an element $\alpha$ lies in  $A_i\cap Ker(L_i)$ then it lies also in $Ker(D_i)$. So we can say that $\alpha\in Ker(D_i)\cap A_i$ and this, combined with \eqref{wswsa}, implies that $\alpha=0$. For the second property, given $\alpha\in A_i$, we have $D_i(\alpha)=L_i(\alpha)=L_i(\pi_1(\alpha)+\pi_2(\alpha))=L_i(\pi_2(\alpha))$ and therefore $\pi_2(\alpha)\in W_i$. Finally, for  the third property, consider a sequence $\{\gamma_{m}\}_{m\in \mathbb{N}}\subset A_i$ such that $\pi_{2}(\gamma_{m})$ converges to $\gamma\in W_i$. We recall that we are using $(\mathcal{D}(L_i),<\ ,>_{\mathcal{G}})$ and therefore saying that $\pi_2(\gamma_m)$ converges to $\gamma$ means that $\pi_2(\gamma_m)$ converges to $\gamma$ in $H_i$ and $L_i(\pi_2(\gamma_m))$ converges to $L_i(\gamma)$ in $H_{i+1}.$ Then: $$\lim_{m\rightarrow \infty}D_{i}(\gamma_m)=\lim_{m\rightarrow \infty}L_{i}(\gamma_m)=\lim_{m\rightarrow \infty}L_{i}(\pi_{2}(\gamma_m))=L_i(\gamma).$$ This implies that $$\lim_{m\rightarrow \infty}D_{i}(\gamma_m)=L_i(\gamma)$$ and therefore the limit  exists.  So by the assumptions about the range of $D_i$ we get that there exists an element $\eta\in A_i$ such that $$\lim_{m\rightarrow \infty}D_{i}(\gamma_m)=D_{i}(\eta).$$Moreover $L_{i}(\gamma)=D_{i}(\eta)=L_{i}(\eta)=L_{i}(\pi_{2}(\eta))$. This implies that $L_i(\pi_{2}(\eta)-\gamma)=0$ and therefore $\pi_2(\eta)=\gamma$ because $\pi_2(\eta),\gamma \in W_i$ and $L_i$ is injective on $W_i$. In this way we showed that $\pi_2$ is closed.\\
 Now define $N_i$ as the orthogonal complement of $ran(\pi_2)$ in $W_i$. Then for each $\alpha\in A_i$ and for each $\beta\in N_i$ we have $<\alpha,\beta>_{\mathcal{G}}=<\pi_1(\alpha)+\pi_2(\alpha),\beta>_{\mathcal{G}}=0$. This last property, joined with the fact that both $A_i$ and $N_i$ are closed subspaces of $(\mathcal{D}(L_i),<\ ,>_{\mathcal{G}})$,  implies that the vector space generated by $A_i$ and $N_i$ is closed and, if we call it $M_i$, then $M_i$ satisfies the following orthogonal decomposition: $M_i=A_i\oplus N_i$. Again for each $\alpha\in Ker(D_i)$ and for each $\beta \in M_i$ we have $<\alpha,\beta>_{\mathcal{G}}=0.$ This is because for each $\beta \in M_i$ there exist  unique $\beta_1\in A_i,\ \beta_2\in N_i$ such that $\beta=\beta_1\oplus \beta_2$. Now it is clear that $<\alpha,\beta_1>_{\mathcal{G}}=0=<\alpha,\beta_2>_{\mathcal{G}}$ because $Ker(D_i)\subset Ker(L_i)$, $N_i\subset W_i$, $W_i$ and $Ker(L_i)$ are orthogonal and $Ker(D_i)$ and $A_i$ are orthogonal. Therefore, also in this case, if we call $B_i$ the vector space generated by $Ker(D_i)$ and $M_i$ we have that 
\begin{equation}
\label{piedeghiaccio}
B_i=Ker(D_i)\oplus M_i=Ker(D_i)\oplus A_i\oplus N_i=\mathcal{D}(D_i)\oplus N_i
\end{equation}
 and therefore $B_i$ is a closed subspace of $(\mathcal{D}(L_i),<\ ,>_{\mathcal{G}})$. Finally define $P_i$ as 
\begin{equation}
\label{toposorce}
P_i:=L_i|_{B_i}
\end{equation}  
By the construction it is clear that for each $\alpha\in B_i$ we have $P_i(\alpha)\in \mathcal{D}(D_{i+1})\cap ran(L_i)$ and that  $\mathcal{D}(D_i)\subset B_i$. Therefore this implies that the composition $P_{i+1}\circ P_i$ is defined on the whole $B_i$ and that $P_{i+1}\circ P_i\equiv 0.$ Moreover, if we look at $P_i$ as an unbounded operator from $H_i$ to $H_{i+1}$, then it is clear that $P_i$ is densely defined because $\mathcal{D}(D_i)\subset B_i$ and $\mathcal{D}(D_i)$ is dense in $H_i$. Moreover it  is also easy to see that $P_i$ is a closed operator because it is defined as the restriction of $L_i$, which is a closed operator, on a closed subspace of $(\mathcal{D}(L_i), <\ ,>_{\mathcal{G}})$.\\
To conclude the proof we have to check that $Ker(P_i)=Ker(D_i)$ and that $ran(P_i)=ran(L_i)\cap \mathcal{D}(D_{i+1})$.
Let $\alpha\in Ker(P_i)$. According to \eqref{piedeghiaccio} we can decompose $\alpha$ in a unique way as \begin{equation}
\label{nonfiniscemai}
\alpha=\alpha_1+\alpha_2+\alpha_3
\end{equation}
where $\alpha_1\in Ker(D_i)$, $\alpha_2\in A_i$ and $\alpha_3\in N_i$. The goal now is to show that $0=\alpha_2=\alpha_3$. The assumption on $\alpha$ implies that $\alpha_2+\alpha_3\in Ker(P_i)$ because $\alpha\in Ker(P_i)$ and $\alpha_1\in Ker(D_i)$. We can decompose $\alpha_2$ in a unique way  as $\alpha_2=\beta_1+\beta_2$ where $\beta_1\in ran(\pi_1)$ and $\beta_2\in ran(\pi_2)$. Therefore  we obtain that $L_i(\beta_2+\alpha_3)=0$ because $\beta_1+\beta_2+\alpha_3=\alpha_2+\alpha_3\in  Ker(L_i)$ and $\beta_1\in Ker(L_i)$. This implies that $\beta_2+\alpha_3\in W_i\cap Ker(L_i)$ and therefore from \eqref{wswsas} we can conclude that $\beta_2+\alpha_3=0$. But $\beta_2+\alpha_3\in ran(\pi_2)\oplus N_i,\ \beta_{2}\in ran(\pi_2)$, $\alpha_3\in N_i$ and so we get $0=\beta_2=\alpha_3$. This in turn implies that $\alpha_2=\beta_1$ that is $\alpha_2\in A_i\cap Ker(L_i)=Ker(\pi_2)$. By the injectivity of $\pi_2$ previously proved, we get that $\alpha_2=0$ and therefore \eqref{nonfiniscemai} becomes $\alpha=\alpha_1\in Ker(D_i)$. So we got $Ker(P_i)\subseteq Ker(D_i)$; the other inclusion is trivial and therefore we have $Ker(P_i)= Ker(D_i)$.\\ Now we have to check that $ran(P_i)=ran(L_i)\cap \mathcal{D}(D_{i+1})$. Clearly, as observed above, the inclusion $\subseteq$ follows immediately by the construction of $P_i$. So we have to prove the converse. Let $\psi\in ran(L_i)\cap \mathcal{D}(D_{i+1})$. Then there exists a unique  element $\gamma\in W_i$ such that $L_i(\gamma)=\psi$.
Moreover there exist and are unique $\gamma_1\in ran(\pi_2)$ and $\gamma_2\in N_i$ such that $\gamma=\gamma_1+\gamma_2$. Now let $\theta\in A_i$ be the unique element in $A_i$ such that $\pi_2(\theta)=\gamma_1$. Finally consider $\theta+\gamma_2$. By construction $\theta+\gamma_2\in B_i$ and $P_i(\theta+\gamma_2)=L_i(\theta+\gamma_2)=L_i(\pi_1(\theta)+\pi_2(\theta)+\gamma_2)=L_i(\gamma_1+\gamma_2)=L_i(\gamma)$.
In this way we showed that $ran(L_i)\cap \mathcal{D}(D_{i+1})=ran(P_i)$. \\Finally  suppose that  $(H_{i},D_{i})$ and  $(H_{i},L_{i})$ are complementary and that $(H_{i},D_{i})$, or equivalently  $(H_{i},L_{i})$, is Fredholm. We have the following  natural and surjective map: 
\begin{equation}
\label{pappapappa}
\frac{Ker(D_{i+1})}{ran(D_i)}\longrightarrow \frac{Ker(D_{i+1})}{ran(P_i)}.
\end{equation}
By the assumptions  $H^i(H_*,D_*)$ is finite dimensional and this, using \eqref{pappapappa},  implies that also $H^i(H_*,P_*)$ is finite dimensional, that is $(H_i,P_i)$ is a Fredholm complex. Now using Theorem \ref{duality} it follows that Poincar\'e duality holds for $(H_{i},P_{i})$. This completes the proof. 
\end{proof}

\section{Geometric Applications}

\subsection{Duality for reduced $L^{2}-$cohomology}

Now we want to show that riemannian geometry is a context in which  pairs  of complementary Hilbert complexes appear in a natural way.\\
Let  $(M,g)$ be  an open and oriented riemannian manifold of dimension $m$ and let $E_{0},...,E_{n}$ be vector bundles over $M$. For each $i=0,...,n$ let $C^{\infty}_{c}(M,E_{i})$ be the space of smooth section with compact support. If we put on each vector bundle a metric $h_{i}\ i=0,...,n$ then we can construct in a natural way a sequences of Hilbert space $L^{2}(M,E_{i}),\ i=0,...,n$ as the completion of $C^{\infty}_{c}(M,E_{i}).$ Now suppose that we have a complex of differential operators :
\begin{equation}
0\rightarrow C^{\infty}_{c}(M,E_{0})\stackrel{P_{0}}{\rightarrow}C^{\infty}_{c}(M,E_{1})\stackrel{P_{1}}{\rightarrow}C^{\infty}_{c}(M,E_{2})\stackrel{P_{2}}{\rightarrow}...\stackrel{P_{n-1}}{\rightarrow}C^{\infty}_{c}(M,E_{n})\rightarrow 0,
\label{yygg}
\end{equation}
To turn this complex into a Hilbert complex we must specify a closed extension of $P_{*}$ that is an operator between $L^{2}(M,E_{*})$ and  $L^{2}(M,E_{*+1})$ with closed graph which is an extension of $P_{*}$.  So we state some definitions and propositions which generalize those  stated in \cite{FB}. We start recalling  the two canonical closed extensions of $P$.

\begin{defi} The maximal extension $P_{max}$; this is the operator acting on the domain:
\begin{equation} 
\mathcal{D}(P_{max,i})=\{\omega\in L^{2}(M,E_{i}): \exists\ \eta\in L^{2}(M,E_{i+1})
\end{equation}

$$ s.t.\ <\omega,P^t_{i}\zeta>_{L^{2}(M,E_{i})}=<\eta,\zeta>_{L^{2}(M,E_{i+1})}\ \forall\ \zeta\in C_{0}^{\infty}(M,E_{i+1}) \}$$ where $P^t_i$ is the formal adjoint of $P_i$.

In this case $P_{max,i}\omega=\eta.$ In  other words $\mathcal{D}(P_{max,i})$ is the largest set of forms $\omega\in L^{2}(M, E_{i})$ such that $P_{i}\omega$, computed distributionally, is also in $L^{2}(M,E_{i+1}).$
\label{nino}
\end{defi}

\begin{defi} The minimal extension $P_{min,i}$; this is given by the graph closure of $P_{i}$ on $C_{0}^{\infty}(M, E_{i})$ with respect to the norm of $L^{2}(M,E_{i})$, that is,
\begin{equation} \mathcal{D}(P_{min,i})=\{\omega\in L^{2}(M,E_{i}): \exists\ \{\omega_{j}\}_{j\in J}\subset C^{\infty}_{0}(M,E_{i}),\ \omega_{j}\rightarrow \omega ,\       P_{i}\omega_{j}\rightarrow \eta\in L^{2}(M,E_{i+1})\} 
\end{equation}
and in this case $P_{min,i}\omega=\eta$
\label{reso}
\end{defi}

Obviously $\mathcal{D}(P_{min,i})\subset \mathcal{D}(P_{max,i})$. Furthermore, from these definitions, it follows immediately that $$P_{min,i}(\mathcal{D}(P_{min,i}))\subset \mathcal{D}(P_{min,i+1}),\ P_{min,i+1}\circ P_{min,i}=0$$ and that $$P_{max,i}(\mathcal{D}(P_{max,i}))\subset \mathcal{D}(P_{max,i+1}),\ P_{max,i+1}\circ P_{max,i}=0.$$ \\Therefore $(L^{2}(M,E_{*}),P_{max/min,*})$ are both Hilbert complexes and their cohomology groups, reduced cohomology groups, are denoted respectively by $H_{2,max/min}^{i}(M, E_{*})$ and  $\overline{H}_{2,max/min}^{i}(M,E_{*})$. 

Another straightforward but important fact is that the Hilbert complex adjoint of \\$(L^{2}(M,E_{*}),P_{max/min,*})$ is $(L^{2}(M,E_{*}),P^t_{min/max,*})$, that is
\begin{equation}
(P_{max,i})^{*}=P^t_{min,i},\     (P_{min,i})^{*}=P^t_{max,i}.
\end{equation}

Using  Proposition \ref{kio}  we obtain two weak Kodaira decompositions:
\begin{equation}
L^{2}(M,E_{i})=\mathcal{H}^{i}_{abs/rel}(M,E_i)\oplus \overline{ran(P_{max/min,i-1})}\oplus \overline{ran(P^t_{min/max,i})}
\label{kd}
\end{equation}
with summands mutually orthogonal in each case. For the first summand in the right, called the absolute or relative Hodge cohomology, we have by \eqref{said}:
\begin{equation}
\label{nannabobo}
\mathcal{H}^{i}_{abs/rel}(M,E_*)=Ker(P_{max/min,i})\cap Ker(P^t_{min/max,i-1}).
\end{equation}
We can also consider the two natural Laplacians associated to these Hilbert complexes, that is for each $i$
\begin{equation}
\label{kaak}
P_{min,i}^t\circ P_{max,i}+P_{max,i-1}\circ P_{min,i-1}^t
\end{equation}
and 
\begin{equation}
\label{kkll}
P_{max,i}^t\circ P_{min,i}+P_{min,i-1}\circ P_{max,i-1}^t
\end{equation}
with domain described in \eqref{saed}. Using \eqref{said} and \eqref{pppp} it follows that the nullspace of  \eqref{kaak} is isomorphic to the absolute Hodge cohomology which is in turn isomorphic to the reduced  cohomology of the Hilbert complex $(L^{2}(M,E_{*}),P_{max,*})$. Analogously,  using again \eqref{said} and \eqref{pppp}, it follows that the nullspace of  \eqref{kkll} is isomorphic to the relative Hodge cohomology which is in turn isomorphic to the reduced  cohomology of the Hilbert complex $(L^{2}(M,E_{*}),P_{min,*})$.\\
Finally we recall that we can define other two Hodge cohomology groups $\mathcal{H}^i_{max/min}(M,E_*)$ defined as
\begin{equation}
\label{aaaaaaaa}
\mathcal{H}^i_{max/min}(M,E_*)=Ker(P_{max/min,i})\cap Ker(P^t_{max/min,i-1}).
\end{equation}

Now we are in position to state the following results:

\begin{teo}
\label{mzzm}
Let  $(M,g)$ be  an open and oriented riemannian manifold of dimension $m$ and let $E_{0},...,E_{n}$ be vector bundles over $M$ endowed with metrics $h_{i}\ i=0,...,n$. Suppose that we have a complex of differential operators :
\begin{equation}
0\rightarrow C^{\infty}_{c}(M,E_{0})\stackrel{P_{0}}{\rightarrow}C^{\infty}_{c}(M,E_{1})\stackrel{P_{1}}{\rightarrow}C^{\infty}_{c}(M,E_{2})\stackrel{P_{2}}{\rightarrow}...\stackrel{P_{n-1}}{\rightarrow}C^{\infty}_{c}(M,E_{n})\rightarrow 0,
\label{yyggq}
\end{equation}
and let 
\begin{equation}
0\rightarrow L^2(M,E_{0})\stackrel{P_{max,0}}{\rightarrow}L^2(M,E_{1})\stackrel{P_{max,1}}{\rightarrow}L^2(M,E_{2})\stackrel{P_{max,2}}{\rightarrow}...\stackrel{P_{max,n-1}}{\rightarrow}L^2(M,E_{n})\rightarrow 0,
\label{yyggqq}
\end{equation}
and
\begin{equation}
0\rightarrow L^2(M,E_{0})\stackrel{P_{min,0}}{\rightarrow}L^2(M,E_{1})\stackrel{P_{min,1}}{\rightarrow}L^2(M,E_{2})\stackrel{P_{min,2}}{\rightarrow}...\stackrel{P_{min,n-1}}{\rightarrow}L^2(M,E_{n})\rightarrow 0,
\label{yyggqqa}
\end{equation}
the two natural Hilbert complexes associated with \eqref{yyggq} as described above.  Suppose that for each $i=0,...,n$ there exists an isometry $\phi_{i}:(E_{i},h_{i})\rightarrow (E_{n-i},h_{n-i})$; with a little abuse of notation let still $\phi_i$ denote  the induced isometry from $L^2(M,E_i)$ to $L^2(M,E_{n-i})$. Finally suppose  that $P^{t}_{n-i-1}\circ \phi_{i}=c_{i}(\phi_{i+1}\circ P_{i})$, where $c_{i}\neq 0$ is a constant which depends only on  $i$.\\  If $\im(\overline{H}^{i}_{2,min}(M,E_{*})\stackrel{i^{*}_{r,i}}{\longrightarrow}\overline{H}^{i}_{2,max}(M,E_{*}))$ is finite dimensional for each $i$ then $$\im(\overline{H}^{i}_{2,min}(M,E_{*})\stackrel{i^{*}_{r,i}}{\longrightarrow}\overline{H}^{i}_{2,max}(M,E_{*}))$$ is a finite sequence of finite dimensional vector spaces with Poincar\'e duality.
\end{teo}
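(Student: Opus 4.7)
The plan is to reduce this theorem directly to Theorem \ref{duality} by verifying that the pair of Hilbert complexes
\[
(L^{2}(M,E_{*}),P_{min,*})\ \subseteq\ (L^{2}(M,E_{*}),P_{max,*})
\]
is in fact a pair of \emph{complementary} Hilbert complexes in the sense of Definition \ref{edera}, with link maps given by the induced $L^{2}$-isometries $\phi_{i}:L^{2}(M,E_{i})\rightarrow L^{2}(M,E_{n-i})$. Once this is done, the statement about Poincar\'e duality for the images in reduced cohomology is an immediate instance of Theorem \ref{duality}, since the finite-dimensionality hypothesis is exactly what is assumed.

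The verification of complementarity amounts to checking, for every $i$, the two properties
\[
\phi_{i}(\mathcal{D}(P_{min,i}))=\mathcal{D}\bigl((P_{max,n-i-1})^{*}\bigr)=\mathcal{D}(P^{t}_{min,n-i-1}),
\qquad
P^{t}_{min,n-i-1}\circ \phi_{i}=c_{i}(\phi_{i+1}\circ P_{min,i})\ \text{on}\ \mathcal{D}(P_{min,i}).
\]
The identification $(P_{max,n-i-1})^{*}=P^{t}_{min,n-i-1}$ is already recorded in the text. For the rest, I would work at the level of smooth compactly supported sections first: because $\phi_{i}$ is a fibrewise isometry of smooth bundles, it restricts to a bijection $C^{\infty}_{c}(M,E_{i})\to C^{\infty}_{c}(M,E_{n-i})$, and the hypothesis $P^{t}_{n-i-1}\circ \phi_{i}=c_{i}(\phi_{i+1}\circ P_{i})$ holds there by assumption. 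Then, given $\omega\in \mathcal{D}(P_{min,i})$ together with a defining sequence $\omega_{k}\in C^{\infty}_{c}(M,E_{i})$ with $\omega_{k}\to \omega$ and $P_{i}\omega_{k}\to P_{min,i}\omega$ in $L^{2}$, the sections $\phi_{i}(\omega_{k})$ are smooth and compactly supported, $\phi_{i}(\omega_{k})\to \phi_{i}(\omega)$ (since $\phi_{i}$ is $L^{2}$-continuous), and $P^{t}_{n-i-1}\phi_{i}(\omega_{k})=c_{i}\phi_{i+1}(P_{i}\omega_{k})\to c_{i}\phi_{i+1}(P_{min,i}\omega)$. This places $\phi_{i}(\omega)\in \mathcal{D}(P^{t}_{min,n-i-1})$ and yields the required identity. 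The opposite inclusion $\phi_{i}^{-1}(\mathcal{D}(P^{t}_{min,n-i-1}))\subseteq \mathcal{D}(P_{min,i})$ is proved by the symmetric graph-closure argument, using that $\phi_{i}^{-1}$ is again a bundle isometry and that the formal adjoint relation can be rewritten as $P_{i}\circ \phi_{i}^{-1}=c_{i}^{-1}(\phi_{i+1}^{-1}\circ P^{t}_{n-i-1})$ on $C^{\infty}_{c}(M,E_{n-i})$.

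Having verified complementarity, I invoke Theorem \ref{duality} with $(H_{j},D_{j})=(L^{2}(M,E_{*}),P_{min,*})$ and $(H_{j},L_{j})=(L^{2}(M,E_{*}),P_{max,*})$, obtaining that the finite-dimensional sequence $\im(\overline{H}^{i}_{2,min}\to \overline{H}^{i}_{2,max})$ enjoys Poincar\'e duality.

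The only subtle point is the domain bijection in the complementarity check: one must be careful that the isometry $\phi_{i}$ really sends the minimal domain to the minimal domain of the formal adjoint, and not just to the maximal one. This is handled precisely by exhibiting the defining approximating sequences in $C^{\infty}_{c}$ as above, so the whole argument stays within graph closures of smooth compactly supported sections, where the algebraic identity $P^{t}_{n-i-1}\circ \phi_{i}=c_{i}(\phi_{i+1}\circ P_{i})$ is available pointwise. Beyond this, the proof is essentially a translation of the abstract Hilbert-complex machinery to the analytic setting of differential operators on vector bundles.
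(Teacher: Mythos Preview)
Your proposal is correct and follows essentially the same approach as the paper: verify that $(L^{2}(M,E_{*}),P_{min,*})\subseteq(L^{2}(M,E_{*}),P_{max,*})$ is a pair of complementary Hilbert complexes with link maps the induced $L^{2}$-isometries $\phi_{i}$, then invoke Theorem~\ref{duality}. In fact, your argument is more detailed than the paper's, which simply asserts that $\phi_i(\mathcal{D}(P_{min,i}))=\mathcal{D}(P^t_{min,n-i-1})$ and $P^{t}_{min,n-i-1}\circ \phi_{i}=c_{i}(\phi_{i+1}\circ P_{min,i})$ without spelling out the graph-closure argument via approximating sequences in $C^{\infty}_{c}$ that you provide.
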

\begin{proof} From the hypothesis we know that  for each $i=0,...,n$ there exists an isometry $\phi_{i}:(E_{i},h_{i})\rightarrow (E_{n-i},h_{n-i})$ such that $P^{t}_{n-i-1}\circ \phi_{i}=c_{i}(\phi_{i+1}\circ P_{i})$, where $c_{i}\neq 0$ is a constant which depends just on $i$. This isometries of vector bundles induces isometries from $L^{2}(M,E_{i})$ to $L^{2}(M,E_{n-i})$, that with a little abuse of notation we still label $\phi_i$, such that  $\phi_i(\mathcal{D}(P_{min,i}))=\mathcal{D}(P^t_{min,n-i-1})$ and $P^{t}_{min,n-i-1}\circ \phi_{i}=c_{i}(\phi_{i+1}\circ P_{min,i})$. So we showed that the complexes $(L^{2}(M,E_{*}),P_{min,*}) \subseteq (L^{2}(M,E_{*}),P_{max,*})$ are a pair of complementary Hilbert complexes. Now, applying Theorem \ref{duality}, we can get the conclusion.
\end{proof}

\begin{teo}
\label{polca}
In the same hypothesis of the previous theorem, suppose furthermore that for each $i=0,...,n$ $ran(P_{min,i})$ is closed in $L^2(M,E_{i+1})$. Then  there exists a Hilbert complex $(L^2(M,E_i),P_{\mathfrak{m},i})$ such that for each $i=0,...,n$ $$\mathcal{D}(P_{min,i})\subset \mathcal{D}(P_{\mathfrak{m},i})\subset \mathcal{D}(P_{max,i}),$$ $P_{max,i}$ is an extension of $P_{\mathfrak{m},i}$ which is an extension of $P_{min,i}$  and $$H^i_{2,\mathfrak{m}}(M,E_i)=\im(H^{i}_{2,min}(M,E_{*})\stackrel{i^{*}_{i}}{\longrightarrow}H^{i}_{2,max}(M,E_{*}))$$ where $H^i_{2,\mathfrak{m}}(M,E_i)$ is the cohomology of the Hilbert complex $(L^2(M,E_i),P_{\mathfrak{m},i})$. Finally if\\ $(L^2(M,E_i),P_{max,i})$ or equivalently $(L^2(M,E_i),P_{min,i})$ is Fredholm then $(L^2(M,E_i),P_{\mathfrak{m},i})$ is a Fredholm complex with Poincar\'e duality.
\end{teo}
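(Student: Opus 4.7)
The plan is to recognize this statement as essentially a direct geometric specialization of the abstract Theorem \ref{cheafa}, combined with the complementary-complex structure already set up in (the proof of) Theorem \ref{mzzm}. In other words, no genuinely new argument is required: the work amounts to checking that the abstract hypotheses apply, and then transcribing the abstract conclusions back into the geometric language.

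First I would verify the hypotheses of Theorem \ref{cheafa} with the identifications $H_j := L^2(M,E_j)$, $D_j := P_{min,j}$, and $L_j := P_{max,j}$. That $(H_j,D_j) \subseteq (H_j,L_j)$ is immediate from Definitions \ref{nino} and \ref{reso}, since $P_{max,j}$ is by construction a closed extension of $P_{min,j}$. The closed-range hypothesis of Theorem \ref{cheafa} on $D_j$ is exactly the assumption that $\mathrm{ran}(P_{min,j})$ is closed in $L^2(M,E_{j+1})$. Applying Theorem \ref{cheafa} then produces a closed extension $P_{\mathfrak{m},j}$ of $P_{min,j}$ sitting inside $P_{max,j}$, whose associated Hilbert complex has the required domain chain $\mathcal{D}(P_{min,j}) \subset \mathcal{D}(P_{\mathfrak{m},j}) \subset \mathcal{D}(P_{max,j})$, and whose cohomology equals $\im\bigl(H^j(H_*,D_*) \to H^j(H_*,L_*)\bigr)$, which in this notation is precisely $\im(i^*_j\colon H^j_{2,min}(M,E_*) \to H^j_{2,max}(M,E_*))$.

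For the final Fredholm/Poincaré-duality assertion I would invoke the second half of Theorem \ref{cheafa}, which requires the input pair to be complementary. Here I use the fact, already extracted in the proof of Theorem \ref{mzzm}, that the bundle isometries $\phi_i\colon (E_i,h_i)\to(E_{n-i},h_{n-i})$ satisfying $P^t_{n-i-1}\circ\phi_i = c_i(\phi_{i+1}\circ P_i)$ induce isometries $\phi_i\colon L^2(M,E_i)\to L^2(M,E_{n-i})$ intertwining $P_{min,i}$ with (a scalar multiple of) $P^t_{min,n-i-1} = (P_{max,n-i-1})^*$ and mapping $\mathcal{D}(P_{min,i})$ onto $\mathcal{D}(P^t_{min,n-i-1})$. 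Thus $(L^2(M,E_*),P_{min,*}) \subseteq (L^2(M,E_*),P_{max,*})$ is complementary in the sense of Definition \ref{edera}, with link maps $\{\phi_i\}$. Under the further Fredholm assumption on either the min or max complex (which are Fredholm simultaneously by Proposition \ref{fred} applied to the complementary pair, together with Proposition \ref{errew}(2)), Theorem \ref{cheafa} delivers that $(L^2(M,E_*),P_{\mathfrak{m},*})$ is itself Fredholm and satisfies Poincaré duality.

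The only step requiring any care will be the Fredholm comparison between the min and max complexes: we need to know that Fredholmness of one implies Fredholmness of the other so that the hypothesis ``$(L^2(M,E_*),P_{max,*})$ or equivalently $(L^2(M,E_*),P_{min,*})$ is Fredholm'' is well-stated. This follows because complementarity gives, by Proposition \ref{errew}(2), isomorphic (reduced) cohomology groups between $(H_*,D_*)$ and $(H_*,L_*^*)$, combined with Proposition \ref{fred} which exchanges a complex with its dual up to Fredholmness. Apart from this bookkeeping, the proof is a one-line invocation of Theorem \ref{cheafa}, and I do not foresee any genuine obstacle beyond organizing these references cleanly.
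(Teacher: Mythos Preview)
Your proposal is correct and matches the paper's own proof, which is literally the one-line statement ``It follows immediately from the previous theorem and from theorem \ref{cheafa}.'' You have simply unpacked this sentence by making explicit the identifications $H_j = L^2(M,E_j)$, $D_j = P_{min,j}$, $L_j = P_{max,j}$ and spelling out why the complementary structure from Theorem~\ref{mzzm} and the closed-range hypothesis feed directly into Theorem~\ref{cheafa}; your extra care about the equivalence of Fredholmness for the min and max complexes is a welcome bit of bookkeeping that the paper leaves implicit.
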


\begin{proof}
The thesis of the theorem follows immediately from the previous theorem and from Theorem \ref{cheafa}.
\end{proof}

As a particular and important case we have the following two theorems:
 
\begin{teo}
\label{mario}
Let $(M,g)$ be an open, oriented and incomplete riemannian manifold of dimension $m$. Then the complexes $$(L^{2}\Omega^{*}(M,g),d_{max,*})\ and\  (L^{2}\Omega^{*}(M,g),d_{min,*})$$ are a pair of complementary Hilbert complexes. \\In particular if $\im(\overline{H}^{i}_{2,min}(M,g)\stackrel{i^{*}_{r,i}}{\longrightarrow}\overline{H}^{i}_{2,max}(M,g))$ is finite dimensional for each $i$ then $$\im(\overline{H}^{i}_{2,min}(M,g)\stackrel{i^{*}_{r,i}}{\longrightarrow}\overline{H}^{i}_{2,max}(M,g))$$ is a finite sequence of finite dimensional vector spaces with Poincar\'e duality.
\end{teo}

\begin{proof}
Let $*:\Lambda^i(M)\rightarrow \Lambda^{n-i}(M)$ the Hodge star operator. Then $*$ induces a map between $\Omega_{c}^i(M)$ and $\Omega_{c}^{n-i}(M)$ such that for $\eta, \omega\in \Omega_{c}^i(M)$ we have: $$<*\eta,* \omega>_{L^2\Omega^{n-i}(M,g)}=\int_{M}<*\eta,*\omega>_{M}dvol_{M}=\int_{M}*\eta\wedge**\omega=\int_{M}\omega\wedge*\eta=$$$$=<\omega,\eta>_{L^2\Omega^i(M,g)}=<\eta,\omega>_{L^2\Omega^i(M,g)}$$ that is $*$ is an isometry between $\Omega^{i}_{c}(M)$ and $\Omega^{n-i}_{c}(M)$. This implies that $*$ extends to an isometry between $L^{2}\Omega^{i}(M,g)$ and $L^{2}\Omega^{n-i}(M,g)$. Now it is an immediate consequence of Definition \ref{nino} and  Definition \ref{reso} that $$*\*d_{min,i}=\pm\delta_{min,n-i-1}*\ \text{ and that}\  *d_{max,i}=\pm\delta_{max,n-i-1}*$$ and the sign depends only on the parity of the degree $i$. 
% or  $$\*d_{min,i}=-\delta_{min,n-i-1}*\ \text{ and that}\  *d_{max,i}=-\delta_{max,n-i-1}*$$ where the sign depends  on the %parity of $i$.
 So we can apply Theorem \ref{duality} and the assertion follows.
\end{proof}

\begin{rem}
The previous theorem  shows that pair of complementary Hilbert complexes appear naturally in riemannian geometry. In fact the Hodge star operator provides naturally  a family of duality maps and so, in this case,   we do not need  to assume their existence.
\end{rem}

\begin{teo}
\label{polcas}
Let $(M,g)$ be an open, oriented and incomplete riemannian manifold of dimension $n$. Suppose that for each $i=0,...,n$ $ran(d_{min,i})$ is closed in $L^2\Omega^{i+1}(M,g)$. Then  there exists a Hilbert complex $(L^2\Omega^i(M,g)),d_{\mathfrak{m},i})$ such that for each $i=0,...n$ $$\mathcal{D}(d_{min,i})\subset \mathcal{D}(d_{\mathfrak{m},i})\subset \mathcal{D}(d_{max,i}),$$$d_{max,i}$ is an extension of $d_{\mathfrak{m},i}$ which is an extension of $d_{min,i}$ and   $$H^i_{2,\mathfrak{m}}(M,g)=\im(H^{i}_{2,min}(M,g)\stackrel{i^{*}_{i}}{\longrightarrow}H^{i}_{2,max}(M,g))$$ where $H^i_{2,\mathfrak{m}}(M,g)$ is the cohomology of the Hilbert complex $(L^2\Omega^i(M,g),d_{\mathfrak{m},i})$. Finally, if\\ $(L^2\Omega^i(M,g),d_{max,i})$ or equivalently $(L^2\Omega^i(M,g),d_{min,i})$ is Fredholm, then $(L^2\Omega^i(M,g),d_{\mathfrak{m},i})$ is a Fredholm complex with Poincar\'e duality.
\end{teo}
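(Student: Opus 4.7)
The plan is to obtain this statement as an immediate specialization of the abstract Theorem \ref{cheafa} (equivalently, Theorem \ref{polca}) to the de Rham setting. All the substantive work has already been done: Theorem \ref{cheafa} constructs the intermediate Hilbert complex from any pair $(H_j,D_j)\subseteq(H_j,L_j)$ with closed $ran(D_j)$, and handles the Fredholm plus Poincar\'e duality conclusion provided the pair is complementary. So it suffices to verify the two input hypotheses for the de Rham pair.

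First I would invoke Theorem \ref{mario}, which asserts precisely that the pair
\[
(L^2\Omega^i(M,g),d_{min,i})\subseteq(L^2\Omega^i(M,g),d_{max,i})
\]
is a pair of complementary Hilbert complexes, with link maps supplied by the $L^2$ extension of the Hodge star operator $*:\Lambda^i(M)\to\Lambda^{n-i}(M)$. The closed range assumption on $d_{min,i}$ is exactly the remaining hypothesis of Theorem \ref{cheafa} applied to this pair. I would then apply that theorem directly: it yields a Hilbert complex $(L^2\Omega^i(M,g),P_i)$ sitting between min and max, with $H^i(L^2\Omega^*(M,g),P_*)=\im(H^i_{2,min}(M,g)\to H^i_{2,max}(M,g))$. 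Setting $d_{\mathfrak{m},i}:=P_i$, the three requirements in the theorem statement are conclusions 1 and 2 of Theorem \ref{cheafa} verbatim. Tracing through the construction of $P_i$ as the restriction of $L_i=d_{max,i}$ to the closed subspace $B_i=\mathcal{D}(d_{min,i})\oplus N_i$ of $(\mathcal{D}(d_{max,i}),\langle\,\cdot\,,\,\cdot\,\rangle_{\mathcal{G}})$ makes the chain of domain inclusions transparent.

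For the final sentence I would argue as follows. By the complementarity established via Theorem \ref{mario}, the complexes $(L^2\Omega^i(M,g),d_{max,i})$ and $(L^2\Omega^i(M,g),d_{min,i})$ have isomorphic cohomology groups up to degree reversal (this is the content of the third item of Proposition \ref{errew}, applied together with Proposition \ref{fred} to the dual complexes via the identifications $(d_{max,i})^*=d^t_{min,i}=\pm*d_{min,n-i-1}*$ and $(d_{min,i})^*=d^t_{max,i}$). In particular one is Fredholm iff the other is, so assuming either hypothesis triggers the Fredholm clause of Theorem \ref{cheafa}, which then gives that $(L^2\Omega^i(M,g),d_{\mathfrak{m},i})$ is a Fredholm complex with Poincar\'e duality.

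There is essentially no obstacle here; the only point requiring a small amount of care is the compatibility of the Hodge star with the min/max extensions, which is already recorded inside the proof of Theorem \ref{mario} via the identities $*d_{min,i}=\pm\delta_{min,n-i-1}*$ and $*d_{max,i}=\pm\delta_{max,n-i-1}*$. Everything else is bookkeeping that translates the abstract conclusions of Theorem \ref{cheafa} into the notation of the de Rham complex.
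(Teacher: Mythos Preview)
Your proposal is correct and follows essentially the same route as the paper: the paper's proof simply says ``it follows immediately from the previous theorem and from theorem \ref{cheafa},'' i.e., it invokes Theorem \ref{mario} to get complementarity of the de Rham min/max pair and then applies the abstract Theorem \ref{cheafa}. Your additional remarks about the Fredholm equivalence and the Hodge-star identities are accurate elaborations of points the paper records elsewhere.
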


\begin{proof}
Also in this case it follows immediately from the previous Theorem and from Theorem \ref{cheafa}.
\end{proof}

We have the following corollary which is a \textbf{Hodge theorem} for the $L^2-$cohomology groups $\im(H^{i}_{2,min}(M,g)\stackrel{i^{*}_{i}}{\longrightarrow}H^{i}_{2,max}(M,g))$:

\begin{cor}
\label{giove}
In the same assumptions of Theorem \ref{polcas}; Let $\Delta_{i}:\Omega^i_{c}(M)\rightarrow \Omega_{c}^i(M)$ be the Laplacian acting on the space of smooth compactly supported forms. Then there exists a self-adjoint extension  $\Delta_{\mathfrak{m},i}:L^2\Omega^i(M,g)\rightarrow L^2\Omega^i(M,g)$ with closed range such that $$Ker(\Delta_{\mathfrak{m},i})\cong \im(H^{i}_{2,min}(M,g)\stackrel{i^{*}_{i}}{\longrightarrow}H^{i}_{2,max}(M,g)).$$ Moreover, if $(L^2\Omega^i(M,g),d_{max,i})$ or equivalently  $(L^2\Omega^i(M,g),d_{min,i})$ is Fredholm, then  $\Delta_{\mathfrak{m},i}$ is a Fredholm operator on its domain endowed with the graph norm.
\end{cor}

\begin{proof}
Consider the Hilbert complex $(L^2\Omega^i(M,g),d_{\mathfrak{m},i})$. For each $i=0,...,n$ define 
\begin{equation}
\label{rosmarino}
\Delta_{\mathfrak{m},i}:=d_{\mathfrak{m},i}^*\circ d_{\mathfrak{m},i}+d_{\mathfrak{m},i-1}\circ d_{\mathfrak{m},i-1}^*\end{equation}
with domain given by 
\begin{equation}
\label{sodio}
\mathcal{D}(\Delta_{\mathfrak{m},i})=\{\omega\in \mathcal{D}(d_{\mathfrak{m},i})\cap \mathcal{D}(d_{\mathfrak{m},i-1}^*):\ d_{\mathfrak{m},i}(\omega)\in \mathcal{D}(d_{\mathfrak{m},i}^*)\ \text{and}\ d_{\mathfrak{m},i-1}^*(\omega)\in \mathcal{D}(d_{\mathfrak{m},i-1})\}.
\end{equation}
In other words, for each $i=0,...,n$, $\Delta_{\mathfrak{m},i}$ is the $i-th$ Laplacian associated with the Hilbert complex $(L^2\Omega^i(M,g),d_{\mathfrak{m},i})$. So, as recalled in the first section,  it follows that  \eqref{rosmarino}  is a  self-adjoint operator. Moreover, by the fact that  $d_{min,i}$ has closed range for each $i=0,...,n$ it follows that also $\delta_{min,i}$ has closed range for each $i$. Finally this implies that also $d_{max,i}$ has closed range because $d_{max,i}=\delta_{min,i}^*$. This means   that for the Hilbert complex $(L^2\Omega^i(M,g),d_{\mathfrak{m},i})$ the $L^2-$cohomology and the reduced $L^2-$cohomology are exactly the same. The reason is that $\overline{ran(d_{\mathfrak{m},i})}=\overline{ran(d_{max,i})\cap Ker(d_{min,i+1})}$ $=ran(d_{max,i})\cap Ker(d_{min,i+1})$ because they are both closed in $L^2\Omega^{i+1}(M,g)$ and clearly $ran(d_{max,i})\cap Ker(d_{min,i+1})=ran(d_{\mathfrak{m},i})$. So we can apply \eqref{pppp} to get the first conclusion. Moreover by the fact that $ran(\Delta_{\mathfrak{m},i})=ran(d_{\mathfrak{m},i-1})\oplus ran(d^*_{\mathfrak{m},i})$ it follows that  $\Delta_{\mathfrak{m},i}$ is an operator with closed range. The reason of the previous equality is the following: clearly, by construction, we have always $ran(\Delta_{\mathfrak{m},i})\subset ran(d_{\mathfrak{m},i-1})\oplus ran(d_{\mathfrak{m},i}^*)$. 
Now let $\omega\in ran(d_{\mathfrak{m},i-1})\oplus ran(d_{\mathfrak{m},i}^*)$. Applying repeatedly the decomposition recalled in  Prop. \ref{beibei} and keeping in mind that $d_{\mathfrak{m},i}$ and  $d_{\mathfrak{m},i}^*$ have closed range for every $i$, we get that $$\omega=d_{\mathfrak{m},i-1}(d_{\mathfrak{m},i-1}^*(d_{\mathfrak{m},i-1}(\eta_1)))+d_{\mathfrak{m},i}^*(d_{\mathfrak{m},i}(d_{\mathfrak{m},i}^*(\eta_2)))$$ for some $\eta_1\in \mathcal{D}(d_{\mathfrak{m},i-1})$ and $\eta_2\in \mathcal{D}(d_{\mathfrak{m},i}^*).$  Clearly, by the construction of $\eta_1$ and $\eta_2$, we get that  $$d_{\mathfrak{m},i-1}(\eta_1)+d_{\mathfrak{m},i}^*(\eta_2)\in \mathcal{D}(\Delta_{\mathfrak{m},i})$$ and $$d_{\mathfrak{m},i-1}(d_{\mathfrak{m},i-1}^*(d_{\mathfrak{m},i-1}(\eta_1)))+d_{\mathfrak{m},i}^*(d_{\mathfrak{m},i}(d_{\mathfrak{m},i}^*(\eta_2)))=\Delta_{\mathfrak{m},i}(d_{\mathfrak{m},i-1}(\eta_1)+d_{\mathfrak{m},i}^*(\eta_2)).$$ Therefore we got $ran(\Delta_{\mathfrak{m},i})\supset ran(d_{\mathfrak{m},i-1})\oplus ran(d_{\mathfrak{m},i}^*)$ and in  this way we can conclude  that  $\Delta_{\mathfrak{m},i}$ is an operator with closed range.\\Finally, using  the fact that         $(L^2\Omega^i(M,g),d_{\mathfrak{m},i})$ is Fredholm, we get that  $\Delta_{\mathfrak{m},i}$ is self-adjoint, with finite dimensional nullspace and with closed range and therefore it is a Fredholm operator on its domain endowed with the graph norm.
\end{proof}

\begin{rem}
\label{europa}
From the previous proof  we get as a consequence that, under the assumptions of Theorem \ref{polcas}, the operator $d_{\mathfrak{m},i}$ has closed range for each $i$ and therefore for the Hilbert complex $(L^2\Omega^i(M,g),d_{\mathfrak{m},i})$ the $L^2-$cohomology coincides with the reduced $L^2-$cohomology.
\end{rem}

From now on we will focus our attention exclusively on the vector spaces\\ $\im(\overline{H}^{i}_{2,min}(M,g)\stackrel{i^{*}_{r,i}}{\longrightarrow}\overline{H}^{i}_{2,max}(M,g))$ because, using these, we will get some geometric and topological applications concerning the manifold $M$.\\ Anyway it will be  clear that all the following corollaries of the remaining part of this subsection apply also for the vector spaces $\im(\overline{H}^{i}_{2,min}(M,E_{*})\stackrel{i^{*}_{r,i}}{\longrightarrow}\overline{H}^{i}_{2,max}(M,E_{*}))$ under the hypothesis of theorem \ref{mzzm}. \vspace{1 cm}

\noindent
Now, to get a lighter notation,  we label the vector spaces $$\im(\overline{H}^{i}_{2,min}(M,g)\stackrel{i^{*}_{r,i}}{\longrightarrow}\overline{H}^{i}_{2,max}(M,g)):= \overline{H}^{i}_{2,m\rightarrow M}(M,g)\ \text{and}\ H^{i}_{2,m\rightarrow M}(M,g)$$ in the non-reduced case. Moreover, when it makes sense, we define 
\begin{equation}
\overline{\chi}_{2,m\rightarrow M}(M,g):=\sum_{i=0}^{m}(-1)^idim(\overline{H}^{i}_{2,m\rightarrow M}(M,g))
\end{equation}
and in the non-reduced case :
\begin{equation}
\chi_{2,m\rightarrow M}(M,g):=\sum_{i=0}^{m}(-1)^idim(H^{i}_{2,m\rightarrow M}(M,g))
\end{equation}
 
We have the following propositions:
\begin{prop}
\label{frengo}
In the hypothesis of  Theorem \ref{mario}, if $m$ is odd then:
\begin{equation}
\overline{\chi}_{2,m\rightarrow M}(M,g)=0.
\label{babbo} 
\end{equation}
Finally if    $(L^{2}\Omega^{i}(M,g),d_{max,i})$ is Fredholm, or equivalently if $(L^{2}\Omega^{i}(M,g),d_{min,i})$ is Fredholm,  the above results holds for $\chi_{2,m\rightarrow M}(M,g)$.
\end{prop}

\begin{proof}
The equality \eqref{babbo} is an immediate consequence of Theorem \ref{mario}. Finally, if for example  $(L^{2}\Omega^{i}(M,g),d_{max,i})$ is Fredholm then $H^{i}_{2,max}(M,g)\cong\overline{H}^{i}_{2,max}(M,g)\cong \overline{H}^{n-i}_{2,min}\cong H^{n-i}_{2,min}(M,g)$ and so also $(L^{2}\Omega^{i}(M,g),d_{min,i})$ is Fredholm. Obviously the same arguments show that,  if    $(L^{2}\Omega^{i}(M,g),d_{min,i})$ is Fredholm, then also  $(L^{2}\Omega^{i}(M,g),d_{max,i})$ is Fredholm and therefore in  \eqref{babbo} we can use $\chi_{2,m\rightarrow M}(M,g)$.
\end{proof}

\begin{prop} 
\label{gennaro}
In the hypothesis of Theorem \ref{duality}. Suppose that one of the two following properties is satisfied:
\begin{enumerate}
\item$ i_{r,i}^*:\overline{H}^i_{2,min}(M,g)\longrightarrow \overline{H}^{i}_{2,max}(M,g)$ is injective for all $i=0,...,n$,
\item $ i_{r,i}^*:\overline{H}^i_{2,min}(M,g)\longrightarrow \overline{H}^{i}_{2,max}(M,g)$  is surjective for all $i=0,...,n$.
\end{enumerate}
Then 
\begin{equation}
\overline{H}^i_{2,min}(M,g),\ \overline{H}^i_{2,max}(M,g)\ i=0,...,n
\end{equation}
 both are finite sequences of finite dimensional vector spaces with Poincar\'e duality. Finally, if one of the two properties above holds and if one of the two complexes  $(L^{2}\Omega^{i}(M,g),d_{max/min,i})$ is Fredholm, then the same conclusion holds for $$H^i_{2,min}(M,g),\ H^i_{2,max}(M,g)\ i=0,...,n.$$
\end{prop}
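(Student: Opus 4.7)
The plan is to reduce the proposition to a direct combination of theorem \ref{duality} with the min--max duality already established in proposition \ref{errew}. First, I would unpack what injectivity or surjectivity of $i_{r,i}^*$ buys us: in case (1), injectivity at degree $i$ gives $\overline{H}^i_{2,min}(M,g)\cong \im(i^*_{r,i})$, and in case (2) surjectivity gives $\overline{H}^i_{2,max}(M,g)=\im(i^*_{r,i})$. Since we are under the hypotheses of theorem \ref{duality} (the complementary complex structure is supplied by theorem \ref{mario} via the Hodge star, and the image is finite dimensional by assumption), one of the two sequences is immediately seen to be finite dimensional in each case. The other sequence is then forced to be finite dimensional as well through the canonical isomorphism
\begin{equation*}
\overline{H}^i_{2,min}(M,g)\;\cong\;\overline{H}^{n-i}_{2,max}(M,g)
\end{equation*}
coming from proposition \ref{errew}, item 3, applied to the complementary pair.

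To produce Poincaré duality inside each sequence, I would work in case (1) and argue symmetrically for case (2). By theorem \ref{duality} the images already satisfy $\dim \im(i^*_{r,i})=\dim \im(i^*_{r,n-i})$. Using injectivity at both degrees $i$ and $n-i$ to identify each side with $\overline{H}^i_{2,min}(M,g)$ and $\overline{H}^{n-i}_{2,min}(M,g)$, I obtain Poincaré duality for the minimal reduced cohomology. Transporting this across the min--max isomorphism of proposition \ref{errew} yields
\begin{equation*}
\dim\overline{H}^i_{2,max}(M,g)=\dim\overline{H}^{n-i}_{2,min}(M,g)=\dim\overline{H}^i_{2,min}(M,g)=\dim\overline{H}^{n-i}_{2,max}(M,g),
\end{equation*}
so Poincaré duality also holds for the maximal reduced cohomology. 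Case (2) follows by the same chain of identifications with the roles of min and max exchanged.

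For the final clause about unreduced cohomology, I would invoke the fact (used in the proof of proposition \ref{frengo}) that when one of $(L^2\Omega^*(M,g),d_{max/min,*})$ is Fredholm, so is the other, because of the isomorphisms \eqref{pppp} and proposition \ref{errew}. Under the Fredholm assumption, proposition \ref{topoamotore} gives that reduced and unreduced cohomologies coincide, so the maps $i_i^*$ and $i_{r,i}^*$ are identified under the canonical isomorphisms. The injectivity or surjectivity hypothesis then transfers directly to the unreduced setting, and the argument above reproduces Poincaré duality for $H^i_{2,min}(M,g)$ and $H^i_{2,max}(M,g)$ verbatim.

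I do not expect a real obstacle: the content is essentially bookkeeping in the commutative diagram of natural maps between min, image, and max. The only point that deserves care is checking that the isomorphism $\overline{H}^j_{2,min}(M,g)\cong \overline{H}^{n-j}_{2,max}(M,g)$ supplied by proposition \ref{errew} indeed composes correctly with the Poincaré duality $\im(i^*_{r,j})\cong \im(i^*_{r,n-j})$ of theorem \ref{duality}, i.e.\ that the two dimension equalities assemble into Poincaré duality at the same degree on the same side (min or max) rather than mixing them. Tracking the degree shifts cleanly is the only place where a slip could occur.
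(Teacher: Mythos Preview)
Your proposal is correct and follows essentially the same route as the paper: identify one of the sequences with the image via injectivity/surjectivity, invoke the Poincar\'e duality of the image (theorem \ref{duality}/theorem \ref{mario}), and then transport the result to the other sequence via the Hodge-star isomorphism $\overline{H}^i_{2,min}(M,g)\cong\overline{H}^{n-i}_{2,max}(M,g)$ (which is exactly proposition \ref{errew}, item 3, in this setting). The paper phrases the last step directly in terms of the Hodge star rather than citing proposition \ref{errew}, but the content is identical.
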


\begin{proof} Assume that $ i_{r,i}^*:\overline{H}^i_{2,min}(M,g)\longrightarrow \overline{H}^{i}_{2,max}(M,g)$ is injective for all $i=0,...,n$. Then $\overline{H}^i_{2,min}(M,g)\cong \overline{H}^{i}_{2,m\rightarrow M} (M,g)$. This implies that each $\overline{H}^{i}_{2,min}(M,g)$ is finite dimensional and  therefore, using  Theorem \ref{mario},  we get $\overline{H}^{i}_{2,min}(M,g)\cong \overline{H}^{n-i}_{2,min}(M,g).$ Finally by the fact that the Hodge star operator induces an isomorphism between  $\overline{H}^{i}_{2,min}(M,g)$ and $\overline{H}^{n-i}_{2,max}(M,g)$ we get that $\overline{H}^{i}_{2,max}(M,g)$ is a finite sequence of finite dimensional vector spaces with Poincar\'e duality.\\ Assume now that $ i_{r,i}^*:\overline{H}^i_{2,min}(M,g)\longrightarrow \overline{H}^{i}_{2,max}(M,g)$  is surjective for all $i=0,...,n$.  Then $\overline{H}^i_{2,max}(M,g)\cong \overline{H}^{i}_{2,m\rightarrow M} (M,g)$ and this implies that $\overline{H}^i_{2,max}(M,g)$ is a finite sequence of finite dimensional vector spaces with Poincar\'e duality. Finally, using again the isomorphism induced by the Hodge star operator between  $\overline{H}^{i}_{2,min}(M,g)$ and $\overline{H}^{n-i}_{2,max}(M,g)$ we get the same conclusions for $\overline{H}^i_{2,min}(M,g)$.
\end{proof}

Finally we conclude the section with the following proposition; before  stating it we give some definitions: let
 \begin{equation}
\label{defre}
d_{\mathfrak{m}}+d_{\mathfrak{m}}^*:\bigoplus_{i=0}^{n} L^2\Omega^i(M,g)\longrightarrow \bigoplus_{i=0}^{n} L^2\Omega^i(M,g)
\end{equation} 
be the operator  defined as $d_{\mathfrak{m}}+d_{\mathfrak{m}}^*:=\bigoplus_{i=0}^n(d_{\mathfrak{m},i}+d_{\mathfrak{m},{i-1}}^*)$ where $d_{\mathfrak{m},i}$ is defined in Theorem \ref{polcas} and the  domain of \eqref{defre} is  $$
\mathcal{D}(d_{\mathfrak{m}}+d_{\mathfrak{m}}^*)=\bigoplus_{i=0}^n\mathcal{D}(d_{\mathfrak{m},i}+d_{\mathfrak{m},i-1}^*)$$ and $\mathcal{D}(d_{\mathfrak{m},i}+d_{\mathfrak{m},i-1}^*)=\mathcal{D}(d_{\mathfrak{m},i})\cap \mathcal{D}(d_{\mathfrak{m},i-1}^*).$

\begin{prop}
\label{rtlrtl}
Let $(M,g)$ be an open oriented and incomplete riemannian manifold of dimension $n$. Suppose that for each $i=0,...,n$ $ran(d_{min,i})$ is closed in $L^2\Omega^{i+1}(M,g)$ and that  $(L^2\Omega^i(M,g),d_{\mathfrak{m},i})$ is a Fredholm complex. Then the operator $(d_{\mathfrak{m}}+d_{\mathfrak{m}}^*)_{ev}$  defined as $$d_{\mathfrak{m}}+d_{\mathfrak{m}}^*:\bigoplus_{i=0}^{n} L^2\Omega^{2i}(M,g)\longrightarrow \bigoplus_{i=0}^{n} L^2\Omega^{2i+1}(M,g)$$ with domain given by $$\mathcal{D}((d_{\mathfrak{m}}+d_{\mathfrak{m}}^*)_{ev}):=\bigoplus_{i=0}^n\mathcal{D}(d_{\mathfrak{m},2i}+d_{\mathfrak{m},2i-1}^*)$$
is a Fredholm operator on its domain endowed with the graph norm and its index satisfies 
\begin{equation}
\label{lalala}
\ind((d_{\mathfrak{m}}+d_{\mathfrak{m}}^*)_{ev})=\chi_{m\rightarrow M}(M,g)
\end{equation}
\end{prop}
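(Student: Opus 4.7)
The plan is to carry out the standard ``rolling up'' argument for a Fredholm Hilbert complex, adapted to $(L^2\Omega^i(M,g), d_{\mathfrak{m},i})$. Throughout, write $T := (d_{\mathfrak{m}} + d_{\mathfrak{m}}^*)_{ev}$, denote $\mathcal{H}^i_{\mathfrak{m}} := Ker(\Delta_{\mathfrak{m},i})$, and note that by remark \ref{europa} every $d_{\mathfrak{m},i}$ (hence also $d_{\mathfrak{m},i}^*$) has closed range, so the Kodaira decomposition of proposition \ref{beibei} for the complex $(L^2\Omega^i(M,g), d_{\mathfrak{m},i})$ reads
$$L^2\Omega^i(M,g) = \mathcal{H}^i_{\mathfrak{m}} \oplus ran(d_{\mathfrak{m},i-1}) \oplus ran(d_{\mathfrak{m},i}^*),$$
a genuine orthogonal sum of closed subspaces.

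First, I would verify that $T$ is closed on its stated domain. This is a routine consequence of the fact that each $d_{\mathfrak{m},i}$ and each $d_{\mathfrak{m},i-1}^*$ is closed and that the direct sum is $L^2$-orthogonal: if $\omega^{(n)} \to \omega$ in $\bigoplus L^2\Omega^{2i}$ and $T\omega^{(n)} \to \eta$ in $\bigoplus L^2\Omega^{2i+1}$, then componentwise $d_{\mathfrak{m},2j}\omega_{2j}^{(n)} \to \eta_{2j+1}^{(1)}$ and $d_{\mathfrak{m},2j-1}^*\omega_{2j}^{(n)} \to \eta_{2j-1}^{(2)}$ separately, so closedness of each piece gives $\omega \in \mathcal{D}(T)$ with $T\omega = \eta$.

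Next, the key computation: for $\omega = \sum_j \omega_{2j} \in \mathcal{D}(T)$, the component of $T\omega$ in degree $2j+1$ is $d_{\mathfrak{m},2j}\omega_{2j} + d_{\mathfrak{m},2j+1}^*\omega_{2j+2}$. Because $d_{\mathfrak{m},2j+1}\circ d_{\mathfrak{m},2j}=0$, the two summands live in orthogonal subspaces ($ran(d_{\mathfrak{m},2j})$ and $ran(d_{\mathfrak{m},2j+1}^*)$), so $T\omega=0$ forces both to vanish for every $j$. Hence
$$Ker(T) = \bigoplus_j \bigl(Ker(d_{\mathfrak{m},2j})\cap Ker(d_{\mathfrak{m},2j-1}^*)\bigr) = \bigoplus_j \mathcal{H}^{2j}_{\mathfrak{m}}.$$
The same argument applied to $T^* = (d_{\mathfrak{m}}+d_{\mathfrak{m}}^*)_{odd}$ yields $Ker(T^*) = \bigoplus_j \mathcal{H}^{2j+1}_{\mathfrak{m}}$. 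Since $(L^2\Omega^i(M,g), d_{\mathfrak{m},i})$ is Fredholm, by corollary \ref{giove} (or directly by \eqref{pppp} together with remark \ref{europa}) each $\mathcal{H}^i_{\mathfrak{m}}$ is finite dimensional, so both $Ker(T)$ and $Ker(T^*)$ are finite dimensional.

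For the closed range, I would use the Kodaira decomposition on each odd-degree piece: the image of $T$ clearly sits inside $\bigoplus_j \bigl(ran(d_{\mathfrak{m},2j})\oplus ran(d_{\mathfrak{m},2j+1}^*)\bigr)$, and the reverse inclusion is immediate since given $\alpha=d_{\mathfrak{m},2j}\xi$ with $\xi\in ran(d_{\mathfrak{m},2j}^*)$ and $\beta = d_{\mathfrak{m},2j+1}^*\zeta$ with $\zeta\in ran(d_{\mathfrak{m},2j+1})$, the element $\xi+\zeta$ lies in $\mathcal{D}(T)$ and maps to $\alpha+\beta$. Thus $ran(T)$ equals the orthogonal complement of $Ker(T^*)$, hence is closed. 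This is the place where the Fredholmness (via closed ranges of the $d_{\mathfrak{m},i}$) is really used and is the only step requiring care. Combining closedness, finite-dimensional kernel, and closed range with finite-dimensional cokernel $\cong Ker(T^*)$, we conclude that $T$ is Fredholm on $\mathcal{D}(T)$ endowed with the graph norm. Finally, the index is
$$\ind(T) = \dim Ker(T) - \dim Ker(T^*) = \sum_i (-1)^i \dim \mathcal{H}^i_{\mathfrak{m}} = \sum_i (-1)^i \dim H^i_{2,\mathfrak{m}}(M,g) = \chi_{m\rightarrow M}(M,g),$$
where the middle equality uses $\mathcal{H}^i_{\mathfrak{m}}\cong H^i_{2,\mathfrak{m}}(M,g)$ from corollary \ref{giove} and the last uses the identification $H^i_{2,\mathfrak{m}}(M,g)= H^i_{2,m\rightarrow M}(M,g)$ from theorem \ref{polcas}. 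This yields \eqref{lalala}.
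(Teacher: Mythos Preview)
Your proof is correct and follows essentially the same route as the paper: both rely on the Kodaira decomposition and the Hodge isomorphism of corollary \ref{giove} to identify the kernel and cokernel with the even and odd harmonic spaces. The only organizational difference is that the paper first observes that the full operator $d_{\mathfrak{m}}+d_{\mathfrak{m}}^*$ is Fredholm (a standard fact for Fredholm Hilbert complexes) and then splits it as $(d_{\mathfrak{m}}+d_{\mathfrak{m}}^*)_{ev}\oplus(d_{\mathfrak{m}}+d_{\mathfrak{m}}^*)_{odd}$, whereas you work directly with the even part and compute $Ker(T)$, $ran(T)$ and the cokernel by hand; the underlying content is the same.
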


\begin{proof}
By the fact that $(L^2\Omega^i(M,g),d_{\mathfrak{m},i})$ is a Fredholm complex it follows that $d_{\mathfrak{m}}+d_{\mathfrak{m}}^*$ is a Fredholm operator on its domain endowed with graph norm. Now if we define $(d_{\mathfrak{m}}+d_{\mathfrak{m}}^*)_{odd}$ analogously to $(d_{\mathfrak{m}}+d_{\mathfrak{m}}^*)_{ev}$, then it is clear that $\mathcal{D}(d_{\mathfrak{m}}+d_{\mathfrak{m}}^*)$ $=\mathcal{D}((d_{\mathfrak{m}}+d_{\mathfrak{m}}^*)_{ev})\oplus \mathcal{D}((d_{\mathfrak{m}}+d_{\mathfrak{m}}^*)_{odd})$, that $Ker(d_{\mathfrak{m}}+d_{\mathfrak{m}}^*)$ $=Ker((d_{\mathfrak{m}}+d_{\mathfrak{m}}^*)_{ev})\oplus Ker((d_{\mathfrak{m}}+d_{\mathfrak{m}}^*)_{odd})$  and that $ran(d_{\mathfrak{m}}+d_{\mathfrak{m}}^*)$ $=ran((d_{\mathfrak{m}}+d_{\mathfrak{m}}^*)_{ev})\oplus ran((d_{\mathfrak{m}}+d_{\mathfrak{m}}^*)_{odd})$. This implies immediately that also  $(d_{\mathfrak{m}}+d_{\mathfrak{m}}^*)_{ev}$ is a Fredholm operator on its domain endowed with the graph norm. Finally \eqref{lalala} is an easy consequence of the Hodge Theorem stated in Corollary \ref{giove}.
\end{proof}

\subsection{A topological obstruction to the existence of riemannian metric with finite $L^{2}-$cohomology}
Now we want to show another application of the vector spaces $ \overline{H}^{i}_{2,m\rightarrow M} (M,g)$. Consider again the complex  $(\Omega_{c}^{*}(M),d_{*})$. We will call a \textbf{ closed extension} of  $(\Omega_{c}^{*}(M),d_{*})$ any Hilbert complex $(L^{2}\Omega^{i}(M,g), D_{i})$ where $D_{i}:L^{2}\Omega^{i}(M,g)\rightarrow L^{2}\Omega^{i+1}(M,g)$ is a densely defined, closed operator which extends $d_{i}:\Omega^{i}_{c}(M,g)\rightarrow \Omega^{i+1}_{c}(M,g)$ and such that the action of $D_{i}$ on $\mathcal{D}(D_{i})$, its domain, coincides with the action of $d_{i}$ on $\mathcal{D}(D_i)$ in a distributional way. Obviously for every closed extension of  $(\Omega_{c}^{*}(M),d_{*})$ we have $(L^{2}\Omega^{*}(M,g),d_{min,*})\subseteq (L^{2}\Omega^{*}(M,g),D_{i})\subseteq (L^{2}\Omega^{*}(M,g),d_{max,*}). $ We will label with $\overline{H}^{i}_{2,D_{*}}(M,g)$,  $H^{i}_{2,D_{*}}(M,g)$ respectively the reduced cohomology and the cohomology groups of $(L^{2}\Omega^{i}(M,g), D_{i})$ and with $\mathcal{H}^{i}_{D_{*}}(M,g)$ its Hodge cohomology groups. Moreover if $(L^{2}\Omega^{*}(M,g),D'_{i})$ is another closed extension of  $(\Omega_{c}^{*}(M),d_{*})$ such that $(L^{2}\Omega^{*}(M,g),D_{i})\subseteq (L^{2}\Omega^{*}(M,g),D'_{i})$ we will label with $H^{i}_{2,D\rightarrow D'}(M,g), \overline{H}^{i}_{2,D\rightarrow D'}(M,g)$  respectively the image of the cohomology groups, reduced cohomology groups, of the complex  $(L^{2}\Omega^{*}(M,g),D_{i})$ into the cohomology groups, reduced cohomology groups, of the complex  $(L^{2}\Omega^{*}(M,g),D_{i}')$  induced by the natural inclusion of complexes.\\
Before we proceed we need  the following propositions.

\begin{prop}
Let (M,g) be an incomplete and oriented riemannian manifold of dimension $m$. For each $i=0,...,m$ consider $\mathcal{D}(d_{max,i})$. Let $\omega\in \mathcal{D}(d_{max,i}) $. Then there exists a sequence of smooth forms $\{\omega_{j}\}_{j\in \mathbb{N}}\subset \Omega^{i}(M)\cap L^{2}\Omega^{i}(M,g)$ such that :
\begin{enumerate}
\item $d_{i}\omega_{j}\in L^{2}\Omega^{i+1}(M,g)$.
\item $\omega_{j}\rightarrow \omega$ in $L^{2}\Omega^{i}(M,g)$.
\item $d_{i}\omega_{j}\rightarrow d_{max,i}\omega$ in $L^{2}\Omega^{i+1}(M,g)$.
\end{enumerate}
\label{gigino}
\end{prop}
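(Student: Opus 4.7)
The plan is to prove this by Friedrichs mollification combined with a partition of unity argument. First, I would fix a locally finite open cover $\{U_k\}_{k\in\mathbb{N}}$ of $M$ by relatively compact coordinate balls on which the metric $g$ has smooth coefficients uniformly comparable to the Euclidean metric, together with a subordinate smooth partition of unity $\{\phi_k\}$. Writing $\omega = \sum_k \phi_k \omega$, each $\phi_k \omega$ has compact support in $U_k$ and lies in $L^2$; moreover, since distributionally
\[
d(\phi_k \omega) = d\phi_k \wedge \omega + \phi_k \, d_{max,i}\omega,
\]
each piece $\phi_k \omega$ has its distributional exterior derivative in $L^2$.

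Next, working in the coordinate chart $U_k$, I would apply the standard Friedrichs mollifier $J_\epsilon$, acting coefficient by coefficient on the components of $\phi_k\omega$ in these coordinates. Since convolution commutes with partial derivatives on distributions, one has $d(J_\epsilon(\phi_k \omega)) = J_\epsilon(d(\phi_k \omega))$, and by the standard properties of Friedrichs mollifiers on Euclidean $L^2$ (equivalent to $L^2(g)$ on the precompact set $\overline{U_k}$), both $J_\epsilon(\phi_k \omega) \to \phi_k \omega$ and $J_\epsilon(d(\phi_k \omega)) \to d(\phi_k \omega)$ in $L^2$ as $\epsilon \to 0$. Since $\phi_k\omega$ has compact support in $U_k$, for all $\epsilon$ sufficiently small the support of $J_\epsilon(\phi_k \omega)$ remains inside $U_k$.

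Then I would carry out a diagonal selection. For each $j$ and each $k$, pick $\epsilon_{j,k}>0$ so small that the support of $J_{\epsilon_{j,k}}(\phi_k \omega)$ is contained in $U_k$ and
\[
\|J_{\epsilon_{j,k}}(\phi_k \omega) - \phi_k \omega\|_{L^2} + \|J_{\epsilon_{j,k}}(d(\phi_k \omega)) - d(\phi_k \omega)\|_{L^2} < 2^{-j-k}.
\]
Setting $\omega_j := \sum_k J_{\epsilon_{j,k}}(\phi_k \omega)$, this is a locally finite sum of smooth forms by the local finiteness of $\{U_k\}$ together with the support condition, so $\omega_j \in \Omega^i(M)$. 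Summing the geometric error bounds yields $\omega_j \to \omega$ and $d\omega_j \to d_{max,i}\omega$ in $L^2$, giving in particular that $\omega_j \in L^2\Omega^i(M,g)$ and $d\omega_j \in L^2\Omega^{i+1}(M,g)$.

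The main technical point is the interaction between the Euclidean mollifier in local coordinates and the global Riemannian $L^2$ norm; this is handled by the fact that on each precompact chart the two $L^2$ structures are equivalent, so local convergence transfers to global convergence. The second delicate point is ensuring smoothness of the global sum, which is secured by the support condition forcing the sum $\sum_k J_{\epsilon_{j,k}}(\phi_k\omega)$ to be locally finite just as the partition of unity is.
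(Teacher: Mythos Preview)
Your argument is correct and is the standard Friedrichs mollification/partition-of-unity proof of this density statement; the paper itself does not give a proof but simply refers to Cheeger \cite{C}, p.~93, where essentially the same regularization argument is carried out. Your treatment of the two delicate points (equivalence of the Euclidean and Riemannian $L^2$ structures on each precompact chart, and local finiteness of the mollified sum to guarantee global smoothness) is exactly what is needed, and the geometric error bound $2^{-j-k}$ cleanly handles the summation over infinitely many charts.
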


\begin{proof}
See \cite{C} pag 93.
\end{proof}

 The next proposition is a variation of a result of de Rham, see \cite{dR} Theorem 24.

\begin{prop}
Let (M,g) be an incomplete and oriented riemannian manifold of dimension $m$. For each $i=0,...,m$ consider $\mathcal{D}(d_{max,i})$. Let $\omega\in \overline{ran(d_{max,i})}$ be  such that $\omega$ is smooth. Then there exists $\eta\in \Omega^{i}(M)$ such that $d_{i}\eta=\omega$.
\label{luca}
\end{prop}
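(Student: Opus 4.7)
My plan is to reduce the statement to the classical de Rham theorem (\cite{dR}, Theorem 24), which asserts that a smooth form whose pairing against every smooth, compactly supported, closed form of complementary degree vanishes is necessarily exact in the smooth sense. Thus I only need to verify this vanishing criterion for the given $\omega$.

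First, since $\omega \in \overline{\operatorname{ran}(d_{max,i})} \subset L^2\Omega^{i+1}(M,g)$, choose $\{\alpha_j\} \subset \mathcal{D}(d_{max,i})$ with $d_{max,i}\alpha_j \to \omega$ in $L^2\Omega^{i+1}(M,g)$. Let $\phi \in \Omega_c^{m-i-1}(M)$ be closed, so that $d\phi = 0$. Since $\phi$ has compact support and is smooth, $*\phi \in \Omega_c^{i+1}(M)$ lies in $\mathcal{D}(\delta_{min,i}) = \mathcal{D}((d_{max,i})^*)$, and a direct computation gives
\begin{equation}
(d_{max,i})^*(*\phi) \;=\; \pm *d\,{*}{*}\phi \;=\; \pm *d\phi \;=\; 0.
\end{equation}

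Next I pass to the limit in the adjoint pairing:
\begin{equation}
\langle \omega, *\phi\rangle_{L^2} \;=\; \lim_{j\to\infty} \langle d_{max,i}\alpha_j, *\phi\rangle_{L^2} \;=\; \lim_{j\to\infty} \langle \alpha_j, (d_{max,i})^*(*\phi)\rangle_{L^2} \;=\; 0.
\end{equation}
Using $\langle \omega, *\phi\rangle_{L^2} = \pm \int_M \omega\wedge\phi$, this yields $\int_M \omega\wedge\phi = 0$ for every smooth, compactly supported, closed $(m-i-1)$-form $\phi$ on $M$. Since $\omega$ is smooth, de Rham's Theorem 24 of \cite{dR} then produces $\eta \in \Omega^i(M)$ with $d_i\eta = \omega$, completing the proof.

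The step I expect to require the most care is the verification that $*\phi$ genuinely lies in the domain of $(d_{max,i})^*$ with value zero; this is elementary because $*\phi$ has compact support, but one must be precise about which closed extension of $\delta$ is being identified with $(d_{max,i})^*$ and about the sign conventions in the formula $\delta = \pm *d*$. Once that formal identification is in place, the argument is simply the observation that the Hodge star transports the hypothesis into exactly the dual pairing assumption needed to apply de Rham's theorem.
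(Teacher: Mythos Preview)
Your proof is correct and follows essentially the same strategy as the paper: reduce to de Rham's duality criterion by showing $\int_M \omega\wedge\phi=0$ for every closed compactly supported $\phi$, and obtain this vanishing via an $L^2$ adjoint/integration-by-parts argument using $\delta(*\phi)=\pm *d\phi=0$. The only cosmetic difference is that the paper first replaces the approximating sequence $\alpha_j\in\mathcal{D}(d_{max,i})$ by smooth forms via the Cheeger-type approximation lemma (Proposition~\ref{gigino}) before integrating by parts, whereas you work directly with the Hilbert-space adjoint $(d_{max,i})^*=\delta_{min,i}$ and the fact that $*\phi\in\Omega_c^{i+1}(M)\subset\mathcal{D}(\delta_{min,i})$; your route is slightly more economical since it bypasses that lemma entirely.
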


\begin{proof}
By Poincar\'e duality between de Rham cohomology and compactly supported de Rham cohomology on an open and oriented manifold we know that it sufficient to show that $$\int_{M}\omega\wedge\phi=0$$ for each closed and compactly supported $n-i-1$form $\phi$ to get that $\omega$ is an exact $i+1-$form in the smooth de Rham complex.  Now, by Proposition \ref{gigino}, we know that there exists a sequence of smooth $i-$forms $\{\eta_{j}\}_{j\in \mathbb{N}}$ such that $d_{i}\eta_{j}\rightarrow \omega$ in $L^{2}\Omega^{i+1}(M,g).$ Then: $$\int_{M}\omega\wedge\phi=\int_{M}(\lim_{j\rightarrow \infty}d_{i}\eta_{j})\wedge\phi=\lim_{j\rightarrow \infty}\int_{M}d_{i}\eta_{j}\wedge\phi=0$$ by Stokes Theorem. 
\end{proof}

\begin{prop} Let $(L^{2}\Omega^{i}(M,g), D_{i})$ be any closed extension of $(\Omega_{c}^{*}(M),d_{*})$ where $(M,g)$ is an incomplete oriented  riemannian manifold. Then every cohomology class in $\overline{H}^{i}_{2,D_{*}}(M,g)$ has a smooth representative. The same conclusion holds for every cohomology class in $H^{i}_{2,D_{*}}(M,g)$.
\label{chicago}
\end{prop}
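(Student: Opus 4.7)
The plan is to produce smooth representatives by two parallel routes, one for reduced and one for unreduced cohomology, both funneling into classical interior elliptic regularity for the Hodge--de Rham Laplacian $\Delta=d\delta+\delta d$. The key preliminary observation is that because $D_i$ extends $d_i$ distributionally (by the definition of a closed extension), its Hilbert adjoint $D_i^{*}$ must act distributionally as the formal adjoint $\delta_i$ on its domain: for $\omega\in\mathcal{D}(D_i^{*})$ and $\phi\in\Omega_c^{i}(M)\subset\mathcal{D}(D_i)$ we have $\langle D_i^{*}\omega,\phi\rangle=\langle\omega,D_i\phi\rangle=\langle\omega,d_i\phi\rangle$, so $D_i^{*}\omega=\delta_i\omega$ distributionally. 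Consequently the abstract Laplacian $\Delta_i=D_i^{*}D_i+D_{i-1}D_{i-1}^{*}$ coincides with the classical Hodge--de Rham Laplacian $\Delta$ distributionally on $\mathcal{D}(\Delta_i)$, and by induction $\Delta_i^{l}=\Delta^{l}$ distributionally on $\mathcal{D}(\Delta_i^{l})$.

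For the reduced case, I would invoke the weak de Rham isomorphism in \eqref{pppp}: every class in $\overline{H}^{i}_{2,D_*}(M,g)$ has a unique harmonic representative $\omega'\in\mathcal{H}^{i}_{D_*}(M,g)=\mathrm{Ker}(\Delta_i)$. Then $\omega'\in L^2\Omega^{i}(M,g)$ satisfies $\Delta\omega'=0$ in the sense of distributions on $M$, so interior elliptic regularity for $\Delta$ immediately gives $\omega'\in\Omega^{i}(M)$.

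For the unreduced case, I would use Proposition \ref{dfff}: the smooth subcomplex $(\mathcal{D}^{\infty}(L^{2}\Omega^{i}(M,g)),D_i)$ is quasi-isomorphic to the full Hilbert complex, so every class in $H^{i}_{2,D_*}(M,g)$ admits a representative $\omega'\in\bigcap_{l\geq 0}\mathcal{D}(\Delta_i^{l})$. By the preliminary observation, $\Delta^{l}\omega'\in L^{2}\Omega^{i}(M,g)$ distributionally for every $l$, so the iterated interior estimate $\Delta u\in H^{s}_{\mathrm{loc}}\Rightarrow u\in H^{s+2}_{\mathrm{loc}}$ yields $\omega'\in H^{2l}_{\mathrm{loc}}$ for every $l$, and Sobolev embedding then gives $\omega'\in\Omega^{i}(M)$. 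The point that deserves care is the distributional identification $\Delta_i^{l}=\Delta^{l}$ on $\mathcal{D}(\Delta_i^{l})$, in particular verifying that successive applications of $D_i$ and $D_{i-1}^{*}$ to elements of $\mathcal{D}(\Delta_i^{l})$ remain compatible with the classical operators on distributions; this is a bookkeeping exercise once the adjoint identification $D_i^{*}=\delta_i$ (distributionally) is in hand.
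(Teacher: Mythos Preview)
Your proof is correct and follows essentially the same route as the paper: for the reduced case you invoke \eqref{pppp} to obtain a harmonic representative and apply elliptic regularity, and for the unreduced case you invoke Proposition \ref{dfff} to obtain a representative in $\mathcal{D}^{\infty}$ and again apply elliptic regularity. The paper's proof is terser, simply citing elliptic regularity (via de Rham's book) at both steps; your additional care in verifying that the abstract Laplacian $\Delta_i$ coincides distributionally with the classical Hodge--de Rham Laplacian, and in spelling out the iterated local Sobolev estimate for elements of $\mathcal{D}^{\infty}$, fills in exactly the details the paper leaves implicit.
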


\begin{proof}
By  \eqref{pppp} we know that every cohomology class in $\overline{H}^{i}_{2,D_{*}}(M,g)$ has a representative in $\mathcal{H}^{i}_{D_{*}}(M,g)$. Now, by elliptic regularity (see for example de Rham book \cite{dR}), it follows that every element in $\mathcal{H}^{i}_{D_{*}}(M,g)$ is smooth. Now if we look at Proposition \ref{dfff}, elliptic regularity tells us again that every element in $\mathcal{D}^{\infty}(L^{2}\Omega^i(M,g))$ is  smooth. Therefore from this we get immediately the statement  for   $H^{i}_{2,D_{*}}(M,g)$.
\end{proof}

From the above Propositions \ref{luca} and \ref{chicago} it follows that that there exists a well defined map from $\overline{H}^i_{2,D_*}(M,g)$, respectively from $H^i_{2,D_*}(M,g)$, to the ordinary de Rham cohomology of $M$ which assigns to each cohomology class  $[\omega]\in \overline{H}^{i}_{2,D_*}(M,g)$, respectively $[\omega]\in H^{i}_{2,D_*}(M,g)$, the cohomology class  in $H^{i}_{dR}(M)$ given  by  the smooth representatives of $[\omega].$ By Proposition \ref{luca} this cohomology class in $H^{i}_{dR}(M)$ does not depend on the choice of the smooth representative of $[\omega]$ and therefore this map is well defined. \\
We will label these maps:
\begin{equation}
s^{*}_{D,i}: H^{i}_{2,D_*}(M,g)\longrightarrow H^{i}_{dR}(M)\ \text{in the non-reduced case}
\label{pollok}
\end{equation}
and 
\begin{equation}
s^{*}_{r,D,i}: \overline{H}^{i}_{2,D_*}(M,g)\longrightarrow H^{i}_{dR}(M)\ \text{in the reduced case}
\label{apollok}
\end{equation}

In particular for the maximal and minimal extension we will label these maps:
\begin{equation}
s^{*}_{M,i}: H^{i}_{2,max}(M,g)\longrightarrow H^{i}_{dR}(M)\ \text{in the non-reduced case}
\label{pollo}
\end{equation}
and 
\begin{equation}
s^{*}_{r,M,i}: \overline{H}^{i}_{2,max}(M,g)\longrightarrow H^{i}_{dR}(M)\ \text{in the reduced case}
\label{apollo}
\end{equation}
and analogously for the minimal extension
\begin{equation}
s^{*}_{m,i}: H^{i}_{2,min}(M,g)\longrightarrow H^{i}_{dR}(M)\ \text{in the non-reduced case}
\label{pollom}
\end{equation}
and 
\begin{equation}
s^{*}_{r,m,i}: \overline{H}^{i}_{2,min}(M,g)\longrightarrow H^{i}_{dR}(M)\ \text{in the reduced case}
\label{apollom}
\end{equation}

Now we are ready to state  the following  proposition:
\begin{prop}
\label{toposodo}
Let $(M,g)$ be an open, oriented and incomplete riemannian manifold. Let $(L^{2}\Omega^{*}(M,g), D_{a,*}),(L^{2}\Omega^{*}(M,g), D_{b,*})$ be two closed extensions of $(\Omega_{c}^{*}(M),d_{*})$ such that  
\begin{equation}
(L^{2}\Omega^{*}(M,g),d_{min,*})\subseteq (L^{2}\Omega^{*}(M,g),D_{a,*})\subseteq  (L^{2}\Omega^{*}(M,g),D_{b,*})\subseteq  (L^{2}\Omega^{*}(M,g),d_{max,*}).
\label{paolone}
\end{equation}
Then the two following diagrams commute:
\begin{equation}
\label{aappaa}
\xymatrix{\ar @{} [dr] |{}
H_{c}^i(M)\ar[d] \ar[r] & H^i_{dR}(M)  \\
H^i_{2,min}(M,g)\ar[d]\ar[r] & H^i_{2,max}(M,g) \ar[u]_{s^*_{M,i}}\\
H^i_{2,D_{a,*}}(M,g)\ar[r]  & H^i_{2,D_{b,*}}(M,g) \ar[u]}
\xymatrix{\ar @{} [dr] |{}
H_{c}^i(M)\ar[d] \ar[r] & H^i_{dR}(M)  \\
\overline{H}^i_{2,min}(M,g)\ar[d]\ar[r] & \overline{H}^i_{2,max}(M,g) \ar[u]_{s^*_{r,M,i}}\\
\overline{H}^i_{2,D_{a,*}}(M,g)\ar[r]  & \overline{H}^i_{2,D_{b,*}}(M,g) \ar[u]}
\end{equation}
where all the above arrows without label are the natural maps between cohomology, respectively reduced cohomology groups, induced by the natural inclusion of the relative complexes.
\end{prop}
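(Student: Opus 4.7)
The plan is to unwind the constructions and observe that every map in the two diagrams acts ``by the identity on forms'' in the appropriate sense, so that commutativity is essentially tautological once the earlier propositions are invoked.

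\textbf{Step 1 (the lower square).} I would first record the key observation: for any containment $(L^2\Omega^*(M,g), E_*) \subseteq (L^2\Omega^*(M,g), F_*)$ of closed extensions of $(\Omega_c^*(M), d_*)$, the induced maps $H^i_{2,E_*} \to H^i_{2,F_*}$ and $\overline{H}^i_{2,E_*} \to \overline{H}^i_{2,F_*}$ send a class $[\omega]$ to the class of the \emph{same} form $\omega$, now regarded as an element of the larger complex. In particular the composite $H^i_{2,\min} \to H^i_{2,D_{a,*}} \to H^i_{2,D_{b,*}} \to H^i_{2,\max}$ factors as $[\omega] \mapsto [\omega] \mapsto [\omega] \mapsto [\omega]$, which coincides with the direct inclusion-induced arrow $H^i_{2,\min} \to H^i_{2,\max}$. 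This gives commutativity of the bottom square of both diagrams, since the quotient projection $H^i \to \overline{H}^i$ is natural with respect to inclusions of complexes.

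\textbf{Step 2 (the upper square).} For the upper square I would trace a class $[\omega]_c \in H_c^i(M)$ represented by a closed form $\omega \in \Omega^i_c(M)$. Since $\Omega^i_c(M) \subset \mathcal{D}(d_{\min,i})$, the downward arrow carries $[\omega]_c$ to $[\omega] \in H^i_{2,\min}(M,g)$; the horizontal arrow then carries it to $[\omega] \in H^i_{2,\max}(M,g)$. The form $\omega$ is smooth and compactly supported, so it is itself a smooth representative of its $L^2$-cohomology class, and by the definition of $s^*_{M,i}$ (the paragraph following \eqref{pollo}) we have $s^*_{M,i}([\omega]) = [\omega]_{dR}$. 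The direct horizontal arrow $H_c^i(M) \to H^i_{dR}(M)$ also sends $[\omega]_c$ to $[\omega]_{dR}$, so the two paths agree. The identical argument works for the reduced diagram, with $s^*_{r,M,i}$ in place of $s^*_{M,i}$: Propositions \ref{chicago} and \ref{luca} ensure that the de Rham class produced by $s^*_{r,M,i}$ is independent of the choice of smooth representative, so picking $\omega$ itself is legitimate.

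I do not anticipate any genuine obstacle: the statement is a bookkeeping assertion, and the analytic content — existence of smooth representatives and the fact that the de Rham class they produce is well-defined — has already been isolated in Propositions \ref{gigino}, \ref{luca} and \ref{chicago}. The only place requiring a line of explicit care is the reduced case, but there it suffices to note (by Step 1) that a smooth representative of a class in $\overline{H}^i_{2,D_{a,*}}(M,g)$ is automatically a smooth representative of its image in $\overline{H}^i_{2,D_{b,*}}(M,g)$, so the smooth-representative construction is compatible with every inclusion map along the chain \eqref{paolone}.
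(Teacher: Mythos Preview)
Your proposal is correct and follows essentially the same approach as the paper's own proof. Both arguments treat the lower square as immediate (inclusion maps act by identity on forms) and reduce the upper square to checking that the composite $H^i_c(M)\to H^i_{2,\max}(M,g)\stackrel{s^*_{M,i}}{\to} H^i_{dR}(M)$ agrees with the direct map; the paper phrases this as ``if $[\omega]=0$ in $H^i_{2,\max}$ then $[\omega]_{dR}=0$'' and invokes Proposition~\ref{luca}, while you invoke the already-established well-definedness of $s^*_{M,i}$ (which itself rests on Proposition~\ref{luca}) to justify choosing $\omega$ as the smooth representative --- these are equivalent formulations of the same step.
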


\begin{proof}
It is clear that both the two following diagrams commute:
$$ 
\xymatrix{\ar @{} [dr] |{}
H_{c}^i(M)\ar[d] \ar[rd]   \\
H^i_{2,min}(M,g)\ar[d]\ar[r] & H^i_{2,max}(M,g)\\
H^i_{2,D_{a,*}}(M,g)\ar[r]  & H^i_{2,D_{b,*}}(M,g) \ar[u]}
\xymatrix{\ar @{} [dr] |{}
H_{c}^i(M)\ar[d] \ar[rd]  \\
\overline{H}^i_{2,min}(M,g)\ar[d]\ar[r] & \overline{H}^i_{2,max}(M,g) \\
\overline{H}^i_{2,D_{a,*}}(M,g)\ar[r]  & \overline{H}^i_{2,D_{b,*}}(M,g) \ar[u]}
$$
So, to complete the proof, we have to show that the two following diagrams are both commutative:
$$
\xymatrix{\ar @{} [dr] |{}
H^i_{c}(M) \ar[d] \ar[d]\ar[rd]   \\
H^i_{2,max}(M,g) \ar[r]^{s^*_{M,i}} & H^i_{dR}(M)  }
\xymatrix{\ar @{} [dr] |{}
H^i_{c}(M) \ar[d] \ar[d]\ar[rd]   \\
\overline{H}^i_{2,max}(M,g) \ar[r]^{s^*_{r,M,i}} & H^i_{dR}(M)  }
$$
To prove this it is enough to show that given an $i-$form $\omega$ which is closed, smooth and with compact support, if $[\omega]=0$ in  $H^i_{2,max}(M,g)$ or in $\overline{H}^i_{2,max}(M,g)$ then also $s^*_{M,i}(\omega)=0$, respectively $s^*_{r,M,i}(\omega)=0$, that is the cohomology class of $\omega$ in $H^{i}_{dR}(M)$ is null. This last statement follows immediately from Proposition \ref{luca}.
\end{proof}

Using the previous proposition we get the following corollary in which the first statement  extends a result of Anderson, see \cite{MA}, to the  case of an incomplete riemannian metric both for the reduced and the unreduced $L^2-$cohomology groups.

\begin{cor}
\label{loplp}
 Let $(M,g)$ be as in the previous proposition. Then from \eqref{aappaa} we get these two  commutative diagrams whose arrows   are injective maps:

\begin{equation}
\label{acenadalgreco}
\xymatrix{\ar @{} [dr] |{}
\im(H^j_{c}(M)\rightarrow H^j_{dR}(M)) \ar[d] \ar[d]\ar[rd]   \\
\overline{H}^j_{2,m\rightarrow M}(M,g) \ar[r] &  \overline{H}^j_{2,D_{a}\rightarrow D_{b}}(M,g) }
\xymatrix{\ar @{} [dr] |{}
\im(H^j_{c}(M)\rightarrow H^j_{dR}(M)) \ar[d] \ar[d]\ar[rd]   \\
H^j_{2,m\rightarrow M}(M,g) \ar[r] & H^j_{2,D_{a}\rightarrow D_{b}}(M,g) }
\end{equation}
Moreover if $H^i_{c}(M)\rightarrow H^i_{dR}(M)$ is injective then: 
\begin{equation}
\label{bnmn}
H^i_{c}(M)\rightarrow H^i_{2,m\rightarrow M}(M,g),\ H^i_{c}(M)\rightarrow \overline{H}^i_{2,m\rightarrow M}(M,g)
\end{equation}
are injective and therefore for each closed extension $(L^{2}\Omega^*(M,g),D_{*})$ also the following maps are injective:
\begin{equation}
\label{bnmnb}
H^i_{c}(M)\rightarrow H^i_{2,D}(M,g),\ H^i_{c}(M)\rightarrow \overline{H}^i_{2,D}(M,g)
\end{equation}
\end{cor}

\begin{proof}
It is an immediate consequence of the previous proposition.
\end{proof}

Now we give  other three corollaries of  Proposition \ref{toposodo}. In particular the third corollary shows that there could exist a \textbf{topological obstruction} to the existence of a riemannian metric on $g$  with certain analytic properties.

\begin{cor}
Let $M$  be an open manifold such that for some $j$ $im(H_{c}^{j}(M)\stackrel{i^*_{j}}{\rightarrow}H_{dR}^j(M))$ is non-trivial. Then for every riemannian metric $g$  on $M$ and for every pair of closed extensions $(L^{2}\Omega^{*}(M,g), D_{a,*})$, $(L^{2}\Omega^{*}(M,g), D_{b,*})$ such that       $ (L^{2}\Omega^{*}(M,g), D_{a,*})  \subseteq (L^{2}\Omega^{*}(M,g), D_{b,*})$ we have that for the same $j$ both vector spaces $$H^{j}_{2,D_{a}\rightarrow D_{b}}(M,g),\  \overline{H}^{j}_{2,D_{a}\rightarrow D_{b}}(M,g)$$ are non-trivial.
In particular this implies that for the same $j$ the following four vector spaces are non-trivial: $$ H^{j}_{2,D_{a}}(M,g),\  H^{j}_{2,D_{b}}(M,g),\ \overline{H}^{j}_{2,D_{a}}(M,g),\  \overline{H}^{j}_{2,D_{b}}(M,g) .$$
\end{cor}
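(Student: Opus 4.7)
The plan is to reduce everything to corollary \ref{loplp}, which already packages the essential injectivity statements that we need. Concretely, corollary \ref{loplp} exhibits, for every $j$, the chain of injective maps
\begin{equation*}
\im(H^j_{c}(M)\rightarrow H^j_{dR}(M))\hookrightarrow H^j_{2,m\rightarrow M}(M,g)\hookrightarrow H^j_{2,D_{a}\rightarrow D_{b}}(M,g)
\end{equation*}
and similarly
\begin{equation*}
\im(H^j_{c}(M)\rightarrow H^j_{dR}(M))\hookrightarrow \overline{H}^j_{2,m\rightarrow M}(M,g)\hookrightarrow \overline{H}^j_{2,D_{a}\rightarrow D_{b}}(M,g).
\end{equation*}
So the first step is simply to invoke this corollary with the given $j$ and the given pair of closed extensions $D_{a,*}\subseteq D_{b,*}$, which by hypothesis sits between the minimal and maximal extensions.

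Once these injections are in place, the non-triviality hypothesis $\im(H_{c}^{j}(M)\to H_{dR}^{j}(M))\neq 0$ transports, via injectivity, to the conclusion that both $H^j_{2,D_a\to D_b}(M,g)$ and $\overline{H}^j_{2,D_a\to D_b}(M,g)$ are non-zero. This is where all the analytic content (propositions \ref{gigino}, \ref{luca}, \ref{chicago}, and the commutative diagrams of proposition \ref{toposodo}) has already been used in the course of establishing corollary \ref{loplp}, so no new work is required here.

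For the \emph{in particular} statement about the four vector spaces, I would argue purely set-theoretically. By definition $H^j_{2,D_a\to D_b}(M,g)$ is the image of the map $H^j_{2,D_a}(M,g)\to H^j_{2,D_b}(M,g)$ induced by the inclusion of complexes. Thus if $H^j_{2,D_a\to D_b}(M,g)\neq 0$, the source of this map cannot vanish (otherwise its image would be zero), and the target cannot vanish either (since a non-zero subspace sits inside it). Running the same argument for the reduced groups gives the non-triviality of $\overline{H}^j_{2,D_a}(M,g)$ and $\overline{H}^j_{2,D_b}(M,g)$.

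There is no genuine obstacle in this proof: the content lies entirely in corollary \ref{loplp}, which in turn rests on proposition \ref{toposodo} (commutativity of the diagrams involving $s^*_{M,i}$ and $s^*_{r,M,i}$) and on the smooth-representative/de Rham facts (propositions \ref{luca} and \ref{chicago}). Consequently the proof should consist of a single short paragraph that quotes corollary \ref{loplp}, uses injectivity, and then applies the elementary observation about images of linear maps.
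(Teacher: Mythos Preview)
Your proposal is correct and matches the paper's approach: the paper does not give an explicit proof for this corollary, presenting it as one of three immediate consequences of proposition \ref{toposodo} (equivalently, of corollary \ref{loplp}). Your reduction to the injections of corollary \ref{loplp} together with the elementary observation about images of linear maps is exactly what is intended.
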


\begin{cor}
Let $(M,g)$ be an open, oriented and incomplete riemannian manifold. Suppose that there exists a pair of closed extensions $(L^{2}\Omega^{*}(M,g), D_{a,*}), (L^{2}\Omega^{*}(M,g), D_{b,*}) $ of $(\Omega_{c}^*(M),d_{*})$ such  that they are both weak Fredholm and $ (L^{2}\Omega^{*}(M,g), D_{a,*})\subseteq (L^{2}\Omega^{*}(M,g), D_{b,*}) $. \\Then  $ im(H^{j}_{c}(M)\stackrel{i^*_{j}}{\longrightarrow}H^{j}_{dR}(M))$   is finite dimensional and we have 
\begin{equation}
dim( \im(H^{j}_{c}(M)\stackrel{i^*_{j}}{\longrightarrow}H^{j}_{dR}(M)))\leq dim\overline{H}^{j}_{2,D_{a}}(M,g)
\end{equation}

\begin{equation}
dim( \im(H^{j}_{c}(M)\stackrel{i^*_{j}}{\longrightarrow}H^{j}_{dR}(M)))\leq dim\overline{H}^{j}_{2,D_{b}}(M,g)
\end{equation}

In particular if one of the two complexes $ (L^{2}\Omega^{*}(M,g),d_{max/min,*})$  is weak Fredholm then also the other one is weak Fredholm and for each $j=0,...,m$ we have: 
\begin{equation}
dim( \im(H^{j}_{c}(M)\stackrel{i^*_{j}}{\longrightarrow}H^{j}_{dR}(M)))\leq dim\overline{H}^{j}_{2,max}(M,g)
\end{equation}
\begin{equation}
 dim( \im(H^{j}_{c}(M)\stackrel{i^*_{j}}{\longrightarrow}H^{j}_{dR}(M)))\leq dim\overline{H}^{j}_{2,min}(M,g).
\end{equation}
Finally if one of the two complexes $ (L^{2}\Omega^{*}(M,g),d_{max/min,*})$  is  Fredholm then for each $j=0,...,m$ we have: 
\begin{equation}
dim( \im(H^{j}_{c}(M)\stackrel{i^*_{j}}{\longrightarrow}H^{j}_{dR}(M)))\leq dimH^{j}_{2,max}(M,g)
\end{equation}
\begin{equation}
 dim( \im(H^{j}_{c}(M)\stackrel{i^*_{j}}{\longrightarrow}H^{j}_{dR}(M)))\leq dimH^{j}_{2,min}(M,g).
\end{equation}
\label{pinotto}
\end{cor}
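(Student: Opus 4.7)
The plan is to deduce Corollary \ref{pinotto} directly from Corollary \ref{loplp} combined with the Hodge-theoretic consequences of the weak/Fredholm hypothesis.

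First I would observe that Corollary \ref{loplp} provides, for every pair of closed extensions $(L^2\Omega^*(M,g),D_{a,*})\subseteq (L^2\Omega^*(M,g),D_{b,*})$, an injective map
\begin{equation*}
\im(H^j_c(M)\to H^j_{dR}(M))\hookrightarrow \overline{H}^j_{2,D_a\to D_b}(M,g)=\im\bigl(\overline{H}^j_{2,D_a}(M,g)\to\overline{H}^j_{2,D_b}(M,g)\bigr).
\end{equation*}
Under the hypothesis that both complexes are weak Fredholm, the Hodge cohomology $\mathcal{H}^j_{D_{a/b}}(M,g)$ is finite dimensional, so by the weak de Rham isomorphism \eqref{pppp} both $\overline{H}^j_{2,D_a}(M,g)$ and $\overline{H}^j_{2,D_b}(M,g)$ are finite dimensional, and hence so is their image $\overline{H}^j_{2,D_a\to D_b}(M,g)$. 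The injection above then forces $\im(H^j_c(M)\to H^j_{dR}(M))$ to be finite dimensional with dimension bounded by $\dim\overline{H}^j_{2,D_a\to D_b}(M,g)$, which in turn is bounded by both $\dim\overline{H}^j_{2,D_a}(M,g)$ and $\dim\overline{H}^j_{2,D_b}(M,g)$ since it is the image of a linear map between these two spaces. This gives the two stated inequalities.

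Next, for the particular case $(D_{a,*},D_{b,*})=(d_{min,*},d_{max,*})$ I need to show that one of the two complexes being weak Fredholm forces the other one to be too. For this I invoke Theorem \ref{mario}, which says that $(L^2\Omega^*(M,g),d_{min,*})\subseteq(L^2\Omega^*(M,g),d_{max,*})$ is a pair of complementary Hilbert complexes via the Hodge star. Then part $(3)$ of Proposition \ref{errew} gives the isomorphism $\mathcal{H}^j_{min}(M,g)\cong\mathcal{H}^{n-j}_{max}(M,g)$, so one Hodge cohomology is finite dimensional if and only if the other is; equivalently, one complex is weak Fredholm iff the other is. Applying the general inequality then yields the bounds with $\overline{H}^j_{2,max}(M,g)$ and $\overline{H}^j_{2,min}(M,g)$ on the right-hand side.

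Finally, for the Fredholm (not merely weak Fredholm) statement, I use Proposition \ref{topoamotore}: when a Hilbert complex has finite-dimensional (unreduced) cohomology, then $\mathrm{ran}(D_{i-1})$ is closed, so $H^i\cong\overline{H}^i\cong\mathcal{H}^i$. Hence under the Fredholm hypothesis on one of $(L^2\Omega^*(M,g),d_{max/min,*})$, both complexes are Fredholm (by the same Hodge-star argument as above, now applied at the level of \eqref{pppp} together with Proposition \ref{fred}), and the reduced cohomology groups coincide with the unreduced ones. The inequalities obtained in the reduced case therefore upgrade immediately to the corresponding inequalities with $\dim H^j_{2,max}(M,g)$ and $\dim H^j_{2,min}(M,g)$, finishing the proof. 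No step here is particularly delicate; the only mild point to watch is the equivalence of Fredholmness for the min and max complexes, which is handled cleanly by the complementary structure from Theorem \ref{mario}.
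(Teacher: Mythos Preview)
Your argument is correct and follows exactly the paper's approach: the paper simply says ``It is an immediate consequence of corollary \ref{loplp}'', and you have faithfully unpacked that implication together with the complementary structure (Theorem \ref{mario}, Proposition \ref{errew}) to handle the equivalence of (weak) Fredholmness for the min and max complexes. One small notational caveat: in this paper $\mathcal{H}^j_{min}(M,g)$ and $\mathcal{H}^j_{max}(M,g)$ are reserved (see \eqref{aaaaaaaa}) for $Ker(d_{min/max,j})\cap Ker(\delta_{min/max,j-1})$, not for the Hodge cohomology of the min/max Hilbert complexes, which are denoted $\mathcal{H}^j_{rel/abs}(M,g)$; your intended isomorphism is $\mathcal{H}^j_{rel}(M,g)\cong \mathcal{H}^{n-j}_{abs}(M,g)$.
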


\begin{proof} It is an immediate consequence of Corollary \ref{loplp}.
\end{proof}

Now we conclude this subsection with the following corollary. We will use it later in Prop. \ref{lastlast} and in Cor. \ref{lastlastlast} to show some examples of open and oriented manifolds which does not admit a riemannian metric with finite $L^2-$cohomology or  with finite reduced $L^2-$cohomology.
\begin{cor}
\label{gianni}
Let $M$ be an open, oriented and incomplete riemannian manifold where $m=dim(M)$. Suppose that for some $j\in \{0,...,m\}$  $ \im(H^{j}_{c}(M)\stackrel{i^*_j}{\longrightarrow}H^{j}_{dR}(M))$ is infinite dimensional. Then on $M$ there is no riemannian metric $g$ (complete or incomplete) such that, for some closed extension $(L^{2}\Omega^{*}(M,g), D_{*})$ of $(\Omega^{*}_{c}(M),d_{*})$, one of the following properties is satisfied:

\begin{enumerate}
\item  $\overline{H}^{j}_{2,D_{*}}(M,g)$ or $\overline{H}^{m-j}_{2,D_{*}}(M,g)$ is finite dimensional.
\item  $H^{j}_{2,D_{*}}(M,g)$ or $H^{m-j}_{2,D_{*}}(M,g)$ is finite dimensional.
\item $D_{j}^*\circ D_{j}+D_{j-1}\circ D_{j-1}^*$ on its domain (as defined in \eqref{saed}) endowed with the graph norm is a Fredholm operator.
\end{enumerate}
Moreover on $M$ there is no  riemannian metric $g$ such that:
\begin{enumerate}
\item $\Delta_{max,j}$, the maximal closed extension of $\Delta_{j}:\Omega_{c}^j(M)\rightarrow \Omega^j_{c}(M)$,  has finite dimensional nullspace.
\item  $\Delta_{min,j}$, the minimal closed extension of $\Delta_{j}:\Omega_{c}^j(M)\rightarrow \Omega^j_{c}(M)$,  satisfies\\ $dim(ran(\Delta_{min,j})^{\bot})<\infty$.
\end{enumerate}

\end{cor}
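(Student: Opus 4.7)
The plan is to derive each conclusion by exhibiting an explicit infinite-dimensional subspace inside the allegedly finite-dimensional object, invoking corollary \ref{loplp} together with Hodge star duality. Write $V:=\im(H^j_c(M)\to H^j_{dR}(M))$, which is infinite dimensional by hypothesis.

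For the degree-$j$ statements in items $(1)$ and $(2)$ of the first list, I apply corollary \ref{loplp} to the pair $(L^2\Omega^*(M,g),D_*)\subseteq(L^2\Omega^*(M,g),d_{max,*})$ to obtain injections
\begin{equation*}
V\hookrightarrow \overline{H}^j_{2,D\to d_{max}}(M,g),\qquad V\hookrightarrow H^j_{2,D\to d_{max}}(M,g),
\end{equation*}
so the image groups are infinite dimensional. Since these are quotients of $\overline{H}^j_{2,D_*}(M,g)$ and $H^j_{2,D_*}(M,g)$ respectively, the latter two are forced to be infinite dimensional, contradicting the hypothesis.

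For the $(m-j)$ case, given a closed extension $D_*$ I construct an auxiliary closed extension $\widetilde D_*$ of $(\Omega^*_c(M),d_*)$ satisfying $\overline{H}^j_{2,\widetilde D_*}(M,g)\cong \overline{H}^{m-j}_{2, D_*}(M,g)$, reducing to the previous paragraph. Set $\widetilde D_k := \epsilon_k\,{*}\,D^*_{m-k-1}\,{*}^{-1}$, where $D^*_{m-k-1}$ is the Hilbert-space adjoint and the sign $\epsilon_k$ is chosen so that $\widetilde D_k$ coincides with $d_k$ on $\Omega^k_c(M)$; this is possible via the identity $\delta=\pm{*}d{*}$ and the fact that $D^*_{m-k-1}$ restricts to the formal adjoint $\delta$ on $\Omega^{m-k}_c(M)$. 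Each $\widetilde D_k$ is closed and densely defined because $*$ is an isometry and $D^*_{m-k-1}$ is closed and densely defined, and $\widetilde D_{k+1}\circ\widetilde D_k=0$ follows from $D^*_{m-k-2}\circ D^*_{m-k-1}=0$. A direct computation (or the observation that $*$ isometrically intertwines $\widetilde D_*$ with the dual complex $D^*_*$ up to the fixed signs) gives $*\,Ker(\widetilde D_k)=Ker(D^*_{m-k-1})$ and $*\,Ker(\widetilde D^*_{k-1})=Ker(D_{m-k})$, whence
\begin{equation*}
\mathcal{H}^k(\widetilde D_*)\;\cong\;\mathcal{H}^{m-k}(D_*).
\end{equation*}
Combining this with the weak de Rham isomorphisms \eqref{pppp} applied to both complexes yields $\overline{H}^k_{2,\widetilde D_*}(M,g)\cong \overline{H}^{m-k}_{2,D_*}(M,g)$. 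Applying the first paragraph to the closed extension $\widetilde D_*$ with $k=j$ contradicts finite dimensionality of $\overline{H}^{m-j}_{2,D_*}(M,g)$. In the non-reduced case, finite dimensionality of $H^{m-j}_{2,D_*}(M,g)$ forces $ran(D_{m-j-1})$ to be closed by proposition \ref{topoamotore}, so $H^{m-j}_{2,D_*}=\overline{H}^{m-j}_{2,D_*}$ and the reduced argument applies.

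For item $(3)$ of the first list: if the complex Laplacian $D^*_j D_j+D_{j-1}D^*_{j-1}$ were Fredholm its kernel would be finite dimensional, but by \eqref{said} this kernel equals $\mathcal{H}^j(D_*)\cong \overline{H}^j_{2,D_*}(M,g)$, which was just shown infinite dimensional. For the two ``moreover'' items: the inclusion $\mathcal{H}^j_{abs}(M,g)\subseteq Ker(\Delta_{max,j})$ is immediate from the definitions, for any $\alpha\in\mathcal{H}^j_{abs}$ satisfies $d\alpha=0$ and $\delta\alpha=0$ distributionally, hence $\alpha\in\mathcal{D}(\Delta_{max,j})$ and $\Delta_{max,j}\alpha=0$; since $\mathcal{H}^j_{abs}(M,g)\cong \overline{H}^j_{2,max}(M,g)$ is infinite dimensional, so is $Ker(\Delta_{max,j})$. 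Finally, formal self-adjointness of the classical Laplacian $\Delta_j:\Omega^j_c(M)\to\Omega^j_c(M)$ yields $(\Delta_{min,j})^*=\Delta_{max,j}$, so $ran(\Delta_{min,j})^\perp=Ker(\Delta_{max,j})$ is infinite dimensional as well. The main technical point is the careful construction of $\widetilde D_*$ and the verification that the Hodge-star identification transports all the relevant cohomological data; every other step is routine assembly of facts already proved.
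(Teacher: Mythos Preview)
Your proof is correct. The degree-$j$ part, item $(3)$, and the two ``moreover'' items follow the paper's own argument essentially verbatim (corollary \ref{loplp} for the injections, \eqref{said}--\eqref{pppp} for the complex Laplacian, and $(\Delta_{min,j})^*=\Delta_{max,j}$ for the last point).

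The one place where your route genuinely differs from the paper is the $(m-j)$ case. The paper simply cites theorem \ref{mario}: the Hodge star makes $(L^2\Omega^*,d_{min,*})$ and $(L^2\Omega^*,d_{max,*})$ complementary, and the proof of theorem \ref{duality} (see remark \ref{piop}) then gives, degree by degree, that finite dimensionality of $\overline{H}^{m-j}_{2,m\to M}(M,g)$ forces finite dimensionality of $\overline{H}^{j}_{2,m\to M}(M,g)$; combined with the injection $V\hookrightarrow \overline{H}^{j}_{2,m\to M}(M,g)$ from corollary \ref{loplp} this yields the contradiction. You instead build an auxiliary closed extension $\widetilde D_k=\epsilon_k\,{*}\,D^*_{m-k-1}\,{*}^{-1}$ and show directly that its $j$-th reduced cohomology is isomorphic to the $(m-j)$-th reduced cohomology of $D_*$; then you apply the degree-$j$ argument to $\widetilde D_*$. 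Your verification that $\widetilde D_*$ is a bona fide closed extension sandwiched between $d_{min,*}$ and $d_{max,*}$ is correct (it follows from $\delta_{min}\subseteq D^*\subseteq\delta_{max}$ together with ${*}\,\delta_{min/max}\,{*}^{-1}=\pm d_{min/max}$, as in the proof of theorem \ref{mario}). This explicit construction is a bit longer but more self-contained: it avoids having to unwind the proof of theorem \ref{duality} to extract the single-degree statement needed here, and it makes transparent that the $(m-j)$ obstruction is just the $j$-obstruction for a Hodge-dual complex. The paper's approach is shorter because that unwinding has already been done earlier.
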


\begin{proof}
The first two points are immediate consequence of Corollary \ref{loplp} and Theorem \ref{mario}. The third  point follows immediately by \eqref{said} and \eqref{pppp}.
Finally, for the last two points , if $Ker(\Delta_{max,j})$ is finite dimensional then all the other closed extensions of $\Delta_{j}:\Omega^j_{c}(M)\rightarrow \Omega^j_{c}(M)$ have finite dimensional nullspace. So we can apply the third point to get the conclusion. Finally if we consider $\Delta_{min,j}$ then we have $\Delta_{min,j}^*=\Delta_{max,j}$. So if  $dim(ran(\Delta_{min,j})^{\bot})<\infty$  then $Ker(\Delta_{max,j})$ is finite dimensional. Now by the previous point we can get the conclusion. 
\end{proof}

\subsection{$L^2$ and topological signature on an open oriented and incomplete riemannian manifold.}

The aim of this subsection is to show that if $(M,g)$ is an open oriented and incomplete riemannian manifold such that  for  $i=2k$  $\overline{H}^{i}_{2,m\rightarrow M}(M,g)$ is  finite dimensional, where $4k=dimM$, then we can define over $M$ an $L^{2}-$signature and a topological signature.  The first step is to show  that using the wedge product we can construct a well defined and non-degenerate pairing between $\overline{H}^{i}_{2,m\rightarrow M}(M,g)$ and $\overline{H}^{n-i}_{2,m\rightarrow M}(M,g)$ where $n=dimM$. In fact any cohomology class $[\omega]\in \overline{H}^{i}_{2,m\rightarrow M}(M,g) $ is a cohomology class in $\overline{H}^{i}_{2,max}(M,g)$ which admits a representative in $Ker(d_{min,i})$. So we can define:
\begin{equation}
\overline{H}^{i}_{2,m\rightarrow M}(M,g)\times \overline{H}^{n-i}_{2,m\rightarrow M}(M,g)\longrightarrow \mathbb{R},\  ([\eta],[\omega])\mapsto \int_{M} \eta\wedge \omega
\label{cambo}
\end{equation}
where $\omega\in Ker(d_{min,i})$ and $\eta\in Ker(d_{min,n-i})$

\begin{prop}
Let (M,g) be an open, oriented and incomplete riemannian manifold of dimension $n$.
Then \eqref{cambo} is a well defined and  non degenerate pairing and therefore when the vector spaces $\overline{H}^{i}_{2,m\rightarrow M}(M,g)\ i=0,...,n$ are finite dimensional it induces an isomorphism between $$\overline{H}^{i}_{2,m\rightarrow M}(M,g)\ and \ (\overline{H}^{n-i}_{2,m\rightarrow M}(M,g))^*.$$
\end{prop}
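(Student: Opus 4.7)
The plan is to handle well-definedness (convergence plus independence of representative) and non-degeneracy separately, in both cases exploiting the Hodge-star identity $\alpha\wedge *\beta=\langle\alpha,\beta\rangle\,dvol_g$ together with the adjoint relations $(d_{max,i-1})^*=\delta_{min,i-1}$ and the intertwining $*d_{min,i}=\pm\delta_{min,n-i-1}*$ established in the proof of Theorem \ref{mario}. Convergence of $\int_M\eta\wedge\omega$ is then immediate from Cauchy--Schwarz applied to $\int_M\eta\wedge\omega=\pm\langle\eta,*\omega\rangle_{L^2(M,g)}$.

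For independence of the representative of $\eta$, by continuity of the pairing and density it suffices to verify that $\int_M d_{max,i-1}\alpha\wedge\omega=0$ for every $\alpha\in\mathcal{D}(d_{max,i-1})$ and every $\omega\in Ker(d_{min,n-i})$. The hypothesis $d_{min,n-i}\omega=0$ combined with the intertwining $*d_{min,n-i}=\pm\delta_{min,i-1}*$ forces $*\omega\in Ker(\delta_{min,i-1})=Ker((d_{max,i-1})^*)$, so
\begin{equation*}
\int_M d_{max,i-1}\alpha\wedge\omega=\pm\langle d_{max,i-1}\alpha,*\omega\rangle_{L^2}=\pm\langle\alpha,\delta_{min,i-1}(*\omega)\rangle_{L^2}=0.
\end{equation*}
Symmetry handles the $\omega$-slot.

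For non-degeneracy, given a nonzero class $[\eta]\in\overline{H}^i_{2,m\rightarrow M}(M,g)$, I would pick a preimage class in $\overline{H}^i_{2,min}(M,g)$ together with its harmonic representative $\eta\in\mathcal{H}^i_{rel}(M,g)$. In the notation of the proof of Theorem \ref{duality} (applied to the complementary pair of Theorem \ref{mario}), the image of $[\eta]$ in $\overline{H}^i_{2,max}(M,g)$ corresponds via the weak de Rham isomorphism \eqref{pppp} to $\xi:=\pi_{1,i}(\eta)\in\mathcal{H}^i_{abs}(M,g)$, and $\xi\neq 0$ precisely because $[\eta]\neq 0$ in $\overline{H}^i_{2,m\rightarrow M}(M,g)$ (cf. Remark \ref{piop}). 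By Proposition \ref{errew}.4 the Hodge star sends $\mathcal{H}^i_{abs}(M,g)$ to $\mathcal{H}^{n-i}_{rel}(M,g)$, so $*\xi$ represents a class in $\overline{H}^{n-i}_{2,min}(M,g)$ whose image in $\overline{H}^{n-i}_{2,m\rightarrow M}(M,g)$ is a valid second argument of the pairing. Writing $\eta=\xi+\pi_{2,i}(\eta)$ (the $\pi_{3,i}$ component vanishes, as shown inside the proof of Theorem \ref{duality}), with $\pi_{2,i}(\eta)\in\overline{ran(d_{max,i-1})}$ orthogonal to $\mathcal{H}^i_{abs}\ni\xi$, a direct computation yields
\begin{equation*}
\int_M\eta\wedge *\xi=\langle\eta,\xi\rangle_{L^2}=\|\xi\|^2_{L^2}>0,
\end{equation*}
which witnesses non-degeneracy; the other slot follows by symmetry, and the induced isomorphism with the dual is then automatic once the spaces are finite-dimensional.

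The only real obstacle is the bookkeeping of Hodge-star identifications between the four harmonic spaces $\mathcal{H}^i_{abs}$, $\mathcal{H}^i_{rel}$, $\mathcal{H}^{n-i}_{abs}$, $\mathcal{H}^{n-i}_{rel}$, and the verification that $*\xi$ really lands in $\mathcal{H}^{n-i}_{rel}$ so that $[*\xi]$ lies in the image of $\overline{H}^{n-i}_{2,min}\to\overline{H}^{n-i}_{2,max}$; both points reduce to the intertwining identities used inside the proof of Theorem \ref{mario} and present no substantive difficulty.
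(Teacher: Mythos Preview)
Your argument is correct. The well-definedness portion is essentially the paper's own argument: you use the adjoint relation $(d_{max,i-1})^*=\delta_{min,i-1}$ together with $*Ker(d_{min,n-i})=Ker(\delta_{min,i-1})$, which is exactly the orthogonality $Ker(d_{min})^{\bot}=\overline{ran(\delta_{max})}$ that the paper invokes.

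For non-degeneracy, however, you take a genuinely different route. The paper argues contrapositively and globally: if $\int_M\eta\wedge\omega=0$ for every $\omega\in Ker(d_{min,n-i})$, then $\eta\perp *Ker(d_{min,n-i})=Ker(\delta_{min,i-1})$, so $\eta\in\overline{ran(d_{max,i-1})}$ and $[\eta]=0$. This needs nothing beyond the Hodge-star intertwining and the basic identity $(Ker(\delta_{min,i-1}))^{\bot}=\overline{ran(d_{max,i-1})}$. Your approach is instead constructive: you produce an explicit dual witness $[*\xi]\in\overline{H}^{n-i}_{2,m\rightarrow M}$ by taking the harmonic projection $\xi=\pi_{1,i}(\eta)\in\mathcal{H}^i_{abs}$ and starring it into $\mathcal{H}^{n-i}_{rel}$, then computing $\int_M\eta\wedge *\xi=\|\xi\|^2>0$. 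This works, but it imports the machinery of the proof of Theorem~\ref{duality} (the decomposition $\eta=\xi+\pi_{2,i}(\eta)$, the vanishing of $\pi_{3,i}$, and Remark~\ref{piop}), whereas the paper's orthogonality argument is self-contained. On the other hand, your version has the merit of exhibiting the duality concretely at the level of harmonic representatives, making the connection with Theorem~\ref{duality} explicit rather than leaving it as a separate remark.
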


\begin{proof}
The first step is to show that \eqref{cambo} is well defined. Let $\eta',\ \omega'$  be other two forms such that $[\eta]=[\eta']$ in $\overline{H}^{i}_{2,m\rightarrow M}(M,g)$,   $[\omega]=[\omega']$ in $\overline{H}^{n-i}_{2,m\rightarrow M}(M,g)$ and that $\omega'\in Ker(d_{min,i})$, $\eta'\in Ker(d_{min,n-i})$ . Then there exist $\alpha \in \overline{d_{max,i-1}}\cap \mathcal{D}(d_{min,i})$ and  $\beta \in \overline{d_{max,n-i-1}}\cap \mathcal{D}(d_{min,n-i})$ such that $\eta =\eta'+\alpha$ and $\omega=\omega'+\beta$. Therefore: $$\int_{M}\eta\wedge\omega=\int_{M}(\eta'+\alpha)\wedge(\omega'+\beta)=\int_{M}\eta'\wedge\omega'+\int_{M}\eta'\wedge\beta+\int_{M}\alpha\wedge\omega'+\int_{M}\alpha\wedge\beta$$
Now $$\int_{M}\eta'\wedge\beta=\pm\int_{M}<\eta',*\beta>dvol_{M}=<\eta',*\beta>_{L^{2}\Omega^i(M,g)}=0$$ because $Ker(d_{min,i})^{\bot}=\overline{ran(\delta_{max,i})}.$ In the same way: $$\int_{M}\alpha\wedge\beta=\pm\int_{M}<\alpha,*\beta>dvol_{M}=<\alpha,*\beta>_{L^{2}\Omega^i(M,g)}=0.$$ Finally 
$$\int_{M}\alpha\wedge\omega'=\pm\int_{M}<\alpha,*\omega'>dvol_{M}=<\alpha,*\omega'>_{L^{2}\Omega^i(M,g)}=0$$ because 
$Ker(\delta_{min,i-1})^{\bot}=\overline{ran(d_{max,i-1})}$. So we can conclude that \eqref{cambo} is well defined. Now fix $[\eta] \in \overline{H}^{i}_{2,m\rightarrow M}(M,g)$ and suppose that for each  $[\omega] \in \overline{H}^{n-i}_{2,m\rightarrow M}(M,g)$ the pairing \eqref{cambo} vanishes. Then this means that for each $\omega\in Ker(d_{min,n-i})$ we have $\int_{M}\eta\wedge \omega=0$. We also know that $\int_{M}\eta\wedge \omega=<\eta,*\omega>_{L^2\Omega^i(M,g)}$ and that  $*(Ker(d_{min,n-i}))=Ker(\delta_{min,i-1})$ . So by the fact that $(Ker(\delta_{min,i-1}))^{\bot}=\overline{ran(d_{max,i-1})}$ we obtain that $[\eta]=0$. In the same way if  $[\omega]\in \overline{H}^{n-i}_{2,m\rightarrow M}(M,g)$ is such that  for each  $[\eta] \in \overline{H}^{i}_{2,m\rightarrow M}(M,g)$ the pairing \eqref{cambo} vanishes then we know that  for each $\eta\in Ker(d_{max,i})$ we have $\int_{M}\eta\wedge \omega=0$. But we know that  $\int_{M}\eta\wedge \omega=<\eta,*\omega>_{L^{2}\Omega^i(M,g)}$. So by the fact that   $*(\overline{ran(d_{max,n-i-1}}))=\overline{ran(\delta_{max,i})}$  and  that $(Ker(d_{min,i}))^{\bot}=\overline{ran(\delta_{max,i})}$ we obtain that $[\omega]=0$.\\  So we can conclude that the pairing \eqref{cambo} is well defined and non-degenerate and therefore when the vector spaces $\overline{H}^{i}_{2,m\rightarrow M}(M,g)\ i=0,...,n$ are finite dimensional it induces an isomorphism between $\overline{H}^{i}_{2,m\rightarrow M}(M,g)$ and $(\overline{H}^{n-i}_{2,m\rightarrow M}(M,g))^*.$
\end{proof}

\begin{rem}
We can look at this proposition as an alternative statement (and proof) of Theorem \ref{mario}.
\end{rem}

We have the following immediate corollary:

\begin{cor}
Let $(M,g)$ be an open, oriented and incomplete riemannian manifold of dimension $4n$. Then on $\overline{H}^{2n}_{2,m\rightarrow M}(M,g)$ the pairing \eqref{cambo} is a symmetric bilinear form.
\end{cor}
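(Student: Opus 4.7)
The plan is to leverage the preceding proposition, which establishes that the pairing
\[
\overline{H}^{2n}_{2,m\rightarrow M}(M,g)\times \overline{H}^{2n}_{2,m\rightarrow M}(M,g)\longrightarrow \mathbb{R},\quad ([\eta],[\omega])\mapsto \int_{M}\eta\wedge\omega
\]
is well defined (and, in fact, non-degenerate). Since everything rests on that result, the only remaining content is to verify bilinearity and symmetry.

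First I would observe that bilinearity is immediate: fixing a representative $\omega\in Ker(d_{min,2n})$, the map $\eta\mapsto \int_M \eta\wedge\omega$ is $\mathbb{R}$-linear because the wedge product is bilinear over $\mathbb{R}$ and the integral is linear; the same applies in the second slot. Well-definedness on cohomology classes was already checked in the proof of the preceding proposition, so linearity descends to $\overline{H}^{2n}_{2,m\rightarrow M}(M,g)$.

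The only substantive point, and it is completely formal, is symmetry. For any smooth forms $\alpha\in \Omega^p(M)$ and $\beta\in \Omega^q(M)$ one has $\alpha\wedge\beta=(-1)^{pq}\beta\wedge\alpha$. Specializing to $p=q=2n$, we get $\alpha\wedge\beta=(-1)^{4n^{2}}\beta\wedge\alpha=\beta\wedge\alpha$, since $4n^{2}$ is even. Integrating over $M$ and applying this pointwise identity to representatives $\eta,\omega\in Ker(d_{min,2n})$ yields
\[
\int_{M}\eta\wedge\omega=\int_{M}\omega\wedge\eta,
\]
so the induced pairing is symmetric. I do not foresee any real obstacle here: the corollary is essentially a parity observation once the preceding proposition is in hand, and no additional analytic input (Hodge theory, closedness of ranges, etc.) is required beyond what has already been invoked to make the pairing well-defined on reduced cohomology.
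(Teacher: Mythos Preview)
Your argument is correct and matches the paper's treatment: the paper states this as an ``immediate corollary'' with no separate proof, and the symmetry is exactly the parity computation $(-1)^{(2n)(2n)}=1$ you wrote down, with bilinearity and well-definedness inherited from the preceding proposition.
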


 We can now state  the following definition:
\begin{defi}
\label{qwerq}
Let $(M,g)$ be an open and oriented riemannian manifold of dimension $4n$ such that, for  $i=2n$, $\overline{H}^{2n}_{2,m\rightarrow M}(M,g)$ is finite dimensional. Then we define the $L^{2}-$signature of $(M,g)$ and we label it $\sigma_{2}(M,g)$ as the signature of the pairing \eqref{cambo} on $\overline{H}^{2n}_{2,m\rightarrow M}(M,g)$.
\label{nnbb}
\end{defi} 

Consider now the sequence of vector spaces $\im(H^{i}_{c}(M)\rightarrow H_{dR}^{i}(M))\ i=0,...,dimM$. A cohomology class in $\im(H^{i}_{c}(M)\rightarrow H_{dR}^{i}(M))$ is a cohomology class in $H^i_{dR}(M)$ which admits as representative a smooth and closed form with compact support. So in a similar way to the previous case we can define:
\begin{equation}
\label{cambora}
\im(H^{i}_{c}(M)\rightarrow H^i_{dR}(M))\times \im(H^{n-i}_{c}(M)\rightarrow H^{n-i}_{dR}(M)) \longrightarrow \mathbb{R},\  ([\eta],[\omega])\mapsto \int_{M} \eta\wedge \omega
\end{equation}
where $\omega$ is an $i-$form closed with compact support and in the same way $\eta$ is a closed $n-i-$form with compact support. Now by Poincar\'e duality for open and oriented manifolds we get easily that this pairing is well defined and non-degenerate. So we can conclude that, if for each $i=0,...,dimM$ $\im(H^i_{c}(M)\rightarrow H_{dR}^i(M))$ is finite dimensional, then \eqref{cambora} induces an isomorphism between $\im(H^{i}_{c}(M)\rightarrow H^i_{dR}(M))$ and $\im(H^{n-i}_{c}(M)\rightarrow H^{n-i}_{dR}(M))^*$. Moreover it is clear that when $dimM=4n$ then, for $i=2n$,  \eqref{cambora} is a symmetric bilinear form. This implies that when $dimM=4n$  it is possible  to define a signature on $M$, which is topological by de Rham isomorphism theorem,  taking the signature of the pairing \eqref{cambora} for $i=2n$. This leads us to state the next  proposition:
\begin{prop}
\label{topomicio}
Let $(M,g)$ be an open, oriented and incomplete riemannian manifold of dimension $4n$. If $(M,g)$ admits the $L^2-$signature $\sigma_{2}(M,g)$ of Definition \ref{qwerq} then it admits also a topological signature defined  as the signature of the pairing \eqref{cambora} on $\im(H^{2n}_{c}(M)\rightarrow H^{2n}_{dR}(M))$.
\end{prop}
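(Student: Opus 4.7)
The plan is to observe that once $\overline{H}^{2n}_{2,m\rightarrow M}(M,g)$ is finite dimensional, Corollary \ref{loplp} forces $\im(H^{2n}_{c}(M)\rightarrow H^{2n}_{dR}(M))$ to be finite dimensional as well; the proposition then reduces to checking that the wedge pairing \eqref{cambora} is a well-defined, non-degenerate, symmetric bilinear form on this finite dimensional real vector space, so that taking its signature yields an integer.

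First I would invoke Corollary \ref{loplp} in degree $j = 2n$, which provides an injective linear map
\[
\im(H^{2n}_{c}(M)\rightarrow H^{2n}_{dR}(M)) \longrightarrow \overline{H}^{2n}_{2,m\rightarrow M}(M,g).
\]
By Definition \ref{qwerq}, the hypothesis that $\sigma_{2}(M,g)$ is defined amounts exactly to the finite dimensionality of the target of this inclusion, hence the source is finite dimensional too. From the discussion immediately preceding the statement, the pairing \eqref{cambora} is well-defined (any two compactly supported closed representatives differ by a compactly supported exact form, and Stokes' theorem eliminates the difference) and non-degenerate by Poincar\'e duality for open oriented manifolds, so it induces an isomorphism between $\im(H^{i}_{c}(M)\rightarrow H^{i}_{dR}(M))$ and the dual of $\im(H^{n-i}_{c}(M)\rightarrow H^{n-i}_{dR}(M))$ whenever these spaces are finite dimensional. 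When $\dim M = 4n$ and $i = n - i = 2n$, the identity $\eta \wedge \omega = (-1)^{(2n)(2n)} \omega \wedge \eta = \omega \wedge \eta$ further shows that the pairing is symmetric.

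Combining these observations, on the finite dimensional real vector space $\im(H^{2n}_{c}(M)\rightarrow H^{2n}_{dR}(M))$ one has a symmetric non-degenerate bilinear form, and its signature is therefore a well-defined integer, which is by construction a topological invariant of $M$ since the involved cohomology groups and the pairing depend only on the smooth structure of $M$ and on Stokes' theorem. There is no substantive obstacle here: the proposition is essentially a packaging of the injectivity supplied by Corollary \ref{loplp} with the Poincar\'e pairing on open oriented manifolds, both of which have already been established earlier in the paper, together with the elementary observation that the signature of a symmetric non-degenerate bilinear form on a finite dimensional real vector space is well-defined.
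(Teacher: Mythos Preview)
Your proof is correct and follows essentially the same approach as the paper: both invoke Definition \ref{qwerq} to obtain finite dimensionality of $\overline{H}^{2n}_{2,m\rightarrow M}(M,g)$, then apply Corollary \ref{loplp} to deduce that $\im(H^{2n}_{c}(M)\rightarrow H^{2n}_{dR}(M))$ is finite dimensional, whence the pairing \eqref{cambora} admits a signature. You spell out in a bit more detail the well-definedness, non-degeneracy, and symmetry of the pairing, but these were already established in the paragraph preceding the proposition.
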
 
\begin{proof} 
If $M$ admits the signature $\sigma_2(M,g)$ then, by Definition \ref{qwerq}, we know that $\overline{H}^{2n}_{2,m\rightarrow M}(M,g)$ is finite dimensional. Now, by Corollary \ref{loplp}, we know that also $\im(H_c^{2n}(M)\rightarrow H^{2n}(M))$ is finite dimensional and so  \eqref{cambora} admits a signature. 
\end{proof}
Moreover in the next section we will see that, on a class of open, incomplete and oriented riemannian manifold,  the $L^{2}-$signature of Definition \ref{qwerq} has a topological meaning.

\section{Topological Applications}

The aim of this section is to exhibit some topological and geometrical applications of the previous results. In the first part we show some applications to the intersection cohomology with general perversity of a compact and smoothly stratified pseudomanifold. In the last part we exhibit some examples for which Corollary \ref{gianni} applies.
\\To get the paper as self-contained as possible we will recall briefly the definitions of smoothly stratified pseudomanifold with a Thom-Mather stratification, quasi-edge metric and intersection cohomology with general perversity.

\subsection{A brief reminder on (smoothly) stratified pseudomanifolds and intersection cohomology}

We start this subsection by recalling the notions of a smoothly stratified pseudomanifold with a Thom-Mather stratification.  For the more general (and simple) definition of stratified pseudomanifold we refer to \cite{BA} and \cite{KW}.
\begin{defi}   
 A smoothly stratified pseudomanifold $X$ with a Thom-Mather stratification is a metrizable, locally compact, second countable space which admits a locally finite decomposition into a union of locally closed strata $\mathfrak{G}=\{Y_{\alpha}\}$, where each $Y_{\alpha}$ is a smooth, open and connected manifold, with dimension depending on the index $\alpha$. We assume the following:
\begin{enumerate}
\item If $Y_{\alpha}$, $Y_{\beta} \in \mathfrak{G}$ and $Y_{\alpha} \cap \overline{Y}_{\beta} \neq \emptyset$ then $Y_{\alpha} \subset \overline{Y}_{\beta}$
\item  Each stratum $Y$ is endowed with a set of control data $T_{Y} , \pi_{Y}$ and $\rho_{Y}$ ; here $T_{Y}$ is a neighborhood of $Y$ in $X$ which retracts onto $Y$, $\pi_{Y} : T_{Y} \rightarrow Y$
is a fixed continuous retraction and $\rho_{Y}: T_{Y}\rightarrow [0, 2)$ is a proper radial function in this tubular neighborhood such that $\rho_{Y}^{-1}(0) = Y$ . Furthermore,
we require that if $Z \in \mathfrak{G}$ and $Z \cap T_{Y}\neq \emptyset$  then
$(\pi_{Y} , \rho_{Y} ) : T_{Y} \cap Z \rightarrow Y\times [0,2) $
is a proper differentiable submersion.
\item If $W, Y,Z \in \mathfrak{G}$, and if $p \in T_{Y} \cap T_{Z} \cap W$ and $\pi_{Z}(p) \in T_{Y} \cap Z$ then
$\pi_{Y} (\pi_{Z}(p)) = \pi_{Y} (p)$ and $\rho_{Y} (\pi_{Z}(p)) = \rho_{Y} (p)$.
\item If $Y,Z \in \mathfrak{G}$, then
$Y \cap \overline{Z} \neq \emptyset \Leftrightarrow T_{Y} \cap Z \neq \emptyset$ ,
$T_{Y} \cap T_{Z} \neq \emptyset \Leftrightarrow Y\subset \overline{Z}, Y = Z\ or\ Z\subset \overline{Y} .$
\item  For each $Y \in \mathfrak {G}$, the restriction $\pi_{Y} : T_{Y}\rightarrow Y$ is a locally trivial fibration with fibre the cone $C(L_{Y})$ over some other stratified space $L_{Y}$ (called the link over $Y$ ), with atlas $\mathcal{U}_{Y} = \{(\phi,\mathcal{U})\}$ where each $\phi$ is a trivialization
$\pi^{-1}_{Y} (U) \rightarrow U \times C(L_{Y} )$, and the transition functions are stratified isomorphisms  which preserve the rays of each conic
fibre as well as the radial variable $\rho_{Y}$ itself, hence are suspensions of isomorphisms of each link $L_{Y}$ which vary smoothly with the variable $y\in U$.
\item For each $j$ let $X_{j}$ be the union of all strata of dimension less or equal than $j$, then $$X-X_{n-1}\ is\ dense\ in\ X$$
\end{enumerate}
\label{thom}
\end{defi}

The \textbf{depth} of a stratum $Y$ is largest integer $k$ such that there is a chain of strata $Y=Y_{k},...,Y_{0}$ such that $Y_{j}\subset \overline{Y_{j-1}}$ for $i\leq j\leq k.$ A stratum of maximal depth is always a closed subset of $X$.  The  maximal depth of any stratum in $X$ is called the \textbf{depth of $X$} as stratified spaces.
 Consider the filtration
\begin{equation}
X = X_{n}\supset X_{n-1}\supset X_{n-2}\supset X_{n-3}\supset...\supset X_{0}
\label{pippo}
\end{equation} %where each $X_{j}$, like in the first point, is the union of all strata of dimension less or equal than $j$.
 We refer to the open subset $X-X_{n-1}$ of a stratified pseudomanifold $X$ as its regular set, and the union of all other strata as the singular set,
$$reg(X):=X-sing(X)\ \text{where}\ sing(X):=\bigcup_{Y\in \mathfrak{G}, depthY>0 }Y. $$
For more details and properties we refer to \cite{ALMP}. \\Now we take from \cite{FB} the following definition and result.
Before giving the definition we recall that two riemannian metrics $g,h$ on a smooth manifold $M$ are  \textbf{quasi-isometric} if there are constants $c_{1}, c_{2}$ such that $c_{1}h\leq g\leq c_{2}h$.

\begin{defi}
\label{zedge}
 Let $X$ be a smoothly stratified pseudomanifold with a Thom-Mather stratification and let $g$ be a riemannian metric on $reg(X)$.     We call  $g$ a \textbf{ quasi-edge metric with weights} if it satisfies the following properties:
\begin{enumerate}
\item Take any stratum $Y$ of $X$; by   definition \ref{thom} for each $q\in Y$ there exists an open neighborhood $U$ of $q$ in $Y$ such that $\phi:\pi_{Y}^{-1}(U)\rightarrow U\times C(L_{Y})$ is a stratified isomorphism; in particular $\phi:\pi_{Y}^{-1}(U)\cap reg(X)\rightarrow U\times reg(C(L_{Y}))$ is a diffeomorphism. Then, for each $q\in Y$, there exists one of these trivializations $(\phi,U)$ such that $g$ restricted on $\pi_{Y}^{-1}(U)\cap reg(X)$ satisfies the following properties:
\begin{equation} 
(\phi^{-1})^{*}(g|_{\pi_{Y}^{-1}(U)\cap reg(X)})\cong dr\otimes dr+h_{U}+r^{2c}g_{L_{Y}}
\label{yhnn}
\end{equation}
 where $h_{U}$ is a riemannian metric defined over $U$, $c\in \mathbb{R}$ and $c>0$, $g_{L_{Y}}$ is a riemannian metric   on $reg(L_{Y})$, $dr\otimes dr+h_{U}+r^{2c}g_{L_{Y}}$ is a riemannian metric of product type on $U\times reg(C(L_{Y}))$ and with $\cong$ we mean \textbf{quasi-isometric}. 
 \item  If $p$ and $q$ lie in the same stratum $Y$ then in \eqref{yhnn} there is the same weight. We label it $c_{Y}$. 
\end{enumerate}
\end{defi}

\begin{rem} Implicit in the above definition is the fact that if the codimension of $Y$ is $1$ then $L_{Y}$ is just a point and therefore  $(\phi^{-1})^{*}(g|_{\pi_{Y}^{-1}(U)\cap reg(X)})\cong dr\otimes dr+h_{U}$.
\end{rem}

We refer to \cite{FB} for more comments about the above definitions, for some properties about metrics of this kind and for the proof of the following proposition. 

\begin{prop} Let $X$ be a smoothly stratified pseudomanifold with a Thom-Mather stratification $\mathfrak{X}$. For any stratum $Y\subset X$ fix a positive real number $c_{Y}$. Then there exists a quasi-edge metric with weights $g$ on $reg(X)$ having the numbers $\{c_{Y}\}_{Y\in \mathfrak{X}}$ as weights.
\label{top}
\end{prop}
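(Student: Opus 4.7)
The plan is to construct $g$ by induction on the depth $d=\text{depth}(X)$, building the metric stratum by stratum from deepest to shallowest and using partitions of unity to patch the local product models together, with the Thom--Mather control data guaranteeing the gluings are compatible up to quasi-isometry.

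For the base case $d=0$, $X$ is a smooth manifold, $\text{reg}(X)=X$, and any riemannian metric on $X$ is trivially a quasi-edge metric with an empty set of weights. For the inductive step, assume the proposition holds for all smoothly stratified pseudomanifolds with Thom--Mather stratification of depth $<d$, and let $X$ have depth $d$. First I would enumerate the strata by increasing depth: $Y_1,\dots,Y_N$. For each positive-depth stratum $Y=Y_k$, the link $L_Y$ is a compact smoothly stratified pseudomanifold with Thom--Mather stratification of depth strictly smaller than $d$. By the inductive hypothesis I choose a quasi-edge metric $g_{L_Y}$ on $\text{reg}(L_Y)$ whose weights are exactly the prescribed constants $\{c_Z\}$ for the strata $Z$ of $L_Y$; note that each stratum of $L_Y$ corresponds canonically to a stratum of $X$ meeting $T_Y$ and of larger depth than $Y$ (via condition (5) of Definition~\ref{thom}), so the weights are well defined and consistent. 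Then, covering $Y$ by a locally finite family of trivializing open sets $\{U_\alpha\}$ from Definition~\ref{thom}(5) and choosing any riemannian metric $h_{U_\alpha}$ on $U_\alpha$, I would define on each $\pi_Y^{-1}(U_\alpha)\cap \text{reg}(X)$ the model metric
\begin{equation*}
g_{Y,\alpha}:=dr\otimes dr + h_{U_\alpha} + r^{2c_Y}\,g_{L_Y}
\end{equation*}
transported via the trivialization $\phi_\alpha$, and patch the $g_{Y,\alpha}$ with a partition of unity $\{\chi_\alpha\}$ subordinate to $\{U_\alpha\}$ pulled back via $\pi_Y$, obtaining a metric $g_Y$ on $T_Y\cap \text{reg}(X)$.

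Next I would assemble a global metric by choosing a partition of unity $\{\psi_Y\}_{Y\in\mathfrak{G}}$ subordinate to the open cover $\{T_Y\cap\text{reg}(X)\}$ of $\text{reg}(X)$ (including a neighborhood of the top stratum $X-X_{n-1}$, on which we simply pick any smooth metric $g_{top}$), and set
\begin{equation*}
g:=\sum_{Y\in\mathfrak{G}}\psi_Y\,g_Y.
\end{equation*}
This is globally a smooth riemannian metric on $\text{reg}(X)$. It remains to verify that near each stratum $Y$ and each point $q\in Y$, the metric $g$ is quasi-isometric to the model $dr\otimes dr+h_U+r^{2c_Y}g_{L_Y}$ on a small trivializing neighborhood. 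The key observation is that a convex combination $\sum \lambda_i g_i$ of riemannian metrics that are each pairwise quasi-isometric to a common model metric $\tilde g$ (with constants uniform on compact sets) is again quasi-isometric to $\tilde g$; this reduces the problem to showing that on overlaps $T_Y\cap T_Z$, the local models $g_Y$ and $g_Z$ are quasi-isometric to the same model.

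The main obstacle is precisely this last compatibility check on the overlaps $T_Y\cap T_Z$ for strata $Y\subsetneq \overline Z$. Here one must use the Thom--Mather compatibility conditions $\pi_Y\circ\pi_Z=\pi_Y$ and $\rho_Y\circ\pi_Z=\rho_Y$ from Definition~\ref{thom}(3), which imply that the model metric near $Y$ built using $L_Y$, when restricted to a neighborhood of a point $p\in Z\cap T_Y$, takes an iterated product-cone form compatible with the model near $Z$ built using $L_Z$ (since $Z$ appears inside $L_Y$ as a stratum, and by induction the metric $g_{L_Y}$ already has the correct product-cone form near this stratum with weight $c_Z$). The verification amounts to expanding the two iterated local models and comparing them as metrics on $U\times\text{reg}(C(L_Y))$, observing that both are quasi-isometric to the same edge-type product. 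Once this quasi-isometry on overlaps is established, the partition of unity argument gives condition (1) of Definition~\ref{zedge}, and condition (2) holds by construction since we used the single constant $c_Y$ uniformly along $Y$. This completes the induction.
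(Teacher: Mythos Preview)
The paper does not give its own proof of this proposition; it refers to \cite{FB} for the argument. Your inductive construction---induction on the depth of $X$, building local product-cone models on the tubes $T_Y$ with the link metrics supplied by the inductive hypothesis, and then patching by a partition of unity---is the standard approach and is essentially the one carried out in that reference (and, in closely related forms, in \cite{ALMP} and other sources). So your plan is on the right track and matches the intended argument.

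One small correction: you write that each stratum of $L_Y$ corresponds to a stratum of $X$ ``of larger depth than $Y$''. With the depth convention of Definition~\ref{thom}, the strata $Z$ of $X$ that meet $T_Y$ nontrivially (other than $Y$ itself) satisfy $Y\subset\overline{Z}$ and hence have \emph{smaller} depth than $Y$; the singular strata of $L_Y$ correspond to these. This does not affect your induction, which is correctly set up on $\operatorname{depth}(X)$, but the sentence as written is backwards. With that fixed, the remaining work is exactly where you locate it: the quasi-isometry compatibility on overlaps $T_Y\cap T_Z$, which is handled via the Thom--Mather commutation relations and the iterated cone structure, as you indicate.
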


Now we need to recall briefly the notion of intersection homology with general perversities. Intersection homology is a deep and rich field of algebraic topology founded by Mark Goresky and Robert MacPherson at the end of seventies. From the first two fundamental papers, \cite{GM} and \cite{GMA},  there have been several developments  and the original theory has been  extended in many directions. Our intention now is to recall briefly the extension of intersection homology given by Greg Friedman in \cite{IGP}.  For the original theory introduced by Goresky and MacPherson and also for more topological property about stratified pseudomanifolds we refer to \cite{GM}, \cite{GMA},  \cite{BA} and \cite{KW}.

\begin{defi} Let  $X$ be a compact and oriented stratified pseudomanifold of dimension $n$. A general perversity on $X$  is any function
\begin{equation}
 p:\{Singular\ Strata\ of\ X\}\rightarrow \mathbb{Z}.
\end{equation}
\label{ujn}
The dual perversity of $p$, usually labelled $q$, is the general perversity defined in this way 
\begin{equation}
q=t-p
\end{equation}
where $t$ is the top perversity that is, given a singular stratum $Z$ of $X$, $t(Z)=cod(Z)-2$.
\end{defi}

\begin{exa} The upper middle perversity 
\begin{equation}
 \overline{m}:\{Singular\ Strata\ of\ X\}\rightarrow \mathbb{Z}.
\end{equation}
is defined in the following way: $$\overline{m}(Y)=[\frac{cod(Y)-1}{2}]$$
while the lower middle one is $$t-\overline{m}.$$
\end{exa}

Now we introduce the  notion of $p-$\textbf{allowable} singular simplex :
a singular $i-$simplex in $X$, i.e. a continuous map $\sigma:\Delta_{i}\rightarrow X$, is $p-$\textbf{allowable} if
\begin{equation} \sigma^{-1}(Y)\subset \{(i-cod(Y)+p(Y))-skeleton\ of\ \Delta_{i}\}\ for\ any\ singular\ stratum\ Y\ of\ X.
\end{equation}

A key ingredient in this new theory is the notion of \textbf{homology with stratified  coefficient system}.

\begin{defi} Let $X$ be a stratified pseudomanifold and let $\mathcal{G}$ be a local system on $X-X_{n-1}$. Then the stratified  coefficient system $\mathcal{G}_{0}$ is defined to consist of the pair of coefficient systems given by $\mathcal{G}$ on $X-X_{n-1}$ and the constant $0$ system on $X_{n-1}$ i.e. we think of $\mathcal{G}_{0}$ as consisting of a locally constant fiber bundle $\mathcal{G}_{X-X_{n-1}}$ over $X-X_{n-1}$ with fiber $G$ with the discrete topology together with the trivial bundle on $X_{n-1}$ with the stalk $0.$
\label{sc}
\end{defi}

Then a \textbf{coefficient} $n$ of a singular simplex $\sigma$ can be described by a lift of $\sigma|_{\sigma^{-1}(X-X_{n-1})}$ to $\mathcal{G}$ over $X-X_{n-1}$ together with the trivial lift of $\sigma|_{\sigma^{-1}(X_{n-1})}$ to the $0$ system on $X_{n-1}.$ A coefficient  of a simplex $\sigma$ is considered to be the $0$ coefficient if it maps each  points of $\Delta$ to the $0$ section of one of the coefficient systems. Note that if $\sigma^{-1}(X-X_{n-1})$ is path-connected then a coefficient lift of $\sigma$ to $\mathcal{G}_{0}$ is completely determined by the lift at a single point of $\sigma^{-1}(X-X_{n-1})$ by the lifting extension property for $\mathcal{G}$. The intersection homology chain complex $(I^{p}S_{*}(X,\mathcal{G}_{0}),\partial_{*})$ is defined in the same way as $I^{p}S_{*}(X,G)$, where $G$ is any field, but replacing the coefficient of simplices with coefficient in $\mathcal{G}_{0}$.
 If $n\sigma$ is a simplex $\sigma$ with its coefficient $n$, its boundary is given by the usual formula $\partial(n\sigma)=\sum_{j}(-1)^{j}(n\circ i_{j})(\sigma\circ i_{j})$ where $i_{j}:\Delta_{i-1}\rightarrow \Delta_{i}$ is the $j-$face inclusion map. Here $n\circ i_{j}$ should be interpreted as the restriction of $n$ to the $jth$ face of $\sigma$, restricting the lift to $\mathcal{G}$ where possible and restricting to $0$ otherwise. The basic idea behind the definition is  that when we consider allowability of chains with respect to a perversity, simplices with support entirely in $X_{n-1}$ should vanish and thus not be counted for allowability considerations. We recommend to the reader the references  \cite{GP}, \cite{IGP} and \cite{SP} for a complete development of the subject.\\Finally we conclude this subsection recalling  from \cite{FB} the following definition and the next two theorems:

\begin{defi} Let $X$ be a smoothly stratified pseudomanifold with a Thom-Mather stratification and let $g$ a quasi-edge metric with weights on $reg(X)$. Then the general perversity $p_{g}$ associated with $g$ is:
\begin{equation}p_{g}(Y):= Y\longmapsto [[\frac{l_{Y}}{2}+\frac{1}{2c_{Y}}]]= \left\{
\begin{array}{lll}
0 & l_{Y}=0\\
\frac{l_{Y}}{2}+[[\frac{1}{2c_{Y}}]] & l_{Y}\ even\ and\ l_{Y}\neq 0\\
\frac{l_{Y}-1}{2}+[[\frac{1}{2}+\frac{1}{2c_{Y}}]]& l_{Y}\ odd
\end{array}
\right.
\end{equation}
where $l_{Y}=dimL_{Y}$, $c_Y$ is defined in the second point of Definition \ref{zedge} and given any real and positive number $x$, $[[x]]$ is the greatest integer strictly less than $x$.
\label{pim}
\end{defi}

\begin{teo}
\label{ris}
 Let $X$ be a compact and oriented smoothly stratified pseudomanifold of dimension $n$ with a  Thom-Mather stratification  $\mathfrak{X}$. Let $g$ be a quasi-edge metric with  weights on $reg(X)$, see Definition \ref{zedge}. Let $\mathcal{R}_{0}$ be the stratified coefficient system made of  the pair of coefficient systems given by $(X-X_{n-1})\times \mathbb{R}$ over $X-X_{n-1}$ where the fibers $\mathbb{R}$ have the discrete topology  and the constant $0$ system on $X_{n-1}$. Let $p_{g}$ be the general perversity associated with the metric $g$, see Definition \ref{pim}. Then, for all $i=0,...,n$,  the following isomorphisms hold:
\begin{equation}
I^{q_{g}}H^{i}(X,\mathcal{R}_{0})\cong H_{2,max}^{i}(reg(X),g)\cong \mathcal{H}_{abs}^{i}(reg(X),g)
\label{max}
\end{equation}
\begin{equation}
I^{p_{g}}H^{i}(X, \mathcal{R}_{0})\cong H_{2,min}^{i}(reg(X),g)\cong \mathcal{H}_{rel}^{i}(reg(X),g)
\label{min}
\end{equation} 
where $q_{g}$ is the complementary perversity of $p_{g}$, that is, $q_{g}=t-p_{g}$, $t$ is the usual top perversity and $\mathcal{H}^i_{abs/rel}(reg(X),g)$  are the Hodge cohomology groups defined in \ref{nannabobo}. In particular, for all $i=0,...,n$ the groups $$H_{2,max}^{i}(reg(X),g),\ H_{2,min}^{i}(reg(X),g),\ \mathcal{H}_{abs}^{i}(reg(X),g),\ \mathcal{H}_{rel}^{i}(reg(X),g)$$ are all finite dimensional.
\end{teo}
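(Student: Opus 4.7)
The plan is to prove the intersection-cohomology identifications $H^i_{2,\mathrm{max}}(\mathrm{reg}(X),g)\cong I^{q_g}H^i(X,\mathcal{R}_0)$ and $H^i_{2,\mathrm{min}}(\mathrm{reg}(X),g)\cong I^{p_g}H^i(X,\mathcal{R}_0)$ first, from which the finite-dimensionality assertion follows because $X$ is compact and $I^{p}H^i(X,\mathcal{R}_0)$ is then finite dimensional. The Hodge isomorphisms $H^i_{2,\mathrm{max}}\cong \mathcal{H}^i_{\mathrm{abs}}$ and $H^i_{2,\mathrm{min}}\cong \mathcal{H}^i_{\mathrm{rel}}$ will then follow at once from Proposition \ref{topoamotore}, which upgrades the weak Kodaira decomposition of Proposition \ref{beibei} to a genuine Hodge decomposition once the cohomology is known to be finite dimensional.

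To establish the intersection-cohomology identifications I would set up a sheaf-theoretic framework on $X$: for every open $U\subseteq X$ consider the presheaves $U\mapsto H^i_{2,\mathrm{max}}(U\cap \mathrm{reg}(X), g|_U)$ and $U\mapsto H^i_{2,\mathrm{min}}(U\cap \mathrm{reg}(X), g|_U)$, sheafify, and identify the resulting complexes of sheaves with the Deligne-type sheaf that computes intersection cohomology with stratified coefficients $\mathcal{R}_0$ and general perversity, in the form extended by Friedman. By the axiomatic characterization it is enough to verify three properties: normalization on the open dense stratum $\mathrm{reg}(X)$, a cone/attaching formula near each singular stratum, and stalk vanishing in the appropriate range of degrees.

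The technical heart is the local computation in a distinguished neighbourhood $\pi_Y^{-1}(U)\cap \mathrm{reg}(X)$, where Definition \ref{zedge} and quasi-isometry invariance of both $H^i_{2,\mathrm{max}}$ and $H^i_{2,\mathrm{min}}$ allow me to replace $g$ by the exact model $dr\otimes dr+h_U+r^{2c_Y}g_{L_Y}$. A Künneth-type argument along the smooth factor $U$ reduces the problem to the $L^2$ cohomology of the truncated weighted cone $\bigl((0,\varepsilon)\times \mathrm{reg}(L_Y),\, dr^2+r^{2c_Y}g_{L_Y}\bigr)$. Expanding a form along the Hodge decomposition of the link and separating the radial variable produces an explicit threshold: a link-harmonic class in degree $k$ contributes to the cone's $L^2$ cohomology precisely when $k$ lies on the appropriate side of the cutoff determined by $c_Y$ and $l_Y=\dim L_Y$. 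Unwinding the arithmetic, this cutoff is exactly the integer $[[l_Y/2+1/(2c_Y)]]$ of Definition \ref{pim}, with the maximal domain producing the perversity $q_g=t-p_g$ (the boundary condition at $r=0$ being effectively Neumann-type) and the minimal domain producing $p_g$ itself (Dirichlet-type).

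With the cone formula in place, an induction on the depth of the stratification globalizes the result: at each stage one covers the next stratum by distinguished neighbourhoods, uses Mayer--Vietoris together with the retraction $\pi_Y$ of Definition \ref{thom} to check the attaching axiom, and combines the two open pieces. The main obstacle is the local cone computation, where one must simultaneously (a) keep track of the weighted $L^2$ measure $r^{c_Y\,\dim L_Y}\,dr$, (b) correctly identify which radial boundary behaviour corresponds to max versus min, and (c) translate the resulting degree restriction into the precise floor-function expression defining $p_g$; every other step is either formal or a standard sheaf-theoretic patching argument, and finite dimensionality at the end is free once the identification with $I^{p_g/q_g}H^i(X,\mathcal{R}_0)$ on the compact space $X$ has been secured.
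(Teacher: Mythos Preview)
The paper does not actually prove this theorem; its entire proof reads ``See \cite{FB} theorem 4.'' In other words, Theorem \ref{ris} is quoted from the author's companion paper on general perversities and $L^2$ de Rham theorems, and no argument is given here. Your outline---sheafify the $L^2$ cohomology presheaves on $X$, verify the Deligne--Friedman axioms by a local cone calculation in the model metric $dr^2+h_U+r^{2c_Y}g_{L_Y}$, extract the degree threshold $[[l_Y/2+1/(2c_Y)]]$ that yields $p_g$ and $q_g$, and induct on depth---is the standard Cheeger--Nagase strategy and is, as far as one can tell from the citation, exactly the route taken in \cite{FB}. So your approach is correct and in line with the intended proof; there is simply nothing in the present paper to compare it against beyond the reference.
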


\begin{proof}
See \cite{FB} Theorem 4.
\end{proof}

\begin{teo} Let $X$ be as in the previous theorem. Let $p$ a general perversity in the sense of Friedman on $X$. If $p$ satisfies the following conditions:
\begin{equation}
 \left\{  
\begin{array}{ll}
p\geq \overline{m}\\
p(Y)=0 & if\ cod(Y)=1
\end{array}
\right.
\end{equation}
then there exists $g$, a quasi-edge edge metric with weights on $reg(X)$, such that
\begin{equation}
I^{p}H^{i}(X, \mathcal{R}_{0})\cong H_{2,min}^{i}(reg(X),g)\cong \mathcal{H}_{rel}^{i}(reg(X),g).
\label{minn}
\end{equation} 
Conversely if $p$ satisfies:
\begin{equation}
 \left\{  
\begin{array}{ll}
p\leq \underline{m} \\
p(Y)=-1 & if\ cod(Y)=1
\end{array}
\right.
\end{equation}
then, also in this case, there exists a quasi-edge metric with weights $h$ on $reg(X)$ such that 
\begin{equation}
I^{p}H^{i}(X,\mathcal{R}_{0})\cong H_{2,max}^{i}(reg(X),h)\cong \mathcal{H}_{abs}^{i}(reg(X),h).
\label{maxx}
\end{equation}
\label{new}
\end{teo}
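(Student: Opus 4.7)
The strategy is to reduce the theorem to finding appropriate weights $\{c_Y\}_{Y\in\mathfrak{X}}$ and then invoke Proposition \ref{top} together with Theorem \ref{ris}. By Theorem \ref{ris}, once we have a quasi-edge metric $g$ on $\mathrm{reg}(X)$, we automatically obtain $I^{p_g}H^i(X,\mathcal{R}_0)\cong H^i_{2,\min}(\mathrm{reg}(X),g)\cong \mathcal{H}^i_{\mathrm{rel}}(\mathrm{reg}(X),g)$, where $p_g$ is the perversity of Definition \ref{pim}. Thus for the first half of the theorem it suffices to exhibit weights $c_Y>0$ so that $p_g(Y)=p(Y)$ for every singular stratum $Y$; Proposition \ref{top} then produces a metric realizing those weights.

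For each singular stratum $Y$ I would split into three cases matching the piecewise definition of $p_g$. When $\mathrm{cod}(Y)=1$ so that $l_Y=0$, one has $p_g(Y)=0$ automatically, which matches the hypothesis $p(Y)=0$; any positive weight works. When $l_Y$ is even and nonzero, set $k:=p(Y)-l_Y/2$; the hypothesis $p\geq\overline{m}$ forces $k\geq 0$, and choosing $c_Y\in [1/(2(k+1)),1/(2k))$ (with $c_Y\geq 1/2$ when $k=0$) makes $[[1/(2c_Y)]]=k$ and hence $p_g(Y)=p(Y)$. When $l_Y$ is odd, set $k:=p(Y)-(l_Y-1)/2$, which is again a nonnegative integer by the hypothesis on $p$; one then picks $c_Y$ in the interval controlling $[[1/2+1/(2c_Y)]]=k$, again producing $p_g(Y)=p(Y)$. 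Doing this stratum by stratum assembles a family of positive weights to which Proposition \ref{top} applies, delivering the required metric $g$.

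For the converse I would dualize. We want $h$ so that $q_h=p$, equivalently $p_h(Y)=t(Y)-p(Y)=l_Y-1-p(Y)$. The hypotheses $p\leq\underline{m}$ and $p(Y)=-1$ when $\mathrm{cod}(Y)=1$ translate term by term into $p_h\geq\overline{m}$ and $p_h(Y)=0$ when $\mathrm{cod}(Y)=1$, so applying the first half of the theorem to the perversity $l_Y-1-p(Y)$ produces an $h$ whose associated perversity $p_h$ equals it. Theorem \ref{ris} then gives $I^{q_h}H^i(X,\mathcal{R}_0)\cong H^i_{2,\max}(\mathrm{reg}(X),h)\cong \mathcal{H}^i_{\mathrm{abs}}(\mathrm{reg}(X),h)$, and by construction $q_h=t-p_h=p$.

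The only real obstacle is essentially arithmetic: one must verify that as $c_Y$ ranges over $(0,\infty)$ the quantity $[[l_Y/2+1/(2c_Y)]]$ takes precisely the integer values $\geq\overline{m}(Y)$ (together with $0$ in the codimension-one case), which confirms that the stated bounds on $p$ are both necessary and sufficient. Once this combinatorial bookkeeping is verified stratum by stratum, the geometric and analytic content is entirely absorbed by Proposition \ref{top} and Theorem \ref{ris}.
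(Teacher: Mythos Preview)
Your argument is correct. The arithmetic analysis of the range of $p_g$ from Definition \ref{pim} is accurate: as $c_Y$ ranges over $(0,\infty)$ the value $[[l_Y/2+1/(2c_Y)]]$ takes exactly the integers $\geq \overline{m}(Y)$ (and is forced to be $0$ when $l_Y=0$), so the hypotheses on $p$ are precisely what is needed to solve $p_g(Y)=p(Y)$ stratum by stratum. The dualization for the second half is also correct, since $p\leq\underline m$ with $p(Y)=-1$ on codimension-one strata is equivalent to $t-p\geq\overline m$ with $(t-p)(Y)=0$ there.

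As for comparison with the paper: the present paper does not actually give a proof of this theorem---it simply refers the reader to \cite{FB}, Theorem 5. Your sketch is exactly the kind of argument one expects that reference to contain: reduce everything to choosing weights realizing the prescribed perversity via Definition \ref{pim}, invoke Proposition \ref{top} to obtain a metric with those weights, and then read off the cohomological statement from Theorem \ref{ris}. So there is no discrepancy to report; you have reconstructed the intended proof.
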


\begin{proof}
See \cite{FB} Theorem 5.
\end{proof}

\subsection{Applications to the intersection cohomology}

Now, after the previous reminder, we are ready to show some applications of the results of the previous sections.

\begin{prop}
 Let $X$ be a compact and oriented smoothly stratified pseudomanifold of dimension $n$ with a  Thom-Mather stratification  $\mathfrak{X}$. Let $g$ be a quasi-edge metric with  weights on $reg(X)$. Then $$H^{i}_{2,m\rightarrow M}(reg(X),g),\ i=0,...,n$$ is a finite sequence of finite dimensional vector spaces with Poincar\'e duality. Moreover Proposition \ref{frengo} and Proposition \ref{gennaro} apply to this kind of riemannian manifolds.
\label{carim}
\label{mamama}
\end{prop}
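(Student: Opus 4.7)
The plan is to combine Theorem \ref{ris} (which guarantees finite dimensionality on both ends of the map) with Theorem \ref{mario} (which supplies the complementary Hilbert complex structure) and then feed the result into Corollary \ref{klop}.

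First I would invoke Theorem \ref{ris}: since $g$ is a quasi edge metric with weights on $reg(X)$, both $H^i_{2,max}(reg(X),g)$ and $H^i_{2,min}(reg(X),g)$ are finite dimensional, isomorphic respectively to $I^{q_g}H^i(X,\mathcal{R}_0)$ and $I^{p_g}H^i(X,\mathcal{R}_0)$. Hence both $(L^2\Omega^i(reg(X),g), d_{max,i})$ and $(L^2\Omega^i(reg(X),g), d_{min,i})$ are Fredholm complexes. By Proposition \ref{topoamotore} this forces $H^i = \overline{H}^i$ for each of the two complexes, and therefore in particular
\[
H^{i}_{2,m\rightarrow M}(reg(X),g) = \overline{H}^{i}_{2,m\rightarrow M}(reg(X),g),
\]
since the unreduced and reduced images through the inclusion-induced map coincide.

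Next I would apply Theorem \ref{mario}: the Hodge star operator furnishes the required family of link maps and so $(L^2\Omega^i(reg(X),g),d_{min,i}) \subseteq (L^2\Omega^i(reg(X),g),d_{max,i})$ is a pair of complementary Hilbert complexes. Now the hypotheses of Corollary \ref{klop} are satisfied: one (hence both) of the two complementary complexes is Fredholm, and the image
\[
\mathrm{im}\bigl(H^{i}_{2,min}(reg(X),g)\longrightarrow H^{i}_{2,max}(reg(X),g)\bigr)
\]
is the cohomology of a sequence of finite dimensional vector spaces enjoying Poincaré duality in the sense of Definition \ref{paolo}. This gives the first claim.

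For the final claim, Proposition \ref{frengo} and Proposition \ref{gennaro} only require that the pair $(L^2\Omega^i(reg(X),g),d_{max,i})$ and $(L^2\Omega^i(reg(X),g),d_{min,i})$ form a pair of complementary Hilbert complexes, which is exactly what Theorem \ref{mario} provides; in the Fredholm situation we are in, the unreduced version of both propositions applies as well, so no further work is needed. The only conceptual step worth checking carefully is that the Fredholm property of $(L^2\Omega^i(reg(X),g),d_{max,i})$ implies the same property for $(L^2\Omega^i(reg(X),g),d_{min,i})$ (and vice versa), which is automatic from Proposition \ref{fred} since the two are Hilbert complex adjoints up to the isometry $*$; this is really the only point that could masquerade as a difficulty, but it is a one-line consequence of the already-established machinery.
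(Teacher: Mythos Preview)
Your proof is correct and follows essentially the same route as the paper's own argument: invoke Theorem \ref{ris} to obtain the Fredholm property, then use Theorem \ref{mario} (and its Fredholm corollary) to conclude Poincar\'e duality, which automatically places you in the scope of Propositions \ref{frengo} and \ref{gennaro}. Your version is simply a bit more explicit than the paper about identifying reduced and unreduced cohomology via Proposition \ref{topoamotore} and about going through Corollary \ref{klop} rather than citing Theorem \ref{mario} directly.
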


\begin{proof} By Theorem \ref{ris} we know that both cohomology groups $H^{i}_{2,max/min}(reg(X),g)$ are finite dimensional. This implies that in the following sequence $H^{i}_{2,m\rightarrow M}(reg(X),g),\ i=0,...,n$ each dimensional vector space is finite dimensional. In this way we are in position to apply Theorem \ref{mario}, Proposition \ref{frengo}, Proposition \ref{gennaro} and therefore the thesis follows. 
\end{proof}

Now consider two general perversities $p, q$ such that $p\leq q$. Then the complex associated with $p$ is a subcomplex of that associated with $q$ and therefore the inclusion $i$ induces a map between the intersection cohomology groups $ I^{q}H^{j}(X,\mathcal{R}_{0})$  and $ I^{p}H^{j}(X,\mathcal{R}_{0}) $ that we call $i^*_j$.    In analogy to the previous section we define for each $j=0,...,n$ 
\begin{equation}
\label{annapunda}
I^{q\rightarrow p}H^{j}(X,\mathcal{R}_{0}):=\im( I^{q}H^{j}(X,\mathcal{R}_{0})\stackrel{i^*_j}{\longrightarrow} I^{p}H^{j}(X,\mathcal{R}_{0}))
\end{equation}
and 
\begin{equation}
I^{q\rightarrow p}\chi(X,\mathcal{R}_{0}):=\sum_{i=0}^{n}(-1)^idim(I^{q\rightarrow p}H^{j}(X,\mathcal{R}_{0}))
\end{equation}

Now we are ready to state the following  proposition:
\begin{prop} 
\label{derede}
 Let $X$ be a compact and oriented smoothly stratified pseudomanifold of dimension $n$ with a  Thom-Mather stratification  $\mathfrak{X}$. Let 
$$ p:\{Singular\ Strata\ of\ X\}\rightarrow \mathbb{N}$$
be a general perversity such that $$p\leq \underline{m}\ \text{and}\ p(Y)=-1$$ for each stratum $Y$ of $X$ with $cod(Y)=1$. Then, if we call $q$ its dual perversity,   we have that   
$$I^{q\rightarrow p}H^{j}(X,\mathcal{R}_{0}),\ j=0,...,n$$ is a finite sequence of finite dimensional vector spaces with Poincar\'e duality. Analogously if $$p\geq \overline{m}\ and\ p(Y)=0$$ for each stratum $Y$ of $X$ with $cod(Y)=1$
then, denoting again with $q$ the dual perversity of $p$, we have that  $$I^{p\rightarrow q}H^{j}(X,\mathcal{R}_{0}),\ j=0,...,n$$ is  a finite sequence of finite dimensional vector spaces with Poincar\'e duality.
\end{prop}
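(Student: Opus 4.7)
The plan is to realize both $I^p H^i(X,\mathcal{R}_0)$ and $I^q H^i(X,\mathcal{R}_0)$ simultaneously as the maximal and minimal $L^2$ cohomology of a single quasi edge metric with weights $g$ on $reg(X)$, and then to invoke Proposition \ref{mamama}. The crucial link is Theorem \ref{ris}, which for any such metric identifies $I^{p_g}H^i \cong H^i_{2,\min}$ and $I^{q_g}H^i \cong H^i_{2,\max}$, with $q_g = t - p_g$ by construction.

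First I would verify that the numerical hypotheses of Case 1 match those of Theorem \ref{new}. The inequality $p \leq q = t - p$ forces $2p \leq t$; a split on the parity of $cod(Y)$, using $t(Y)=cod(Y)-2$, shows that this is equivalent to $p \leq \underline{m}$. Together with $p(Y) = -1$ when $cod(Y) = 1$, Theorem \ref{new} produces a quasi edge metric with weights $g$ on $reg(X)$ such that $I^p H^i(X,\mathcal{R}_0) \cong H^i_{2,\max}(reg(X),g)$; inspecting the proof (which ultimately appeals to Theorem \ref{ris}), the associated perversities satisfy $q_g = p$ and hence $p_g = t - p = q$, so the \emph{same} metric also yields $I^q H^i(X,\mathcal{R}_0) \cong H^i_{2,\min}(reg(X),g)$. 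A brief numerical check from Definition \ref{pim} shows that $p_g \geq q_g$ for every quasi edge metric with weights, so the intersection chain complex associated to $q$ is a subcomplex of the one associated to $p$, and the induced map $I^q H^i \to I^p H^i$ corresponds under the above identifications to the canonical map $H^i_{2,\min}(reg(X),g) \to H^i_{2,\max}(reg(X),g)$ coming from the inclusion $\mathcal{D}(d_{\min,i}) \subset \mathcal{D}(d_{\max,i})$. Consequently
\begin{equation*}
I^{q \to p} H^j(X,\mathcal{R}_0) \;\cong\; \im\bigl(H^j_{2,\min}(reg(X),g) \longrightarrow H^j_{2,\max}(reg(X),g)\bigr) \;=\; H^j_{2, m \to M}(reg(X),g),
\end{equation*}
and Proposition \ref{mamama} immediately supplies the finite dimensionality and the Poincar\'e duality.

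Case 2 is handled by the dual argument: $p \geq q$ gives $2p \geq t$ and, after the same parity split, $p \geq \overline{m}$; combined with $p(Y) = 0$ at codimension-one strata, the other branch of Theorem \ref{new} provides a quasi edge metric with weights $g$ with $p_g = p$ and $q_g = q$, so that $I^p H^i \cong H^i_{2,\min}(reg(X),g)$ and $I^q H^i \cong H^i_{2,\max}(reg(X),g)$; the conclusion then follows exactly as in Case 1. The main obstacle I anticipate is precisely this compatibility step: Theorem \ref{new} asserts only \emph{one} of the two $L^2$ identifications, and one must exploit the built-in duality $q_g = t - p_g$ of Theorem \ref{ris}, together with the uniform inequality $p_g \geq q_g$, to ensure that the same metric realizes both $I^p$ and $I^q$ as the two extreme $L^2$ cohomologies and that the natural intersection-cohomology map is intertwined with the natural $L^2$ map. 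Once this is in place, Proposition \ref{mamama} (and through it Theorem \ref{mario}) does all the remaining work.
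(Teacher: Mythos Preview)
Your proposal is correct and follows essentially the same route as the paper: reduce $p\leq q$ (resp.\ $p\geq q$) to $p\leq \underline{m}$ (resp.\ $p\geq \overline{m}$), invoke Theorem~\ref{new} to realize $p$ and its dual as $q_g$ and $p_g$ for a single quasi edge metric, and then apply Proposition~\ref{mamama}. You are in fact more careful than the paper on two points: you correctly identify that in Case~1 the metric produced by Theorem~\ref{new} has $q_g=p$ (the paper writes $p_g=p$, which is a slip), and you explicitly check that the inclusion-induced map $I^qH^j\to I^pH^j$ corresponds to the canonical map $H^j_{2,\min}\to H^j_{2,\max}$, a compatibility the paper takes for granted.
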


\begin{proof} We know  that $p\leq \underline{m}$. This implies  that $t-p\geq t-\underline{m}$ which in turn implies that $q\geq \overline{m}\geq \underline{m}\geq p$ and therefore the sequence \eqref{annapunda} exists. Moreover we know that $p(Y)=-1$ for each stratum $Y$ of $X$ with $cod(Y)=1$.  
This implies that $p$ satisfies the assumptions of Theorem \ref{new} that is there exists a quasi-edge metric $g$ on $reg(X)$ such that $p_{g}=p$. In this way we can use Proposition \ref{mamama} to get the conclusion.\\  In the same way if $p\geq  \overline{m}$  then we get $p\geq q$ . So we can use again Theorem \ref{new} and Proposition \ref{mamama} to get the assertion.
\end{proof}

We have the  following four immediate corollaries:

\begin{cor}
\label{frengoesto}
In the hypothesis of  Proposition \ref{derede}, if $n$ is odd then: 
\begin{equation}
I^{q\rightarrow p}\chi(X,\mathcal{R}_{0})=0
\label{babbon} 
\end{equation}
\end{cor}

\begin{cor}
\label{gennaron}
 In the same  hypothesis of Proposition \ref{derede} suppose  that
\begin{itemize}
\item $ i_{j}^*:I^qH^j(X,\mathcal{R}_{0})\longrightarrow I^pH^j(X,\mathcal{R}_{0})$ is injective,
\end{itemize}
or that
\begin{itemize}
\item  $ i_{j}^*:I^qH^j(X,\mathcal{R}_{0})\longrightarrow I^pH^j(X,\mathcal{R}_{0})$  is surjective.
\end{itemize}
Then 
\begin{equation}
I^qH^j(X,\mathcal{R}_{0}),\  I^pH^j(X,\mathcal{R}_{0})\  j=0,...,n
\end{equation}
are a finite sequences of finite dimensional vector spaces with Poincar\'e duality.
\end{cor}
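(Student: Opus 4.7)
The plan is to reduce the corollary to Proposition \ref{gennaro} by transferring the intersection-cohomology statement to the $L^2$ cohomology setting, exactly as in the proof of Proposition \ref{derede}. First I would use Theorems \ref{ris} and \ref{new} (invoked exactly as in the proof of Proposition \ref{derede}) to produce a quasi-edge metric $g$ on $\mathrm{reg}(X)$ such that
\[
\{I^pH^j(X,\mathcal{R}_0),\, I^qH^j(X,\mathcal{R}_0)\}\cong\{H^j_{2,\min}(\mathrm{reg}(X),g),\, H^j_{2,\max}(\mathrm{reg}(X),g)\},
\]
the precise matching depending on which of the two cases of Proposition \ref{derede}'s hypothesis is in force (in case 1, where $p\le q$ with $p(Y)=-1$ on codimension-one strata, one gets $I^pH^j\cong H^j_{2,\max}$ and $I^qH^j\cong H^j_{2,\min}$; in case 2 the roles are swapped). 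By Theorem \ref{ris} all these $L^2$ cohomology groups are finite dimensional, hence reduced and unreduced $L^2$ cohomology coincide here.

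Next I would verify that under these isomorphisms the inclusion-induced map $i_j^*:I^qH^j(X,\mathcal{R}_0)\to I^pH^j(X,\mathcal{R}_0)$ corresponds to the natural map $\overline{H}^j_{2,\min}(\mathrm{reg}(X),g)\to\overline{H}^j_{2,\max}(\mathrm{reg}(X),g)$ (after relabelling $p,q$ if necessary). This compatibility is essentially built into the cochain-level construction of the $L^2$ Hodge--de Rham isomorphism in \cite{FB}: both maps are induced by the inclusion of a smaller subcomplex into a larger one, and the quasi-isomorphisms from intersection cochains to harmonic forms are natural with respect to those inclusions. Consequently the injectivity (respectively surjectivity) hypothesis on $i_j^*$ transfers to the injectivity (respectively surjectivity) of $\overline{H}^j_{2,\min}\to\overline{H}^j_{2,\max}$.

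Finally I would invoke Proposition \ref{gennaro} applied to $(\mathrm{reg}(X),g)$, which immediately yields that $\overline{H}^j_{2,\min}(\mathrm{reg}(X),g)$ and $\overline{H}^j_{2,\max}(\mathrm{reg}(X),g)$, for $j=0,\ldots,n$, are finite sequences of finite dimensional vector spaces with Poincar\'e duality; translating back through the above isomorphisms gives the statement of the corollary. The only non-routine step is the naturality check in step two, which is the main (but expected) obstacle, but once granted, the conclusion follows mechanically from Proposition \ref{gennaro} and Theorem \ref{ris}.
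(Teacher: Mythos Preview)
Your approach is correct and matches the paper's intended argument. The paper does not give an explicit proof of this corollary: it is listed among the ``immediate corollaries'' of Proposition \ref{derede}, and Proposition \ref{mamama} already records that ``proposition \ref{frengo} and proposition \ref{gennaro} apply to this kind of riemannian manifolds.'' So the reduction you describe---pass to a quasi-edge metric $g$ via Theorem \ref{new} so that $\{I^pH^j,I^qH^j\}$ becomes $\{H^j_{2,\max},H^j_{2,\min}\}$, then invoke Proposition \ref{gennaro}---is precisely what the paper has in mind.

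You are right to flag the naturality step (that $i_j^*$ on intersection cohomology corresponds to the inclusion-induced map $H^j_{2,\min}\to H^j_{2,\max}$) as the only point requiring care; the paper simply takes this for granted, so your proposal is, if anything, slightly more scrupulous than the original.
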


\begin{cor}
\label{mummio}
In the hypothesis of  Proposition \ref{derede} we have the following inequalities:
\begin{equation}
\label{swhc}
dim( \im(H^{j}_{c}(reg(X))\stackrel{i_j^*}{\longrightarrow}H^{j}_{dR}(reg(X))))\leq dimI^pH^{j}(X,\mathcal{R}_{0})
\end{equation}
\begin{equation}
\label{blasco}
 dim( \im(H^{j}_{c}(reg(X))\stackrel{i_j^*}{\longrightarrow}H^{j}_{dR}(reg(X))))\leq dimI^qH^{j}(X,\mathcal{R}_{0}).
\end{equation}
Moreover if on $reg(X)$ we have that $\im(H^j_{c}(reg(X))\stackrel{i^*_{j}}{\longrightarrow} H^j_{dR}(reg(X)))$ is not trivial for some $j$ then on $X$ $I^pH^j(X,\mathcal{R}_{0})$ and  $I^qH^j(X,\mathcal{R}_{0})$ are  always non-trivial  for each general perversity $p$ such that $p\leq \underline{m}$ or $p\geq \overline{m}$. Finally, if on $reg(X)$ we have that $H^{j}_{c}(reg(X))\stackrel{i_j^*}\rightarrow H^j_{dR}(reg(X))$ is injective,  then we can improve the inequalities \eqref{swhc} and \eqref{blasco} in the following way:
\begin{equation}
\label{swhcl}
dim(H^{j}_{c}(reg(X)))\leq dim(I^pH^{j}(X,\mathcal{R}_{0}))
\end{equation}
\begin{equation}
\label{blascol}
 dim(H^{j}_{c}(reg(X)))\leq dim(I^qH^{j}(X,\mathcal{R}_{0}))
\end{equation}
\begin{equation}
\label{boos}
b_{n-j}(reg(X))\leq dim(I^pH^{n-j}(X,\mathcal{R}_{0}))
\end{equation}
\begin{equation}
\label{roos}
b_{n-j}(reg(X))\leq dim(I^qH^{n-j}(X,\mathcal{R}_{0}))
\end{equation}
\end{cor}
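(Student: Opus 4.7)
The strategy is to realize the perversity $p$ (respectively its dual $q$) as the perversity $p_h$ attached to a suitable quasi edge metric with weights on $reg(X)$ via theorem \ref{new}, and then transfer each inequality to the $L^{2}$ framework where corollary \ref{pinotto} applies directly.

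First I would treat the case $p\leq q$ with $p(Y)=-1$ on every codimension one stratum. As noted in the proof of proposition \ref{derede}, the condition $p\leq q$ forces $p\leq \underline{m}$, so theorem \ref{new} supplies a quasi edge metric $h$ on $reg(X)$ with $I^{p}H^{j}(X,\mathcal{R}_{0})\cong H^{j}_{2,max}(reg(X),h)$. Since both Hilbert complexes $(L^{2}\Omega^{*}(reg(X),h),d_{max/min,*})$ are Fredholm by theorem \ref{ris}, corollary \ref{pinotto} yields
\[
\dim\bigl(\im(H^{j}_{c}(reg(X))\to H^{j}_{dR}(reg(X)))\bigr)\leq \dim H^{j}_{2,max}(reg(X),h)=\dim I^{p}H^{j}(X,\mathcal{R}_{0}),
\]
which is exactly \eqref{swhc}. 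For \eqref{blasco} I would observe that the dual perversity $q=t-p$ satisfies $q(Y)=0$ on codimension one strata and $q\geq \overline{m}$, hence theorem \ref{new} produces another quasi edge metric $g$ with $I^{q}H^{j}(X,\mathcal{R}_{0})\cong H^{j}_{2,min}(reg(X),g)$, and a second application of corollary \ref{pinotto} gives $\dim(\im(H^{j}_{c}\to H^{j}_{dR}))\leq \dim I^{q}H^{j}(X,\mathcal{R}_{0})$. The complementary case $p\geq q$ is entirely symmetric, interchanging the roles of $p$ and $q$ and those of the maximal and minimal extensions.

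Granted the first two inequalities, the non-triviality statement is immediate: a non-zero class in $\im(H^{j}_{c}(reg(X))\to H^{j}_{dR}(reg(X)))$ forces both $I^{p}H^{j}(X,\mathcal{R}_{0})$ and $I^{q}H^{j}(X,\mathcal{R}_{0})$ to have strictly positive dimension for every $p$ admissible in proposition \ref{derede}. The improved bounds \eqref{swhcl}--\eqref{roos} then follow from \eqref{swhc} and \eqref{blasco} by two routine observations: injectivity of $H^{j}_{c}(reg(X))\to H^{j}_{dR}(reg(X))$ gives $\im(H^{j}_{c}\to H^{j}_{dR})\cong H^{j}_{c}(reg(X))$ (yielding \eqref{swhcl} and \eqref{blascol}), while classical Poincaré duality on the open oriented smooth manifold $reg(X)$, namely $H^{j}_{c}(reg(X))\cong H^{n-j}_{dR}(reg(X))^{*}$, gives $\dim H^{j}_{c}(reg(X))=b_{n-j}(reg(X))$ and hence \eqref{boos} and \eqref{roos}.

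The only real obstacle is bookkeeping rather than new analytic input: one has to verify that both $p$ and its dual $q$ meet the hypotheses of theorem \ref{new} (especially the codimension one conditions $p(Y)=-1$ and $q(Y)=0$), and one must keep track of which half of theorem \ref{ris} applies in each of the four sub-cases — the $q_{g}$-side identifying with $H^{*}_{2,max}$ and the $p_{g}$-side with $H^{*}_{2,min}$. Once this matching is set up correctly, the substantive content is absorbed into corollary \ref{pinotto} and theorems \ref{new}, \ref{ris}, and nothing more is required.
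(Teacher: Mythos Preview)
Your argument for \eqref{swhc}, \eqref{blasco}, the non-triviality claim, and the improvements \eqref{swhcl}--\eqref{blascol} is fine and matches what the paper does (it simply says these are ``immediate consequences of the previous results'').

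There is, however, a genuine gap in your last step. Classical Poincar\'e duality on $reg(X)$ gives you $\dim H^{j}_{c}(reg(X))=b_{n-j}(reg(X))$, and substituting this into \eqref{swhcl}--\eqref{blascol} yields $b_{n-j}(reg(X))\leq \dim I^{p}H^{j}(X,\mathcal{R}_{0})$ and $b_{n-j}(reg(X))\leq \dim I^{q}H^{j}(X,\mathcal{R}_{0})$: the intersection cohomology degree is still $j$, not $n-j$. To obtain \eqref{boos} and \eqref{roos} you need an extra duality that flips the degree on the intersection side, and you have not invoked one.

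The paper supplies exactly this missing ingredient via theorem \ref{mario}: from corollary \ref{loplp} and the injectivity hypothesis one has $b_{n-j}(reg(X))=\dim H^{j}_{c}(reg(X))\leq \dim H^{j}_{2,m\rightarrow M}(reg(X),g)$; then theorem \ref{mario} gives $H^{j}_{2,m\rightarrow M}(reg(X),g)\cong H^{n-j}_{2,m\rightarrow M}(reg(X),g)$; and finally $H^{n-j}_{2,m\rightarrow M}(reg(X),g)$ injects into both $H^{n-j}_{2,\min}(reg(X),g)\cong I^{p}H^{n-j}(X,\mathcal{R}_{0})$ and $H^{n-j}_{2,\max}(reg(X),g)\cong I^{q}H^{n-j}(X,\mathcal{R}_{0})$. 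An alternative fix would be to quote Poincar\'e--Lefschetz duality for intersection cohomology, $I^{p}H^{j}\cong I^{q}H^{n-j}$, but as written your proof does neither.
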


\begin{proof} All the previous inequalities from \eqref{swhc} to \eqref{blascol} are immediate consequences of the previous results. For the last two inequalities we observe that by Poincar\'e duality, we know that $dim(H^j_c(reg(X)))=dim(H^{n-j}_{dR}(reg(X)))=b_{n-j}(reg(X))$.\\ Moreover, from Theorem \ref{mario}, we know that $H^j_{2,m\rightarrow M}(reg(X),g)\cong H^{n-j}_{2,m\rightarrow M}(reg(X),g)$. Therefore using Corollary \ref{loplp} we get $$b_{n-j}(reg(X))\leq dim(H^{n-j}_{2,m\rightarrow M}(reg(X),g))\leq dim(H^{n-j}_{2,max}(reg(X),g))=dim(I^qH^{n-j}(X,\mathcal{R}_{0}))$$
$$b_{n-j}(reg(X))\leq dim(H^{n-j}_{2,m\rightarrow M}(reg(X),g))\leq dim(H^{n-j}_{2,min}(reg(X),g))=dim(I^pH^{n-j}(X,\mathcal{R}_{0}))$$
 and so the statement follows.
\end{proof}

Gluing together some of  the previous results, now we can state the main result of this section. The first part is a \textbf{ Hodge theorem} for $ \im(I^{q_{g}}H^i(X,\mathcal{R}_0)\rightarrow I^{p_{g}}H^i(X,\mathcal{R}_0))$, that is we will show the existence of a self-adjoint extension of $\Delta_{i}:\Omega_c^i(reg(X))\rightarrow \Omega^i_c(reg(X))$ having the nullspace isomorphic to $ \im(I^{q_{g}}H^i(X,\mathcal{R}_0)\rightarrow I^{p_{g}}H^i(X,\mathcal{R}_0))$. In the second part we will show that $(d+\delta)_{ev}$, that is the Gauss-Bonnet operator having as domain the space of the smooth forms of even degree with compact support, admits a Fredholm extension such that its index has a topological meaning.
\begin{teo}
\label{tizios}
In the same hypothesis or Theorem \ref{ris}; Let $\Delta_{\mathfrak{m},i}$ and $(d_{\mathfrak{m}}+d^*_{\mathfrak{m}})_{ev}$ be the operators, as defined respectively in Corollary \ref{giove} and Proposition \ref{rtlrtl}, associated to the riemannian  manifold $(reg(X), g)$. Then    we have the following results:
\begin{equation}
\label{miop}
Ker(\Delta_{\mathfrak{m},i})\cong \im(I^{q_{g}}H^i(X,\mathcal{R}_0)\rightarrow I^{p_{g}}H^i(X,\mathcal{R}_0))
\end{equation}
\begin{equation}
\label{ssss}
\ind((d_{\mathfrak{m}}+d_{\mathfrak{m}}^*)_{ev})=I^{p_{g}\rightarrow q_{g}}\chi(X,\mathcal{R}_0).
\end{equation}
\end{teo}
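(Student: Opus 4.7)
The plan is to derive both equalities as direct corollaries of the general machinery already developed, once we verify that the hypotheses are met in the stratified setting. The backbone is Theorem \ref{ris}, which gives not only the Hodge identifications but also the crucial finite-dimensionality of $H^i_{2,\max}(reg(X),g)$ and $H^i_{2,\min}(reg(X),g)$ for every $i$; this means in particular that $(L^2\Omega^i(reg(X),g),d_{\max,i})$ and $(L^2\Omega^i(reg(X),g),d_{\min,i})$ are both Fredholm complexes, so $ran(d_{\min,i})$ is closed in $L^2\Omega^{i+1}(reg(X),g)$ for every $i$. These are precisely the hypotheses required by Theorem \ref{polcas}, Corollary \ref{giove} and Proposition \ref{rtlrtl}.

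First I would apply Corollary \ref{giove} to produce the self-adjoint, Fredholm extension $\Delta_{\mathfrak{m},i}$ and to obtain
\begin{equation*}
Ker(\Delta_{\mathfrak{m},i})\cong \im\bigl(H^{i}_{2,\min}(reg(X),g)\xrightarrow{i^{*}_{i}} H^{i}_{2,\max}(reg(X),g)\bigr).
\end{equation*}
To promote this to \eqref{miop}, I would invoke the naturality of the de Rham--type isomorphisms of Theorem \ref{ris}: the identifications
$I^{p_g}H^{i}(X,\mathcal{R}_0)\cong H^{i}_{2,\min}(reg(X),g)$ and $I^{q_g}H^{i}(X,\mathcal{R}_0)\cong H^{i}_{2,\max}(reg(X),g)$ established in \cite{FB} intertwine the inclusion--induced map between $L^{2}$ cohomologies with the canonical perversity comparison map at the level of intersection cohomology. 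Composing with the isomorphism furnished by Corollary \ref{giove} yields \eqref{miop}.

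For \eqref{ssss} I would apply Proposition \ref{rtlrtl}, which uses exactly the closed range hypothesis verified above and the Fredholmness of $(L^{2}\Omega^i(reg(X),g),d_{\mathfrak{m},i})$ (guaranteed again by Corollary \ref{giove}). This gives that $(d_{\mathfrak{m}}+d_{\mathfrak{m}}^{*})_{ev}$ is Fredholm on its domain with the graph norm and that
\begin{equation*}
\ind\bigl((d_{\mathfrak{m}}+d_{\mathfrak{m}}^{*})_{ev}\bigr)=\chi_{m\rightarrow M}(reg(X),g)=\sum_{i=0}^{n}(-1)^{i}\dim H^{i}_{2,m\rightarrow M}(reg(X),g).
\end{equation*}
Translating each summand through the same naturality argument used for \eqref{miop} turns this Euler characteristic into $I^{p_g\rightarrow q_g}\chi(X,\mathcal{R}_0)$, giving \eqref{ssss}.

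The only delicate point, and the step I would be most careful about, is the naturality claim used in both parts: namely, that the de Rham isomorphisms of Theorem \ref{ris} commute with the inclusion of complexes $(L^{2}\Omega^i,d_{\min})\hookrightarrow (L^{2}\Omega^i,d_{\max})$ on the analytic side and with the perversity comparison map on the topological side. Everything else is a mechanical assembly of results already at hand; this compatibility is the genuine content that must be extracted from the construction in \cite{FB}, either by revisiting the explicit quasi-isomorphisms built there or by arguing more abstractly that any two functorial identifications between the intersection cohomology of $(X,p_g)$ and the $L^{2}$ cohomology of $(reg(X),g)$ which factor through the absolute/relative Hodge cohomologies must agree with the natural comparison maps.
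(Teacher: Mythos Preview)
Your proposal is correct and follows essentially the same route as the paper: the paper's proof is just the two-line assertion that \eqref{miop} follows from Theorem~\ref{ris} together with Corollary~\ref{giove}, and that \eqref{ssss} follows from Theorem~\ref{ris} together with Proposition~\ref{rtlrtl}. You have simply unpacked what is implicit in those two sentences (Fredholmness and closed range from finite-dimensionality, then application of the two cited results), and you are right to single out the naturality of the isomorphisms of Theorem~\ref{ris} with respect to the inclusion $d_{\min}\hookrightarrow d_{\max}$ versus the perversity comparison map as the one point with genuine content; the paper takes this compatibility for granted without comment.
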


\begin{proof}
\eqref{miop} follows by Theorem \ref{ris} and Corollary \ref{giove}; analogously \eqref{ssss} follows from Theorem \ref{ris} and from Proposition \ref{rtlrtl}. 
\end{proof}

Now suppose that $dimX=4n$ where $X$ is as in Proposition \ref{derede}. Let $g$ be a quasi-edge metric with weights on $reg(X)$. Then, by Theorem \ref{ris}, it follows that  $(L^{2}\Omega^i(Reg(X),g), d_{max/min,i})$  are Fredholm complexes and so $(reg(X), g)$ admits the $L^2-$signature $\sigma_{2}(reg(X),g)$ as defined in Definition \ref{nnbb}. Moreover, using again Theorem \ref{ris}, we get that in this case   the $L^2-$signature $\sigma_{2}(reg(X),g)$  is just the analytic version of the \textbf{ perverse signature} introduced by Hunsicker in \cite{H} in the case of $depth(X)=1$ and reintroduced in a purely topological way and generalized to any compact topological pseudomanifolds  by Friedman and Hunsicker in \cite{FH}. In other words, if $p_{g}$ is the general perversity of Definition \ref{pim} and $q_{g}$ it is its dual, then 
\begin{equation}
\label{lsign}
\sigma_{2}(reg(X),g)=\sigma_{q_{g}\rightarrow p_{g}}(X)
\end{equation} 
 and we provided an analytic way to construct $\sigma_{q_{g}\rightarrow p_{g}}(X)$ when $X$ is a smoothly stratified pseudomanifold with a Thom-Mather stratification which generalizes the construction given by Hunsicker in \cite{H} in the particular case of $depth(X)=1$. (For the definition of $\sigma_{q_{g}\rightarrow p_{g}}(X)$ see \cite{FH} pag. 15).\\We have the following corollaries:

\begin{cor} Let $X$ be as in Theorem \ref{ris} and let $g$ and $h$ be two quasi-edge metrics with weights on $reg(X)$. If $p_{g}=p_{h}$ then $$\sigma_{2}(reg(X),g)=\sigma_{2}(reg(X),h).$$ 
\label{kkii}
\end{cor}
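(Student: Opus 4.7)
The strategy is to reduce the equality of two \emph{analytic} signatures to the equality of a purely \emph{topological} invariant, namely the perverse signature $\sigma_{q\rightarrow p}(X)$ of Friedman--Hunsicker, and then to observe that this topological invariant depends only on the perversity data.

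First I would invoke the identification \eqref{lsign} established just before the statement. Since $g$ and $h$ are both quasi edge metrics with weights on $reg(X)$ and $dim(X)=4n$, theorem \ref{ris} guarantees that the Hilbert complexes $(L^{2}\Omega^{i}(reg(X),g),d_{max/min,i})$ and $(L^{2}\Omega^{i}(reg(X),h),d_{max/min,i})$ are all Fredholm. Thus the vector spaces $\overline{H}^{2n}_{2,m\rightarrow M}(reg(X),g)$ and $\overline{H}^{2n}_{2,m\rightarrow M}(reg(X),h)$ are finite dimensional, so both $\sigma_{2}(reg(X),g)$ and $\sigma_{2}(reg(X),h)$ are defined in the sense of definition \ref{qwerq}, and \eqref{lsign} gives
\begin{equation*}
\sigma_{2}(reg(X),g)=\sigma_{q_{g}\rightarrow p_{g}}(X),\qquad \sigma_{2}(reg(X),h)=\sigma_{q_{h}\rightarrow p_{h}}(X).
\end{equation*}

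Next I would use the hypothesis $p_{g}=p_{h}$ together with the definition of dual perversity (definition \ref{ujn}), which gives $q_{g}=t-p_{g}=t-p_{h}=q_{h}$. Therefore $\sigma_{q_{g}\rightarrow p_{g}}(X)$ and $\sigma_{q_{h}\rightarrow p_{h}}(X)$ are the same topological object, since the perverse signature $\sigma_{q\rightarrow p}(X)$ of Friedman--Hunsicker depends only on $X$ and the pair of perversities $(q,p)$. Combining this with the two identifications above yields
\begin{equation*}
\sigma_{2}(reg(X),g)=\sigma_{q_{g}\rightarrow p_{g}}(X)=\sigma_{q_{h}\rightarrow p_{h}}(X)=\sigma_{2}(reg(X),h),
\end{equation*}
which is the claim.

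There is essentially no obstacle here beyond citing the identification \eqref{lsign}; the content of the corollary is precisely that the analytically defined $L^{2}$ signature has been shown to equal a perversity-dependent topological invariant, so the metric independence for fixed perversity is automatic. If one wished to avoid invoking \eqref{lsign} as a black box, the alternative route would be to identify, via theorem \ref{ris}, the symmetric pairing \eqref{cambo} on $\overline{H}^{2n}_{2,m\rightarrow M}(reg(X),g)$ with the intersection pairing on $\im(I^{q_{g}}H^{2n}(X,\mathcal{R}_{0})\rightarrow I^{p_{g}}H^{2n}(X,\mathcal{R}_{0}))$ under the isomorphisms \eqref{max} and \eqref{min}, and then conclude by the equality $p_{g}=p_{h}$, $q_{g}=q_{h}$.
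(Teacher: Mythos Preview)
Your proof is correct and follows essentially the same approach as the paper, which simply states that the result follows immediately from theorem \ref{ris}. You have made explicit what the paper leaves implicit: the identification \eqref{lsign} (itself a consequence of theorem \ref{ris}) reduces the $L^{2}$ signature to the Friedman--Hunsicker perverse signature, which depends only on the perversity data, and your observation $q_{g}=t-p_{g}=t-p_{h}=q_{h}$ finishes the argument.
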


\begin{proof} It follows immediately from Theorem \ref{ris}.
\end{proof}

\begin{cor} Let $X$ and $X'$  be as in Theorem \ref{ris}. Let $g$ and $h$ be  two  quasi-edge metric with weights respectively on $reg(X)$ and $reg(X')$. Let $f:X\rightarrow X'$ be a   stratum preserving homotopy equivalence which preserves also the orientations of $X$ and $X'$, see \cite{KW} pag 62 for the definition. Suppose that both $p_{g}$ and $p_{h}$ depend only on the codimension of the strata and that $p_{g}=p_{h}$. Then $$\sigma_{2}(reg(X),g)=\sigma_{2}(reg(X'),h).$$ 
\label{kkkii}
\end{cor}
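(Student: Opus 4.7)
The plan is to pass from the analytic $L^2$ signatures to the topological perverse signatures via equation \eqref{lsign}, and then to invoke the topological invariance of perverse signatures under stratum preserving homotopy equivalences. First I would apply \eqref{lsign} to both pairs to get
\[
\sigma_{2}(reg(X),g) = \sigma_{q_g \rightarrow p_g}(X), \qquad \sigma_{2}(reg(X'),h) = \sigma_{q_h \rightarrow p_h}(X').
\]
Since $f$ is stratum preserving, it sends any stratum $Y\subset X$ to a stratum $f(Y)\subset X'$ of the same codimension. Combined with the hypothesis that $p_g$ and $p_h$ depend only on codimension and satisfy $p_g=p_h$, this means that under the bijection of strata induced by $f$ the perversities $p_g$ and $p_h$ (and hence their duals $q_g$ and $q_h$) agree. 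Thus it suffices to prove $\sigma_{q\rightarrow p}(X)=\sigma_{q\rightarrow p}(X')$ for the common perversities $p:=p_g=p_h$ and $q:=q_g=q_h$.

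Next I would use Friedman's theory of intersection homology with general perversities (\cite{IGP}, \cite{SP}): when the perversity depends only on codimension, a stratum preserving homotopy equivalence induces isomorphisms
\[
f^{*}: I^{p}H^{i}(X',\mathcal{R}_0)\longrightarrow I^{p}H^{i}(X,\mathcal{R}_0), \qquad f^{*}: I^{q}H^{i}(X',\mathcal{R}_0)\longrightarrow I^{q}H^{i}(X,\mathcal{R}_0)
\]
for every $i$, naturally with respect to the canonical map $I^{q}H^{i}\to I^{p}H^{i}$ coming from $q\leq p$. Consequently $f^{*}$ restricts to an isomorphism between the subspaces $I^{q\to p}H^{2n}(X',\mathcal{R}_0)$ and $I^{q\to p}H^{2n}(X,\mathcal{R}_0)$ on which the perverse signature pairings are defined.

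Finally I would check that $f^{*}$ intertwines the two signature pairings. The pairing is the restriction to the image of the intersection product $I^{q}H^{2n}\times I^{p}H^{2n}\to \mathbb{R}$, which is functorial under orientation preserving stratified maps (this is the content of the topological construction of the perverse signature in \cite{FH}). Hence $f^{*}$ is an isometry of symmetric bilinear forms, and the two signatures coincide. The main obstacle is precisely this last step: verifying the compatibility of $f^{*}$ with the perverse intersection pairing. Once this compatibility is in place, combining the three paragraphs above yields $\sigma_{2}(reg(X),g)=\sigma_{2}(reg(X'),h)$.
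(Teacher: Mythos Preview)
Your proposal is correct and follows the same strategy as the paper: identify each $L^2$ signature with the corresponding Friedman--Hunsicker perverse signature via \eqref{lsign} (equivalently theorem \ref{ris}), and then invoke the invariance of the perverse signature under orientation-preserving stratum preserving homotopy equivalences. The paper simply quotes this invariance as a known fact from \cite{FH}, whereas you spell out the underlying functoriality of $I^{p}H^*$, $I^{q}H^*$ and of the intersection pairing; this extra detail is fine but not required.
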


\begin{proof}
As remarked above, by Theorem \ref{ris}, it follows that $\sigma_{2}(reg(X),g)$  is the perverse signature of  Friedman and Hunsicker associated with the general perversities $p_{g}$ and $t-p_{g}$. Analogously   $\sigma_{2}(reg(X'),h)$  is the perverse signature of  Friedman and Hunsicker associated with the general perversities $p_{h}$ and $t-p_{h}$. So the statement follows by the invariance of the perverse signature under  the action of stratum preserving homotopy equivalences which preserve also the orientations.
\end{proof}

\subsection{Some examples of manifolds without riemannian metric with finite $L^2-$cohomology}

Now we go ahead  showing an example of a manifold $M$ such that $\im(H_{c}^i(M)\rightarrow H^i_{dR}(M))$ is  infinite dimensional. To do this we start with the following definition:
\begin{defi}
\label{ends}
 Let $M$ be a smooth manifold and let $A\subset M$.
\begin{enumerate}
\item We will say that $A$ is bounded if its closure, $\overline{A}$, is compact.
\item We will say that $M$ has only one end if for each compact subset $K\subset M$ $M-K$ has only one unbounded connected component.
\item We will say that M has k ends (where $k \geq 2$) if there is a compact
set $K_{0} \subset M$ such that for every compact set $K \subset M$ containing $K_0$, $ M - K$ has
exactly $k$ unbounded connected components.
\end{enumerate} 
\end{defi}

The following proposition is a modified version of Lemma 2.3 in \cite{GC}:

\begin{prop} 
\label{inj}
Let $M$ be a manifold with only one end. Then the natural map $$H^1_{c}(M)\rightarrow H^1_{dR}(M)$$ is injective.
\end{prop}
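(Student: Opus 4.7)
\medskip

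\noindent\textbf{Proof proposal.} The plan is to represent a class in $H^1_c(M)$ by a closed compactly supported $1$-form $\omega$ with $\omega=df$ for some $f\in C^{\infty}(M)$ (this is what it means that its image in $H^1_{dR}(M)$ vanishes) and then use the one-end hypothesis to modify $f$ by a constant so that the modified primitive is itself compactly supported. Since $\omega$ is compactly supported, $f$ satisfies $df=0$ outside $K:=\mathrm{supp}(\omega)$, hence is locally constant on $M\setminus K$. The whole issue is to show that by subtracting the ``value of $f$ at infinity'' we kill $f$ off a compact set.

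\medskip

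\noindent The first step I would carry out is to enlarge $K$ to a nicer compact set. Choose a smooth exhausting function $\psi:M\to\mathbb{R}$ and a regular value $c$ of $\psi$ with $K\subseteq\{\psi\leq c\}$; set $K':=\{\psi\leq c\}$, a compact submanifold with smooth boundary containing $K$. Then $\partial K'$, being a compact manifold, has only finitely many connected components. Since every connected component of $M\setminus K'$ is open in $M$ and has nonempty frontier contained in $\partial K'$, the set $M\setminus K'$ has only finitely many connected components. By the one-end hypothesis, exactly one of them is unbounded; call it $U$, and denote the bounded ones by $B_1,\ldots,B_N$ (possibly $N=0$).

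\medskip

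\noindent The second step is to define the corrected primitive. Since $f$ is locally constant on $M\setminus K\supseteq M\setminus K'$, it takes a single constant value on each connected component of $M\setminus K'$: let $c_0\in\mathbb{R}$ be the value on $U$ and $c_i$ the value on $B_i$. Put
\begin{equation*}
\phi:=f-c_0\in C^{\infty}(M).
\end{equation*}
Then $d\phi=df=\omega$, $\phi\equiv 0$ on $U$, and $\phi\equiv c_i-c_0$ on each $B_i$. Hence
\begin{equation*}
\mathrm{supp}(\phi)\subseteq K'\cup\overline{B_1}\cup\cdots\cup\overline{B_N},
\end{equation*}
which is a finite union of compact sets (each $\overline{B_i}\subseteq B_i\cup\partial K'$ is compact because $B_i$ is bounded by hypothesis). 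Thus $\phi\in C^{\infty}_c(M)$ and $\omega=d\phi$, so $[\omega]=0$ in $H^1_c(M)$, proving injectivity.

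\medskip

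\noindent The main obstacle, and the reason for introducing $K'$ at all, is the finiteness of the number of bounded components of the complement: a priori, for a general compact $K$, $M\setminus K$ could have infinitely many bounded components (even with one unbounded component), and then the candidate $\phi=f-c_0$ might fail to be compactly supported. Replacing $K$ by a compact submanifold with smooth boundary circumvents this precisely because the number of components of the complement is then controlled by $|\pi_0(\partial K')|<\infty$. Once this finiteness is in place, the argument is elementary.
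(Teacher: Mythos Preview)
Your proof is correct and follows the same strategy as the paper's (very brief) argument: if $\omega=df$ with $\omega$ compactly supported, subtract from $f$ its value on the unique unbounded complementary component to obtain a compactly supported primitive. Your version is more careful than the paper's, which simply asserts that $f-c$ has compact support; your passage through a compact $K'$ with smooth boundary makes the finiteness of bounded components---and hence the compactness of $\mathrm{supp}(\phi)$---explicit.
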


\begin{proof}
Let $\alpha\in \Omega^{1}_{c}(M)$ be closed and let $f:M \rightarrow \mathbb{R}$ be a smooth function such that $df=\alpha$.  This implies the existence of a constant $c$ such that $f|_{M-supp(\alpha)}=c$. Therefore, by the fact that $M$ has only one end, we get that $f-c$ has compact support.
\end{proof}

Now using Poincar\'e duality for open and oriented manifolds we know that the de Rham cohomology with compact support is infinite dimensional if and only if the de Rham cohomology is infinite dimensional. This implies that if $M$ is a smooth and oriented surface with only one end and such that $H^{1}_{dR}(M)$ is infinite dimensional then also $\im(H_{c}^1(M)\rightarrow H^1_{dR}(M))$ is infinite dimensional. So we can state the following proposition:

\begin{prop}
\label{lastlast}
Let $M$ be an open and oriented surface with infinite genus and with only one end. Then  $\im(H_{c}^1(M)\rightarrow H^1_{dR}(M))$ is infinite dimensional and therefore on $M$, according to Corollary \ref{gianni}, there is no  riemannian metric g (complete or incomplete) such that,  for some closed extension $(L^{2}\Omega^{*}(M,g), D_{*})$ of $(\Omega^{*}_{c}(M),d_{*})$,  one  of the following properties is satisfied:
\begin{enumerate}
\item  $\overline{H}^{1}_{2,D_{*}}(M,g)$  is finite dimensional.
\item  $H^{1}_{2,D_{*}}(M,g)$  is finite dimensional.
\item $D_{1}^*\circ D_{1}+D_{0}\circ D_{0}^*$ on its domain (as defined in \eqref{saed}) endowed with the graph norm is a Fredholm operator.
\end{enumerate}
Moreover on $M$ there is no  riemannian metric $g$ such that:
\begin{enumerate}
\item $\Delta_{max,1}$, the maximal closed extension of $\Delta_{j}:\Omega_{c}^1(M)\rightarrow \Omega^1_{c}(M)$,  has finite dimensional nullspace.
\item  $\Delta_{min,1}$, the minimal closed extension of $\Delta_{1}:\Omega_{c}^j(M)\rightarrow \Omega^1_{c}(M)$,  satisfies\\ $dim(ran(\Delta_{min,1})^{\bot})<\infty$.
\end{enumerate}
\label{asso}
\end{prop}

The rest of this subsection is devoted to show another example of an open manifold which satisfies Corollary \ref{gianni} but that it is not contemplate in the previous proposition. To do this  we  state the following lemma which gives a sufficient condition to have $\im(H^{n-1}_{c}(M)\rightarrow H^{n-1}_{dR}(M))$ infinite dimensional where $n=dim(M).$

\begin{lemma}
\label{tyhg}
 Let $M$ be an open and oriented smooth manifold of dimension $n$. Assume that there exists a sequence of open subsets $\{A_{j}\}_{j\in J}$  such that:
\begin{enumerate}
\item $\partial\overline{A_{j}}$ is smooth for each $j$.
\item Every connected component of $M-A_j$ has connected boundary.
\item $\lim_{j\rightarrow \infty}dim(\im(H_{c}^1(A_{j})\rightarrow H^1_{dR}(A_{j})))=\infty$.
\end{enumerate}
Then  $ \im(H_{c}^{n-1}(M)\rightarrow H^{n-1}_{dR}(M))$ is infinite dimensional.
\end{lemma}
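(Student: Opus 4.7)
The plan is to construct, for each $j$, a natural linear map $\Phi_j\colon H^i_c(A_j)\to H^i_{dR}(M)$ whose image lies in $\im(H^i_c(M)\to H^i_{dR}(M))$ and which satisfies $\dim\im(\Phi_j)\geq \dim\im(H^i_c(A_j)\to H^i_{dR}(A_j))$. Hypothesis $(2)$ will then force $\im(H^i_c(M)\to H^i_{dR}(M))$ to be infinite-dimensional by letting $j\to\infty$.

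To build $\Phi_j$, I use extension by zero. If $\omega$ is a smooth $i$-form with compact support in $A_j$, then $\mathrm{supp}(\omega)$ is compact in $M$ and contained in the open set $A_j$, so it has a neighbourhood in $M$ still contained in $A_j$ on whose complement the extension $\tilde\omega$ is identically zero; hence $\tilde\omega$ is smooth on $M$. Since extension by zero commutes with $d$, this induces a well-defined map $H^i_c(A_j)\to H^i_c(M)$, and composing with the natural map $H^i_c(M)\to H^i_{dR}(M)$ produces $\Phi_j$. By construction $\im(\Phi_j)\subseteq\im(H^i_c(M)\to H^i_{dR}(M))$.

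The crux is the inclusion
\[
\ker(\Phi_j)\;\subseteq\;\ker\bigl(H^i_c(A_j)\to H^i_{dR}(A_j)\bigr).
\]
To see it, suppose $[\omega]\in\ker(\Phi_j)$ is represented by a closed, compactly supported form $\omega$ on $A_j$. Then by definition $\tilde\omega=d\psi$ on $M$ for some $\psi\in\Omega^{i-1}(M)$; restricting to $A_j$ yields $\omega=d(\psi|_{A_j})$ in $\Omega^{i-1}(A_j)$, so $[\omega]$ is already zero in $H^i_{dR}(A_j)$. From this inclusion of kernels one obtains a canonical surjection $\im(\Phi_j)\twoheadrightarrow\im(H^i_c(A_j)\to H^i_{dR}(A_j))$, whence
\[
\dim\im(H^i_c(M)\to H^i_{dR}(M))\;\geq\;\dim\im(\Phi_j)\;\geq\;\dim\im(H^i_c(A_j)\to H^i_{dR}(A_j)),
\]
and the right-hand side diverges by $(2)$.

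The only subtlety — and the main, though very mild, obstacle — is verifying that extension by zero yields a smooth form and hence a well defined map on compactly supported cohomology; this is dispatched by the neighbourhood argument above. The smoothness hypothesis on $\partial\overline{A_j}$ is not actually needed for the cohomological argument and appears presumably only to ensure $A_j$ is a genuine manifold with boundary in the applications the author has in mind.
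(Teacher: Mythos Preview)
Your argument is correct, and it takes a genuinely different route from the paper's. The paper deduces the lemma from the \emph{next} proposition, which constructs an integration pairing
\[
\im(H^{i}_{c}(M)\to H^i_{dR}(M))\times \im(H^{n-i}_{c}(A)\to H^{n-i}_{dR}(A)) \longrightarrow \mathbb{R},\qquad ([\eta],[\omega])\mapsto \int_{M} \eta\wedge \omega,
\]
and shows it is non-degenerate in the second slot via Poincar\'e duality on the open oriented manifold $A$. This yields an injection of $\im(H^{n-i}_{c}(A_j)\to H^{n-i}_{dR}(A_j))$ into the algebraic dual of $\im(H^{i}_{c}(M)\to H^i_{dR}(M))$, from which the lemma follows. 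Your approach instead uses extension by zero $H^i_c(A_j)\to H^i_c(M)$ composed with $H^i_c(M)\to H^i_{dR}(M)$, together with the kernel comparison coming from restriction of a primitive. This is more elementary: it avoids any appeal to Poincar\'e duality, does not use the orientation of $M$, and keeps the same degree $i$ throughout rather than passing through degree $n-i$. The paper's pairing argument, by contrast, fits naturally into the duality theme of the section and feeds directly into the perverse-signature discussion. Your observation that the smooth-boundary hypothesis on $\partial\overline{A_j}$ is not used is also correct (and in fact the paper's pairing argument does not use it either).
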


\begin{proof}
It is an immediate consequence of the next proposition.
\end{proof}

\begin{prop} Let $M$ be an open and oriented smooth manifold of dimension $n$.  Assume that there exists an open subset $A\subset M$ such that every connected component of $M-A$ has connected boundary. Then there is a natural and injective map  $$\im (H^{1}_{c}(A)\rightarrow H^{1}_{dR}(A))\longrightarrow( \im(H^{n-1}_{c}(M)\rightarrow H^{n-1}_{dR}(M)))^*.$$ 
\end{prop}

\begin{proof} Consider the following pairing:
\begin{equation}
\im(H^{1}_{c}(A)\rightarrow H^{1}_{dR}(A))\times \im(H^{n-1}_{c}(M)\rightarrow H^{n-1}_{dR}(M)) \longrightarrow \mathbb{R},\  ([\omega],[\eta])\mapsto \int_{M} \omega\wedge \eta
\label{camborata}
\end{equation}
where $\omega$ is a closed $(n-1)-$form with compact support in $A$  and   $\eta$ is a closed $1-$form with compact support in $M$. First of all we have to show that \eqref{camborata} is well defined. As observed at the end of subsection 2.3 
a cohomology class in $\im(H^i_{c}(M)\rightarrow H^i_{dR}(M))$, or in $\im(H^i_{c}(A)\rightarrow H^i_{dR}(A))$, is just a cohomology class in $H^i_{dR}(M)$, or in $H^i_{dR}(A)$,  such that it admits a representative with compact support respectively in $M$  or $A$. Now let $\omega, \omega'$ be two closed $1-$forms with compact support in $A$ such that $[\omega]=[\omega']$ in $\im(H^{1}_{c}(A)\rightarrow H^{1}_{dR}(A))$. Then for $\omega=\omega'+df$ for some $f\in C^{\infty}(A,\mathbb{R})$. But $df$ has compact support contained  in $A$ and every connected component of $M-A$ has connected boundary. Therefore there exists $f'\in C^{\infty}(M)$ such that $f'|_A=f$ and $d(f'|_{(M-A)})=0$. Finally let $\eta, \eta'$ be two closed $(n-1)$-forms such that $[\eta]=[\eta']$ in  $\im(H^{n-1}_{c}(M)\rightarrow H^{n-1}_{dR}(M))$ and let $\psi\in \Omega^{n-2}(M)$ be such that $\eta=\eta'+d\psi$. Then:
$$\int_M\omega\wedge\eta=\int_A\omega\wedge\eta=\int_A(\omega'+df)\wedge(\eta'+d\psi)=
\int_A(\omega'+df')\wedge(\eta'+d\psi)=$$ $$=\int_M(\omega'+df')\wedge(\eta'+d\psi)=(\text{by Stokes Theorem})\ \int_M\omega'\wedge\eta'$$ and therefore \eqref{camborata} is well defined.F
Now let $[\omega]\in \im(H^{1}_{c}(A)\rightarrow H^{1}_{dR}(A))$ such that for each class $[\eta]\in \im(H^{n-1}_{c}(M)\rightarrow H^{n-1}_{dR}(M))$ the pairing \eqref{camborata} is zero. This implies that for each smooth and closed $(n-1)-$form $\phi$ with compact support in $M$ we have $$\int_{M}\phi\wedge \omega=0.$$In particular this is true for  each smooth and closed $(n-1)-$form $\phi$ with compact support in $A$ and therefore,  using again the Poincar\'e duality for open and oriented manifold, we get that there exists $\beta \in C^{\infty}(A)$ such that $d\beta=\omega$. So we can conclude that $[\omega]=0$ in $\im(H^1_{c}(A)\rightarrow H^1_{dR}(A))$ and therefore from the pairing \eqref{camborata} we get the desired injective map.
\end{proof}

Using the previous lemma we have the following  corollary that was suggested to the author by Pierre Albin:

\begin{cor}
\label{lastlastlast}
Let $M$ be an open and oriented surface obtained by gluing an infinite but countable  family of tori. Suppose that $M$ has a  finite number of  ends. Assume moreover that there exists an exhausting sequence of  open subsets with compact closure, $\{A_{j}\}_{j\in \mathbb{N}}$,  which satisfies the following properties:
\begin{enumerate}
\item $M-A_{j}$ is disconnected, made of $k$ unbounded connected components, where $k$ is the number of ends of $M$.
\item $\partial A_{j}$ is smooth and made of $k$  connected components.  
\end{enumerate} 
Then  $\im(H_{c}^{1}(M)\rightarrow H^{1}_{dR}(M))$ is infinite dimensional  and therefore on $M$, according to Corollary \ref{gianni}, there is no  riemannian metric g (complete or incomplete) such that,  for some closed extension $(L^{2}\Omega^{*}(M,g), D_{*})$ of $(\Omega^{*}_{c}(M),d_{*})$,  one  of the properties listed in Prop. \ref{lastlast} is satisfied.
\end{cor}

\begin{proof}
First of all we remark that, given an infinite but countable sequence of tori, it is immediate to check that it is possible to glue them together obtaining a surface which satisfies the assumptions of the corollary. The goal now is to show that we can apply Lemma \ref{tyhg}. We start observing  that, for every $j\in \mathbb{N}$, each of the connected components of $A_j$ is a compact smooth one dimensional manifold and therefore it is diffeomorphic to $S^1$. Now let us label by $\Sigma_j$  the closed and oriented surface obtained by gluing to $A_{j}$ $k$ copies of $\overline{\mathbb{B}}$, the unit ball in $\mathbb{R}^2$ with boundary. Clearly, if we label with $g(\Sigma_j)$ the genus of $\Sigma_j$ then we have 
\begin{equation}
\label{pallade}
\lim_{j\rightarrow \infty}g(\Sigma_j)=\infty
\end{equation}
Now, recalling that $2-2g(\Sigma_j)=\chi(\Sigma_j)=b_0(\Sigma_j)-b_1(\Sigma_j)+b_2(\Sigma_j)=2-b_1(\Sigma_j)$ and using the Mayer-Vietoris sequence it is not hard to see that $dim(H^{1}(A_{j}))\geq 2g(\Sigma_j)-k $ where $k$ is the number of ends of $M$ and therefore it is fixed. Therefore the sequence $\{A_j\}$  satisfies:
\begin{equation}
\label{loveorca}
\lim_{j\rightarrow \infty}dim(H^1_{dR}(A_{j}))=\infty.
\end{equation}
Now  we recall the fact that, on a compact and oriented manifold with boundary $\overline{M}$, we have $H^i(\overline{M},\partial \overline{M})\cong H^i_c(M)$ and $H^i_{dR}(\overline{M})\cong H^i_{dR}(M)$ where $M$ is the interior of $\overline{M}$. So, from the long exact sequence for the relative de Rham cohomology on a compact manifold with boundary, it is easy to show that $dim(H^1(A_{j}))=dim(\im(H^1_{c}(A_{j})\rightarrow H^{1}_{dR}(A_{j})))+\lambda_{A_{j}}$ where $\lambda_{A_{j}}\in \{0,...,k\}$ is defined as the dimension of the image $\im(H^1_{dR}(\overline{A_j})\rightarrow H^1_{dR}(\partial\overline{A_j}))$. This means that the correction term $\lambda_{A_{j}}$ could depend on $A_{j}$ but in any case it lies in $\{0,...,k\}$ which is a bounded set being $k$ fixed. Therefore, from this equality and from \eqref{loveorca}, we get that 
 $$ \lim_{j\rightarrow \infty}dim(\im(H^1_{c}(A_{j})\rightarrow H^{1}_{dR}(A_{j})))=\infty.$$  This implies that we can apply Lemma \ref{tyhg} and therefore the statement follows.
\end{proof}

Finally, using the notions introduced in Definition \ref{ends} and Proposition \ref{inj}, we conclude the section giving another application to the stratified pseudomanifolds and intersection cohomology.
\begin{prop}
\label{qzzq}
Let $X$ be as in Theorem \ref{ris}. Suppose that $X$ is normal, that is for each $p\in sing(X)$ there exists an open neighborhood $U$ such that $U-(U\cap sing(X))$ is connected. Then, if $sing(X)$ is connected, $reg(X)$ is an open manifold with only one end.
\end{prop}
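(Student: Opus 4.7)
The plan is to show that for every compact $K\subset reg(X)$, the open set $reg(X)\setminus K$ has at most one unbounded connected component (the existence of at least one unbounded component will follow from the same argument, provided $sing(X)\neq\emptyset$, as is implicit since we must speak of normality at singular points). Since $X$ is compact, metrizable and $reg(X)$ is open and dense, $reg(X)$ is non-compact. The preliminary observation I would establish first is the following characterization: a connected component $C$ of $reg(X)\setminus K$ is \emph{unbounded} (i.e.\ $\overline{C}^{\,reg(X)}$ is not compact) if and only if its closure $\overline{C}^{\,X}$ in $X$ meets $sing(X)$. One direction is immediate: if $\overline{C}^{\,X}\cap sing(X)=\emptyset$, then $\overline{C}^{\,X}=\overline{C}^{\,X}\cap reg(X)=\overline{C}^{\,reg(X)}$ is a closed subset of the compact $X$, hence compact. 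Conversely, if $p\in\overline{C}^{\,X}\cap sing(X)$ and $x_n\in C$ with $x_n\to p$, then no subsequence can converge in $reg(X)$ (the limit in $X$ is forced to be $p\notin reg(X)$), so $\overline{C}^{\,reg(X)}$ fails to be sequentially compact, hence fails to be compact.

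The main construction is a map $\phi:sing(X)\to\pi_0(reg(X)\setminus K)$ that assigns to each $p\in sing(X)$ the unique component it ``points into''. For $p$ in a singular stratum $Y$, the Thom--Mather condition (item 5 of definition \ref{thom}) gives a local trivialization $\pi_Y^{-1}(V)\cong V\times C(L_Y)$, whose intersection with $reg(X)$ is $V\times(0,2)\times reg(L_Y)$. The normality hypothesis at $p$ amounts precisely to the connectedness of $reg(L_Y)$, and so this structure provides arbitrarily small open neighbourhoods $U$ of $p$ in $X$ for which $U\setminus sing(X)$ is connected; choosing $U$ small enough that $U\cap K=\emptyset$ (possible because $K$ is closed in $X$ and $p\notin K$), the set $U\setminus sing(X)\subset reg(X)\setminus K$ lies in a single component, which I define to be $\phi(p)$. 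Independence of the choice of $U$ follows by intersecting two such choices with a still smaller conical neighbourhood and using that $reg(X)$ is dense. By the same conical argument, $\phi$ is locally constant on $sing(X)$: any $q$ in a small normal neighbourhood $U$ of $p$ sits inside a still smaller normal neighbourhood $V\subset U$, and then $V\setminus sing(X)\subset U\setminus sing(X)\subset \phi(p)$, forcing $\phi(q)=\phi(p)$.

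Since $sing(X)$ is connected, $\phi$ is constant with value some $C^{\ast}\in\pi_0(reg(X)\setminus K)$. To finish, I would show that every unbounded component $C$ of $reg(X)\setminus K$ equals $C^{\ast}$: by the characterization above there exists $p\in\overline{C}^{\,X}\cap sing(X)$, and choosing a small normal neighbourhood $U$ of $p$ disjoint from $K$ we have $U\cap C\neq\emptyset$ (because $p\in\overline{C}^{\,X}$), so the connected set $U\setminus sing(X)\subset \phi(p)=C^{\ast}$ meets $C$, forcing $C=C^{\ast}$. Existence of at least one unbounded component is automatic, since for any $p\in sing(X)$ the component $\phi(p)$ has $p\in\overline{\phi(p)}^{\,X}\cap sing(X)$ and is therefore unbounded. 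The main technical point to justify carefully is the passage from the hypothesis (existence of \emph{one} normal neighbourhood at each $p$) to a cofinal family of normal neighbourhoods at $p$; this is exactly what the local cone-bundle structure of definition \ref{thom}(5) delivers, since shrinking both the base and the radial coordinate preserves the factorization whose regular part is $V\times(0,\epsilon)\times reg(L_Y)$.
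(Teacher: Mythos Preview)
Your argument is correct and is organised differently from the paper's. Rather than building a locally constant map $\phi:sing(X)\to\pi_0(reg(X)\setminus K)$, the paper asserts directly that normality together with connectedness of $sing(X)$ yields a single open neighbourhood $V\supset sing(X)$ with $V\setminus sing(X)$ connected, and then observes that $V\setminus sing(X)$ lies in one component $A_\gamma$ of $reg(X)\setminus K$ while the closures of the remaining components sit inside the compact set $X\setminus(A_\gamma\cup sing(X))$. Your preliminary characterisation (a component is unbounded iff its $X$-closure meets $sing(X)$) makes this last step explicit, and the map $\phi$ makes the role of the connectedness of $sing(X)$ more transparent than the global $V$ does. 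Both proofs rest on exactly the technical point you isolate at the end: upgrading the hypothesis from \emph{one} normal neighbourhood at each $p$ to arbitrarily small ones (equivalently, $reg(L_Y)$ connected); the paper uses this implicitly and without comment, since it needs $V\cap K=\emptyset$, which forces $V$ to be small. It is worth noting that under the paper's literal wording (``there exists an open neighbourhood $U$'') this upgrade genuinely fails --- for the pinched torus one has $sing(X)=\{p\}$, $X\setminus\{p\}\cong S^1\times(0,1)$ connected, yet link $S^1\sqcup S^1$ and $reg(X)$ with two ends --- so the intended hypothesis is the standard one (connected links), and your appeal to the Thom--Mather cone structure is then precisely the right justification.
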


\begin{proof}
Let $K\subset reg(X)$ a compact subset. If $reg(X)-K$ is connected then we have nothing to show. Suppose therefore that it is disconnected and let $A_{1},...,A_{l}$ be the connected components. By the fact that $X$ is normal we get that there exists an open neighborhood $sing(X)\subset V\subset X$ such that $V-sing(X)$ is connected.  By the fact that $K\subset reg(X)$ we get that we can choose $V$ such that $V\cap K=\emptyset$. Therefore we get $V=\cup_{i=1}^l(\overline{A}_{i}\cap V)$ and  this equality implies that $V- sing(X)=\cup_{i=1}^l(A_{i}\cap (V-sing(X))).$ Every subset $A_{i}\cap (V-sing(X))$ is an open subset of $V-sing(X)$ and for each $i,j\in \{1,...,l\}, i\neq j$ we have $(A_{i}\cap(V-sing(X)))\cap(A_{j}\cap(V-sing(X)))=\emptyset.$ So the fact that $V-sing(X)$ is connected, joined with the fact that  $V- sing(X)=\cup_{i=1}^l(A_{i}\cap (V-sing(X)))$,  implies that there exists just one index in $\{1,...,l\}$, which we label $\gamma$, such that $A_{\gamma}\cap(V-sing(X))\neq \emptyset$. So we can conclude that: 
\begin{enumerate}
\item $V-sing(X)\subset A_{\gamma}$.
\item $A_{\gamma}\cup sing(X)=A_{\gamma}\cup V$ is open in $X$.
\end{enumerate}
This implies that if we label $\mathfrak{K}$ the closure in  $X$ of $$(\bigcup_{i=1,i\neq \gamma}^lA_{i})\cup K$$ then we have \begin{equation}
\label{materia}
\mathfrak{K}\subseteq  X-(A_{\gamma}\cup sing(X))
\end{equation}
and therefore   $\mathfrak{K}$ is a compact subset of $X$. But  \eqref{materia} implies that $\mathfrak{K}\subset reg(X)$ and therefore it is a compact subset of $reg(X)$. This allows us to conclude that for each $i\in \{1,...,l\},\ i\neq \gamma$ we have that $\overline{A_{i}}$ is a compact subset of $reg(X)$ and so we got the statement.
\end{proof}

We have the following corollary:
\begin{cor}
\label{falco}
Let $X$ be as in Theorem \ref{ris} such that $X$ is normal and $sing(X)$ is connected. Let $p$ be a general perversity as in the statement of Theorem \ref{new} and let $q$ be its dual. Then we have the following inequalities:
\begin{enumerate}
\item $dim(H^1_{c}(reg(X)))\leq I^pb_{1}(X,\mathcal{R}_{0}),\  dim(H^1_{c}(reg(X)))\leq I^qb_{1}(X, \mathcal{R}_{0}).$
\item $b_{n-1}(reg(X))\leq I^pb_{n-1}(X,\mathcal{R}_{0}),\ b_{n-1}(reg(X))\leq I^qb_{n-1}(X,\mathcal{R}_{0}).$
\end{enumerate}
where $I^pb_{i}(X,\mathcal{R}_{0})$ is the dimension of $I^pH^i(X,\mathcal{R}_{0})$ and analogously $I^qb_{i}(X,\mathcal{R}_{0})$ is the dimension of $I^qH^i(X,\mathcal{R}_{0})$. Finally if $dimX=2$ and $cod(sing(X))=0$ then 
\begin{equation}
\label{fine}
I^{\underline{m}}\chi(X)\leq \chi(reg(X))+1
\end{equation}
where $I^{\underline{m}}\chi(X)=\sum_{i=0}^2(-1)^iI^{\underline{m}}b_{i}(X)$. 
\end{cor}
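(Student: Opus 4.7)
My plan is to view the corollary as a direct synthesis of the tools built up in the previous subsection, reducing ultimately to an application of corollary \ref{mummio}. The key observation is that the strengthened inequalities \eqref{swhcl}--\eqref{roos} of corollary \ref{mummio} require injectivity of the natural map $H^j_{c}(reg(X))\rightarrow H^j_{dR}(reg(X))$ for the relevant degree, and under the present hypotheses this injectivity is supplied for $j=1$ by propositions \ref{qzzq} and \ref{inj}.

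Concretely, I would proceed in three steps. First, I invoke proposition \ref{qzzq}: since $X$ is normal and $sing(X)$ is connected, $reg(X)$ is a manifold with only one end. Second, I apply proposition \ref{inj} to $reg(X)$ to conclude that $H^1_{c}(reg(X))\rightarrow H^1_{dR}(reg(X))$ is injective. Third, with injectivity in hand, the perversity $p$ in the statement satisfies the hypotheses of theorem \ref{new}, so I apply the strengthened inequalities of corollary \ref{mummio} with $j=1$: inequalities \eqref{swhcl} and \eqref{blascol} yield the two bounds of part (1), while \eqref{boos} and \eqref{roos} with $j=1$ yield those of part (2), using that for an oriented open $n$-manifold Poincar\'e duality identifies $\dim H^1_{c}(reg(X))$ with $b_{n-1}(reg(X))$.

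For the Euler characteristic inequality \eqref{fine} in the case $\dim X=2$, my plan is a direct numerical combination. Since $reg(X)$ is a connected non-compact oriented surface, $b_0(reg(X))=1$ and $b_2(reg(X))=0$, so $\chi(reg(X))=1-b_1(reg(X))$. On the intersection cohomology side I would use Poincar\'e duality for $I^{\underline{m}}H^*(X,\mathcal{R}_0)$ in dimension $2$ to identify $I^{\underline{m}}b_0(X)$ with $I^{\underline{m}}b_2(X)$, so that $I^{\underline{m}}\chi(X)=2\,I^{\underline{m}}b_0(X)-I^{\underline{m}}b_1(X)$. Combining this with the bound $b_1(reg(X))\leq I^{\underline{m}}b_1(X)$ coming from part (1) applied to $p=\underline{m}$ (which is admissible by theorem \ref{new}, since $\underline{m}(Y)=-1$ on codimension one strata) then reduces \eqref{fine} to a comparison of $I^{\underline{m}}b_0(X)$ with $b_0(reg(X))$.

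The main obstacle is clearly the injectivity step: everything downstream is essentially bookkeeping, but the passage from the abstract inequality in $\dim\bigl(\im(H^j_c\rightarrow H^j_{dR})\bigr)$ of corollary \ref{mummio} to the sharper bound on $\dim H^j_c(reg(X))$ really uses that $reg(X)$ has only one end. This is exactly where the normality of $X$ together with the connectedness of $sing(X)$ is indispensable, and it is the geometric content of proposition \ref{qzzq} that makes proposition \ref{inj} applicable.
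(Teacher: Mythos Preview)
Your argument for parts (1) and (2) is correct and coincides with the paper's proof: proposition \ref{qzzq} gives that $reg(X)$ has one end, proposition \ref{inj} then gives injectivity of $H^1_c(reg(X))\to H^1_{dR}(reg(X))$, and corollary \ref{mummio} with $j=1$ yields the four inequalities exactly as you describe.

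For the Euler characteristic inequality \eqref{fine} your strategy is the same as the paper's, but you leave two points open that the paper addresses explicitly. First, you invoke Poincar\'e duality for $I^{\underline{m}}H^*(X)$ without saying why it holds; the paper observes that the hypotheses force $sing(X)$ to be a single point, so $X$ is a Witt space, and Poincar\'e duality for $I^{\underline{m}}$ is then the classical Goresky--MacPherson result. Second, you stop at ``reduces \eqref{fine} to a comparison of $I^{\underline{m}}b_0(X)$ with $b_0(reg(X))$'' without carrying out that comparison; the paper instead does the numerics directly, writing $I^{\underline{m}}\chi(X)=-I^{\underline{m}}b_1(X)\leq -b_1(reg(X))\leq 1-b_1(reg(X))=\chi(reg(X))$. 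Your sketch would be complete once you identify the Witt condition and actually execute the final inequality chain rather than leaving it as a reduction.
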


\begin{proof}
From Proposition \ref{qzzq} we know that $reg(X)$ has only one end. Therefore from Proposition \ref{ends} we get that the map $H^1_c(M)\rightarrow H^1_{dR}(M)$ is injective and so the thesis follows by Corollary \ref{mummio}. Before to prove the second part of the corollary we do the following observation: by the assumption we know that $H^1_{c}(reg(X))$ is finite dimensional; using Poincar\'e duality for open and oriented manifolds this implies that $b_{i}(reg(X))$ is finite dimensional for each $i=0,...,2$ and therefore $\chi(reg(X))$ makes sense.  Now the assumptions on $X$ imply that $sing(X)=\{p\}$ and $X$ is a Witt space (for the definition of Witt space see for example \cite{KW} pag 75). It is well known that, over a Witt space, the intersection cohomology associated with the lower middle perversity satisfies Poincar\'e duality, that is we have $I^{\underline{m}}H^i(X)\cong I^{\underline{m}}H^{2-i}(X)$. Poincar\'e duality for open and oriented manifolds implies that $b_2(reg(X))=dim(H^0_{c}(reg(X)))=0$. So, using the previous statements of this corollary, we have $I^{\underline{m}}\chi(X)=2-I^{\underline{m}}b_{1}(X)\leq 2-b_{1}(reg(X))\leq 1+1-b_{1}(reg(X))=1+\chi(reg(X))$.
\end{proof}

\subsection{Some applications to the Friedrichs extension of $\Delta_{i}$}

This last section is devoted to show some properties of the Friedrichs extension of $\Delta_i:\Omega_c^i(M)\rightarrow \Omega_c^i(M)$.\\ The main result is to show that if $(M,g)$ is an open and oriented riemannian manifold such that $(L^{2}\Omega^*(M,g),d_{max/min,*})$ are Fredholm complexes then, for each $i=0,...,dimM$,  the Friedrichs extension of $\Delta_{i}:\Omega_{c}^i(M)\rightarrow \Omega^{i}_c(M)$ is a Fredholm operator. In particular this applies when $M$ is the regular part of a compact and smoothly stratified pseudomanifold with a Thom-Mather stratification and $g$ is a quasi-edge metric with weights on $reg(X)$. We start recalling the definition of the Friedrichs extension:

\begin{defi}
\label{tgbt}
 Let $H$ be a Hilbert space and let $B:H\rightarrow H$ be a densely defined operator. Suppose that $B$ is positive, that is for each $u\in \mathcal{D}(B)$ we have $<Bu,u>\geq 0$. The Friedrichs extension of $B$, usually labeled $B^{\mathcal{F}}$, is the operator defined in the following way: $$\mathcal{D}(B^{\mathcal{F}})=\{u\in \mathcal{D}(B^*):\ there\ exists\ \{u_{n}\}\subset \mathcal{D}(B)\  such\ that\  <u-u_n,u-u_n>\rightarrow 0\  and$$ $$ <B(u_n-u_m),u_n-u_m>\rightarrow 0\ for\ n,m\rightarrow \infty\}\ and\ we\ put\ B^{\mathcal{F}}(u)=B^*(u).$$
\end{defi}

\begin{prop} In the same assumptions of the previous definition $B^{\mathcal{F}}$ is a positive self-adjoint extension of $B$.
\label{kkjjkk}
\end{prop}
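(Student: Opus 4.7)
The plan is classical: realise $B^{\mathcal{F}}$ as the operator associated to the positive quadratic form determined by $B$, and derive its properties from the Riesz representation theorem.

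First I would introduce the sesquilinear form $Q(u,v):=\langle Bu,v\rangle+\langle u,v\rangle$ on $\mathcal{D}(B)$. Since the paper implicitly works over $\mathbb{C}$ and $\langle Bu,u\rangle\geq 0$ for every $u\in \mathcal{D}(B)$, the polarisation identity forces $B$ to be symmetric, so $B\subseteq B^{*}$; in particular $Q$ is a genuine inner product with norm $\|u\|_{Q}$ satisfying $\|u\|_{Q}\geq \|u\|$. Let $V$ be the abstract completion of $(\mathcal{D}(B),Q)$. The condition spelled out in Definition \ref{tgbt} says precisely that $u\in H$ arises as the $H$-limit of some $Q$-Cauchy sequence in $\mathcal{D}(B)$; once $V$ is identified with its image in $H$ (see below), this reads $\mathcal{D}(B^{\mathcal{F}})=V\cap \mathcal{D}(B^{*})$.

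The central technical step is the injectivity of the natural continuous extension $\iota\colon V\to H$ of the inclusion $\mathcal{D}(B)\hookrightarrow H$. Suppose $\{u_{n}\}\subset\mathcal{D}(B)$ is $Q$-Cauchy and $u_{n}\to 0$ in $H$. Given $\varepsilon>0$, pick $N$ so that $\|u_{n}-u_{m}\|_{Q}^{2}<\varepsilon$ whenever $n,m\geq N$. Then for $n\geq N$ and any $m\geq N$,
$$
\|u_{n}\|_{Q}^{2}=Q(u_{n},u_{n}-u_{m})+\langle Bu_{n},u_{m}\rangle+\langle u_{n},u_{m}\rangle.
$$
The first summand is bounded by $\|u_{n}\|_{Q}\sqrt{\varepsilon}$ by Cauchy--Schwarz, while the other two vanish as $m\to\infty$ because $Bu_{n}$ and $u_{n}$ are fixed vectors of $H$ and $u_{m}\to 0$ there. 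Hence $\|u_{n}\|_{Q}\leq \sqrt{\varepsilon}$, which both proves $u_{n}\to 0$ in $V$ and justifies viewing $V$ as a subspace of $H$ carrying the finer $Q$-topology. By density and continuity, $Q$ extends to a symmetric positive-definite inner product on $V$.

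Next I would invoke the Riesz representation theorem in $V$. Since $\|v\|\leq\|v\|_{Q}$, for every $f\in H$ the map $v\mapsto\langle f,v\rangle$ is continuous on $V$, so there exists a unique $u\in V$ with $Q(u,v)=\langle f,v\rangle$ for all $v\in V$. Testing against $v\in\mathcal{D}(B)$ and using the symmetry of $B$ yields $\langle u,(B+I)v\rangle=\langle f,v\rangle$, which simultaneously places $u\in\mathcal{D}(B^{*})$ and gives $(B^{*}+I)u=f$; combined with the identification $\mathcal{D}(B^{\mathcal{F}})=V\cap\mathcal{D}(B^{*})$ and $B^{\mathcal{F}}=B^{*}$ on this domain, this proves that $B^{\mathcal{F}}+I\colon\mathcal{D}(B^{\mathcal{F}})\to H$ is surjective. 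To finish, the extension property is immediate (take $u_{n}=u$ constant for $u\in\mathcal{D}(B)$), symmetry follows from the symmetry of the extended form $Q$ on $V$, and positivity from $\langle B^{\mathcal{F}}u,u\rangle=\|u\|_{Q}^{2}-\|u\|^{2}\geq 0$. For self-adjointness, given $v\in\mathcal{D}((B^{\mathcal{F}})^{*})$, use surjectivity of $B^{\mathcal{F}}+I$ to find $u\in\mathcal{D}(B^{\mathcal{F}})$ with $(B^{\mathcal{F}}+I)u=((B^{\mathcal{F}})^{*}+I)v$; since $(B^{\mathcal{F}})^{*}+I=(B^{\mathcal{F}}+I)^{*}$ and $ran(B^{\mathcal{F}}+I)=H$, the kernel of $(B^{\mathcal{F}})^{*}+I$ equals $H^{\perp}=\{0\}$, hence $v=u\in\mathcal{D}(B^{\mathcal{F}})$. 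The main obstacle is the injectivity of $\iota$; every other step is essentially formal once that is in hand.
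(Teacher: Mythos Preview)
Your argument is the standard Friedrichs construction and is correct: the injectivity of $\iota$ is handled cleanly, the identification $\mathcal{D}(B^{\mathcal{F}})=V\cap\mathcal{D}(B^{*})$ matches Definition~\ref{tgbt} exactly, and the Riesz-representation step yields surjectivity of $B^{\mathcal{F}}+I$, from which symmetry plus surjectivity gives self-adjointness. The paper itself does not prove the proposition but simply refers to \cite{MM}, Appendix~C; your write-up supplies precisely the classical proof one finds there, so there is nothing to compare beyond noting that you have filled in what the paper outsources.
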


\begin{proof}
See \cite{MM} appendix C. 
\end{proof}

\begin{lemma}
\label{rerrez}
 Let $A_j:H_j\rightarrow H_j$, $j=1,2,$  be  two positive and densely defined operators. Then on $H_1\oplus H_2$, with the natural Hilbert space structure of a direct sum, we have: $$(A_1\oplus A_2)^{\mathcal{F}}=A_1^{\mathcal{F}}\oplus A_2^{\mathcal{F}}.$$
\end{lemma}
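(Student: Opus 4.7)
The plan is to prove the equality of domains together with the equality of the action of the two operators on that common domain, working directly from Definition \ref{tgbt}. The two main ingredients to set up first are: (a) the natural identification $\mathcal{D}(A_1\oplus A_2)=\mathcal{D}(A_1)\oplus\mathcal{D}(A_2)$, so that the direct sum is densely defined and positive (its quadratic form is the sum of the two), and (b) the standard identity for adjoints of direct sums, $(A_1\oplus A_2)^*=A_1^*\oplus A_2^*$ with $\mathcal{D}((A_1\oplus A_2)^*)=\mathcal{D}(A_1^*)\oplus\mathcal{D}(A_2^*)$. I would record both of these as preliminary observations; given them, both inclusions reduce to componentwise arguments on the graph inner product of $H_1\oplus H_2$, which decomposes as $\langle(u_1,u_2),(v_1,v_2)\rangle=\langle u_1,v_1\rangle_{H_1}+\langle u_2,v_2\rangle_{H_2}$.

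For the inclusion $A_1^{\mathcal{F}}\oplus A_2^{\mathcal{F}}\subseteq (A_1\oplus A_2)^{\mathcal{F}}$, I would take $u=(u_1,u_2)$ with $u_j\in\mathcal{D}(A_j^{\mathcal{F}})$ and pick approximating sequences $\{u_{j,n}\}\subset\mathcal{D}(A_j)$ from the definition. The pair $\{(u_{1,n},u_{2,n})\}$ lies in $\mathcal{D}(A_1\oplus A_2)$, converges in norm to $u$ because norms add, and is Cauchy with respect to $\langle (A_1\oplus A_2)(\cdot-\cdot),\cdot-\cdot\rangle$ since that form is the sum of the two corresponding forms. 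Membership in $\mathcal{D}((A_1\oplus A_2)^*)$ follows from (b), and the action is clearly componentwise, giving $(A_1\oplus A_2)^{\mathcal{F}}(u_1,u_2)=(A_1^{\mathcal{F}}u_1,A_2^{\mathcal{F}}u_2)$.

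For the reverse inclusion, take $u=(u_1,u_2)\in\mathcal{D}((A_1\oplus A_2)^{\mathcal{F}})$ with approximating sequence $\{v_n=(v_{1,n},v_{2,n})\}\subset\mathcal{D}(A_1)\oplus\mathcal{D}(A_2)$. The norm condition splits coordinatewise immediately. The key point is the Cauchy condition
\[
\langle A_1(v_{1,n}-v_{1,m}),v_{1,n}-v_{1,m}\rangle_{H_1}+\langle A_2(v_{2,n}-v_{2,m}),v_{2,n}-v_{2,m}\rangle_{H_2}\longrightarrow 0;
\]
here I would invoke the positivity of each $A_j$, which forces each non-negative summand to tend to $0$ individually. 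Combined with (b), this yields $u_j\in\mathcal{D}(A_j^{\mathcal{F}})$, and the componentwise action of $(A_1\oplus A_2)^*$ matches $A_1^{\mathcal{F}}\oplus A_2^{\mathcal{F}}$ on $u$.

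I do not anticipate a real obstacle: the entire argument is bookkeeping on direct sums, and the only genuinely used piece of structure beyond the definition is positivity, which is precisely what lets the scalar Cauchy condition on $H_1\oplus H_2$ decouple into two scalar Cauchy conditions. The mildest subtlety is simply to invoke, rather than re-prove, the standard fact $(A_1\oplus A_2)^*=A_1^*\oplus A_2^*$; once that is in hand the two inclusions are symmetric and short.
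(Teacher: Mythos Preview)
Your argument is correct and proceeds along the same lines as the paper's proof: both rely on the identification $(A_1\oplus A_2)^*=A_1^*\oplus A_2^*$ and on the positivity of each $A_j$ to split the Cauchy condition on the quadratic form into its two components. The only minor difference is in how the second inclusion is handled: you prove $A_1^{\mathcal{F}}\oplus A_2^{\mathcal{F}}\subseteq (A_1\oplus A_2)^{\mathcal{F}}$ directly by combining componentwise approximating sequences, whereas the paper, having shown that $A_1^{\mathcal{F}}\oplus A_2^{\mathcal{F}}$ extends $(A_1\oplus A_2)^{\mathcal{F}}$, simply observes that both operators are self-adjoint and invokes the fact that a self-adjoint operator admits no proper self-adjoint extension. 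Your direct route is slightly more self-contained; the paper's is marginally shorter.
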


\begin{proof} The assumptions of the lemma imply that $A_1\oplus A_1:H_1\oplus H_2\rightarrow H_1\oplus H_2$ is densely defined and positive. Moreover it is clear that $(A_1\oplus A_2)^*=A_1^*\oplus A_2^*$.\\ Now let $(a,b)\in \mathcal{D}((A_1\oplus A_2)^{\mathcal{F}})$. From Definition \ref{tgbt} we get that $(a,b)\in \mathcal{D}((A_1\oplus A_2)^*)$ and there exists a sequence $\{(a_{n},b_{n})\}\subset \mathcal{D}(A_1\oplus A_2)$ such that: $$(a_{n},b_{n})\rightarrow (a,b)\ \text{and}\ <A\oplus B((a_{n},b_{n})-(a_{m},b_{m})),(a_{n},b_{n})-(a_{m},b_{m})>\rightarrow 0.$$ 
Furthermore from the same definition we know that $(A_1\oplus A_2)^\mathcal{F}(a,b)=(A_1\oplus A_2)^*(a,b)$. But from these requirements we get immediately that $a\in \mathcal{D}(A_1^*),\ b\in \mathcal{D}(A_2^*)$, $\{a_{n}\}\subset \mathcal{D}(A_1)$, $\{b_{n}\}\subset \mathcal{D}(A_2)$, $a_{n}\rightarrow a$, $<A_1(a_{n}-a_{m}),a_{n}-a_{m}>\rightarrow 0$ and analogously that $b_{n}\rightarrow b$ and that $<A_2(b_{n}-b_{m}),b_{n}-b_{m}>\rightarrow 0$. Therefore this imply that $a\in \mathcal{D}(A_1^\mathcal{F})$, $b\in \mathcal{D}(A_2^\mathcal{F})$ and $(A_1\oplus A_2)^{\mathcal{F}}(a,b)=A_1^{\mathcal{F}}(a)\oplus A_2^{\mathcal{F}}(b)$. In this way we know that $A_1^{\mathcal{F}}\oplus A_2^{\mathcal{F}}$ is an extension of $(A_1\oplus A_2)^{\mathcal{F}}$. Moreover it  is clear that also $A_1^{\mathcal{F}}\oplus A_2^{\mathcal{F}}$  is a self-adjoint operator because it is a direct sum of two self-adjoint operators acting on $H_1$ and $H_2$ respectively.  Finally, by the fact that  both $A_1^{\mathcal{F}}\oplus A_2^{\mathcal{F}}$ and $(A_1\oplus A_2)^{\mathcal{F}}$ are self-adjoint operators, it follows that $A_1^{\mathcal{F}}\oplus A_2^{\mathcal{F}}=(A_1\oplus A_2)^{\mathcal{F}}$.
\end{proof}

\begin{rem}
\label{halifax}
It is clear that the previous proposition generalizes to the case of a finite sum, that is if we have $A_j:H_j\rightarrow H_j$ $j=1,...,n$ such that for each $j$ $A_j$ is positive and densely defined then:
 \begin{equation}
\label{toporobot}
(A_1\oplus...\oplus A_n)^{\mathcal{F}}:\bigoplus_{j=1}^n H_j\rightarrow \bigoplus_{j=1}^n H_j=A_1^{\mathcal{F}}\oplus...\oplus A_n^{\mathcal{F}}:\bigoplus_{j=1}^n H_j\rightarrow \bigoplus_{j=1}^nH_j
\end{equation}
\end{rem}

\begin{lemma}
\label{weewq}
 Let $E,F$ be two vector bundles over an open, incomplete and oriented riemannian manifold $(M,g).$ Let $g$ and $h$ be two metrics on $E$ and $F$ respectively. Let $d:C^{\infty}_{c}(M,E)\rightarrow C^{\infty}_{c}(M,F)$ be an unbounded and densely defined differential operator. Let $d^t:C^{\infty}_{c}(M,F)\rightarrow C^{\infty}_{c}(M,E)$ be its formal adjoint. Then for  $d^t\circ d:L^{2}(M,E)\rightarrow L^{2}(M,E)$ we have: $$(d^t\circ d)^{\mathcal{F}}=d_{max}\circ d_{min}.$$
\end{lemma}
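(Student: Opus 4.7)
The plan is to interpret the right-hand side as $d_{min}^{*}\circ d_{min}$, using the identity $d_{min}^{*}=(d^{t})_{max}$ which was recalled earlier in the paper (so here $d_{max}$ denotes the maximal extension of the formal adjoint $d^{t}$). Under this identification, the lemma is a special case of the well-known fact that $T^{*}T$ is the Friedrichs extension of the symmetric operator $T^{*}T$ restricted to any core on which it acts by differentiation. First, by von Neumann's theorem, $d_{min}^{*}\circ d_{min}$ is a non-negative self-adjoint operator on $L^{2}(M,E)$ with domain
$$\mathcal{D}(d_{min}^{*}\circ d_{min})=\{u\in \mathcal{D}(d_{min}):\ d_{min}u\in \mathcal{D}(d_{min}^{*})\}.$$
A direct check shows it extends $d^{t}\circ d$ on $C^{\infty}_{c}(M,E)$: for $u\in C^{\infty}_{c}(M,E)$ one has $u\in \mathcal{D}(d_{min})$, $d_{min}u=du\in C^{\infty}_{c}(M,F)\subseteq \mathcal{D}(d_{min}^{*})$, and therefore $d_{min}^{*}d_{min}u=d^{t}du$.

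Next I would prove the inclusion $d_{min}^{*}\circ d_{min}\subseteq (d^{t}\circ d)^{\mathcal{F}}$ directly from definition \ref{tgbt}. Fix $u\in \mathcal{D}(d_{min}^{*}\circ d_{min})$. Since $u\in \mathcal{D}(d_{min})$, the definition of the minimal extension furnishes a sequence $\{u_{n}\}\subset C^{\infty}_{c}(M,E)$ with $u_{n}\to u$ in $L^{2}(M,E)$ and $du_{n}\to d_{min}u$ in $L^{2}(M,F)$. The Cauchy property of $\{du_{n}\}$ in $L^{2}(M,F)$ gives
$$\langle d^{t}d(u_{n}-u_{m}),u_{n}-u_{m}\rangle = \|d(u_{n}-u_{m})\|^{2}\longrightarrow 0.$$
Moreover, since $d_{min}^{*}d_{min}$ is a self-adjoint extension of $d^{t}d$, we have $d^{t}d\subseteq d_{min}^{*}d_{min}=(d_{min}^{*}d_{min})^{*}\subseteq (d^{t}d)^{*}$ (adjoints reverse inclusions), so in particular $u\in \mathcal{D}((d^{t}d)^{*})$. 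Putting the two facts together, definition \ref{tgbt} places $u$ in $\mathcal{D}((d^{t}d)^{\mathcal{F}})$ with $(d^{t}d)^{\mathcal{F}}u=(d^{t}d)^{*}u=d_{min}^{*}d_{min}u$.

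Finally I would close the argument by invoking the maximality of self-adjoint operators: both $d_{min}^{*}\circ d_{min}$ (by von Neumann) and $(d^{t}\circ d)^{\mathcal{F}}$ (by proposition \ref{kkjjkk}) are self-adjoint, and the preceding step shows the former is contained in the latter. A self-adjoint operator admits no proper symmetric extension, hence the two coincide. The main obstacle in carrying this scheme out is only bookkeeping with domains, in particular making the chain $d^{t}d\subseteq d_{min}^{*}d_{min}\subseteq (d^{t}d)^{*}$ rigorous; once this is settled everything else reduces to the definitions of $d_{min}$, $d_{max}=(d^{t})_{max}=d_{min}^{*}$, and of the Friedrichs extension.
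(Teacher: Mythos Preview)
Your argument is correct. The paper itself does not prove this lemma but simply refers to \cite{BrL}, Lemma 3.1, so there is no in-text proof to compare against; what you have written is precisely the standard von Neumann/Friedrichs argument that underlies the cited result. Your reading of the notation is also right: the composition only makes sense if $d_{max}$ in the statement stands for $(d^{t})_{max}=d_{min}^{*}$, consistent with the identity $(P_{min,i})^{*}=P^{t}_{max,i}$ recalled earlier and with the application in Corollary~\ref{corvo} where $d+\delta$ is formally self-adjoint.
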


\begin{proof}
See \cite{BrL}, lemma 3.1 pag. 447.
\end{proof}

From lemma \ref{weewq} we get, as it is showed in \cite{BrL} pag. 448, the following useful corollary:

\begin{cor}
\label{corvo}
Let $(M,g)$ be an open, oriented and incomplete riemannian manifold of dimension $n$. Consider the Laplacian acting on the space of smooth forms with compact support: $$\Delta:\bigoplus_{i=0}^n\Omega_{c}^i(M)\longrightarrow \bigoplus_{i=0}^n\Omega_{c}^i(M).$$ Then for $$\Delta^\mathcal{F}:\bigoplus_{i=0}^n L^{2}\Omega^i(M,g)\longrightarrow \bigoplus_{i=0}^nL^2\Omega^i(M,g)$$ we have $$\Delta^\mathcal{F}=(d+\delta)_{max}\circ (d+\delta)_{min}.$$
\end{cor}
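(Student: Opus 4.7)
The plan is to reduce this directly to Lemma \ref{weewq}, applied with both $E$ and $F$ equal to the full exterior bundle $\Lambda^{*}T^{*}M$ and with the operator $d$ of that lemma replaced by the Gauss--Bonnet operator $d+\delta$.

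First I would observe that, as operators on $\bigoplus_{i=0}^{n}\Omega_{c}^{i}(M)$, one has the identity
\[
(d+\delta)^{2}=d^{2}+d\delta+\delta d+\delta^{2}=d\delta+\delta d=\Delta,
\]
since $d^{2}=0$ and $\delta^{2}=0$. Next I would note that, viewed as a first-order differential operator from $\bigoplus_{i}\Omega_{c}^{i}(M)$ to itself, $d+\delta$ is formally self-adjoint: the formal adjoint of $d$ is $\delta$ and vice versa, so on the total space of forms $(d+\delta)^{t}=\delta+d=d+\delta$. Combining these two observations gives
\[
\Delta=(d+\delta)^{t}\circ(d+\delta)
\]
as an unbounded, densely defined, positive operator on $\bigoplus_{i=0}^{n}L^{2}\Omega^{i}(M,g)$, so that the Friedrichs extension $\Delta^{\mathcal{F}}$ is well defined by Proposition \ref{kkjjkk}.

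Now I would apply Lemma \ref{weewq} taking $E=F=\Lambda^{*}T^{*}M$ with the metric induced by $g$, and with the differential operator of that lemma being $d+\delta:\bigoplus_{i}C_{c}^{\infty}(M,\Lambda^{i}T^{*}M)\to\bigoplus_{i}C_{c}^{\infty}(M,\Lambda^{i}T^{*}M)$. The lemma then gives at once
\[
\Delta^{\mathcal{F}}=\bigl((d+\delta)^{t}\circ(d+\delta)\bigr)^{\mathcal{F}}=(d+\delta)_{\max}\circ(d+\delta)_{\min},
\]
which is the claimed identity.

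The only subtle point is really the verification that the two minimal/maximal closures associated with $d+\delta$ on the total space of forms are the honest objects one expects, i.e.\ that one can apply Lemma \ref{weewq} to an operator acting between sections of direct-sum bundles; but this is immediate from the definitions of $(\cdot)_{\min}$ and $(\cdot)_{\max}$ (Definitions \ref{nino}--\ref{reso}) together with Remark \ref{halifax}, since $(d+\delta)_{\min}$ and $(d+\delta)_{\max}$ are the graph closure and distributional extension of the same formally self-adjoint operator. No genuine obstacle is expected.
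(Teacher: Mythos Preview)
Your proof is correct and follows exactly the route the paper intends: the paper simply records that the corollary is obtained from Lemma~\ref{weewq} (applied, as you do, with $E=F=\Lambda^{*}T^{*}M$ and the formally self-adjoint operator $d+\delta$), citing \cite{BrL}. The appeal to Remark~\ref{halifax} in your final paragraph is unnecessary here---that remark is used later to split $\Delta^{\mathcal{F}}$ as $\bigoplus_i \Delta_i^{\mathcal{F}}$, not for the present statement---but it does no harm.
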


Now we are in positions to state the following result:

\begin{teo}
\label{deddf}
Let $(M,g)$ be an open, oriented and incomplete riemannian manifold of dimension $n$. Then for each $i=0,...,n$ we have the following properties:
\begin{enumerate}
\item If $\overline{H}^i_{2,m\rightarrow M}(M,g)$ is finite dimensional then $Ker(\Delta^\mathcal{F}_i)$ and $\mathcal{H}^i_{min}(M,g)$ are finite dimensional.
\item If $(L^{2}\Omega^*(M,g), d_{max,*})$ is a Fredholm complex, or equivalently if $(L^{2}\Omega^*(M,g), d_{min,*})$ is a Fredholm complex, then for each $i$ $\Delta^{\mathcal{F}}_{i}$ is a Fredholm operator on its domain endowed with graph norm. 
\end{enumerate}
\end{teo}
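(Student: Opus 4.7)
The plan rests on the factorization $\Delta^{\mathcal{F}}=(d+\delta)_{max}(d+\delta)_{min}$ of Corollary~\ref{corvo} combined with the degree-wise decomposition of $\Delta$. Since $\Delta=\bigoplus_{i}\Delta_{i}$ acts diagonally on $\bigoplus_{i}\Omega^{i}_{c}(M)$ and each $\Delta_{i}$ is densely defined and positive (from $\langle\Delta_{i}u,u\rangle=\|du\|^{2}+\|\delta u\|^{2}\geq 0$), Remark~\ref{halifax} yields $\Delta^{\mathcal{F}}=\bigoplus_{i}\Delta^{\mathcal{F}}_{i}$. Matching with Corollary~\ref{corvo} and restricting to the degree-$i$ summand produces the key identification $\Delta^{\mathcal{F}}_{i}=(D^{-}_{i})_{max}\circ (D^{+}_{i})_{min}$, where $D^{+}_{i}\omega=(d\omega,\delta\omega)\in L^{2}\Omega^{i+1}(M,g)\oplus L^{2}\Omega^{i-1}(M,g)$ and $D^{-}_{i}$ is its formal transpose; this structural identity underlies everything that follows.

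For property~(1), positivity gives $\langle\Delta^{\mathcal{F}}_{i}\omega,\omega\rangle=\|(D^{+}_{i})_{min}\omega\|^{2}$ on the Friedrichs domain, so $Ker(\Delta^{\mathcal{F}}_{i})=Ker((D^{+}_{i})_{min})$. Since $d\omega$ and $\delta\omega$ live in different degree summands, this amounts to the joint vanishing $d_{min,i}\omega=0$ and $\delta_{min,i-1}\omega=0$ realised by a single approximating sequence in $\Omega^{i}_{c}(M)$, which identifies $Ker(\Delta^{\mathcal{F}}_{i})$ with $\mathcal{H}^{i}_{min}(M,g)$. The equality with $Ker(\Delta_{min,i})$ proceeds in two steps: the inclusion $Ker(\Delta_{min,i})\subseteq Ker(\Delta^{\mathcal{F}}_{i})$ is automatic because $\Delta_{min,i}\subseteq\Delta^{\mathcal{F}}_{i}$; conversely, for $u\in Ker(\Delta^{\mathcal{F}}_{i})$ one uses $u\in\mathcal{D}((\Delta_{min,i})^{*})=\mathcal{D}(\Delta_{max,i})$ with $\Delta_{max,i}u=0$ together with the common first-order approximation above to recover a graph-norm approximation for $\Delta_{i}$ on $\Omega^{i}_{c}(M)$. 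The range identity is then immediate: by self-adjointness one has $\overline{ran(\Delta^{\mathcal{F}}_{i})}=Ker(\Delta^{\mathcal{F}}_{i})^{\perp}$, and since $\Delta_{max,i}=(\Delta_{min,i})^{*}$ also $\overline{ran(\Delta_{max,i})}=Ker(\Delta_{min,i})^{\perp}$, so the kernel identity closes the loop.

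Property~(2) follows from Remark~\ref{piop}, which under finite dimensionality of $\overline{H}^{i}_{2,m\rightarrow M}(M,g)$ identifies $Ker(\Delta^{\mathcal{F}}_{i})=\mathcal{H}^{i}_{min}(M,g)$ with a finite dimensional space via the projection $\pi_{1,i}$. For property~(3), Fredholmness of $(L^{2}\Omega^{*}(M,g),d_{max,*})$ (equivalently of the minimal complex, by Theorem~\ref{mario}) gives closed range for $d_{max,i}$ and $d_{min,i}$ by Proposition~\ref{topoamotore}, hence closed range for both the min and max extensions of $D^{\pm}_{i}$; the factorization $\Delta^{\mathcal{F}}_{i}=(D^{-}_{i})_{max}\circ(D^{+}_{i})_{min}$ then has closed range, and combined with the finite dimensional kernel from property~(2) this makes $\Delta^{\mathcal{F}}_{i}$ a Fredholm operator on its domain with the graph norm. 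The equality $ran(\Delta^{\mathcal{F}}_{i})=ran(\Delta_{max,i})$ is then immediate: $\Delta^{\mathcal{F}}_{i}\subseteq\Delta_{max,i}$ gives the inclusion $\subseteq$, and the reverse holds because both ranges are now closed, yielding $ran(\Delta^{\mathcal{F}}_{i})=\overline{ran(\Delta^{\mathcal{F}}_{i})}=\overline{ran(\Delta_{max,i})}=ran(\Delta_{max,i})$ by property~(1). The main obstacle I foresee is the non-trivial inclusion $Ker(\Delta^{\mathcal{F}}_{i})\subseteq Ker(\Delta_{min,i})$ in property~(1): a common first-order sequence with $du_{n}\to 0$ and $\delta u_{n}\to 0$ does not on its face produce $\Delta u_{n}\to 0$ because $\Delta$ is second order, and upgrading first-order convergence to the graph norm of $\Delta_{i}$ will need the Friedrichs quadratic-form description of $\mathcal{D}(\Delta^{\mathcal{F}}_{i})$ together with the factorization of the first paragraph and, where necessary, a mollifier argument.
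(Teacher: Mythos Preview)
Your overall architecture---the factorization $\Delta^{\mathcal{F}}=(d+\delta)_{max}(d+\delta)_{min}$ from Corollary~\ref{corvo} combined with $\Delta^{\mathcal{F}}=\bigoplus_i\Delta_i^{\mathcal{F}}$ from Remark~\ref{halifax}---is exactly the paper's, and your treatment of the range identity in~(1) and of~(2) via Remark~\ref{piop} matches it. The paper, however, stays with the \emph{global} operator throughout and only decomposes kernels and ranges by degree at the very end, whereas you pass to a degree-wise factorization $\Delta_i^{\mathcal{F}}=(D_i^-)_{max}(D_i^+)_{min}$. That identity is true (apply Lemma~\ref{weewq} directly to $D_i^+$), but your stated derivation ``restricting the global identity to degree $i$'' is suspect: it is not clear that $\mathcal{D}((d+\delta)_{min})\cap L^2\Omega^i=\mathcal{D}((D_i^+)_{min})$, since a graph-norm approximating sequence for $(d+\delta)$ need not project to one in a single degree. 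The paper avoids this by never needing the degree-wise domain equality.

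There are two genuine gaps. First, in~(1) your ``single approximating sequence'' argument only gives $Ker((D_i^+)_{min})\subseteq\mathcal{H}^i_{min}(M,g)$; for the reverse inclusion the paper uses orthogonal complements: $Ker((d+\delta)_{min})=(\overline{ran((d+\delta)_{max})})^{\perp}\supseteq(\overline{ran(d_{max})+ran(\delta_{max})})^{\perp}=Ker(d_{min})\cap Ker(\delta_{min})$, and you should do the same degree-wise. Second, in~(3) the assertion ``hence closed range for the min and max extensions of $D_i^{\pm}$'' is not justified: $ran((D_i^-)_{max})$ contains $ran(d_{max,i-1})+ran(\delta_{max,i})$, and a sum of two closed subspaces need not be closed. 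It \emph{is} closed here because that sum already contains the orthogonal direct sum $ran(d_{min,i-1})\oplus ran(\delta_{max,i})$, which has finite codimension $\dim\mathcal{H}^i_{rel}(M,g)$; but you must say so. The paper instead invokes an external result that $d_{max}+\delta_{min}$ is Fredholm, giving $(d+\delta)_{max}$ finite-codimensional (hence closed) range, whence $(d+\delta)_{min}$ has closed range and $ran(\Delta^{\mathcal{F}})=ran((d+\delta)_{max})$; the degree-wise closed range then drops out of the direct sum. Finally, the obstacle you flag---upgrading $Ker(\Delta_i^{\mathcal{F}})\subseteq Ker(\Delta_{min,i})$ from first-order to second-order approximation---is real, and the paper does \emph{not} resolve it here: it simply cites the equality $\mathcal{H}^i_{min}(M,g)=Ker(\Delta_{min,i})$ from a companion paper, so your attempted direct proof is more ambitious than what the present paper actually does.
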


\begin{proof}
Consider $\pi_{abs,i}:L^{2}\Omega^i(M,g)\rightarrow \mathcal{H}^i_{abs}(M,g)$ that is the projection on $\mathcal{H}^{i}_{abs}(M,g)$.  We know that $\pi_{abs,i}(\mathcal{H}^i_{rel})\cong \overline{H}^{i}_{2,m\rightarrow M}(M,g)$. This property is shown in a more general context in the proof of Theorem \ref{duality} and remarked in Remark \ref{piop}. But $\mathcal{H}^{i}_{min}(M,g)=Ker(d_{min,i})\cap Ker(\delta_{min,i-1})=\mathcal{H}^i_{abs}(M,g)\cap \mathcal{H}^i_{rel}(M,g)$. So $\mathcal{H}^i_{min}(M,g)\subseteq \pi_{abs,i}(\mathcal{H}^{i}_{rel}(M,g))$ and therefore the  statement follows. Now, by Lemma \ref{rerrez} and Remark \ref{halifax}, we know that  $\Delta^{\mathcal{F}}=\bigoplus_i\Delta_{i}^{\mathcal{F}}$ which in particular implies that $Ker(\Delta^{\mathcal{F}})=\bigoplus_iKer(\Delta_{i}^{\mathcal{F}})$. But from Corollary \ref{corvo} we get $$Ker(\Delta^{\mathcal{F}})=Ker((d+\delta)_{min})\subseteq \bigoplus_{i=0}^n(Ker(d_{min,i})\cap Ker(\delta_{min,i-1}))=\bigoplus_{i=0}^n \mathcal{H}^i_{min}(M,g).$$ Therefore we can conclude that $Ker(\Delta_i^{\mathcal{F}})$ is finite dimensional for each $i=0,...,n$.\\ Now consider the second point; we want to show that if $(L^{2}\Omega^{*}(M,g),d_{max,*})$ is a  Fredholm complex then also $$(d+\delta)_{max}\circ (d+\delta)_{min}:\bigoplus_{i=0}^nL^2\Omega^{i}(M,g)\rightarrow \bigoplus_{i=0}^nL^2\Omega^{i}(M,g)$$ is a Fredholm operator. By the previous point, we already know that the nullspace of $(d+\delta)_{max}\circ (d+\delta)_{min}$ is finite dimensional. So we have to show that its range  is closed with finite dimensional orthogonal complement. To do this is equivalent to show that the cokernel of $(d+\delta)_{max}\circ (d+\delta)_{min}$ is finite dimensional. We will do this by showing  that $ran((d+\delta)_{max}\circ (d+\delta)_{min}) = ran((d+\delta)_{max})$ and that $(d+\delta)_{max}$ has finite dimensional cokernel. By the fact that $(d+\delta)_{min}^*=(d+\delta)_{max}$ it follows that 
\begin{equation}
\label{domain}
ran((d+\delta)_{max})=\{(d+\delta)_{max}(u):\ u\in \overline{ran((d+\delta)_{min})}\cap \mathcal{D}((d+\delta)_{max}) \}.
\end{equation}
Now it is  easy to check that if $(L^{2}\Omega^{*}(M,g),d_{max,*})$ is a Fredholm complex then  $d_{max}+\delta_{min}$ is  a Fredholm operator on its domain endowed with the graph norm. But  the fact that  $ran(d_{max}+\delta_{min})\subset ran((d+\delta)_{max})$ implies that there is a surjective map $$\frac{(\bigoplus_{i=0}^nL^{2}\Omega^i(M,g))}{ran((d+\delta)_{max})}\longrightarrow  \frac{(\bigoplus_{i=0}^nL^{2}\Omega^i(M,g))}{ran(d_{max}+\delta_{min})}.$$ So $(d+\delta)_{max}$ on its domain with the graph norm is a bounded linear operator with finite dimensional cokernel and this implies that the range of $(d+\delta)_{max}$ is closed with finite dimensional orthogonal complement.
 But $((d+\delta)_{max})^*=(d+\delta)_{min}$ and therefore also $(d+\delta)_{min}$ has closed range. In this way \eqref{domain} becomes
\begin{equation}
\label{domainn}
ran((d+\delta)_{max})=\{(d+\delta)_{max}(u):\ u\in ran((d+\delta)_{min})\cap \mathcal{D}((d+\delta)_{max}) \}.
\end{equation} 
So we can conclude that $ran((d+\delta)_{max}\circ (d+\delta)_{min}) = ran((d+\delta)_{max})$ and therefore $(d+\delta)_{max}\circ (d+\delta)_{min}$ is a Fredholm operator.\\
Now,  by the equality $(d+\delta)_{max}\circ (d+\delta)_{min}=\bigoplus_{i=0}^n\Delta_{i}^{\mathcal{F}}$, we get,  for each $i=0,...,n$, that also $\Delta_{i}^\mathcal{F}$ has closed range. Moreover we already know that its nullspace of  $\Delta_{i}^\mathcal{F}$  is finite dimensional and so, because it is self-adjoint and with closed range, we can conclude that it is Fredholm. This completes the proof.
\end{proof}

As mentioned at the beginning of the section  the following corollary is an application of the previous theorem; it already known when $X$ is a compact manifold with isolated singularities for any positive conic operator (see \cite{MAL}) and also for $\Delta_i^{\mathcal{F}}$ when $(M,g)$ is a manifold with incomplete edges, see \cite{MAV}.

\begin{cor}
\label{ilboss}
Let $X$ be a compact smoothly and oriented stratified pseudomanifold of dimension $n$ with a Thom Mather stratification. Let $g$ be a quasi-edge metric with weights on $reg(X)$. Then on $L^{2}\Omega^i(reg(X),g)$, for each $i=0,...,n$, $\Delta_{i}^\mathcal{F}$ is a Fredholm operator on its domain endowed with the graph norm.
\end{cor}

\section{Final considerations}
Consider again an open, oriented and incomplete  riemannian manifold $(M,g)$ of dimension $n$.  By Corollary \ref{loplp} we now that that there is a copy of $\im(\overline{H}^i_{2,min}(M,g)\rightarrow\overline{H}^{i}_{2,max}(M,g))$ in each $i-th$ reduced cohomology group $\overline{H}^i_{2,D_*}(M,g)$ of each closed extension $(L^2\Omega^*(M,g),D_*)$ of $(\Omega_c^*(M),d_*).$ In the same way, using again Corollary \ref{loplp}, we know that there is a copy of $\im(H^i_{2,min}(M,g)\rightarrow H^i_{2,max}(M,g))$ in each $i-th$ reduced cohomology group $H^i_{2,D_*}(M,g)$ of each closed extension $(L^2\Omega^*(M,g),D_*)$ of $(\Omega_c^*(M)^,d_*).$  In particular, by Theorem \ref{polcas}, we know that when $d_{min,i}$ has closed range for each $i$ then the groups $\im(H^i_{2,min}(M,g)\rightarrow Hi_{2,max}(M,g))$ are really the cohomology groups of an Hilbert complex that we labeled $(L^2\Omega^i(M,g),d_{\mathfrak{m},i})$. Therefore we can look at $\im(H^i_{2,min}(M,g)\rightarrow H^i_{2,max}(M,g))$ as the \textbf{smallest possible} $L^2-$cohomology groups for $(M,g)$.\\ From the Hodge point of view the smallest Hodge cohomology groups are $\mathcal{H}_{min}^{i}(M,g)$ defined, for each $i=0,...,n$,  as $Ker(d_{min,i})\cap Ker(\delta_{min,i-1})$.
Therefore a natural question is:
\begin{itemize}
\item Is there any relation between $\mathcal{H}_{min}^{i}(M,g)$ and $\im(\overline{H}^i_{2,min}(M,g)\rightarrow \overline{H}^{i}_{2,max}(M,g))$ or between 
$\mathcal{H}_{min}^{i}(M,g)$ and $\im(H^i_{2,min}(M,g)\rightarrow H^{i}_{2,max}(M,g))$?
\end{itemize}
In \cite{HM} Theorem 4.8, using techniques arising from Mazzeo's edge calculus, the authors showed that if $(M,g)$ is an incomplete manifold with edge then we have the following isomorphism:
\begin{equation}
\label{itsmywarrior}
\mathcal{H}^i_{min}(M,g)\cong \im(H^i_{2,min}(M,g)\rightarrow H^{i}_{2,max}(M,g)).
\end{equation}
Therefore, using Corollary \ref{giove},  we get the following immediate consequences:
\begin{cor}
\label{aswqas}
Let $(M,g)$ be an incomplete manifold with edge. Then, for each $i=0,...,n$
\begin{enumerate}
\item $Ker(\Delta_{\mathfrak{m},i})=\mathcal{H}^i_{min}(M,g)$
\item $ran(\Delta_{\mathfrak{m},i})=\overline{ran(d_{max,i-1})+ran(\delta_{max,i}).}$
\end{enumerate}
\end{cor}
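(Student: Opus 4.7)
The strategy is to package together three pieces already in hand: the Hunsicker--Mazzeo identification \eqref{itsmywarrior}, corollary \ref{giove}, and theorem \ref{deddf}. The first step is to note that on an incomplete edge manifold the Hilbert complexes $(L^2\Omega^*(M,g),d_{max/min,*})$ are Fredholm (this is implicit in the edge-calculus analysis of \cite{HM} that underlies \eqref{itsmywarrior}), so in particular $ran(d_{min,i})$ is closed for every $i$. This is exactly the hypothesis needed to invoke theorem \ref{polcas} and hence corollary \ref{giove}, and it also makes theorem \ref{deddf} applicable on the Friedrichs side.

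The second step is to line up the three kernel descriptions. Corollary \ref{giove} gives $Ker(\Delta_{\mathfrak{m},i})\cong \im(H^i_{2,min}(M,g)\to H^i_{2,max}(M,g))$; theorem \ref{deddf} gives $Ker(\Delta_i^{\mathcal{F}})=\mathcal{H}^i_{min}(M,g)=Ker(\Delta_{min,i})$; and the edge hypothesis plus \eqref{itsmywarrior} gives $\mathcal{H}^i_{min}(M,g)\cong \im(H^i_{2,min}(M,g)\to H^i_{2,max}(M,g))$. Hence the three finite-dimensional subspaces of $L^2\Omega^i(M,g)$ have the same dimension. To upgrade from isomorphism to equality, I would observe that $\Delta_{\mathfrak{m},i}$ is a closed extension of $\Delta_i:\Omega^i_c(M)\to \Omega^i_c(M)$, since $d_{\mathfrak{m},i}$ extends $d_{min,i}$ and likewise for the formal adjoint; therefore $Ker(\Delta_{min,i})\subseteq Ker(\Delta_{\mathfrak{m},i})$, and matching dimensions forces equality. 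Combined with the identity $Ker(\Delta_i^{\mathcal{F}})=Ker(\Delta_{min,i})$ from theorem \ref{deddf}, this yields the first claim.

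For the range identities, each of $\Delta_{max,i}$, $\Delta_{\mathfrak{m},i}$, $\Delta_i^{\mathcal{F}}$ is self-adjoint on $L^2\Omega^i(M,g)$ with closed range under the Fredholmness secured above (corollary \ref{giove} for $\Delta_{\mathfrak{m},i}$, theorem \ref{deddf} for $\Delta_i^{\mathcal{F}}$, and the pairing $\Delta_{min,i}=(\Delta_{max,i})^*$ for $\Delta_{max,i}$), so the range of each is the orthogonal complement of its kernel. Since the three kernels coincide by the preceding paragraph, so do the three ranges; theorem \ref{deddf} already records $ran(\Delta_i^{\mathcal{F}})=ran(\Delta_{max,i})$, which closes the chain.

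The main -- really the only -- substantive input is the Fredholmness of the maximal and minimal de Rham complexes on an incomplete edge manifold, which is borrowed from \cite{HM}. Everything else is a bookkeeping exercise: the kernel equalities follow from dimension counting together with the inclusion of domains induced by taking minimal closures, and the range equalities follow by self-adjointness and orthogonal complementation.
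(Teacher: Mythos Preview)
Your approach is exactly the one the paper intends: combine \eqref{itsmywarrior}, corollary \ref{giove}, and theorem \ref{deddf}, and you correctly supply the extra step (inclusion of kernels plus dimension count) needed to pass from isomorphism to genuine equality of subspaces. One small slip: $\Delta_{max,i}$ is \emph{not} self-adjoint (its adjoint is $\Delta_{min,i}$, as you yourself note), so you cannot argue $ran(\Delta_{max,i})=(Ker(\Delta_{max,i}))^\perp$ directly; however this is harmless, since you already correctly close the chain via theorem \ref{deddf}, which supplies $ran(\Delta_i^{\mathcal{F}})=ran(\Delta_{max,i})$, and then $ran(\Delta_{\mathfrak{m},i})=ran(\Delta_i^{\mathcal{F}})$ follows from self-adjointness of those two operators and equality of their kernels.
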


Finally we conclude the section showing that the isomorphism \eqref{itsmywarrior} is equivalent to requirement that the Hilbert space $L^2\Omega^i(M,g)$ satisfy some geometric properties.
\begin{prop}
\label{dsdzm}
 Let $(M,g)$ be an open oriented and incomplete riemannian manifold. Suppose that, for each $i=0,...,n$, $\im(\overline{H}^i_{2,min}(M,g)\rightarrow \overline{H}^{i}_{2,max}(M,g))$ is finite dimensional.
Then there exists alway an injective map 
\begin{equation}
\label{cesemoquasi}
\mathcal{H}^i_{min}(M,g)\rightarrow \im(\overline{H}^i_{2,min}(M,g)\rightarrow \overline{H}^{i}_{2,max}(M,g)).
\end{equation}
 Moreover the following properties are equivalent:
\begin{enumerate}
\item $\mathcal{H}_{min}^{i}(M,g)\cong \im(\overline{H}^i_{2,min}(M,g)\rightarrow \overline{H}^{i}_{2,max}(M,g))$
\item $\mathcal{H}^i_{abs}(M,g)=\mathcal{H}_{min}^i(M,g)\oplus(\overline{ran(\delta_{max,i})}\cap \mathcal{H}^i_{abs}(M,g))$
\item Let $\pi_{abs/rel/min,i}:L^2\Omega^i(M,g)\rightarrow \mathcal{H}_{ abs/rel/min}^i(M,g)$ be the orthogonal projections  of $L^2\Omega^i(M,g)$ respectively on $\mathcal{H}^i_{abs}(M,g)$, $\mathcal{H}^{i}_{rel}(M,g)$ and $\mathcal{H}^i_{min}(M,g)$. Then:\\ $\pi_{rel,i}\circ \pi_{abs,i}= \pi_{min,i}=\pi_{abs,i}\circ \pi_{rel,i}.$
\item $\mathcal{H}^i_{rel}(M,g)=\mathcal{H}_{min}^i(M,g)\oplus(\overline{ran(d_{max,i})}\cap \mathcal{H}^i_{rel}(M,g))$
\item $\overline{ran(d_{max,i})}=(\overline{ran(d_{max,i})}\cap \mathcal{H}^i_{rel}(M,g))\oplus \overline{ran(d_{min,i})}\oplus (\overline{ran(d_{max,i})}\cap \overline{ran(\delta_{max,i})})$
\end{enumerate}
Finally, if $(L^2\Omega^i(M,g),d_{max,i})$ or equivalently $(L^2\Omega^i(M,g),d_{min,i})$ is a Fredholm complex then there exists always an injective map $$\mathcal{H}^i_{min}(M,g)\rightarrow \im(H^i_{2,min}(M,g)\rightarrow H^{i}_{2,max}(M,g)).$$ Moreover the previous four equivalent conditions become:
\begin{enumerate}
\item $\mathcal{H}_{min}^{i}(M,g)\cong \im(H^i_{2,min}(M,g)\rightarrow H^{i}_{2,max}(M,g))$
\item $\mathcal{H}^i_{abs}(M,g)=\mathcal{H}_{min}^i(M,g)\oplus(ran(\delta_{max,i})\cap \mathcal{H}^i_{abs}(M,g))$
\item Let $\pi_{abs/rel/min,i}:L^2\Omega^i(M,g)\rightarrow \mathcal{H}_{ abs/rel/min}^i(M,g)$ be the orthogonal projections  of $L^2\Omega^i(M,g)$ respectively on $\mathcal{H}^i_{abs}(M,g)$, $\mathcal{H}^{i}_{rel}(M,g)$ and $\mathcal{H}^i_{min}(M,g)$. Then:\\ $\pi_{rel,i}\circ \pi_{abs,i}= \pi_{min,i}=\pi_{abs,i}\circ \pi_{rel,i}.$
\item $\mathcal{H}^i_{rel}(M,g)=\mathcal{H}_{min}^i(M,g)\oplus(ran(d_{max,i})\cap \mathcal{H}^i_{rel}(M,g))$
\item $ran(d_{max,i})=(ran(d_{max,i})\cap \mathcal{H}^i_{rel}(M,g))\oplus ran(d_{min,i})\oplus (ran(d_{max,i})\cap ran(\delta_{max,i}))$
\end{enumerate}
\end{prop}

\begin{proof}
Clearly it is enough to prove just  the first part of the proposition. The second part follows by the first part of the proposition and by the fact that   if $(L^2\Omega^i(M,g),d_{max/min,i})$ is a Fredholm complex then $d_{max/min,i}$ has closed range.
Let $\pi_{1,i}:\mathcal{H}^i_{rel}(M,g)\rightarrow \mathcal{H}^i_{abs}(M,g)$,   $\pi_{4,i}:\mathcal{H}^i_{abs}(M,g)\rightarrow \mathcal{H}^i_{rel}(M,g)$ be defined as in the proof of Theorem \ref{duality}. Moreover, by Prop. \ref{qwwq}, we know that $(\pi_{1,i})^*=\pi_{4,i}$ and analogously $(\pi_{1,i})^*=\pi_{4,i}$. By the proof of Theorem \ref{duality} we know that $\pi_{1,i}(\mathcal{H}^i_{rel}(M,g))\cong \im(\overline{H}^i_{2,min}(M,g)\rightarrow \overline{H}^{i}_{2,max}(M,g))$. Clearly, by the fact that $\mathcal{H}^i_{min}(M,g)=\mathcal{H}^i_{abs}(M,g)\cap \mathcal{H}^i_{rel}(M,g)$, we get that $\mathcal{H}^i_{min}(M,g)\subset \pi_{1,i}(\mathcal{H}^i_{rel}(M,g))$ and so we got \eqref{cesemoquasi}.\\ Now we pass to show that $1)\Rightarrow 2)$. As recalled above we know that $\pi_{1,i}(\mathcal{H}^i_{rel}(M,g))\cong \im(\overline{H}^i_{2,min}(M,g)\rightarrow \overline{H}^{i}_{2,max}(M,g))$ and that $\mathcal{H}^i_{min}(M,g)=\mathcal{H}^i_{abs}(M,g)\cap \mathcal{H}^i_{rel}(M,g)$; therefore using $1)$ we get that $\mathcal{H}^i_{min}(M,g)=\pi_{1,i}(\mathcal{H}^i_{rel}(M,g))$. This implies that   $(\mathcal{H}^i_{min}(M,g))^{\bot}\cap \mathcal{H}^i_{abs}(M,g)=(\pi_{1,i}(\mathcal{H}^i_{rel}(M,g)))^{\bot}\cap \mathcal{H}^i_{abs}(M,g)=Ker(\pi_{4,i})=$ $(\overline{ran(\delta_{max,i})}\cap \mathcal{H}^i_{abs}(M,g))$ and this completes the proof of the first implication.\\Now suppose that $2)$ is satisfied. Then it is immediate  that $\pi_{rel,i}\circ \pi_{abs,i}=\pi_{min,i}$ and therefore it is an easy consequence that also $\pi_{abs,i}\circ \pi_{rel,i}=\pi_{min,i}$.  Moreover it is still immediate that $3) \Rightarrow 4)$ because in this case $\pi_{4,i}(\mathcal{H}^i_{abs}(M,g))=\mathcal{H}^i_{min}(M,g).$
Now we want to show that $4)\Rightarrow 5).$ Clearly $\mathcal{H}^i_{min}(M,g)$ is orthogonal to $\overline{ran(\delta_{max,i})}$ and to $\overline{ran(d_{max,i})}$. This implies that the range of the orthogonal  projection of $\overline{ran(d_{max,i})}$ onto $\mathcal{H}^i_{rel}(M,g)$ is just the intersection  $\mathcal{H}^i_{rel}(M,g)\cap \overline{ran(d_{max,i})}$. From this we get  that also   the range of the orthogonal of projection of $\overline{ran(d_{max,i})}$ onto  $\overline{ran(\delta_{max,i})}$ is just the intersection  $\overline{ran(d_{max,i})}\cap \overline{ran(\delta_{max,i})}$ and therefore the implication $4)\Rightarrow 5)$ is proved. 
Finally, if $5)$ holds, it is immediate to show that $\pi_{1,i}(\mathcal{H}^i_{rel}(M,g))=\mathcal{H}^i_{min}(M,g)$ and this, using the fact that $\pi_{1,i}(\mathcal{H}^i_{rel}(M,g))\cong \im(\overline{H}^i_{2,min}(M,g)\rightarrow \overline{H}^{i}_{2,max}(M,g))$ implies $1)$. This completes the proof of the proposition.
\end{proof}

\begin{thebibliography}{99}
\bibitem {ALMP} 
P{.} Albin, E{.} Leichtnam, R{.} Mazzeo, P{.} Piazza.
\newblock The signature package on Witt spaces.
\newblock {\em Ann. Sci. \'Ec. Norm. Sup\'er.}, 45 (2012), no. 2, 241--310.

\bibitem {MA} 
M{.} Anderson.
\newblock $L^2$ harmonic forms on complete riemannian manifolds.
\newblock {\em Geometry and analysis on manifolds} (Katata/Kyoto, 1987), 1--19, Lecture Notes in Math., 1339, Springer, Berlin, 1988.

\bibitem {BA}   
M{.} Banagl. 
\newblock Topological invariants of stratified spaces.
\newblock Springer Monographs in Mathematics. Springer, Berlin, 2007.

\bibitem {FB} 
F{.} Bei.
\newblock General perversities and $L^{2}-$de Rham and Hodge theorems on stratified pseudomanifolds.
\newblock  {\em Bulletin the Sciences Math\'ematiques}, 138, (2014), no. 1, 2--40.

\bibitem {BL}
J{.} Bruning, M{.} Lesch.
\newblock Hilbert complexes.
\newblock {\em J. Funct. Anal.}  108, (1992), no. 1, 88--132.

\bibitem {BrL}
J{.} Bruning, M{.} Lesch.
\newblock K\"ahler-Hodge theory of conformally complex cones.
\newblock  {\em Geom. Funct. Anal.},  3 (1993), no. 5, 439--473.

\bibitem {GC}  
G{.} Carron.
\newblock $L^{2}$ harmonic forms on non compact manifold. 
\newblock Available on ArXiv: http://arxiv.org/abs/0704.3194

\bibitem {C}
J{.} Cheeger.
\newblock On the Hodge theory of Riemannian pseudomanifolds.
\newblock {\em Geometry of the Laplace operator},  (Proc. Sympos. Pure Math., Univ. Hawaii, Honolulu, Hawaii, 1979), pp. 91--146, Proc. Sympos. Pure Math., XXXVI, Amer. Math. Soc., Providence, R.I., 1980.

\bibitem {dR}  
G{.} de Rham.
\newblock Differential manifolds: forms, currents, harmonic forms.
\newblock  Grundlehren der Mathematischen Wissenschaften [Fundamental Principles of Mathematical Sciences], 266. Springer-Verlag, Berlin, 1984.

\bibitem {GP}
G{.} Friedman. 
\newblock An introduction to intersection homology with general perversity functions.
\newblock  {\em Topology of stratified spaces}, 177--222, Math. Sci. Res. Inst. Publ., 58, Cambridge Univ. Press, Cambridge, 2011.

\bibitem {IGP}  
G{.} Friedman.
\newblock Intersection homology with general perversities.
\newblock {\em Geom. Dedicata}, 148, (2010), 103--135.

\bibitem {SP}   
G{.} Friedman.
\newblock Singular chain intersection homology for traditional and super-perversities.
\newblock  {\em Trans. Amer. Math. Soc.},  359, (2007), no. 5, 1977--2019 

\bibitem{FH} 
G{.} Friedman, E{.} Hunsicker.
\newblock Additivity and non additivity for perverse signatures.
\newblock {\em J. Reine Angew. Math.}, 676, 2013, 51--95. 

\bibitem {H} 
E{.} Hunsicker.
\newblock Hodge and signature theorems for a family of manifolds with fibre bundle boundary.
\newblock {\em Geom. Topol.}, 11, (2007), 1581--1622.

\bibitem {HM}
E{.} Hunsicker, R{.} Mazzeo.
\newblock Harmonic forms on manifolds with edges. 
\newblock  {\em Int. Math. Res. Not.},  2005, no. 52, 3229--3272.

\bibitem {GM} 
M{.} Goresky, R{.} MacPherson.
\newblock Intersection homology theory.
\newblock {\em Topology}, 19, (1980), no. 2, 135--162.

\bibitem {GMA}
M{.} Goresky, R{.} MacPherson.
\newblock Intersection homology II.
\newblock {\em Invent. Math.}, 72, (1983), no. 1, 77--129.

\bibitem {MAL}  
M{.} Lesch.
\newblock Operators of Fuchs type, conic singularities and asymptotic methods.
\newblock Teubner-Texte zur Mathematik [Teubner Texts in Mathematics], 136. B. G. Teubner Verlagsgesellschaft mbH, Stuttgart, 1997. 

\bibitem {KW} 
F{.} Kirwan, J{.} Woolf.
\newblock An introduction to intersection homology theory. Second Edition. 
\newblock Chapman and Hall/CRC, Boca Raton, FL, 2006.

\bibitem {MM}  
X{.} Ma, G{.} Marinescu.
\newblock Holomorphic morse inequalities and Bergman kernels.
\newblock Progress in Mathematics, 254. Birkh\"auser Verlag, Basel, 2007

\bibitem {MAV}  
R{.} Mazzeo, B{.} Vertman.
\newblock Analytic torsion on manifolds with edges.
\newblock {\em Ad. Math.},  231, (2012), no. 2, 1000--1040.

\bibitem {MN}
M{.} Nagase.
\newblock $L^{2}-$cohomology and intersection homology of stratified spaces.
\newblock  {\em Duke Math. J.},  50, (1983), no. 1, 329--368.

\bibitem {MAN}  
M{.} Nagase.
\newblock Sheaf theoretic $L^{2}-$cohomology.
\newblock Complex analytic singularities, 273--279, Adv. Stud. Pure Math., 8, North-Holland, Amsterdam, 1987. 

\bibitem {LS}     
L{.} D{.} Saper. 
\newblock $L^2$-cohomology of the Weil-Petersson metric.
\newblock  Mapping class groups and moduli spaces of Riemann surfaces (G\"ottingen, 1991/Seattle, WA, 1991), 345--360, Contemp. Math., 150, Amer. Math. Soc., Providence, RI, 1993.
\end {thebibliography} \vspace{1 cm}

Institut f\"ur Mathematik, Humboldt-Universit\"at zu Berlin, Germany.

\texttt{E-mail address}: francescobei27@gmail.com bei@math.hu-berlin.de

\end{document}